\newtheorem{thm}{Theorem}[section]
\newtheorem{cor}[thm]{Corollary}
\newtheorem{prop}[thm]{Proposition}
\newtheorem{lem}[thm]{Lemma}
\newtheorem{quest}[thm]{Question}
\newtheorem{mainthm}{Theorem}
\theoremstyle{definition}
\newtheorem{defn}[thm]{Definition}
\newtheorem{notn}[thm]{Notation}
\theoremstyle{remark}
\newtheorem{rem}[thm]{Remark}
\newtheorem{rems}[thm]{Remarks}
\newtheorem{conv}[thm]{Convention}
\newcommand{\mf}{\mathbf}
\newcommand{\mc}{\mathcal}
\newcommand{\mk}{\mathfrak}
\let\phi\varphi
\newcommand{\N}{\mathbb{N}}
\newcommand{\R}{\mathbb{R}}
\newcommand{\Z}{\mathbb{Z}}
\renewcommand{\subset}{\subseteq}
\newcommand{\defeq}{\mathrel{\mathop:}=}
\newcommand{\haus}{\mathcal{H}}
\newcommand{\tran}{\mathcal{T}}
\newcommand{\vol}{\mathrm{vol}}
\newcommand{\dvol}{\mathrm{dvol}}
\newcommand{\Ric}{\mathrm{Ric}}
\newcommand{\dist}{d}
\newcommand{\meas}{\mathfrak{m}}
\newcommand{\di}{\mathop{}\!\mathrm{d}}
\DeclareMathOperator{\RCD}{RCD}
\let\c@equation\c@thm
\numberwithin{equation}{section}
\title[Nonnegative Ricci curvature and linear volume growth]{On the topology of manifolds with nonnegative Ricci curvature and linear volume growth}
\author{Dimitri Navarro}
\address[Dimitri Navarro]{Department of Mathematics, University of California, Santa Cruz, CA, USA.}
\email{dnavar17@ucsc.edu}
\author{Jiayin Pan}
\address[Jiayin Pan]{Department of Mathematics, University of California, Santa Cruz, CA, USA.}
\email{jpan53@ucsc.edu}
\author{Xingyu Zhu}
\address[Xingyu Zhu]{Simons Laufer Mathematical Sciences Institute, Berkeley, CA, USA.}
\email{xyzhu@gatech.edu}
\begin{document}

\begin{abstract}
   Understanding the relationships between geometry and topology is a central theme in Riemannian geometry. We establish two results on the fundamental groups of open (complete and noncompact) $n$-manifolds with nonnegative Ricci curvature and linear volume growth. First, we show that the fundamental group of such a manifold contains a subgroup $\mathbb{Z}^k$ of finite index, where $0\le k\le n-1$. Second, we prove that if the Ricci curvature is positive everywhere, then the fundamental group is finite. The proofs are based on an analysis of the equivariant asymptotic geometry of successive covering spaces and a plane/halfplane rigidity result for RCD spaces.
\end{abstract}

\maketitle

\tableofcontents

\parskip=5pt plus 0.5pt

\section{Introduction}\label{sec:intro}
About half a century ago, Calabi \cite{Calabi75} and Yau \cite{Yau76} independently proved that an open (complete and noncompact) Riemannian manifold with nonnegative Ricci curvature must have at
least linear volume growth. These manifolds of minimal volume growth were studied extensively by Sormani \cite{Sor98,Sor00a,Sor00c,Sor00b} in the late 90s.

\begin{defn}\label{def:linear_vol}
Let $M$ be an open manifold with $\Ric\ge 0$. We say that $M$ has \emph{linear (or minimal) volume growth}, if for some point (hence for all points) $p\in M$, we have the following:
$$\limsup_{r\to\infty} \dfrac{\vol B_r(p)}{r} <\infty.$$
\end{defn}

While open manifolds with nonnegative Ricci curvature and linear volume growth may have infinite topological types \cite{Menguy00,Jiang21}, Sormani proved that their fundamental groups are always finitely generated \cite{Sor00b}. In contrast, for open manifolds with nonnegative Ricci curvature in general, their fundamental groups need not be finitely generated, as shown by the recent counterexamples to the Milnor conjecture by Bruè--Naber--Semola \cite{BNS_7,BNS_6}.  

When the fundamental group of an open manifold with nonnegative Ricci curvature is finitely generated, the works of Milnor \cite{Milnor68} and Gromov \cite{Gromov81} imply that it is virtually nilpotent, that is, it contains a nilpotent subgroup of finite index; also, see the work by Kapovitch--Wilking \cite{KW}, where an index bound $C(n)$ is achieved. Conversely, Wei \cite{Wei88} and Wilking \cite{Wilking00} showed that every finitely generated virtually nilpotent group can be realized as the fundamental group of some open manifold of positive Ricci curvature.

In this paper, we establish two results concerning the fundamental group of open manifolds with linear volume growth. First, we show that the fundamental group is always virtually abelian, ruling out the possibility of torsion-free nilpotent (non-abelian) groups arising under these conditions. Second, we prove that if the Ricci curvature is positive everywhere, then the fundamental group must be finite.

\begin{mainthm}\label{thm:vir_abel}
If $M^n$ is an open manifold with $\Ric\ge 0$ and linear volume growth, then $\pi_1(M)$ contains a subgroup $\mathbb{Z}^k$ of finite index, where $0\le k\le n-1$.
\end{mainthm}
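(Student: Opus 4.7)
The strategy combines the existing virtual nilpotency of $\pi_1(M)$ with an equivariant asymptotic geometry analysis of a tower of covering spaces, culminating in an appeal to a plane/half-plane rigidity statement for $\RCD(0,n)$ spaces.

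After invoking Sormani's finite generation theorem and the Milnor--Gromov--Kapovitch--Wilking structure theory recalled above, I may replace $\pi_1(M)$ by a torsion-free finitely generated nilpotent subgroup $N$, passing to the corresponding finite Riemannian cover (which retains both $\Ric \ge 0$ and linear volume growth). The task reduces to showing that $N$ is isomorphic to $\mathbb{Z}^k$ with $0 \le k \le n-1$. The upper bound $k \le n-1$ should come first: if $N$ contained a $\mathbb{Z}^n$ subgroup, equivariant Gromov--Hausdorff convergence at infinity would produce $n$ asymptotically independent translation directions on the universal cover $\tilde M$, so that Cheeger--Gromoll/Gigli splitting yields $\tilde M = \mathbb{R}^n \times K$ with $K$ compact; then $M=\tilde M/N$ would be compact, contradicting openness.

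The heart of the argument is ruling out the non-abelian case. Suppose $[N,N] \ne 1$, so the lower central series $N = N_1 \supsetneq N_2 \supsetneq \cdots \supsetneq N_c$ has length $c \ge 2$. It yields a tower of intermediate Riemannian covers $M = M_1 \leftarrow M_2 \leftarrow \cdots \leftarrow \tilde M$ with deck groups $N/N_i$. Fixing $a,b\in N$ with $[a,b]\neq 1$, I analyze the orbit map of $\lb a,b\rb$ at a suitable level of this tower via equivariant asymptotic cones. The nontrivial commutator, combined with the abelian structure of the successive quotients $N_{i-1}/N_i$, should force the equivariant limit of this orbit map to contain an isometrically embedded plane (or half-plane, in the presence of a metric boundary effect coming from the finite index of $\mathbb{Z}^k\subset N/[N,N]$) inside the corresponding asymptotic cone. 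A plane/half-plane rigidity theorem for non-collapsed $\RCD(0,n)$ spaces then forces the cone --- and, by lifting the splitting through the tower, also the appropriate cover itself --- to isometrically split off an $\mathbb{R}^2$-factor. Such a factor is incompatible with linear volume growth of $M$, delivering the contradiction.

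The two principal obstacles I anticipate are: (i) formulating and proving the plane/half-plane rigidity for $\RCD(0,n)$ spaces, since the classical Cheeger--Gromoll/Gigli splitting applies only to lines and a genuinely new input is required to promote a $2$-dimensional flat to a metric splitting; and (ii) controlling the equivariant rescalings along the tower so that the non-commutativity in $N$ is not absorbed by collapse --- i.e., verifying that the translation lengths of $a$, $b$, and the commutator $[a,b]$ remain in the right mutual scales for the limiting orbit to produce a genuine plane rather than degenerating to a line.
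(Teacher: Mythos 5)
Your high-level ingredients (reduction to a torsion-free finitely generated nilpotent subgroup via Sormani and Milnor--Gromov, a tower of successive covers, equivariant asymptotic cones, and a plane/halfplane rigidity for $\RCD(0,N)$ spaces) match the paper's, but the way you assemble them contains genuine gaps, and your two concluding contradictions do not go through as stated.

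First, the final contradiction in your non-abelian case is wrong: you argue that a suitable cover in the tower splits off an $\mathbb{R}^2$-factor and that "such a factor is incompatible with linear volume growth of $M$." It is not. Linear volume growth is a hypothesis on $M$, not on its covers; for example $M=\mathbb{T}^2\times\Sigma$ with $\Sigma$ a surface of linear volume growth has $\widetilde{M}=\mathbb{R}^2\times\widetilde{\Sigma}$ splitting off a Euclidean plane while $M$ has linear volume growth and $\pi_1(M)=\mathbb{Z}^2$. Indeed, the paper's conclusion is that such splittings of covers (up to asymptotics) always occur --- that is the content of the Induction Theorem --- and the virtual abelianness is extracted not from a volume contradiction but from the fact that every equivariant asymptotic cone $(Y,y,G)$ of $(\widetilde{M},\tilde p,\pi_1(M))$ has \emph{Euclidean orbit} $Gy$, which gives zero escape rate and then virtual abelianness by \cite[Theorems 1.2, 1.3]{Pan21}. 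Your proposed mechanism (a nontrivial commutator forcing a plane in the limit orbit, contradicting something) has no counterpart that closes the argument.

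Second, your derivation of $k\le n-1$ assumes that a $\mathbb{Z}^n$-subgroup produces $n$ actual lines in $\widetilde{M}$ so that Cheeger--Gromoll/Gigli applies. Asymptotic translation directions in a blow-down limit do not yield lines in $\widetilde{M}$ itself; whether a single torsion-free element of $\pi_1(M)$ forces $\widetilde{M}$ to split off a line is precisely Question \ref{quest:torsion_free_line}, which the authors leave open. The bound $k\le n-1$ instead follows from Anderson's volume estimate \cite[Theorem 1.1]{Anderson90}.

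Third, you correctly anticipate that the plane/halfplane rigidity is a key obstacle, but you do not address how its hypotheses would be verified. In the paper this is the bulk of the work: one must show the escape rate of each $\mathbb{Z}$-step is non-maximal (via Abresch--Gromoll and a Sormani-type argument), that the limit orbits are homeomorphic to $\mathbb{R}$, and --- crucially --- that the limit renormalized measure satisfies $\mathfrak{m}(\Omega_r(v))=crv$, which requires constructing explicit domains $S(m_iv)\cdot F_i(s)$ converging to cubes together with the existence of the limit $\lim_r \vol B_r(p)/r$. Without these quantitative inputs the rigidity theorem cannot be applied, and the inductive structure of the argument does not get off the ground.
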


\begin{mainthm}\label{thm:finite}
If $M^n$ is an open manifold with $\Ric>0$ and linear volume growth, then $\pi_1(M)$ is finite.
\end{mainthm}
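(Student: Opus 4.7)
The plan is to combine Theorem A with a splitting argument. Theorem A supplies a subgroup $\mathbb{Z}^k \subset \pi_1(M)$ of finite index with $0 \le k \le n-1$, so it suffices to rule out $k \ge 1$. Suppose for contradiction that $\pi_1(M)$ is infinite; then there exists $\gamma \in \pi_1(M)$ of infinite order. Consider the Riemannian cover $\hat M \to M$ corresponding to the subgroup $\langle \gamma \rangle \cong \mathbb{Z}$, so that $\gamma$ acts as a nontrivial deck transformation on $\hat M$. Since $\hat M \to M$ is a local isometry, $\hat M$ still satisfies $\Ric > 0$.

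The goal is now to produce a \emph{line} in $\hat M$. I would use a Sormani-style halfway argument: the linear volume growth of $M$ forces every infinite-order element of $\pi_1(M)$ to admit shortest loop representatives whose lifts to the appropriate cover are minimizing on a central subsegment of length at least half their total length. Applying this to $\gamma^N$ for each $N \ge 1$ yields, in $\hat M$, a geodesic segment from $\hat p$ to $\gamma^N \hat p$ that is minimizing on a central subsegment of length at least $L_N / 2$, where $L_N$ is the length of the shortest representative of $\gamma^N$. Since $\gamma$ has infinite order and acts freely and properly discontinuously on $\hat M$, one expects $L_N \to \infty$. Recentering at the midpoint of each segment and applying Arzelà--Ascoli (with precompactness of balls in $\hat M$ supplied by Bishop--Gromov), one extracts a bi-infinite minimizing geodesic $\sigma : \mathbb{R} \to \hat M$.

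Once a line in $\hat M$ is in hand, the Cheeger--Gromoll splitting theorem applied to $\hat M$ gives an isometric decomposition $\hat M \cong \mathbb{R} \times N$. But this forces the Ricci tensor of $\hat M$ to have a zero eigenvalue everywhere in the direction of the $\mathbb{R}$-factor, contradicting the $\Ric > 0$ inherited from $M$. Hence $k = 0$ and $\pi_1(M)$ is finite.

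The main obstacle is the halfway step. One must verify both that the minimal displacements $L_N$ genuinely tend to infinity (rather than staying bounded, which would prevent the limit segment from being infinite) and that the midpoints of the halfway-minimizing segments can be arranged to lie in a fixed compact region of $\hat M$, so that the Arzelà--Ascoli limit is nontrivial. These issues are controlled by the equivariant asymptotic geometry of $\hat M$ and its further covers mentioned in the abstract; the linear volume growth hypothesis on $M$ enters precisely here, to rule out the pathological "short" or "drifting" displacements that would otherwise let the geometry collapse before a line can form.
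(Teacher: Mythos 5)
Your argument reduces everything to producing a line in a cover of $M$, and that is precisely where it breaks down. The halfway/recentering argument works for \emph{closed} manifolds because the midpoints $q_N$ of the minimal geodesics from $\hat p$ to $\gamma^N\hat p$ stay within distance $\diam(M)$ of the orbit $\Gamma\hat p$; after translating by deck transformations one recenters in a fixed compact set and the Arzel\`a--Ascoli limit is a line \emph{in the cover itself}. For an open manifold with linear volume growth, the best control available is that the escape rate is zero (Theorem \ref{thm:zero_escape_rate}), i.e.\ $\mathrm{size}(\sigma_{\gamma^N})/|\gamma^N|\to 0$. This is a sublinearity statement, not a boundedness statement: the midpoints may still drift to infinity in $\hat M$. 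If they do, the pointed limit $(\hat M, q_N)$ is some tangent space at infinity, not $\hat M$, and a line there yields no contradiction with $\Ric>0$ on $M$ (positive Ricci is not inherited by such limits; Nabonnand-type examples show minimal representing loops genuinely escaping to infinity with no line forming upstairs). Whether a torsion-free element of $\pi_1(M)$ forces the universal cover to split off a line under linear volume growth is exactly Question \ref{quest:torsion_free_line}, which we state as open. So your proof rests on an unresolved problem, and nothing in your sketch supplies the missing compactness of the midpoints. (A secondary point: the cover corresponding to the subgroup $\langle\gamma\rangle$ does not carry $\gamma$ as a deck transformation unless $\langle\gamma\rangle$ is normal; you would want a $\mathbb{Z}$-folding cover $\widetilde M/\Lambda'$ with $\Lambda/\Lambda'\simeq\mathbb{Z}$, or simply the $\langle\gamma\rangle$-action on $\widetilde M$. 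This is fixable.)

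For contrast, the actual proof avoids lines entirely: the Induction Theorem (with $j=k=1$) shows a $\mathbb{Z}$-folding cover $\widehat M$ has volume growth $o(r^{2+\epsilon})$ for every $\epsilon>0$ (Proposition \ref{prop:Z_cover_vol}), hence at most cubic; Anderson's Lemma 2.2 produces from a nonzero class in $H_1(M,\mathbb{R})$ a complete homologically area-minimizing hypersurface which is a boundary of least area in $\widehat M$, while his Theorem 2.1 forbids such a hypersurface when $\Ric>0$ and the volume growth is at most cubic. The contradiction is obtained from minimal hypersurfaces and volume growth, not from a splitting of the cover.
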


\begin{rems}
   Here are some remarks on Theorems \ref{thm:vir_abel} and \ref{thm:finite}.\\
   (1) As an application of their celebrated splitting theorem, Cheeger--Gromoll proved that the fundamental group of a closed manifold $M^n$ with nonnegative Ricci curvature must have a subgroup $\mathbb{Z}^k$ of finite index, where $0\le k\le n$ \cite{CG_split}. Our Theorem \ref{thm:vir_abel} can be viewed as an extension of their result to open manifolds whose largeness, in terms of volume growth, is minimal.\\
   (2) Similarly, Theorem \ref{thm:finite} can be seen as an extension of a classical result by Bonnet and Myers, which asserts that every closed manifold with positive Ricci curvature has a finite fundamental group.\\
   (3) The estimate $k\le n-1$ in Theorem \ref{thm:vir_abel} follows directly from a volume argument by Anderson \cite[Theorem 1.1]{Anderson90}. When $k=n-1$, $M$ is flat and either isometric to a metric product $\mathbb{R}\times \mathbb{T}^{n-1}$ or diffeomorphic to $\mathbb{M}^2\times \mathbb{T}^{n-2}$, where $\mathbb{M}^2$ is the M\"obius band (see \cite[Theorem 5]{Ye24}).\\
   (4) In \cite{Pan21,Pan22}, the second-named author proved that if a complete manifold has nonnegative Ricci curvature and small escape rate, then the fundamental group is virtually abelian. In the proof of Theorem \ref{thm:vir_abel}, we show that manifolds with linear volume growth have zero escape rate (see Theorem \ref{thm:zero_escape_rate}).\\
   (5) Theorem \ref{thm:vir_abel} also holds for noncompact $\RCD(0,N)$ spaces with linear volume growth, as its proof extends directly with some minor modifications; see Remark \ref{rem:vir_abel}.\\
   (6) In Theorem \ref{thm:finite}, it is not possible to uniformly bound the order of $\pi_1(M)$ by a constant depending only on the dimension. To construct counterexamples, first consider a complete Riemannian metric $g_m$ on $\mathbb{R}^m$ with positive sectional curvature and linear volume growth, where $m\ge 2$. For instance, we can use a warped product 
  $$[0,\infty)\times_f S^{m-1},\quad g_m = dr^2 +f^2(r) ds_{m-1}^2,$$
  where $f(r)=\arctan r$ and $ds_{m-1}^2$ is the standard metric on $\mathbb{S}^{m-1}$. Now, we construct $M$ as the metric product of $(\mathbb{R}^m,g_m)$ and a lens space $\mathbb{S}^3/\mathbb{Z}_p$ of constant curvature. Then $M$ has positive Ricci curvature, linear volume growth, and $\pi_1(M)=\mathbb{Z}_p$. As $p$ can be arbitrarily large, this demonstrates that no uniform bound can be imposed on the order of $\pi_1(M)$. Consequently, by taking the metric product of this $M$ with a flat torus $\mathbb{T}^k$, one cannot find a uniform bound on the index of the $\Z^k$ subgroup provided by Theorem \ref{thm:vir_abel} as well.
\end{rems}

For a closed manifold $M$ with nonnegative Ricci curvature, Cheeger--Gromoll splitting theorem \cite{CG_split} implies that any torsion-free element in $\pi_1(M)$ gives rise to a splitting line on the universal cover. It is unclear to the authors whether this property extends to open manifolds with linear volume growth.

\begin{quest}\label{quest:torsion_free_line}
   Let $M^n$ be an open manifold with $\Ric\ge 0$ and linear volume growth. If $\pi_1(M)$ contains a torsion-free element, does the Riemannian universal cover $\widetilde{M}$ split off a line isometrically?
\end{quest}

We also raise the question below on strictly sub-quadratic volume growth.

\begin{quest}\label{quest:subquadratic}
   Let $M^n$ be an open manifold with $\Ric\ge 0$. Suppose that for some $\delta \in (0,1)$ and some point (hence for all points) $p\in M$, the following holds:
   $$\limsup_{r\to\infty} \dfrac{\vol B_r(p)}{r^{1+\delta}}=0.$$
   (1) Is $\pi_1(M)$ finitely generated?\\
   (2) Is $\pi_1(M)$ virtually abelian?\\
   (3) If we further assume $M$ has positive Ricci curvature, is $\pi_1(M)$ finite?
\end{quest}

\begin{rem}
    Regarding Question \ref{quest:subquadratic}(3), there are examples with positive Ricci curvature, quadratic volume growth, and $\pi_1(M)=\mathbb{Z}$. Indeed, Nabonnand \cite{Nabonnand80} constructed doubly warped metrics with positive Ricci curvature on $\R^3\times\mathbb{S}^1$ of the form
$$g=dr^2 + f^2(r) ds_{2}^2 + h^2(r)ds_1^2.
$$
Using the same choice of $f$ and $h$ as in \cite[Proposition 1.1]{PanWei}, one can check $f(r) \sim r^{2/3}$ and $h(r) \sim r^{-1/3}$ as $r\to\infty$. Then $(\R^3\times\mathbb{S}^1,g)$ has positive Ricci curvature and quadratic volume growth since we have the following asymptotic behavior:
$$\mathrm{vol}B_r(p) \sim \int_0^r (s^{2/3})^2\cdot s^{-1/3} ds \sim r^2.$$
\end{rem}

Below we outline our approach to proving Theorems \ref{thm:vir_abel} and \ref{thm:finite}.

Suppose an open manifold $(M,p)$ has nonnegative Ricci curvature and linear volume growth. We consider a Riemannian covering map $(\widehat{M},\hat{p})\to (M,p)$ with a finitely generated and torsion-free nilpotent covering group $\Lambda$. Then $\Lambda$ admits a series of normal subgroups
$$\{e\}=\Lambda_0 \triangleleft \Lambda_1 \triangleleft ... \triangleleft \Lambda_{k-1} \triangleleft \Lambda_k =\Lambda,$$
where $k$ is the rank of $\Lambda$, such that each $\Lambda_{j+1}/\Lambda_j$ is isomorphic to $\mathbb{Z}$. Denoting $\widehat{M}_j \coloneqq \widehat{M}/\Lambda_j$, we obtain a tower of successive covering spaces
$$ \widehat{M}=\widehat{M}_0\to \widehat{M}_{1} \to ... \to \widehat{M}_{k-1} \to \widehat{M}_k=M.$$
Each covering map $\widehat{M}_j \to \widehat{M}_{j+1}$ has covering group $\Lambda_{j+1}/\Lambda_j \simeq \mathbb{Z}$.

A key component of our proof is the Induction Theorem on the asymptotic cones of these covering spaces, which we state below.

\begin{thm}[Induction Theorem]\label{thm:induction}
For each $j=0,1,...,k$, the covering space $\widehat{M}_{k-j}$ has a unique asymptotic cone $(Y_{k-j},y_{k-j})$, isometric to either a Euclidean space $(\mathbb{R}^{j+1},0)$ or a Euclidean halfspace $(\mathbb{R}^j \times [0,\infty),0)$. Moreover, the asymptotic cone $Y_{k-j}$ has a limit renormalized measure as (a multiple of) the Lebesgue measure.
\end{thm}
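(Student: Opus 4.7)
The plan is to proceed by induction on $j$, with base case $j=0$ corresponding to $\widehat{M}_k=M$ and each inductive step climbing from $\widehat{M}_{k-j}$ to its $\mathbb{Z}$-cover $\widehat{M}_{k-j-1}$. For the base case, I would establish that $M$ has a unique asymptotic cone isometric to $\mathbb{R}$ or $[0,\infty)$ with Lebesgue renormalized measure. By Bishop--Gromov and linear volume growth, any pointed measured Gromov--Hausdorff (pmGH) subsequential limit of $(M,r_i^{-1}d,p)$ is an $\RCD(0,N)$ space of linear volume growth, hence one-dimensional, and thus (by the structure of $1$-dimensional $\RCD$ spaces) isometric to $\mathbb{R}$ or $[0,\infty)$ with constant-multiple Lebesgue measure. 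Uniqueness follows from the results of Sormani on the geometry at infinity of such manifolds, or from a Busemann-function argument exploiting the rigidity of one-dimensional limits.

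For the inductive step, assume $\widehat{M}_{k-j}$ has unique asymptotic cone $Y_{k-j}\in\{\mathbb{R}^{j+1},\mathbb{R}^j\times[0,\infty)\}$ with Lebesgue renormalized measure. Fix any sequence $r_i\to\infty$ and consider the rescaled pointed spaces $(\widehat{M}_{k-j-1},r_i^{-1}d,\hat{p}_{k-j-1})$ together with the $\mathbb{Z}$-action by deck transformations, generated by some $\gamma$ obtained via Sormani's short-generator construction. Passing to an equivariant pmGH subsequential limit $(Y',y',G)$, where $G$ is the closure in the limit isometry group of the rescaled $\mathbb{Z}$-orbit, the quotient $Y'/G$ coincides with the asymptotic cone of $\widehat{M}_{k-j}$ along the same sequence, which equals $Y_{k-j}$ by the induction hypothesis.

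The core of the argument is to identify $G\cong\mathbb{R}$. Nontriviality follows because otherwise $\widehat{M}_{k-j-1}$ would share the same asymptotic cone as $\widehat{M}_{k-j}$, contradicting the fact that it accommodates infinitely many disjoint copies of a fundamental domain; connectedness follows from the sublinear growth of $\gamma$'s displacement, i.e.\ the zero escape rate of Theorem \ref{thm:zero_escape_rate}. Applying the Gigli splitting theorem to the $\RCD(0,N)$ space $Y'$ endowed with a line orbit then yields an isometric splitting $Y'=\mathbb{R}\times Z$ with $G$ acting by translations on the $\mathbb{R}$ factor, and the quotient identification forces $Z=Y_{k-j}$. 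Invoking the plane/halfplane rigidity for $\RCD$ spaces announced in the abstract, $Y'$ is identified with $\mathbb{R}^{j+2}$ or $\mathbb{R}^{j+1}\times[0,\infty)$, and the limit renormalized measure factors as (Lebesgue on the splitting line)$\times$(limit measure on $Y_{k-j}$), hence is Lebesgue by the induction hypothesis. Uniqueness across sequences $r_i$ follows because the Euclidean-versus-halfspace dichotomy is a global invariant of $\widehat{M}_{k-j-1}$ determined by whether its end is one-sided or two-sided.

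The main obstacle is the identification of the limit group $G$ with $\mathbb{R}$: ruling out the trivial group, a discrete subgroup like $\mathbb{Z}$, or a higher-dimensional limit. This requires choosing short generators consistently across the tower of covers and coordinating the escape rate estimates at every level. A second delicate point is checking that the splitting of $Y'$ obtained from the $G$-orbit is actually compatible with the product structure of $Y_{k-j}$ that appears as the quotient, so that plane/halfplane rigidity produces precisely a Euclidean space or halfspace rather than a more exotic $\RCD$ space having $Y_{k-j}$ as a quotient.
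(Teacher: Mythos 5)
Your overall strategy (induction up the tower of $\mathbb{Z}$-covers, equivariant blow-downs, Gigli splitting, plane/halfplane rigidity) matches the paper's, but the inductive step as written has three genuine gaps. First, you invoke the zero escape rate of Theorem \ref{thm:zero_escape_rate} to control the limit group $G$; in the paper that theorem is itself \emph{deduced from} the Induction Theorem, so this is circular. The correct input is Proposition \ref{prop:non_max_escape}: the inductive hypothesis makes the quotient $\widehat{M}_{k-j}$ polar at infinity, and an Abresch--Gromoll excess argument then gives $E\neq 1/2$, which is what allows \cite[Proposition C(1)]{Pan23} to conclude that the orbit $Gy$ is \emph{homeomorphic} to $\mathbb{R}$. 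Second, and more seriously, an orbit homeomorphic to $\mathbb{R}$ is not a geodesic line, so you cannot apply Gigli's splitting theorem "along the orbit" to write $Y'=\mathbb{R}\times Z$ with $G$ translating the $\mathbb{R}$-factor and $Z=Y_{k-j}$. If that were available, the conclusion would follow immediately and the plane/halfplane rigidity theorem would be superfluous; the whole difficulty is precisely that the $\mathbb{R}$-orbit may a priori sit in $Y'$ in a distorted way. The paper instead splits along the $j$ lines lifted from the quotient $\mathbb{R}^j\times[0,\infty)$, obtaining $Y'=\mathbb{R}^j\times Z$ with $Z/G=[0,\infty)$, and must then prove that this two-dimensional-type factor $Z$ is a plane or halfplane.

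Third, to apply the rigidity theorem to $Z$ one must verify its measure hypothesis $\meas_Z(\Omega_r(v))=crv$, and your proposal never addresses this. This is the technical core of the paper's argument: one needs the strengthened inductive statement (clause (3) of Theorem \ref{thm:induction_plus}) providing domains $D_i(s)\to[-s,s]^j\times[0,s]$ in a saturated way, the convergence of the symmetric subsets $S(m_iv)$ of the deck group to the cylinder $[-v,v]\times K$ (Propositions \ref{prop:sym_gh} and \ref{prop:sym_saturated}), the convergence of the translated Dirichlet-domain pieces $S(m_iv)\cdot F_i(s)$ to $[-s,s]^j\times\Omega_s(v)$ together with the measure convergence (Proposition \ref{prop:domains_gh}, Corollary \ref{cor:domain_meas_conv}), and the comparison of these domains with metric balls (Lemma \ref{lem:balls_vs_domains}). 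Without this quantitative volume computation, asserting that the limit renormalized measure "factors as Lebesgue times the limit measure on $Y_{k-j}$" is unjustified, since the quotient measure on $Y'/G$ is not a factor of $\meas_{Y'}$ in any a priori sense. You have correctly identified where the difficulties lie in your closing paragraph, but the proposal does not supply the mechanisms that resolve them.
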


Theorem \ref{thm:vir_abel} relies on applying the Induction Theorem. Without loss of generality, we begin by assuming that $\pi_1(M)=\Gamma$ is finitely generated and torsion-free nilpotent. Then the Induction Theorem implies that for any equivariant asymptotic cone $(Y,y,G)$ of $(\widetilde{M},\tilde{p},\Gamma)$, the limit orbit $G\cdot y$ must be a Euclidean subspace of $Y$. By the results of the second-named author \cite{Pan21}, it follows that $M$ has zero escape rate and $\pi_1(M)$ is virtually abelian.

For Theorem \ref{thm:finite}, we argue by contradiction and consider a suitable $\mathbb{Z}$-folding cover $\widehat{M}$ of $M$. Using the Induction Theorem (with $j=k=1$), we can estimate that the volume growth of $\widehat{M}$ is close to quadratic growth. Then, applying Anderson's results on positive Ricci curvature and at most cubic volume growth \cite{Anderson90}, we arrive at a desired contradiction.

As the name suggests, we prove the Induction Theorem by induction. The proof of the inductive step relies primarily on two key ingredients. The first is the equivariant asymptotic geometry of a $\mathbb{Z}$-folding covering map $\widehat{M}\to M$ whose escape rate is not $1/2$ (non-maximal). This topic was explored by the second-named author in \cite{Pan23}. In the inductive step, we must first verify the escape rate of the covering group $\mathbb{Z}$-action indeed is not $1/2$; this requires Abresch--Gromoll excess estimate \cite{AG90,GM14} and an argument by Sormani \cite{Sor00b}. Once this is confirmed, we apply the result from \cite{Pan23} to study the equivariant asymptotic geometry 
\begin{equation}\label{eq:induction_intro}
\begin{CD}
(r_i^{-1} \widehat{M}_{k-(j+1)},\hat{p}_{k-(j+1)},\mathbb{Z}) @>GH>> (Y_{k-(j+1)},y_{k-(j+1)},G) \\
	@VV\pi V @VV \pi V\\
	(r_i^{-1} \widehat{M}_{k-j},\hat{p}_{k-j}) @>GH>> (Y_{k-j},y_{k-j}).
\end{CD}
\end{equation}
In particular, by \cite[Proposition C(1)]{Pan23}, the limit orbit $G\cdot y_{k-(j+1)}$ is always homeomorphic to $\mathbb{R}$. Building on this and the inductive assumptions, we further investigate the asymptotic geometry of $\widehat{M}_{k-(j+1)}$. One particular goal is to analyze certain limit renormalized measure on $Y_{k-(j+1)}$ by constructing a sequence of appropriate domains of $\widehat{M}_{k-(j+1)}$ that converges to some specific domain in $Y_{k-(j+1)}$.

The analysis of equivariant asymptotic geometry leads to an asymptotic cone $Y_{k-(j+1)}=\mathbb{R}^j\times Z$ of $\widehat{M}_{k-(j+1)}$, where $Z$ satisfies specific symmetry and measure conditions; moreover, $Z$ is $\RCD(0,n-j)$ by Gigli's splitting theorem \cite{Gigli13,Gigli14}. The second key ingredient is a plane/halfplane rigidity result below for $\RCD(0,N)$ spaces, which is formulated with the purpose to understand such a space $Z$. Applying this rigidity result to $Z$, we conclude that $Z$ is isometric to either a Euclidean plane or a Euclidean halfplane.

\begin{thm}[Plane/Halfplane Rigidity]\label{thm:plane_halfplane_rigid}
If $(Y,y,\dist,\mathfrak{m})$ is a pointed $\RCD(0,N)$ space such that:\\
(1) $Y$ has a measure-preserving isometric $G$-action, where $G=\mathbb{R}\times K$ is a closed subgroup of $\mathrm{Isom}(Y)$, $K$ fixes $y$, and the quotient metric space $(Y/G,\bar{y})$ is isometric to a ray $([0,\infty),0)$,\\
(2) there is a constant $c>0$ such that $\mathfrak{m}(\Omega_r(v))=crv$ for all $r,v>0$ (see Definition \ref{def:omega_set} below for the subset $\Omega_r(v)\subseteq Y$), \\
then $(Y,y,\dist,\mathfrak{m})$ is isomorphic (up to a constant) to a Euclidean plane $(\mathbb{R}^2,0)$ or a Euclidean halfplane $(\mathbb{R}\times [0,\infty),0)$ equipped with the Lebesgue measure.
\end{thm}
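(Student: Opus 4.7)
The strategy is to apply Gigli's splitting theorem to the $\mathbb{R}$-factor of $G$, reducing the problem to a one-dimensional classification for the complementary factor.

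First I would establish that the orbit $\mathbb{R}\cdot y$ is a geodesic line in $Y$. Since $K$ fixes $y$, this orbit coincides with $G\cdot y$ and projects to the basepoint of the ray $Y/G=[0,\infty)$. The measure condition $\mathfrak{m}(\Omega_r(v))=crv$, which is unbounded in $r$ for any fixed $v>0$, prevents the $\mathbb{R}$-orbit of $y$ from being bounded: if it were, then by continuity of the isometric action the orbits of nearby points would also be bounded, and the measure of a fixed transverse neighborhood could not grow linearly in $r$. In particular $y$ cannot be an $\mathbb{R}$-fixed point. Hence $\mathbb{R}\cdot y$ is an isometrically embedded copy of $\mathbb{R}$, and by Gigli's splitting theorem one obtains an isomorphism of pointed $\RCD$ spaces
\begin{equation*}
(Y,y,\mathfrak{m})\cong (\mathbb{R},0,\mathcal{L}^1)\times (Z,z_0,\mathfrak{m}_Z),
\end{equation*}
where $Z$ is an $\RCD(0,N-1)$ space and the $\mathbb{R}$-subgroup of $G$ acts by translation on the first factor.

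Next I would analyze the action of $K$ with respect to this splitting. Since $K$ fixes $y$ in the proper space $Y$, it is compact; since $K$ also commutes with the $\mathbb{R}$-translation, any $k\in K$ acts as $(t,z)\mapsto(t+\alpha(k),\psi_k(z))$ for a continuous homomorphism $\alpha\colon K\to\mathbb{R}$, which vanishes by compactness. Thus $K$ preserves the product decomposition and fixes $z_0$, giving $(Z/K,\bar z_0)\cong(Y/G,\bar y)\cong([0,\infty),0)$. The measure condition together with Bishop--Gromov forces the volume of balls in $Y$ to grow at most quadratically and at least quadratically, so the essential dimension of $Y$ equals $2$ and that of $Z$ equals $1$. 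By the classification of one-dimensional $\RCD$ spaces, $Z$ is then isometric to an interval of $\mathbb{R}$ equipped with a locally finite weighted measure.

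The constraint $Z/K=[0,\infty)$ with $K$ acting isometrically on the interval $Z$ and fixing $z_0$ leaves only two possibilities, since the isometry group of any interval fixing an interior point is at most $\mathbb{Z}/2$: either $K$ is trivial and $Z\cong[0,\infty)$ with $z_0=0$, or $K$ acts faithfully through $\mathbb{Z}/2$ by reflection and $Z\cong\mathbb{R}$ with $z_0=0$. To upgrade this to an isomorphism of metric measure spaces, I would insert the product decomposition $\mathfrak{m}=\mathcal{L}^1\otimes\mathfrak{m}_Z$ supplied by Gigli's theorem into $\mathfrak{m}(\Omega_r(v))=crv$; the $K$-invariance of $\mathfrak{m}_Z$ together with linearity in $v$ of the measure of the ray-segment of length $v$ forces the density to be constant, and one obtains the Lebesgue measure on $\mathbb{R}\times[0,\infty)$ or $\mathbb{R}^2$ up to a multiplicative constant.

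The principal difficulty I foresee is the first step: excluding the possibility that $\mathbb{R}$ fixes $y$ or produces only a bounded orbit. Without the measure hypothesis one cannot invoke Gigli's splitting directly, since a priori $\mathbb{R}$ could act by ``rotation-like'' isometries around $y$. Once the definition of $\Omega_r(v)$ is unpacked, it should slice $Y$ into segments of $\mathbb{R}$-orbits of length $r$ fibered over a length-$v$ segment of the quotient ray; matching this slicing against the prescribed formula $\mathfrak{m}(\Omega_r(v))=crv$ is what ultimately rigidifies the $\mathbb{R}$-orbit through $y$ into a line and unlocks the rest of the argument.
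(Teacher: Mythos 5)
Your high-level plan (split off an $\mathbb{R}$-factor via Gigli's theorem, then classify the complementary factor $Z$) matches the eventual architecture of the paper's proof, but the two steps that carry all the weight are asserted rather than proved. The first gap is the claim that the orbit $\mathbb{R}\cdot y$ is an isometrically embedded line. What your measure argument actually yields is only that the orbit is unbounded (and even there the roles of $r$ and $v$ are interchanged: it is letting $v\to\infty$ for fixed $r$, together with the inclusion $\Omega_r(v)\subseteq \overline{B}_{r+D}(y)$ valid when the orbit has diameter $D$, that contradicts local finiteness of $\meas$). An unbounded closed orbit homeomorphic to $\mathbb{R}$ need not be a geodesic, and the orbit parametrization $w\mapsto (w,e)\cdot y$ need not be affine in $w$; hypothesis (1) alone does not rule out a horocycle-like, non-geodesic orbit. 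Converting ``$\meas(\Omega_r(v))=crv$'' into ``$Gy$ is a line'' is precisely the hard content of the theorem. In the paper this is done by disintegrating $\meas$ along the needles of $u=\dist(\cdot,Gy)$, using the measure identity to show that the needle densities $h_\alpha$ are constant so that $\mathbf{\Delta}u=0$ away from the orbit, invoking Ketterer's local functional splitting theorem to obtain geodesic convexity of the sublevel sets $\{u\le r\}$, and only then producing the line as a limit of chords of the orbit (Lemma \ref{lem:halfplane_rigid}). You flag this as ``the principal difficulty'' at the end, but the resolution you sketch only excludes bounded orbits, which does not address it.

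The second gap is the treatment of $Z$. The inference ``quadratic volume growth of $Y$ forces essential dimension $2$, hence $Z$ is one-dimensional'' is not valid for $\RCD$ spaces: ball-volume growth from one basepoint does not control the essential dimension, and even the two-sided quadratic bound you invoke implicitly uses that the orbit is quasi-geodesic, which is again the unproved first step. What the splitting actually hands you is that $Z$ is a polar $\RCD(0,N-1)$ space with $\meas_Z(B_r(z_0))=c'r$ exactly; concluding from this that $Z$ is a ray or a line is the content of Proposition \ref{prop:rigid_iso}, whose proof runs once more through a needle decomposition (now for $\dist_{z_0}$), harmonicity, a splitting of $Z\setminus\{z_0\}$ over a cross-section $Q$, and a packing argument showing $\mathrm{Diam}(Q)=0$, before the one-dimensional classification can be applied. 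So both halves of your outline defer to rigidity statements that are themselves the substance of Section \ref{sec:rigid}; as written, the proposal does not contain a proof.
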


\begin{defn}\label{def:omega_set}
Let $r,v>0$. In the context of Theorem \ref{thm:plane_halfplane_rigid}(1), we define a subset $\Omega_r(v)\subset Y$ by
$$\Omega_r(v)\coloneqq\{ (w,h) \cdot \sigma(t)\ |\ w\in[-v,v], h\in K, t\in[0,r] \},$$
where $\sigma$ is a lift of the unit speed ray in $Y/G$ at $y$ and $(w,h)$ represents an element in $\mathbb{R}\times K=G$. We remark that the definition of $\Omega_r(v)$ is independent of the choice of $\sigma$ and the splitting $\mathbb{R}\times K=G$.
\end{defn}

The plane/halfplane rigidity (Theorem \ref{thm:plane_halfplane_rigid}) can be interpreted as a form of splitting in the presence of boundaries. The ideas behind the proof are closely aligned with those in Kasue's splitting theorem \cite[Theorem C]{KasueSplitting} and Croke--Kleiner \cite[Theorem 2]{CrokeKleiner}. In the nonsmooth setting, these results are unified as a type of local functional splitting and extended to $\RCD$ spaces using the disintegration techniques by Ketterer--Kitabeppu--Lakzian; see \cite{KKL23,Ketterer_23}. The corresponding global functional splitting theorem was first studied by Gigli \cite{Gigli13,Gigli14} and later formulated by Antonelli--Bru\`e--Semola \cite{ABS19}. From the local functional splitting theorem, it follows that to establish a splitting, it is sufficient to identify a harmonic function with a constant gradient. Distance functions are natural candidates for this purpose. Indeed, up to taking translation copies, the set $\Omega_r(v)$ is a union of level sets of the distance function to the orbit $Gy$. The assumption that the volume of $\Omega_r(v)$ grows linearly in $r$ suggests, heuristically, that the level sets of this distance function have zero mean curvature. The mean curvature of these level sets corresponds to the Laplacian of the distance function. 

We formalize this intuition using the disintegration techniques, especially those from Cavalletti--Mondino \cite{CM_newformula}. The subtlety of the nonsmooth setting is that the splitting involves the intrinsic metric of the level sets of distance functions rather than the ambient metric. Given the abundance of symmetry in our case, we prove that the intrinsic metric of the level sets coincides with the ambient metric.  


\emph{Acknowledgements.}

This material is based upon work supported by the National Science Foundation under Grant No. DMS-1928930, while two of the authors, J. Pan and X. Zhu, were in residence at the Simons Laufer Mathematical Sciences Institute (SLMath) in Berkeley, California, during the fall semester of 2024.

We are grateful to Guofang Wei for helpful discussions and her suggestion to work on Theorem \ref{thm:finite}. We would like to thank Jie Zhou for explaining the work \cite{ZZ} with details. We would like to thank Zhu Ye for pointing out Remark \ref{rem:rcd_linear_ratio}. 

J. Pan is partially supported by the National Science Foundation DMS-2304698 and Simons Foundation Travel Support for Mathematicians.

\section{Preliminaries}\label{sec:pre}
\subsection{Ricci limits and RCD spaces}

\begin{defn}
Let $M^n$ be a complete manifold with $\Ric\ge0$. An \emph{asymptotic cone} (or a \emph{tangent cone at infinity}) of $M$ is a pointed Gromov--Hausdorff subsequential limit
    \begin{equation}\label{eq:asym_cone}
    (r_i^{-1}M,p) \overset{GH}\longrightarrow (X,x),
    \end{equation}
where $p\in M$ is fixed and $r_i\to\infty$ is a sequence of positive real numbers.
\end{defn}

\begin{rem}
    If $M$ is a complete manifold with $\Ric\ge0$, then, for every $\lambda>0$, the rescaled manifold $\lambda M$ also has $\Ric\geq0$. As a results of Gromov's precompactness theorem (see \cite[Theorem 5.3]{Gromov_98}), for any sequence $r_i\to\infty$ and $p\in M$, the blow-down sequence $\{(r_i^{-1}M,p)\}$ always admits a converging subsequence. Note, however, that the asymptotic cone may not be unique and depends on the subsequence and choice of $r_i\to\infty$.
\end{rem}

Asymptotic cones of manifolds with $\Ric\ge0$ are particular examples of Ricci limit spaces, which were extensively studied by Cheeger--Colding in their seminal papers \cite{CC96,CCI,CCII,CCIII}. Keeping track of the Riemannian measures associated with the approximating sequence is fundamental to retaining good stability properties in the limit space. A common way to formalize this idea (introduced in \cite{Fukaya_87}) is the measured Gromov--Hausdorff topology, endowing Riemannian manifolds with their normalized volume measure.

\begin{defn}\label{defn:renorm_meas}
    Let $M$ be a complete manifold with $\Ric\ge0$ and let $(X,x)$ be an asymptotic cone of $M$ coming from (\ref{eq:asym_cone}). For every $i\in\N$, we define the renormalized measure $\meas_i$ on $r_i^{-1}M$ as follows:
    \begin{equation*}
        \meas_i= \frac{\dvol}{\vol(B_{r_i}(p))}.
    \end{equation*}
    A \emph{limit renormalized measure} on $X$ is any measure $\nu$ on $X$ such that (passing to a subsequence if necessary) $\{(r_i^{-1}M,p,\meas_i)\}$ converges to $(X,x,\nu)$ in the (pointed) measure Gromov--Hausdorff topology.
\end{defn}

\begin{rem}If $(X,x)=\lim_{i\to\infty}(r_i^{-1}M,p)$ is an asymptotic cone of an open manifold $M$ with $\Ric\geq0$, then one can always pass to a measured Gromov--Hausdorff convergent subsequence to obtain a limit renormalized measure $\nu$, which is a Radon measure on $X$ (see \cite[Section 1]{CCI}). Moreover, if $\nu$ is a limit renormalized measure, then, for any converging sequence $q_i\in M\to q\in X$, and $R>0$, we have $\nu(B_R(q))=\lim_{i\to\infty}\meas_i(B_R(q_i))$. However, limit renormalized measures on $X$ are not unique in general.
\end{rem}

Let us introduce isomorphisms of metric measure spaces (m.m.s. for short) up to a constant.

\begin{defn}\label{def:isomorphic_up_to_a_constant}We say that two m.m.s. (metric measure spaces) $(X,\dist_X,\meas_X)$ and $(Y,\dist_Y,\meas_Y)$ are \emph{isomorphic up to a constant}, if there is an isometry $\phi\colon(X,\dist_X)\to(Y,\dist_Y)$ and a constant $\lambda>0$ such that $\phi_*\meas_X=\lambda\meas_Y$.    
\end{defn}

A related approach to studying Ricci limit spaces is introducing a new definition of Ricci curvature lower bounds at the more general level of (possibly non-smooth) metric measure spaces. Inspired notably by \cite{Cordero-Erausquin_01}, Lott--Villani \cite{Lott-Villani_09} and Sturm \cite{Sturm_I_06,Sturm_II_06} were the first to propose a definition of Ricci curvature lower bounds using the theory of Optimal Transport in their pioneering papers. Their combined work led to $\mathrm{CD}(K,N)$ spaces. In our paper, we will focus on $\Ric\ge0$; thus, it will be sufficient to consider the case $K=0$.

\begin{defn}
    A m.m.s. $(X,\dist,\meas)$ satisfies the \emph{$\mathrm{CD}(0,N)$ condition} if, given any pair of probability measure $\mu_0,\mu_1$ that are absolutely continuous w.r.t. $\meas$, there exists a Wasserstein geodesic $\{\mu_t\}_{0\le t\le1}$ from $\mu_0$ to $\mu_1$ such that, for every $N'\ge N$, we have the following property:
    \begin{equation*}
        \forall 0\le t\le 1, \quad \mathcal{S}_N'(\mu_t\mid\meas)\le t\mathcal{S}_N'(\mu_1\mid\meas)+(1-t)\mathcal{S}_N'(\mu_0\mid\meas),
    \end{equation*}
    where $\mathcal{S}_N'(\cdot\mid\meas)$ denotes the R\'{e}nyi entropy with parameter $N'$ associated with $\meas$.
\end{defn}

We refer the readers to \cite{Villani_09} for an introduction to Optimal Transport, particularly for definitions of Wasserstein geodesics and entropy functionals.

While Ricci limit spaces are examples of $\mathrm{CD}$ spaces, it is also the case of Finsler manifolds with an adequate curvature lower bound (see \cite{Ohta_09}). Since the original purpose of $\mathrm{CD}$ was to study Ricci limit spaces, and since Finsler manifolds arise as Ricci limit spaces only when they are Riemannian, the $\mathrm{CD}$ had to be strengthened. To stay as close as possible to the Riemannian situation, Ambrosio--Gigli--Savar\'{e} introduced $\RCD$ spaces in \cite{Ambrosio-Gigli-Savare_14} ruling out non-Riemannian Finsler spaces. We follow the definition of $\RCD$ spaces from Gigli's work \cite{Gigli15}, which requires the Sobolev space $H^{1,2}$ to be a Hilbert space. This property is referred to as \emph{infinitesimal Hilbertianity}.

\begin{defn}\cite{Gigli15}
  An $\RCD(0,N)$ space is an infinitesimally Hilbertian $\mathrm{CD}(0,N)$ space.
\end{defn}

The $\RCD(0,N)$ condition is stable under (pointed) measured Gromov--Hausdorff convergence (see \cite{GMS15}). In particular, for an open manifold $M$ with $\Ric\ge 0$, any asymptotic cone of $M$ with any limit renormalized measure is $\RCD(0,N)$. 

When dealing with manifolds with nonnegative Ricci curvature, a fundamental result is the Cheeger--Gromoll splitting theorem \cite{CG_split}. The splitting theorem was extended to Ricci limit spaces by Cheeger--Colding \cite{CC96}, and finally to $\RCD(0,N)$ spaces by Gigli \cite{Gigli13,Gigli14}.

\begin{thm}\label{thm:meas_split}\cite{Gigli13,Gigli14}
If $(X,\dist,\meas)$ is an $\RCD(0,N)$ space (where $N<\infty$) that contains a line, then there exists a m.m.s. $(X',\dist',\meas')$ such that $(X,\dist,\meas)$ is isomorphic to $(X',\dist',\meas')\otimes (\R,d_E,\mathcal{L}^1)$, where:\\
$\bullet$ $(X',\dist',\meas')$ is an $\RCD(0,N-1)$ space when $N\ge 2$,\\
$\bullet$ $(X',\dist',\meas')$ is a point when $N<2$,\\
and $\R$ is equipped with Euclidean distance $d_E$ and Lebesgue measure $\mathcal{L}^1$.
\end{thm}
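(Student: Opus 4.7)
The plan is to follow the classical Cheeger--Gromoll strategy, adapted to the nonsmooth setting using the Bakry--\'{E}mery $\Gamma_2$-calculus available on infinitesimally Hilbertian spaces. The backbone of the proof consists of producing a harmonic Busemann function with unit gradient and vanishing Hessian, and then converting this analytic data into a genuine metric measure splitting.

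First, I would fix a line $\gamma\colon\R\to X$ and introduce the two Busemann functions
$$b^{\pm}(x)\defeq \lim_{t\to+\infty}\bigl(t-\dist(x,\gamma(\pm t))\bigr),$$
which are well defined and $1$-Lipschitz, and which satisfy $b^++b^-\ge 0$ pointwise with equality along $\gamma$. Next, I would establish the distributional Laplacian bounds $\Delta b^{\pm}\le 0$. In the $\RCD(0,N)$ framework this follows from the Laplacian comparison for distance functions, which is itself a consequence of the Bochner inequality in integral form combined with Bishop--Gromov; passing to the limit $t\to\infty$ transfers the bound from $\dist(\cdot,\gamma(\pm t))$ to $b^{\pm}$. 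Combined with the strong maximum principle for $\RCD$ spaces (available through heat-flow regularization and the Sobolev-to-Lipschitz property), the fact that $b^++b^-$ is a nonnegative superharmonic function vanishing on $\gamma$ forces $b^++b^-\equiv 0$. Setting $b\defeq b^+$, we see that $b$ is simultaneously sub- and superharmonic, hence harmonic.

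Third, I would apply the dimensional Bochner inequality
$$\tfrac{1}{2}\Delta|\nabla b|^2\ \ge\ \tfrac{(\Delta b)^2}{N}+\langle\nabla\Delta b,\nabla b\rangle+|\Hess b|_{HS}^2$$
valid on $\RCD(0,N)$ spaces. Since $\Delta b\equiv 0$ while $|\nabla b|\le 1$ everywhere with equality along $\gamma$, a heat-flow / integration-by-parts argument using the boundedness of $|\nabla b|$ forces $|\nabla b|\equiv 1$ $\meas$-a.e.\ and $\Hess b\equiv 0$. With harmonicity, constant unit gradient, and vanishing Hessian in hand, I would finally invoke the functional splitting theorem available for $\RCD(0,N)$ spaces to produce the isomorphism $X\cong X'\otimes\R$: the regular Lagrangian flow of the Sobolev vector field $\nabla b$ maps level sets of $b$ isometrically onto one another, the intrinsic metric on $X'\defeq b^{-1}(0)$ coincides with the one inherited from $X$, and disintegrating $\meas$ along the fibers of $b$ yields the product measure structure. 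The $\RCD(0,N-1)$ condition on $X'$ follows from the stability of $\RCD$ under measured splittings and a dimensional accounting from Bishop--Gromov on the fibers; when $N<2$ the same accounting leaves no effective dimension on $X'$, so the disintegration collapses $X'$ to a point.

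The main obstacle is the last step. In the smooth setting $\Hess b=0$ together with $|\nabla b|=1$ instantly produces the splitting via the flow of $\nabla b$, but in the nonsmooth $\RCD$ setting one needs substantial machinery: existence and uniqueness of the regular Lagrangian flow for Sobolev vector fields, compatibility of the flow with the heat semigroup, identification of the intrinsic distance on level sets of $b$ with the quotient distance $\dist_X/b$, and compatibility of the disintegration of $\meas$ with the product structure. This is precisely the ``global functional splitting theorem'' discussed in the paper's outline and is the technical engine underpinning Gigli's proof.
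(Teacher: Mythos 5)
This statement is quoted in the paper from \cite{Gigli13,Gigli14} and is not proved there, so there is no internal proof to compare against; your proposal is an outline of Gigli's own argument and follows essentially the canonical route (Busemann functions, Laplacian comparison plus the strong maximum principle to get harmonicity, the Bochner inequality to upgrade to unit gradient and vanishing Hessian, and then the functional splitting machinery). Two small caveats: the Bochner inequality you display is not quite in the standard form (the dimensional term $(\Delta b)^2/N$ and the Hessian term $\lvert\Hess b\rvert^2$ do not simply add; one uses either the dimensional version or the improved Hessian version, though since $\Delta b=0$ this does not affect your conclusion), and the assertion that $\lvert\nabla b\rvert=1$ ``along $\gamma$'' is not meaningful pointwise since the gradient is only defined $\meas$-a.e.\ --- the actual argument for $\lvert\nabla b\rvert\equiv1$ goes through the identity $b^++b^-\equiv0$ and the existence of lines through a.e.\ point, not through evaluation on the given line. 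As you acknowledge, the final step (regular Lagrangian flow of $\nabla b$, identification of intrinsic and quotient metrics on level sets, disintegration of the measure) is the real technical content and is exactly what the cited references supply; as a blind reconstruction of the strategy your sketch is faithful.
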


In addition to the splitting theorem, we will rely on the following fundamental geometric properties of $\RCD(0,N)$ spaces, where $N<\infty$:\\
$\bullet$ $\RCD(0,N)$ spaces are non-branching (see \cite{CoNa12,Deng20}).  \\
$\bullet$ The group of measure preserving isometries of an $\RCD(K,N)$ space is a Lie group (see \cite{CoNa12} for the initial proof on Ricci limit spaces and \cite{Guijarro_19} for the $\RCD$ case).\\
$\bullet$ The quotient of an $\RCD(K,N)$ space by a compact subgroup of measure preserving isometries, endowed with the pushforward measure, is still an $\RCD(K,N)$ space (see \cite{Galaz-Garcia_18}).

The local version of functional splitting theorem by Ketterer (see \cite[Theorem 4.11]{Ketterer_23}) will be crucial in our proof of Theorem \ref{thm:plane_halfplane_rigid}. Let us first introduce the following notation.

\begin{notn}
Let $(X,\dist,\meas)$ be an m.m.s. and $\Omega\subset X$ an open subset. We denote $(\tilde{\Omega},\tilde{\dist}_{\Omega})$ the metric completion of $\Omega$ equipped with its extended intrinsic distance $\dist_{\Omega}$ (this extended distance takes the value $\infty$ between different connected components of $\Omega$). Since $\Omega$ is naturally a subset of $\tilde{\Omega}$, the measure $\meas_{\Omega}\coloneqq \meas_{\lvert \Omega}$ is a well defined measure on $\tilde{\Omega}$.
\end{notn}

\begin{thm}\label{thm:ketterer}\cite[Theorem 4.11]{Ketterer_23}
    Let $(X,\dist,\meas)$ be an $\RCD(0,N)$ space (where $N<\infty$), let $\Omega\subset X$ be an open subset of $X$, and assume that $u\colon\Omega\to\R$ is a function such that $\Omega=u^{-1}(0,D)$, for some $D>0$. If $\lvert \nabla u\rvert = 1$ $\meas$-a.e. on $\Omega$ and if $\mathbf{\Delta}_{\Omega}u=0$, then there exists $(Y,\dist_Y,\meas_Y)$, a disjoint union of connected m.m.s., such that $(\tilde{\Omega},\tilde{\dist}_{\Omega},\meas_{\Omega})$ is isomorphic to $(Y,\dist_Y,\meas_Y)\otimes [0,D]$. Moreover, any connected component of $(Y,\dist_Y,\meas_Y)$ is $\RCD(0,N-1)$ if $N\ge 2$, or a point if $N<2$.
\end{thm}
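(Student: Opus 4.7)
The plan is to exploit the equality case of the Bochner inequality on $\RCD(0,N)$ spaces in order to deduce that $u$ has vanishing Hessian, and then to convert this ``infinitesimally parallel'' unit gradient into an actual product structure by using the Cavalletti--Mondino one-dimensional disintegration of $\meas \restriction \Omega$ along the flow lines of $\nabla u$.

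The first step is to establish $\hess(u)=0$ on $\Omega$ in the sense of Gigli's second-order Sobolev calculus. Starting from the improved $\RCD(0,N)$ Bochner inequality, which in distributional form reads
$$\tfrac{1}{2}\,\mathbf{\Delta}|\nabla u|^2 \ \ge\ |\hess(u)|_{\mathrm{HS}}^2 \ +\ \langle \nabla u,\nabla \mathbf{\Delta} u\rangle \ +\ \tfrac{1}{N}(\mathbf{\Delta} u)^2,$$
I would test against nonnegative cutoffs compactly supported in $\Omega$. Since $|\nabla u|^2 \equiv 1$ $\meas$-a.e. on $\Omega$ (so the left-hand side vanishes distributionally) and $\mathbf{\Delta}_\Omega u=0$, the right-hand side must also vanish, forcing $\hess(u)=0$ $\meas$-a.e. on $\Omega$. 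This in turn makes $\nabla u$ an infinitesimally parallel unit vector field whose gradient flow $(\phi_t)$ generates, for each sufficiently short time, measure-preserving local isometries satisfying $u\circ\phi_t = u+t$.

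The second step is to extract the product structure. I would apply the Cavalletti--Mondino one-dimensional disintegration of $\meas \restriction \Omega$ along the transport rays of $u$: each ray is a unit-speed geodesic segment, and the conditional densities satisfy a Jacobi-type ODE whose $\RCD(0,N)$ concavity degenerates to an equality precisely because $|\nabla u|=1$ and $\mathbf{\Delta}_\Omega u=0$. Consequently these densities are constant along each ray. Fixing any $c\in(0,D)$ and setting $Y\coloneqq u^{-1}(c)$ with its intrinsic metric $\dist_Y$ inherited from $\Omega$ and with $\meas_Y$ the quotient disintegration measure, the flow map $\Phi(y,t)\coloneqq\phi_{t-c}(y)$ becomes a bijective, measure-preserving isometry from $(Y,\dist_Y,\meas_Y)\otimes([0,D],d_E,\leb^1)$ onto $(\Omega,\dist_\Omega,\meas_\Omega)$; passing to metric completions yields the claimed isomorphism with $(\tilde\Omega,\tilde{\dist}_\Omega,\meas_\Omega)$. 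To upgrade each connected component of $Y$ to $\RCD(0,N-1)$ when $N\ge 2$ (or to a point when $N<2$), I would invoke the tensorization--reverse direction of the curvature-dimension condition together with infinitesimal Hilbertianity of the factor: the latter follows from the orthogonal decomposition of the Cheeger energy on the product, while the dimension reduction uses the fact that the $[0,D]$ factor already contributes one full unit to the effective dimension.

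The main obstacle will be the genuinely \emph{local} nature of the statement. The Bochner calculus and the disintegration theory are most comfortable in the global setting, so one must carefully localize both to $\Omega$: in particular, one must handle flow lines that terminate at $\{u=0\}$ or $\{u=D\}$, verify that the intrinsic distance $\dist_\Omega$ (rather than the ambient $\dist$, which could take shortcuts through $X\setminus\Omega$) is the one that captures the product structure, and ensure that the quotient disintegration measure agrees with the measure transported by $\phi_t$. An additional subtlety is that infinitesimal Hilbertianity on the factor $Y$ does not follow automatically from the $\mathrm{CD}(0,N-1)$ bound and requires an independent argument comparing intrinsic Cheeger energies across the product decomposition.
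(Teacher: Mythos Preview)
This theorem is not proved in the paper: it is quoted as a preliminary result from \cite[Theorem 4.11]{Ketterer_23}, with the underlying arguments attributed to \cite{KKL23}. So there is no ``paper's own proof'' to compare against. That said, the paper does sketch the relevant mechanism in Section~\ref{sec:rigid} when it applies the same circle of ideas to the distance functions $\dist_y$ and $\dist_\partial$, and that sketch lets one see how your proposal lines up with the cited proof.

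Your overall architecture---disintegrate $\meas|_\Omega$ along the transport rays of $u$, show the conditional densities $h_\alpha$ are constant, build a flow/ray map, and upgrade it to an isometry onto a product---matches the approach in \cite{KKL23,Ketterer_23} as summarized in the paper. The main divergence is your Step~1. You propose to run the improved Bochner inequality to force $\hess(u)=0$ and then deduce that the gradient flow is a local isometry. The cited proof does not go through Bochner or through $\hess(u)=0$ at all; it obtains constancy of $h_\alpha$ directly from the Cavalletti--Mondino representation $\mathbf{\Delta} u = -(\log h_\alpha)'\,\meas$ together with $\mathbf{\Delta}_\Omega u=0$ (exactly as the paper does in Lemma~\ref{lem:d_y is harmonic}), and then proves the flow map is an isometry by retracing Gigli's splitting argument \cite{Gigli13} at the level of the needle decomposition. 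Your Bochner route is not wrong in spirit, but it imposes a regularity burden you do not address: the improved Bochner inequality with the $|\hess(u)|^2$ term is formulated for test functions, and a $1$-Lipschitz $u$ with $\mathbf{\Delta}_\Omega u=0$ in the distributional sense is not a priori in that class on an $\RCD$ space. Bridging that gap is nontrivial and is precisely why the literature favors the disintegration-first route here.

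Your discussion of the obstacles (localization to $\Omega$, intrinsic versus ambient distance, endpoints of rays, infinitesimal Hilbertianity of the cross-section) is accurate and those are indeed the technical points handled in \cite{KKL23}.
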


\subsection{Equivariant Gromov--Hausdorff convergence}

We recall some basic facts about (pointed) equivariant Gromov--Hausdorff convergence from \cite{Fukaya86,FY92}.

Throughout the paper, we often denote $(X,x,G)$ a pointed complete length metric space $(X,x)$ with a closed subgroup $G$ of the isometry group $\mathrm{Isom}(X)$. For $R\ge 0$, we denote
$$G(R)=\{g\in G\ |\ d(gx,x)\le R \}.$$

\begin{defn}\label{def:eqgh}\cite{Fukaya86,FY92} 
Given $\epsilon>0$, two spaces $(X,x,G)$ and $(Y,y,H)$ are $\epsilon$-close in the (pointed) \emph{equivariant Gromov--Hausdorff} topology, written as
$$d_{GH}((X,x,G),(Y,y,H))\le \epsilon,$$
if there are $\epsilon$-approximation maps $(f,\psi,\phi)$, that is,
$$f:B_{1/\epsilon}(x)\to Y,\quad  \psi: G(1/\epsilon) \to H(1/\epsilon), \quad \phi:H(1/\epsilon) \to  G(1/\epsilon) $$
with the following properties:\\
(1) $f(x)=y$;\\
(2) the $\epsilon$-neighborhood of $f(B_{1/\epsilon}(x))$ contains $B_{1/\epsilon}(y)$;\\
(3) $|d_Y(f(x_1),f(x_2))-d_X(x_1,x_2)|\le\epsilon$ for all $x_1,x_2\in B_{1/\epsilon}(x)$;\\
(4) if $g\in G(1/\epsilon)$ and $z,gz\in B_{1/\epsilon}(x)$, then $d_Y(f(gz),\psi(g)f(z))\le\epsilon$;\\
(5) if $h\in H(1/\epsilon)$ and $z,\phi(h)z\in B_{1/\epsilon}(x)$, then $d_Y(f(\phi(h)z),h f(z))\le\epsilon$.

We say that a sequence $(X_i,x_i,G_i)$ converges to $(Y,y,H)$ in the (pointed) \emph{equivariant Gromov--Hausdorff} topology, written as
\begin{equation}\label{eq:eqGH}
(X_i,x_i,G_i)\overset{GH}\longrightarrow (Y,y,H),
\end{equation}
if $d_{GH}((X_i,x_i,G_i),(Y,y,H))\to 0$.
\end{defn}

\begin{thm}\cite{Fukaya86,FY92} 
	If $(X_i,x_i)\overset{GH}\longrightarrow (Y,y)$
	is a Gromov--Hausdorff convergent sequence and, for each $i$, $G_i$ is a closed subgroup of $\mathrm{Isom}(X_i)$, then:\\
	(1) passing to a subsequence if necessary, we have an equivariant Gromov--Hausdorff convergence sequence
   $$(X_i,x_i,G_i)\overset{GH}\longrightarrow (Y,y,H),$$
	where $H$ is closed subgroup of $\mathrm{Isom}(Y)$,\\
	(2) the sequence of quotient spaces converges
	$$(X_i/G_i,\bar{x}_i)\overset{GH}\longrightarrow (Y/H,\bar{y}).$$
\end{thm}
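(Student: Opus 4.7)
The plan is to prove (1) by a diagonal Arzelà--Ascoli argument on the group elements, then derive (2) from the equivariant approximations produced in (1).

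\textbf{Producing $H$.} Fix GH $\epsilon_i$-approximations $f_i\colon B_{1/\epsilon_i}(x_i)\to Y$ (with $\epsilon_i\to 0$) together with approximate inverses $f_i^\#\colon B_{1/\epsilon_i}(y)\to X_i$. For each fixed $R>0$ and each $g\in G_i(R)$, the isometry $g$ of $X_i$ transports to a map $g^Y\coloneqq f_i\circ g\circ f_i^\#$ defined on $B_{1/\epsilon_i-R}(y)$; by the approximation estimates, $g^Y$ is an $o_i(1)$-approximate isometry with $d_Y(g^Y(y),y)\le R+o_i(1)$. Choose for each $k$ a countable dense subset $S_{i,k}\subseteq G_i(k)$; the uniform properness of $Y$ combined with Arzelà--Ascoli lets us pass to a diagonal subsequence along which every transported element $g^Y$ (for $g\in S_{i,k}$ and all $k$) converges to an isometry of $Y$. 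Let $H$ be the closure in $\mathrm{Isom}(Y)$ of the set of all such limits. Continuity of composition and inversion under this transported convergence (using $g_1g_2\in G_i(R_1+R_2)$ and $g^{-1}\in G_i(R)$) shows the limits form a subgroup, so $H$ is a closed subgroup of $\mathrm{Isom}(Y)$. Finally define $\psi_i\colon G_i(1/\epsilon)\to H(1/\epsilon)$ by sending each $g$ to a limit approximating $g^Y$, and define $\phi_i$ dually; conditions (1)--(5) of Definition~\ref{def:eqgh} then follow from the construction.

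\textbf{Quotient convergence.} With $(f_i,\psi_i,\phi_i)$ in hand, define $\bar f_i\colon B_{1/\epsilon}(\bar{x}_i)\to Y/H$ by $\bar f_i(\bar{z})=\overline{f_i(z)}$ for a chosen lift $z\in B_{1/\epsilon}(x_i)$. Well-definedness up to $\epsilon_i$-error uses that two such lifts differ by some $g\in G_i(2/\epsilon)$, and by property (4) of Definition~\ref{def:eqgh} the point $f_i(gz)$ is $\epsilon_i$-close to $\psi_i(g)f_i(z)$, hence in the same $H$-orbit up to that error. The approximate isometry property reduces to the identity
\[
d_{X_i/G_i}(\bar{z}_1,\bar{z}_2)=\inf_{g\in G_i}d_{X_i}(z_1,gz_2)
\]
and its analogue in $Y/H$: a near-minimizer $g$ on the $X_i$-side lies in some $G_i(R)$ with $R$ bounded in terms of the $d_{X_i}(x_i,z_j)$, and $\psi_i(g)\in H(R+o(1))$ realizes a near-minimizer on the $Y$-side, yielding the desired $\epsilon$-approximation between $X_i/G_i$ and $Y/H$.

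\textbf{Main obstacle.} The delicate step is constructing the maps $g^Y$ and controlling them rigorously, since the group elements $g\in G_i(R)$ sit inside different ambient spaces and the approximate inverse $f_i^\#$ is only defined up to $\epsilon_i$-error. In particular, the verification that composition passes to the limit (needed for $H$ to be a group rather than just a closed set) requires the observation that $g_{2,i}^Y(y)$ is close to $f_i(g_{2,i}x_i)$, so that $f_i^\#\circ g_{2,i}^Y$ nearly reproduces $g_{2,i}$ up to $o_i(1)$, making $g_{1,i}^Y\circ g_{2,i}^Y$ asymptotic to $(g_{1,i}g_{2,i})^Y$. Once this compactness and algebraic closure are established, the descent to quotients in the second step is largely formal.
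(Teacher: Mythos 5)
The paper does not prove this statement — it is quoted directly from Fukaya and Fukaya--Yamaguchi, and your argument is essentially the standard proof of \cite[Proposition 3.6]{FY92}: transport isometries through the approximation maps, extract limits by Arzel\`a--Ascoli plus a diagonal argument (using properness of the spaces), verify the limit set is a closed subgroup, and descend formally to quotients. Your sketch is correct in this sense; the only loose point is the phrase about transported elements of fixed $g\in S_{i,k}$ ``converging'' as $i$ varies — the clean formulation is that $H$ consists of uniform-on-compacta limits of sequences $g_i\in G_i$ of bounded displacement — but this is exactly how the cited proof proceeds.
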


For a convergent sequence (\ref{eq:eqGH}) by definition we have a sequence $\epsilon_i\to 0$ and a sequence of $\epsilon_i$-approximation maps $(f_i,\psi_i,\phi_i)$ with the conditions in Definition \ref{def:eqgh}. If a sequence $g_i\in G_i$ and $h\in H$ satisfies
$$d_Y(\psi_i(g_i)z,hz)\to 0$$
for all $z\in Y$, then we say that $h$ is the limit of $g_i$ associated to (\ref{eq:eqGH}). For any sequence $g_i\in G_i$ with uniformly bounded displacement, that is, $d(g_ix_i,x_i)\le D< \infty$, by the method of \cite[Proposition 3.6]{FY92} we can always find a subsequence of $g_i$ converging to some $h\in H$  associated to (\ref{eq:eqGH}).

In Section \ref{sec:asymgeo}, we often take a sequence of closed subsets $S_i\subseteq G_i$ and consider its convergence to some limit closed subset $S \subseteq H$. This method was used in \cite{Pan24,Pan23}. 

\begin{defn}\label{def_conv_symsubset}
  Let (\ref{eq:eqGH}) be an equivariant Gromov--Hausdorff convergent sequence and let $S_i$ be a sequence of closed subsets in $G_i$. We write
   \begin{equation}\label{eq:eqGH_subset}
   (X_i,x_i,S_i)\overset{GH}\longrightarrow (Y,y,S),
   \end{equation}
	for a closed subset $S$ of $H$, if\\
	(1) for every $h\in S$, there exists a sequence $g_i\in S_i$ converging to $h$,\\
	(2) every convergent subsequence $g_i \in S_i$ has its limit $h$ in $S$.
\end{defn}

It follows directly from the proof of \cite[Proposition 3.6]{FY92} that a corresponding precompactness theorem holds for the convergence of subsets $S_i\subset G_i$. More specifically, for any convergent sequence (\ref{eq:eqGH}) and any sequence of closed subsets $S_i\subset G_i$, we can always find a subsequence such that (\ref{eq:eqGH_subset}) holds. In this paper, we mainly consider the convergence of symmetric subsets. Then its limit $S\subset G$ is symmetric as well.

\begin{defn}
   Let $G$ be a group and $S$ be a subset of $G$. We say that $S$ is \emph{symmetric}, if $e\in S$ and $s^{-1}\in S$ whenever $s\in S$.
\end{defn}

To study the fundamental group of an open manifold $M$ with $\Ric\ge 0$, it is natural to consider a suitable Riemannian covering space $\widehat{M}$ of $M$ with covering group $\Gamma$. We blow down the space to understand the asymptotic geometry of this $\Gamma$-action. Given any $r_i\to\infty$, we can pass to a subsequence and obtain convergence
\begin{equation*}
\begin{CD}
(r_i^{-1} \widehat{M},\hat{p},\Gamma) @>GH>> (Y,y,G) \\
	@VV\pi V @VV \pi V\\
	(r_i^{-1} M,p) @>GH>> (X,x)=(Y/G,\bar{y}).
\end{CD}
\end{equation*}
We call the limit space $(Y,y,G)$ an \emph{equivariant asymptotic cone} of $(\widetilde{M},\Gamma)$. In general, the limit space $(Y,y,G)$ is not unique and depends on the scaling $r_i\to\infty$. Because the isometry group $\mathrm{Isom}(Y)$ is a Lie group \cite{CoNa12} and $G$ is a closed subgroup of $\mathrm{Isom}(Y)$, $G$ is also a Lie group. Lastly, we note that, by construction, the $G$-action preserves any limit renormalized measure $\nu$ on $Y$.

\subsection{Manifolds with linear volume growth}

We recall the relevant results on open manifolds with nonnegative Ricci curvature and linear volume growth. Theorem \ref{thm:sormani} below follows directly from Sormani's works \cite{Sor98,Sor00a}. For a detailed proof of this exact statement, see \cite[Theorem 2.4]{Zhu23}.

\begin{thm}\cite{Sor98,Sor00a}\label{thm:sormani}
   Let $M$ be an open manifold with $\Ric\ge0$ and linear volume growth. Then one of the following holds:\\
   (1) Every asymptotic cone of $M$ is isometric to a line; this happens if and only if $M$ splits isometrically as $\mathbb{R}\times N$, where $N$ is compact.\\
   (2) Every asymptotic cone of $M$ is isometric to a ray $([0,\infty),0)$.
\end{thm}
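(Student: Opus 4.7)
The plan is to classify the possible asymptotic cones first, then use the appearance of a line in some asymptotic cone to produce an honest line in $M$, invoke the Cheeger--Gromoll splitting theorem, and finally pin down compactness of the cross-section from the linear growth hypothesis. As a first step I would show that every asymptotic cone $(X,x,\nu)$ of $M$, equipped with a limit renormalized measure $\nu$, is isometric to either $(\R,0)$ or $([0,\infty),0)$. Combining the Calabi--Yau lower bound $\vol B_r(p)\ge c_0 r$ with the linear upper bound $\vol B_r(p)\le C_0 r$, one obtains the two-sided estimate $c_0 R\le \nu(B_R(x))\le C_0 R$ for every $R>0$. Since $X$ is $\RCD(0,n)$, Bishop--Gromov monotonicity of $R\mapsto \nu(B_R(x))/R^n$ is compatible with this two-sided linear behavior only in very restricted ways; a diagonal blow-down of $X$ produces a tangent cone at infinity of $X$ on which $\nu(B_R)$ is exactly linear in $R$, and the equality case of Bishop--Gromov (equivalently Gigli's volume-cone-to-metric-cone rigidity in $\RCD$) forces that cone to be one-dimensional. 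A noncompact connected one-dimensional geodesic $\RCD$ space is isometric to $\R$ or $[0,\infty)$.

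The second step is to show that if one asymptotic cone is $(\R,0)$, then $M$ splits. Assume $(r_i^{-1}M,p)\overset{GH}\longrightarrow (\R,0)$ along some $r_i\to\infty$. For each fixed $t>0$, choose $q_i^{\pm}\in M$ whose rescalings converge to $\pm t\in\R$ and such that $p$ lies within $o(r_i)$ of a minimizing segment from $q_i^-$ to $q_i^+$; such choices exist because in $(\R,0)$ the segment from a point near $-t$ to a point near $+t$ passes through $0$. Letting $t\to\infty$ and applying Arzel\`a--Ascoli to these minimizing segments produces two rays at $p$ pointing in opposite directions; by construction they concatenate to an honest line in $M$. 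The Cheeger--Gromoll splitting theorem then yields $M=\R\times N$ isometrically. The hypothesis $\vol B_r(p)\le C_0 r$ combined with $\vol B_r(p)\ge 2r\cdot\vol N - O(1)$ forces $\vol N<\infty$; since any noncompact complete manifold with $\Ric\ge 0$ has infinite volume (Calabi--Yau), $N$ must be compact. Conversely, if $M=\R\times N$ with $N$ compact, then for every $r_i\to\infty$ the rescaled space $(r_i^{-1}M,p)$ trivially converges to $(\R,0)$.

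Combining the two steps yields the required dichotomy: either some asymptotic cone is $(\R,0)$, in which case $M=\R\times N$ with $N$ compact and hence \emph{every} asymptotic cone is $(\R,0)$; or no asymptotic cone is $(\R,0)$, in which case Step 1 forces every asymptotic cone to be the ray $([0,\infty),0)$. The hard part should be Step 1: extracting one-dimensionality of $X$ from merely two-sided linear volume growth, because Bishop--Gromov by itself provides only an upper bound on the volume ratio. The key additional input is the rigidity in Bishop--Gromov, which in the nonsmooth category is Gigli's volume-cone-to-metric-cone theorem, applied at a tangent cone at infinity of $X$ where the two-sided linearity sharpens to exact linearity; this rules out any higher-dimensional behavior and forces $X$ itself to be isometric to a line or a ray.
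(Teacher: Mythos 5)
This theorem is not proved in the paper: it is quoted from Sormani's work, with a pointer to \cite[Theorem 2.4]{Zhu23} for a detailed proof. Your proposal is a genuinely different route, but both of its steps have real gaps.

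The main problem is Step 1. From the two-sided bounds $c_0r\le\vol B_r(p)\le C_0r$ you only get $cR\le\nu(B_R(x))\le CR$ with $c<C$; a further blow-down of $X$ again only preserves two-sided linear bounds, so you do not reach \emph{exact} linearity $\nu(B_R)=cR$ unless you already know that $\lim_{r\to\infty}\vol B_r(p)/r$ exists (the Zhou--Zhu result, Theorem \ref{thm:volume_ratio_limit}, whose proof in the appendix itself invokes the present theorem). More seriously, even granting exact linearity, the rigidity you invoke does not apply: $X$ is $\RCD(0,n)$ with $n\ge2$, so the Bishop--Gromov ratio relevant to equality/volume-cone rigidity is $\nu(B_R(x))/R^{n}$, which under $\nu(B_R)=cR$ equals $cR^{1-n}$ and is \emph{strictly decreasing} --- you are nowhere near the equality case, and De Philippis--Gigli's volume-cone-to-metric-cone theorem says nothing here. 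The statement ``exact linear growth of $\nu(B_R(x))$ forces $X$ to be a ray or a line'' is essentially the paper's Proposition \ref{prop:rigid_iso}; it requires the extra hypothesis that $X$ is polar at $x$ (which you have not established for an arbitrary asymptotic cone) and a substantial needle-decomposition argument. It is one of the technical contributions of this paper, not a consequence of classical Bishop--Gromov rigidity.

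Step 2 also has a gap. If $(r_i^{-1}M,p)\to(\R,0)$, the Gromov--Hausdorff approximation only gives points $q_i^{\pm}$ with excess $d(q_i^-,p)+d(p,q_i^+)-d(q_i^-,q_i^+)=o(r_i)$, not $o(1)$. Consequently the two limit rays $\sigma^{\pm}$ at $p$ obtained from $[p,q_i^{\pm}]$ satisfy only $d(\sigma^-(s),\sigma^+(s'))\ge s+s'-\lim_i e_i(p)$, where $e_i(p)$ may be unbounded, so they need not concatenate to a line; likewise the nearest point of a minimizing segment $[q_i^-,q_i^+]$ to $p$ lies at distance $o(r_i)$, which can still tend to infinity, so Arzel\`a--Ascoli applied to those segments produces no line in $M$ near $p$. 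The implication ``some asymptotic cone is a line $\Rightarrow M$ contains a line'' is exactly the delicate point and is not free. The standard proof avoids it by splitting the other way: either $M$ contains a line, in which case Cheeger--Gromoll plus the volume bound gives $M=\R\times N$ with $N$ compact (this part of your argument is fine); or $M$ contains no line, in which case Sormani's results (properness of Busemann functions and sublinear diameter growth of their level sets under linear volume growth) force every asymptotic cone to be the ray $([0,\infty),0)$.
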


In \cite{Sor00b}, Sormani proved that manifolds with $\Ric\ge0$ and small linear diameter growth have finitely generated fundamental groups. Her result applies to manifolds with linear volume growth since they either split isometrically as $\mathbb{R}\times N$ with some compact factor $N$ or have sublinear diameter growth. In fact, Sormani proved a much stronger result: $\pi_1(M)$ is finitely generated if every asymptotic cone of $M$ is sufficiently close to a polar space; see \cite[Theorem 11]{Sor00b}.

\begin{thm}\cite{Sor00b}\label{thm:finite_gen}
   If $M$ is an open manifold with $\Ric\ge0$ and linear volume growth, then $\pi_1(M)$ is finitely generated.
\end{thm}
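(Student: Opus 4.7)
The plan is to follow Sormani's strategy by arguing by contradiction, assuming that $\pi_1(M)$ is not finitely generated, and then producing a sequence of minimal independent generators whose behavior is incompatible with the sublinear (in fact linear) control on the geometry.

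First, I would dispose of the trivial case using Theorem \ref{thm:sormani}. If every asymptotic cone of $M$ is a line, then $M$ splits isometrically as $\mathbb{R}\times N$ with $N$ compact, so $\pi_1(M)=\pi_1(N)$ is finitely generated. From here on, I would assume the alternative: every asymptotic cone is a ray, which in particular forces sublinear diameter growth of metric balls, i.e.\ $\diam(\partial B_r(p))/r\to 0$ as $r\to\infty$. This is the key geometric input replacing a small linear diameter growth hypothesis in Sormani's original setup.

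Next, suppose for contradiction that $\pi_1(M)$ is not finitely generated. Working in the universal cover $(\widetilde{M},\tilde p)$, I would inductively select an ordered set of generators $\{\gamma_i\}_{i\ge1}\subset\pi_1(M,p)$: at each step choose $\gamma_{i+1}$ to be an element of minimal length (where length means $\dist(\tilde p,\gamma\tilde p)$) among those not lying in $\langle\gamma_1,\ldots,\gamma_i\rangle$. Then $L_i\coloneqq \dist(\tilde p,\gamma_i \tilde p)$ is nondecreasing and unbounded. Each $\gamma_i$ is represented by a minimizing geodesic loop $\alpha_i$ at $p$ of length $L_i$. The crucial elementary fact, Sormani's \emph{Halfway Lemma}, is that by the minimality property of $\gamma_i$, the midpoint $m_i$ of $\alpha_i$ is a ``true'' midpoint in the universal cover: namely the lift of $\alpha_i$ realizing $\gamma_i$ is minimizing between $\tilde p$ and $\gamma_i\tilde p$, and removing any subloop based at $m_i$ would produce a shorter element outside $\langle\gamma_1,\ldots,\gamma_{i-1}\rangle$, yielding a contradiction with minimality.

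The heart of the proof is then to combine the Halfway Lemma with the Abresch--Gromoll excess estimate at the midpoint $m_i$. The endpoints $\tilde p$ and $\gamma_i\tilde p$ of the minimizing lift are at equal distance $L_i/2$ from the lift of $m_i$, and the excess function $e(x)=\dist(x,\tilde p)+\dist(x,\gamma_i\tilde p)-L_i$ vanishes along the geodesic. Abresch--Gromoll then gives an estimate of the form $e(x)\le C(n)\,h^{n/(n-1)}(L_i/2)^{-1/(n-1)}$ at nearby points $x$ at height $h$ from the geodesic, in the regime allowed by $\Ric\ge 0$. Meanwhile, sublinear diameter growth at scale $L_i$ forces, via a triangle inequality comparison, that some previously chosen generator $\gamma_j$ (with $j<i$) acting on the midpoint $m_i$ of $\alpha_i$ creates a ``short cut'' loop at $m_i$ of length $o(L_i)$, which, together with the excess bound on nearly minimizing triangles, eventually produces a representative of $\gamma_i\gamma_j^{\pm1}$ (or of $\gamma_i$ itself) strictly shorter than $L_i$ and not in $\langle\gamma_1,\ldots,\gamma_{i-1}\rangle$, contradicting the minimal choice of $\gamma_i$.

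I expect the main obstacle to be the quantitative step linking sublinear diameter growth to the existence of such a ``short cut'' element. One needs to show that, at large scales, the action of the finitely many previously chosen $\gamma_j$ leaves ``gaps'' in $\widetilde{M}$ of size comparable to $L_i$, and simultaneously that the minimizing lift of $\alpha_i$ cannot entirely avoid these gaps without increasing its length: this is precisely where the polar-like nature of the asymptotic cone (a ray in our case) must be leveraged quantitatively, and where the Abresch--Gromoll estimate must be carefully calibrated against the error $o(L_i)$ coming from sublinear diameter growth. Once this calibration is done, iterating yields the contradiction and hence the finite generation of $\pi_1(M)$.
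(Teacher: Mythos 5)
Your reduction is exactly the one the paper uses: Theorem \ref{thm:sormani} splits the problem into the case $M=\mathbb{R}\times N$ with $N$ compact (where finite generation is immediate) and the case where every asymptotic cone is a ray, which gives sublinear diameter growth $\diam(\partial B_r(p))=o(r)$; the paper then simply cites Sormani's small-linear-diameter-growth theorem, whereas you attempt to reprove it. Your ingredients --- the ordered minimal generators, the Halfway Lemma, and Abresch--Gromoll --- are the right ones, but your closing step is not how the argument actually closes, and as stated it has a genuine gap: sublinear diameter growth does \emph{not} force some previously chosen $\gamma_j$ to act on the midpoint $m_i$ with displacement $o(L_i)$. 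The orbit $\langle\gamma_1,\dots,\gamma_{i-1}\rangle\cdot\tilde p$ may well stay at distance comparable to $L_i$ from the lifted midpoint, and no representative of $\gamma_i\gamma_j^{\pm1}$ shorter than $L_i$ need exist; you correctly flag this step as the obstacle, but it is not merely a calibration issue --- it is the wrong mechanism.

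Sormani's actual contradiction (her Uniform Cut Lemma) manufactures no shorter group element. The Halfway Lemma is used to conclude that $d_M(p,m_i)=L_i/2$, i.e.\ $m_i\in\partial B_{L_i/2}(p)$, so sublinear diameter growth places $m_i$ within $o(L_i)$ of the ray point $c(L_i/2)$. Set $x_i=c((\tfrac12+S)L_i)$ for a small dimensional constant $S=S(n)$, and choose the lift $\tilde x_i$ with $d(\tilde x_i,\tilde m_i)=d(x_i,m_i)\le SL_i+o(L_i)$, where $\tilde m_i$ is the midpoint of the minimal lift $\sigma_i$ from $\tilde p$ to $\gamma_i\tilde p$. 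Since every lift of $x_i$ lies at distance at least $(\tfrac12+S)L_i$ from every orbit point of $\tilde p$, the excess of $\tilde x_i$ relative to the endpoints of $\sigma_i$ is at least $2SL_i$; on the other hand its height above $\sigma_i$ is at most $SL_i+o(L_i)$, so Abresch--Gromoll bounds the excess above by roughly $C(n)S^{n/(n-1)}L_i$. Taking $S<(2/C(n))^{n-1}$ yields the contradiction. This is precisely the computation the paper performs (rescaled into an asymptotic cone) in the proof of Proposition \ref{prop:non_max_escape}; running it at scale $L_i$ in $\widetilde M$ is what correctly completes your outline.
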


When $M$ has nonnegative Ricci curvature and $\pi_1(M)$ is finitely generated, Milnor proved that $\pi_1(M)$ has at most polynomial growth \cite{Milnor68}. Combining this with Gromov's result that any finitely generated group of polynomial growth is virtually nilpotent \cite{Gromov81}, we obtain the following theorem.

\begin{thm}\label{thm:vir_nil}\cite{Milnor68,Gromov81}
   If $M$ is a complete manifold with $\Ric\ge0$, then every finitely generated subgroup of $\pi_1(M)$ has a nilpotent subgroup of finite index.
\end{thm}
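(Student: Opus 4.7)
The plan is to combine two classical external results: Milnor's volume-counting argument \cite{Milnor68} showing that any finitely generated subgroup of $\pi_1(M)$ has polynomial word-growth when $\Ric\ge 0$, and Gromov's polynomial growth theorem \cite{Gromov81}. Gromov's theorem will be invoked as a black box; the substantive step to write out is Milnor's volume comparison argument.

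Let $H\le\pi_1(M)$ be finitely generated with symmetric finite generating set $S$, and set $n=\dim M$. Let $\pi\colon\widetilde{M}\to M$ be the Riemannian universal cover and fix $p\in M$ together with a lift $\tilde p\in\widetilde{M}$. The group $\pi_1(M)$ acts on $\widetilde{M}$ by deck transformations, which are isometries, and the action is free and properly discontinuous, so $H$ acts on $\widetilde{M}$ with closed discrete orbits. Since $\pi$ is a local isometry, $\widetilde{M}$ also satisfies $\Ric\ge 0$, so Bishop--Gromov comparison at $\tilde p$ gives
$$\vol B_R(\tilde p)\le \omega_n R^n\quad \text{for all }R>0.$$
Set $L=\max_{s\in S}d(s\tilde p,\tilde p)$ and pick $r>0$ small enough that the balls $\{hB_r(\tilde p)\}_{h\in H}$ are pairwise disjoint; this is possible by the discreteness of the $H$-orbit of $\tilde p$. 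For any $g\in H$ with $|g|_S\le k$, writing $g$ as a word of length at most $k$ and applying the triangle inequality gives $d(g\tilde p,\tilde p)\le kL$, so $gB_r(\tilde p)\subset B_{kL+r}(\tilde p)$. Summing over such $g$ and using disjointness,
$$\#\{g\in H:|g|_S\le k\}\cdot \vol B_r(\tilde p)\le \vol B_{kL+r}(\tilde p)\le \omega_n(kL+r)^n.$$
Hence $H$ has polynomial word-growth of degree at most $n$, and by \cite{Gromov81} it contains a nilpotent subgroup of finite index.

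The main obstacle in a self-contained treatment would be Gromov's polynomial growth theorem itself, whose proof via approximation by Lie groups through asymptotic cones lies well outside the scope of a short argument; here it is cited as an external fact. Granted that, the Milnor step is elementary, reducing to Bishop--Gromov on $\widetilde{M}$ combined with a packing estimate for the $H$-orbit. A minor point worth verifying is that one may take the same $r>0$ uniformly for all $h\in H$, which follows from the properly discontinuous action of the full deck group $\pi_1(M)\supseteq H$.
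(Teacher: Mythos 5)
Your proposal is correct and follows exactly the route the paper takes: Milnor's Bishop--Gromov packing argument on the universal cover to get polynomial word-growth, followed by Gromov's polynomial growth theorem cited as a black box. The paper states this theorem as a direct combination of \cite{Milnor68} and \cite{Gromov81} without further proof, so your write-up of the Milnor step is a faithful (and correctly executed) elaboration of the same argument.
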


\begin{rem}\label{rem:vir_nil_torsionfree}
We can further require that the nilpotent subgroup in Theorem \ref{thm:vir_nil} is torsion-free. That is because every finitely generated nilpotent group contains a torsion-free nilpotent subgroup of finite index \cite{KM_book}.
\end{rem}

Recently, Zhou--Zhu pointed out in their work \cite{ZZ} that for manifolds with nonnegative Ricci curvature and linear volume growth, the limit $\lim_{r\to\infty} \vol B_r(p)/r$ always exists.

\begin{thm}\label{thm:volume_ratio_limit}\cite[Remark 2.1]{ZZ}
   If $M$ is an open manifold with $\Ric\ge0$ and linear volume growth, then the limit $\lim_{r\to\infty} \vol B_r(p)/r$ exists.
\end{thm}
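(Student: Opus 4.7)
The plan is to invoke Sormani's dichotomy, Theorem \ref{thm:sormani}. In the case where $M$ splits isometrically as $\mathbb{R} \times N$ with $N$ compact, a direct computation suffices: for every $r \ge \diam(N)$ and base point $p = (0, p_0)$,
\[
V(r) = \int_N 2\sqrt{r^2 - d_N(q, p_0)^2}\, d\vol_N(q),
\]
and dominated convergence yields $V(r)/r \to 2\vol(N)$ as $r\to\infty$. The substantive case is when every asymptotic cone of $M$ is isometric to a ray $([0, \infty), 0)$.

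In that case, given any sequence $r_i \to \infty$, I would pass to a subsequence so that $V(r_i)/r_i \to L$ for some $L \in [L_*, L^*]$, where $L_* := \liminf_{r\to\infty} V(r)/r$ and $L^* := \limsup_{r\to\infty} V(r)/r$ are finite by assumption, and the rescaled pointed measure spaces $(r_i^{-1}M, p, \meas_i)$ with $\meas_i = \dvol/V(r_i)$ converge in the pointed measured Gromov--Hausdorff sense to $([0, \infty), 0, \nu)$ with $\nu(B_1(0)) = 1$. By the stability of $\RCD$ under pmGH convergence, $([0, \infty), |\cdot|, \nu)$ is an $\RCD(0, n)$ space, so $\nu = f(r)\, dr$ with $f^{1/(n-1)}$ concave (the case $n = 1$ is trivial). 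The identity
\[
\frac{V(r_i R)}{r_i R} = \frac{V(r_i R)}{V(r_i)}\cdot\frac{V(r_i)}{r_i}\cdot\frac{1}{R} \longrightarrow L\cdot \frac{\nu(B_R(0))}{R},
\]
combined with the linear growth bounds $L_* \le V(s)/s \le L^*$ for large $s$, forces $\nu(B_R(0))/R \in [L_*/L, L^*/L]$ for every $R > 0$.

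The central step is to show $\nu = \mathcal{L}^1$ (equivalently $f \equiv 1$) for every such subsequence. Using the normalization $\int_0^1 f\, dr = 1$, the pointwise bounds above, and the concavity of $g := f^{1/(n-1)}$, I expect to deduce $f \equiv 1$ on $[0, 1]$; then, since $g \equiv 1$ on $[0, 1]$ makes the left derivative of $g$ at $1$ vanish, concavity gives $g'(1^+) \le 0$, and together with $\liminf_{R \to \infty} \nu(B_R(0))/R \ge L_*/L$ this should propagate $f \equiv 1$ to $[1, \infty)$. Once $\nu = \mathcal{L}^1$, we obtain $V(r_i R)/V(r_i) \to R$ for each $R > 0$, hence $V(r_i R)/(r_i R) \to L$, with convergence uniform on compact $R$-intervals by the Lipschitz estimate $|(V/r)'| \le (n-1)V/r^2$ from Bishop--Gromov. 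This forces $V(r)/r$ to be $\epsilon$-close to $L$ on intervals $[r_i/K, K r_i]$ of unbounded log-width. Applying this simultaneously to sequences $r_i$ with $V(r_i)/r_i \to L^*$ and $s_j$ with $V(s_j)/s_j \to L_*$, if $L_* < L^*$ the resulting stable regions would be disjoint yet each cover arbitrarily long log-intervals, contradicting the continuity of $V(r)/r$. The \textbf{main obstacle} is identifying $\nu$ as Lebesgue measure uniformly across all subsequences; the delicate interplay between $\RCD$ concavity of $f^{1/(n-1)}$, the normalization $\int_0^1 f\, dr = 1$, and the one-sided linear-growth bounds must be handled carefully, perhaps via an iterated blow-down argument on the ray.
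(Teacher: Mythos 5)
Your proposal takes a completely different route from the paper, and it has genuine gaps at both of its decisive steps. The paper's proof (Proposition \ref{prop:linear2} in the appendix, following Zhou--Zhu) never touches asymptotic cones: it picks $p$ at the minimum of a proper Busemann function, traps $B_R(p)$ in an annulus of width $R+2$ (rather than $2R$) about a far point $q=\gamma(t)$ on the ray, uses the coarea formula to find a sphere slice $\Sigma=\partial B_\rho(q)\cap \overline{B_r}(p)$ with $\mathcal{H}^{n-1}(\Sigma)\lesssim \liminf_s \vol B_s(p)/s$, and then integrates the sphere-volume comparison $\mathcal{H}^{n-1}(T_s)\le (s/\rho)^{n-1}\mathcal{H}^{n-1}(\Sigma)$ over the annulus to conclude $\limsup_R \vol B_R(p)/R\le \liminf_s \vol B_s(p)/s$ directly. (Your computation in the split case is fine, and the paper also dispatches that case trivially.)

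The first gap is the one you flag yourself, and it is fatal as stated: the constraints you list on the limit density --- $g=f^{1/(N-1)}$ concave, $\int_0^1 f=1$, and $\frac{1}{R}\int_0^R f\in[L_*/L,\,L^*/L]$ for all $R$ --- do not force $f\equiv 1$ when $L_*<L^*$. Concavity and nonnegativity of $g$ on $(0,\infty)$ do give that $f$ is non-decreasing and (by the upper average bound) bounded, but a non-constant bounded non-decreasing concave-power density such as $f=\min(1+r,2)/c$ satisfies everything. Note that in the paper the implication goes the other way: Corollary \ref{cor:induction_base} ($\nu$ is Lebesgue) is \emph{deduced from} Theorem \ref{thm:volume_ratio_limit}, so establishing $\nu=\mathcal{L}^1$ for every subsequence without first knowing the limit exists is essentially the whole difficulty, not a lemma on the way. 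The second gap is in your endgame: even granting $\nu=\mathcal{L}^1$ along every subsequence, what you obtain is $\vol B_{rR}(p)/\vol B_r(p)\to R$ for each $R$, i.e.\ $\phi(u):=\vol B_{e^u}(p)/e^u$ satisfies $\sup_{|h|\le T}|\phi(u+h)-\phi(u)|\to 0$ as $u\to\infty$ for each fixed $T$. This ``asymptotic local constancy'' on bounded log-windows does not imply $\phi$ converges (consider $\phi(u)=2+\sin\sqrt{u}$), so the claimed contradiction with continuity between the $L^*$-regions and $L_*$-regions does not materialize: the two families of stable intervals can interleave at ever-wider multiplicative separations. Some genuinely global input --- which is exactly what the Busemann-function slicing provides --- is needed to close the argument.
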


For readers' convenience, we include the proof of Theorem \ref{thm:volume_ratio_limit} in the appendix (see Proposition \ref{prop:linear2}), following the method in \cite{ZZ}. The corollary below results from Theorem \ref{thm:sormani} and a direct calculation of limit renormalized measure by Theorem \ref{thm:volume_ratio_limit}.
 
\begin{cor}\label{cor:induction_base}
   Let $M$ be an open manifold with $\Ric\ge0$ and linear volume growth. Then $M$ has a unique asymptotic cone, as a pointed metric-measure space, isomorphic (up to a constant) to either a line $(\mathbb{R},0)$ or a ray $([0,\infty),0)$ with the Lebesgue measure (see Definition \ref{def:isomorphic_up_to_a_constant}).
\end{cor}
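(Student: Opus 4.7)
My plan is to combine Theorems \ref{thm:sormani} and \ref{thm:volume_ratio_limit}: the first pins down the metric type of the asymptotic cone, and the second provides enough volume information to identify the limit measure. By Theorem \ref{thm:sormani}, every asymptotic cone of $M$, as a pointed metric space, is isometric either to the ray $([0,\infty),0)$ or to the line $(\R,0)$, with the latter occurring precisely when $M = \R \times N$ for some compact $N$. So all that remains is to show that any limit renormalized measure $\nu$ on such a cone is, up to a positive multiplicative constant, the Lebesgue measure.

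Given a sequence $r_i \to \infty$ with asymptotic cone $(X,x) = \lim_i(r_i^{-1}M,p)$ and associated limit renormalized measure $\nu$, I would set $L \coloneqq \lim_{r\to\infty}\vol(B_r(p))/r$, which exists and is positive by Theorem \ref{thm:volume_ratio_limit}. Using the remark after Definition \ref{defn:renorm_meas} that ball masses pass to the limit under measured Gromov--Hausdorff convergence, for every $s>0$ one computes
\begin{equation*}
\nu(B_s(x)) \;=\; \lim_{i\to\infty} \meas_i(B_s(p)) \;=\; \lim_{i\to\infty}\frac{\vol(B_{sr_i}(p))}{\vol(B_{r_i}(p))} \;=\; \frac{Ls}{L} \;=\; s.
\end{equation*}

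With this identity the two cases finish quickly. In the ray case $X = [0,\infty)$, we have $B_s(x) = [0,s)$, so $\nu([0,s)) = s$ for every $s > 0$, and the regularity of Radon measures forces $\nu = \mathcal{L}^1$. In the line case $M = \R \times N$, the $\R$-action by translation on the first factor is by measure-preserving isometries, commutes with every rescaling, and therefore descends to a measure-preserving translation action of $\R$ on $(X,\nu) = (\R,\nu)$; Haar uniqueness then yields $\nu = c\,\mathcal{L}^1$ for some $c>0$ (the ball computation above in fact pins down $c = 1/2$). I do not expect a serious obstacle here, since Theorems \ref{thm:sormani} and \ref{thm:volume_ratio_limit} have done the heavy lifting; the only care-point is citing the correct measured-GH statement for the convergence of ball masses, which is exactly what the remark following Definition \ref{defn:renorm_meas} provides.
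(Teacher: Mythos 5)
Your proposal is correct and is exactly the argument the paper intends: it cites Theorem \ref{thm:sormani} for the metric type of the cone and a direct computation of the limit renormalized measure via Theorem \ref{thm:volume_ratio_limit} (the ratio $\vol(B_{sr_i}(p))/\vol(B_{r_i}(p))\to s$, with positivity of the limit ratio coming from Calabi--Yau). Your extra care in the line case --- noting that $\nu((-s,s))=s$ alone does not determine $\nu$ and supplementing with translation invariance of the split factor --- is a correct and welcome detail that the paper leaves implicit.
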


\subsection{Escape rate}

The notion of escape rate was introduced and studied extensively by the second-named author \cite{Pan21,Pan22,Pan24,Pan23}. In open manifolds with nonnegative Ricci curvature, it is prevalent that the minimal representing loops of $\pi_1(M,p)$ can escape from any bounded subsets of $M$ as we exhaust the elements in $\pi_1(M,p)$. The escape rate measures how fast this escape phenomenon is by comparing the size of minimal representing loops to their length. 

\begin{defn}\label{def:escape_rate_pi1}\cite{Pan21}
   Let $(M,p)$ be a complete manifold with an infinite fundamental group $\pi_1(M,p)$. For each element $\gamma \in \pi_1(M,p)$, let $c_\gamma$ be a geodesic loop of minimal length at $p$ representing $\gamma$. We define the escape rate of $(M,p)$ by
   $$E(M,p)=\limsup_{|\gamma|\to\infty} \dfrac{\mathrm{size}(c_\gamma)}{|\gamma|},$$
   where $|\gamma|=\mathrm{length}(c_\gamma)$ and $\mathrm{size}(c_\gamma)=\inf\{R>0 | c_\gamma \subseteq \overline{B_R}(p) \}$. When choosing $c_\gamma$, if there are multiple geodesics loops representing $\gamma\in \pi_1(M,p)$ of minimal length, we choose the one with the smallest size.
\end{defn}

In \cite[Definition 2.17]{Pan23}, the second-named author further defined the escape rate of an isometric $G$-action on a length metric space $(X,x)$. For the purpose of this paper, we will consider the covering group action on a covering space and its escape rate. 

\begin{defn}\label{def:escape_rate}\cite[Definition 2.17]{Pan23}
   Let $(\widehat{M},\hat{p})\to (M,p)$ be a Riemannian covering map with an infinite covering group $\Gamma$. For each $\gamma\in \Gamma$, let $\sigma_\gamma$ be a minimal geodesic from $\hat{p}$ to $\gamma\cdot \hat{p}$. We define the escape rate of $(\widehat{M},\hat{p},\Gamma)$ by $$E(\widehat{M},\hat{p},\Gamma)=\limsup_{|\gamma|\to\infty} \dfrac{\mathrm{size}(\sigma_\gamma)}{|\gamma|},$$
   where $|\gamma|=d(\hat{p},\gamma \hat{p})=\mathrm{length}(\sigma_\gamma)$ and $\mathrm{size}(\sigma_\gamma)=\inf\{R>0 | \sigma_\gamma \subseteq \overline{B_R}(\Gamma\cdot \hat{p}) \}$. When choosing $\sigma_\gamma$, if there are multiple minimal geodesics from $\hat{p}$ to $\gamma \cdot \hat{p}$, we choose the one with the smallest size.
\end{defn} 

\begin{rems}
    (1) By path-lifting, it is clear that $E(M,p)$ in Definition \ref{def:escape_rate_pi1} is the same as the escape rate of $\pi_1(M,p)$-action on the universal cover $(\widetilde{M},\tilde{p})$ in Definition \ref{def:escape_rate}, that is,
$$E(M,p)=E(\widetilde{M},\tilde{p},\pi_1(M,p)).$$
   (2) If $\Gamma$ is a finite group, then, as a convention, we set $E(\widehat{M},\hat{p},\Gamma)=0$.\\
   (3) In Definition \ref{def:escape_rate}, since $\mathrm{size}(\sigma_\gamma)\le |\gamma|/2$ for all $\gamma\in \Gamma$, it always holds that $E(\widehat{M},\hat{p},\Gamma)\le 1/2$.
\end{rems} 

The second-named author studied equivariant asymptotic geometry with $\Ric\ge 0$ and escape rate strictly less than $1/2$ in \cite{Pan23}. In particular, \cite[Proposition C(1)]{Pan23} shows that, for a finitely generated nilpotent covering group $\mathcal{N}$-action and a $\mathbb{Z}$-subgroup in the last nontrivial subgroup of the central series of $\mathcal{N}$, every asymptotic orbit $Gy$ of this $\mathbb{Z}$-action must be homeomorphic to $\mathbb{R}$, provided that the escape rate of $\mathcal{N}$-action is not $1/2$. We obtain the following in the particular case where $\mathcal{N}=\mathbb{Z}$.

\begin{prop}\cite[Proposition C(1)]{Pan23}\label{prop:orbit_R}
   Let $(\widehat{M},\hat{p})\to (M,p)$ be a Riemannian covering map with covering group $\Gamma\simeq\mathbb{Z}$. Suppose that $\Ric_M\ge 0$ and $E(\widehat{M},\hat{p},\Gamma)\not=1/2$, then in every equivariant asymptotic cone $(Y,y,G)$ of $(\widehat{M},\hat{p},\Gamma)$, the orbit $Gy$ is homeomorphic to $\mathbb{R}$.
\end{prop}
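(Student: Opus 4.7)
The plan is to use the escape rate hypothesis to extract from the $\mathbb{Z}$-action a one-parameter family of elements in the limit group $G$ whose orbit of $y$ is an isometrically embedded line. Fix a generator $\gamma$ of $\Gamma\cong\mathbb{Z}$ and set $L_k := d(\hat{p},\gamma^k\hat{p})$. Since $\Gamma$ acts properly discontinuously and is infinite, $L_k\to\infty$ as $|k|\to\infty$, and the triangle inequality gives $|L_{k+1}-L_k|\le L_1$. Because the escape rate always satisfies $E\le 1/2$, the hypothesis upgrades to $E<1/2$: there exist $\epsilon_0>0$ and $N_0$ such that for every $|k|\ge N_0$ some minimal geodesic $\sigma_k$ from $\hat{p}$ to $\gamma^k\hat{p}$ lies within the $(\tfrac{1}{2}-\epsilon_0)L_k$-tubular neighborhood of $\Gamma\cdot\hat{p}$.

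Fix an equivariant asymptotic cone $(Y,y,G)$ arising from scales $r_i\to\infty$. For each $t\ge 0$, set $k_i(t):=\max\{k : L_k\le tr_i\}$; since $|L_{k+1}-L_k|\le L_1$, one gets $L_{k_i(t)}/r_i\to t$. A compactness argument extracts a convergent subsequence $\gamma^{k_i(t)}\to g_t\in G$ with $d(g_ty,y)=t$. A diagonal extraction over a countable dense set of $t$-values (and $\gamma^{-k}$ for negative $t$) then produces a family $\{g_t\}_{t\in\mathbb{R}}\subset G$ with $d(g_ty,y)=|t|$.

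The core step is to upgrade this to an isometric embedding $t\mapsto g_ty$ of $\mathbb{R}$ into $Y$ whose image exhausts $Gy$. The escape rate condition yields a near-additivity $L_{k+\ell}\ge L_k+L_\ell - o(L_{k+\ell})$: since $\sigma_{k+\ell}$ stays inside the $(\tfrac{1}{2}-\epsilon_0)L_{k+\ell}$-tube of $\Gamma\cdot\hat{p}$, its point at arclength approximately $L_k$ must be close to some intermediate $\gamma^j\hat{p}$ with $L_j\approx L_k$, and a triangle-inequality computation forces the claim. Passing to the blow-down limit gives $d(g_{t+s}y,g_sy)=t$ for all $s,t\ge 0$, establishing the isometric embedding. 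Surjectivity onto $Gy$ follows because any $g\in G$ is the limit of some sequence $\gamma^{n_i}$ with $L_{n_i}/r_i\to t_0:=d(gy,y)$; the non-branching property of the $\RCD(0,N)$ space $Y$ then forces $gy\in\{g_{t_0}y,g_{-t_0}y\}$ because these are the only candidate points at distance $t_0$ along the already-identified isometric line.

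The main obstacle I anticipate is making the near-additivity precise with uniform error control, and in particular ruling out the scenario where the limit orbit wraps back on itself, which corresponds exactly to the borderline case $E=1/2$ (where $\sigma_k$ genuinely escapes halfway between orbit points and the blow-down orbit could close to a circle). Once the isometric line through $y$ is produced, the Cheeger--Colding--Gigli splitting theorem gives an isometric splitting $Y=\mathbb{R}\times Z$ in which the limit $\Gamma$-action is asymptotic to translation on the $\mathbb{R}$-factor, which clinches $Gy\cong\mathbb{R}$.
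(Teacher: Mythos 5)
This proposition is quoted from \cite[Proposition C(1)]{Pan23}; the paper does not reprove it, so I am comparing your argument against what is actually true and against the cited proof.

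Your proof has a genuine gap at what you yourself call the core step. The conclusion of the proposition is deliberately weak --- $Gy$ is only claimed to be \emph{homeomorphic} to $\mathbb{R}$ --- whereas your argument tries to produce an \emph{isometrically embedded geodesic line} $t\mapsto g_ty$ inside the orbit. That stronger statement is false in general: the whole point of \cite{Pan23} is that under $0<E<1/2$ the limit orbit $Gy$ can be a topological line of Hausdorff dimension strictly larger than $1$, hence certainly not a geodesic. You can also see this from the present paper: Lemma \ref{lem:boomer_control} asserts only $d(tz,y)\le C\, d(z,y)$ for a constant $C=C(\widehat N,\gamma)$ that is in general much larger than $1$; if the orbit were an isometric line, that lemma would be trivial with $C=1$, and the plane/halfplane rigidity machinery of Section \ref{sec:rigid} would be largely unnecessary. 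The specific failure is in the near-additivity claim $L_{k+\ell}\ge L_k+L_\ell-o(L_{k+\ell})$: the escape-rate hypothesis only confines $\sigma_{k+\ell}$ to a tube of radius $(\tfrac12-\epsilon_0)L_{k+\ell}$ around the orbit, and this radius is \emph{comparable to} $L_{k+\ell}$, not $o(L_{k+\ell})$. The triangle-inequality (or Abresch--Gromoll excess) computation then yields at best a multiplicative statement of the form $L_j+L_{k-j}\le (2-2\epsilon_0)L_k$ for a suitable intermediate index $j$ --- the kind of ``uniform cut'' estimate used by Sormani and in Proposition \ref{prop:non_max_escape} --- which is far from additivity and cannot rule out the orbit being highly distorted at the blow-down scale.

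Two further steps would also need repair even if the line existed. First, your surjectivity argument misuses non-branching: the non-branching property constrains geodesics, not orbits, and does not force every orbit point at distance $t_0$ from $y$ to lie on a previously identified line. Second, the map $t\mapsto g_t$ obtained by diagonal extraction over a dense set of $t$ is not shown to be continuous or to exhaust $Gy$. The actual proof in \cite{Pan23} proceeds quite differently: since $G$ is a closed abelian subgroup of the Lie group $\mathrm{Isom}(Y)$ and is generated by elements of arbitrarily small displacement, the orbit $Gy\cong G/K$ is a connected abelian Lie group $\mathbb{R}^a\times T^b$; one then rules out every possibility other than $a=1$, $b=0$ by a contradiction/critical-rescaling argument that exploits $E\ne 1/2$ at a carefully chosen intermediate scale, without ever producing a geodesic line in the orbit.
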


\section{Equivariant asymptotic geometry of successive covering spaces}\label{sec:asymgeo}
In this section, we prove Theorems \ref{thm:vir_abel} and \ref{thm:finite} by assuming the plane/halfplane rigidity (see Theorem \ref{thm:plane_halfplane_rigid}). We will prove Theorem \ref{thm:plane_halfplane_rigid} in Section \ref{sec:rigid}.

\subsection{Polar asymptotic cones and non-maximal escape rate}\label{subsec:nonmax_escape}

In this subsection, we consider a Riemannian covering map $(\widehat{M},\hat{p})\to (M,p)$ with covering group $\Gamma$. We show that if $M$ has nonnegative Ricci curvature and is polar at infinity, then the escape rate $E(\widehat{M},\hat{p},\Gamma)$ is not $1/2$ (Proposition \ref{prop:non_max_escape}). This result will serve as a starting point in the inductive step of the Induction Theorem so that Proposition \ref{prop:orbit_R} applies. The proof of Proposition \ref{prop:non_max_escape} is an adaption of an argument by Sormani \cite{Sor00b} to the asymptotic cones.

\begin{defn}\label{def:polar}
   Let $M$ be a complete manifold with $\mathrm{Ric}\ge 0$. We say that $M$ is polar at infinity if every asymptotic cone $(X,x)$ of $M$ is polar at $x$, that is, for every point $z\in X-\{x\}$, there is a ray emanating from $x$ through $z$.
\end{defn}

\begin{prop}\label{prop:non_max_escape}
   Let $(M,p)$ be a complete manifold with $\Ric\ge 0$ and let $(\widehat{M},\hat{p})$ be a covering space of $(M,p)$ with covering group $\Gamma$. Suppose that $M$ is polar at infinity, then $E(\widehat{M},\hat{p},\Gamma)<1/2$.
\end{prop}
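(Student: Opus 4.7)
The plan is to argue by contradiction, adapting Sormani's ``halfway'' idea to the equivariant asymptotic cone. Suppose $E(\widehat{M},\hat{p},\Gamma) = 1/2$ and pick a sequence $\gamma_i \in \Gamma$ with $L_i := |\gamma_i| \to \infty$ and $\mathrm{size}(\sigma_{\gamma_i})/L_i \to 1/2$. Let $z_i \in \sigma_{\gamma_i}$ realize the size and set $s_i = d(\hat{p}, z_i)$; the inequality $\mathrm{size}(\sigma_{\gamma_i}) \leq \min(s_i, L_i - s_i)$ (triangle inequality to $\hat{p}$ and $\gamma_i\hat{p}$) forces $s_i/L_i \to 1/2$, so $z_i$ is asymptotically the midpoint of $\sigma_{\gamma_i}$. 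Rescale by $r_i = L_i/2$ and, after passing to subsequences (invoking precompactness from \cite{FY92}), extract an equivariant asymptotic cone
\begin{equation*}
(r_i^{-1}\widehat{M}, \hat{p}, \Gamma) \overset{GH}\longrightarrow (Y, y, G), \qquad (r_i^{-1}M, p) \overset{GH}\longrightarrow (X, x) = (Y/G, \bar{y}).
\end{equation*}
The rescaled $\sigma_{\gamma_i}$ converge to a minimizing geodesic $\sigma:[0,2]\to Y$ from $y$ to $gy$, where $g \in G$ is a subsequential limit of $\gamma_i$; the midpoint $m = \sigma(1)$ satisfies $d_Y(y, m) = 1$ and $d_Y(m, Gy) = 1$, so $d_X(\pi(m), x) = 1$, where $\pi: Y \to X$ denotes the quotient submetry.

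Since $M$ is polar at infinity, $X$ is polar at $x$, so there is a ray $\alpha: [0, \infty) \to X$ from $x$ through $\pi(m)$; arc-length parametrization forces $\alpha(1) = \pi(m)$. The key move is to use the continuation of this ray to extend $\sigma|_{[0,1]}$ past $m$ in a way that competes with $\sigma|_{[1,2]}$. By the submetry identity $\pi(\overline{B}_1(m)) = \overline{B}_1(\pi(m))$, I choose $q \in Y$ with $\pi(q) = \alpha(2)$ and $d_Y(q, m) \leq 1$. Because $\pi$ is $1$-Lipschitz, $d_Y(q, y) \geq d_X(\alpha(2), x) = 2$; combined with $d_Y(q, y) \leq d_Y(q, m) + d_Y(m, y) \leq 2$, this forces $d_Y(q, m) = 1$, $d_Y(q, y) = 2$, and equality in the triangle inequality $d_Y(q,y) = d_Y(y,m) + d_Y(m,q)$. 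Therefore the concatenation of $\sigma|_{[0,1]}$ with any minimizing geodesic from $m$ to $q$ (which exists since $Y$ is a geodesic space) yields, after arc-length reparametrization, a minimizing geodesic $c: [0,2] \to Y$ from $y$ to $q$.

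Now $c$ and $\sigma$ are two minimizing geodesics from $y$ of length $2$ that coincide on $[0, 1]$. The non-branching property of the $\RCD$ limit space $Y$ (see \cite{CoNa12, Deng20}) forces $c = \sigma$ on $[0, 2]$; in particular $q = \sigma(2) = gy$, hence $\pi(q) = x$. But by construction $\pi(q) = \alpha(2)$ with $d_X(\alpha(2), x) = 2 > 0$, a contradiction. Thus $E(\widehat{M},\hat{p},\Gamma) \neq 1/2$, and combined with the general bound $E \leq 1/2$, this gives $E < 1/2$. The technical point I expect to be most delicate is the limit identity $d_Y(m, Gy) = 1$: the bound $\leq 1$ is immediate from $d_Y(m, y) = 1$, but the matching lower bound requires passing the scalar quantity $d(z_i, \Gamma\hat{p})/r_i = \mathrm{size}(\sigma_{\gamma_i})/r_i \to 1$ to the limit as $d_Y(m, Gy)$, which relies on precompactness of group elements of bounded displacement and on careful bookkeeping with the $\epsilon$-approximation maps in Definition \ref{def:eqgh}.
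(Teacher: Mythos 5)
Your proposal is correct, but it replaces the paper's key analytic tool with a structural one. Both arguments start identically: assume $E=1/2$, blow down at the scale of $|\gamma_i|$, and use polarity to produce a ray in $X=Y/G$ through the projection of the midpoint $m$ of the limit geodesic $\sigma$, exploiting that $d_Y(m,Gy)$ equals exactly half the length of $\sigma$. They diverge at the final step. The paper lifts only a short segment of length $S$ of that ray to a point $\hat w$ with $d_Y(\hat w,\sigma)\le S$ whose excess relative to the endpoints $y,gy$ is at least $2S$, and then invokes the Abresch--Gromoll excess estimate \cite{AG90,GM14} to get $2S\le C(n)S^{n/(n-1)}$, which fails for $S$ small depending only on $n$. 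You instead extend the ray to parameter $2$, lift its endpoint optimally via the submetry $Y\to Y/G$ to a point $q$ with $d_Y(q,m)=1$ and $d_Y(q,y)=2$, so that $m$ is simultaneously a midpoint of $(y,gy)$ and of $(y,q)$; non-branching of the $\RCD(0,n)$ limit $Y$ \cite{CoNa12,Deng20} then forces $q=gy$, contradicting $d_X(\pi(q),x)=2$. Your route is softer and arguably cleaner (no dimensional constant, no choice of $S(n)$), and the ingredients you flag as delicate -- the identity $d_Y(m,Gy)=1$ via convergence of orbit distances, and the closed-ball submetry identity, which needs properness of $Y$ and closedness of the orbit $Gy$ -- are indeed the points to spell out, but they are standard and the paper uses both elsewhere. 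The trade-off is that you lean on the full non-branching of Ricci-limit/$\RCD$ spaces, a deeper structural theorem, whereas the paper's argument needs only the excess estimate and is closer in spirit to Sormani's original halfway argument \cite{Sor00b}; since the paper already invokes non-branching in Section 3 (e.g., Lemma \ref{lem:horizontal_rays}), this costs nothing here.
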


\begin{proof}
    We argue by contradiction and suppose that $E(\widehat{M},\hat{p},\Gamma)=1/2$, then there is a sequence of elements $\gamma_i\in \Gamma$ and a sequence of minimal geodesics $\sigma_i$ from $\hat{p}$ to $\gamma_i\hat{p}$ such that
   $$\frac{\mathrm{size}(\sigma_i)}{\mathrm{length}(\sigma_i)}\to \frac{1}{2},\quad r_i:=\mathrm{length}(\sigma_i)=d(\hat{p},\gamma_i\hat{p})\to\infty.$$
   For each $i$, there exist points $q_i\in \sigma_i$ and $z_i\in \Gamma\cdot \hat{p}$ such that 
   $$\mathrm{size}(\sigma_i)=d(q_i,z_i)=d(q_i,\Gamma\cdot \hat{p}).$$ After passing to a convergent subsequence, we consider the blow-down sequence
   $$(r_i^{-1}\widehat{M},\hat{p},\Gamma,q_i,z_i,\gamma_i,\sigma_i)\overset{GH}\longrightarrow (Y,y,G,q,z,g,\sigma),$$
   where $g\in G$ with $d(gy,y)=1$ and $\sigma$ is a minimal geodesic from $y$ to $gy$. By construction, we have $q\in \sigma$, $z\in Gy$, and
   \begin{equation}\label{eq:midpoint}
   d_Y(q,Gy)=d_Y(q,z)=\frac{1}{2} \mathrm{length}(\sigma) = \frac{1}{2} d_Y(y,gy)=\dfrac{1}{2}.
   \end{equation}
   This implies that $q$ is the midpoint of $\sigma$: in fact, if either $d_Y(q,y)$ or $d_Y(q,gy)$ is strictly less than $<1/2$, then we would have $d_Y(q,Gy)<1/2$, a contradiction.

   On $M$, we have the corresponding convergence 
   $$(r_i^{-1}M,p)\overset{GH}\longrightarrow (X,x)=(Y/G,\bar{y}).$$ 
   Let $\bar{q}\in X$ be the projection of $q\in Y$. Because $X$ is polar at $x$, there is a unit speed ray ${c}$ starting at ${c}(0)=x$ and passing through $\bar{q}$. Note that by (\ref{eq:midpoint}) we have ${c}(1/2)=\bar{q}$. Let $w=c(S+1/2) \in X$, where $S=S(n)\in (0,1)$ is a small constant to be determined later. We lift $c|_{[1/2,S+1/2]}$ to a minimal geodesic $\hat{c}$ in $Y$ starting at $q$. Let $\hat{w}=\hat{c}(S+1/2)$ be the endpoint of $\hat{c}$. We set $q_-=y$ and $q_+=gy$. Then 
   $$d_Y(q_\pm,\hat{w})\ge d_X(x,w)=S+1/2.$$
   Observe that 
   $$l:=d_Y(\sigma,\hat{w})=d_Y(q,\hat{w})=S.$$
   By Abresch--Gromoll excess estimate in RCD$(0,N)$ spaces \cite{AG90,GM14}, there is a constant $C(n)$ such that 
   $$2S\le d(q_+,\hat{w})+d(q_-,\hat{w})-d(q_-,q_+)\le C(n) \cdot l^{\frac{n}{n-1}} =C(n)\cdot S ^{\frac{n}{n-1}},$$
   which implies 
   $$S\ge \dfrac{2^{n-1}}{C(n)^{n-1}}.$$
   Choosing $S=S(n)<(2/C(n))^{n-1}$ in the beginning results in a desired contradiction.
\end{proof}

\subsection{Consequences of asymptotic $\R$-orbits}\label{subsec:orbit_R}

In this subsection, we consider a Riemannian covering map $(\widehat{N},\hat{q})\to (N,q)$ with covering group $\Gamma\simeq \mathbb{Z}$ and escape rate $E(\widehat{N},\hat{q},\Gamma)\not= 1/2$, where $(N,q)$ is an open manifold with $\Ric\ge 0$. Let $(Y,y,G)$ be an equivariant asymptotic cone of $(\widehat{N},\hat{q},\Gamma)$:
$$(r_i^{-1} \widehat{N},\hat{q},\Gamma)\overset{GH}\longrightarrow (Y,y,G).$$
Proposition \ref{prop:orbit_R} assures that the orbit $Gy$ is homeomorphic to $\mathbb{R}$. We study the immediate consequences of these asymptotic $\mathbb{R}$-orbits.

Given that $G$ is a closed and abelian Lie subgroup of $\mathrm{Isom}(Y)$ and its orbit $Gy$ is homeomorphic to $\mathbb{R}$, we can always write $G=\mathbb{R}\times K$, where $K$ is the isotropy subgroup of $G$ at $y$. If the compact Lie group $K$ has positive dimension, then $\mathbb{R}$-subgroups of $G$, and thus the splittings $G=\mathbb{R}\times K$, are not unique. However, a different choice of the $\mathbb{R}$-subgroup does not change the corresponding orbit points: in fact, any $\mathbb{R}$-subgroup in $\mathbb{R}\times K$ has the form $t\mapsto (t,e(t))$, where $e(t)$ is either a one-parameter subgroup in $K$ or is a constant map to the identity element $e$ of $K$, then it holds that
$$(t,e(t))\cdot y = (t,e) \cdot y.$$
We remark that since $G=\mathbb{R}\times K$ is a closed subgroup of $\mathrm{Isom}(Y)$, the orbit $Gy$ is closed in $Y$. Consequently, $d((t,e)\cdot y,y)\to \infty$ as $t\to\infty$.

\begin{defn}\label{def:one_para_orbit}
   For any orbit point $z\in Gy-\{y\}$, we can write $z=(v,e)\cdot y$, where $v\in \mathbb{R}-\{0\}$ and $e$ is the identity element in $K$. For $t\in \mathbb{R}$, we denote
   $tz := (tv,e)\cdot y.$
\end{defn}

One consequence of the limit $\mathbb{R}$-orbits is a quantitative distance control on these orbits.

\begin{lem}\cite[Lemma 5.13]{Pan23}\label{lem:boomer_control}
   Under the assumptions of Proposition \ref{prop:orbit_R}, there is a constant $C=C(\widehat{N},\gamma)$ such that for any equivariant asymptotic cone $(Y,y,G)$ of $(\widehat{N},\hat{q},\Gamma)$ and any orbit point $z\in Gy-\{y\}$, we have
	$$d(tz,y)\le C\cdot d(z,y)$$
	for all $t\in [0,1]$.
\end{lem}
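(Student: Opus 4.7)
The key input is $E(\widehat{N},\hat{q},\Gamma)<1/2$, which follows from the hypothesis $E\ne 1/2$ combined with the universal bound $E\le 1/2$. Fix any $E_0\in(E(\widehat{N},\hat{q},\Gamma),1/2)$; the target constant is $C=1+E_0$. By definition of the escape rate, for all $|k|$ sufficiently large, any minimum-size minimizing geodesic $\sigma_k$ from $\hat{q}$ to $\gamma^k\hat{q}$ satisfies $\mathrm{size}(\sigma_k)\le E_0|\gamma^k|$, so every point of $\sigma_k$ lies within $E_0|\gamma^k|$ of some orbit point of $\Gamma\cdot\hat{q}$.

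The strategy is to lift the problem to $\widehat{N}$ and use escape-rate interpolation to locate orbit points along minimal geodesics. Given $z=(v,e)\cdot y\in Gy-\{y\}$ with $L=d(y,z)>0$, select $n_i\in\mathbb{Z}$ so that $\gamma^{n_i}\to(v,e)$ under the defining convergence at scale $r_i$; this forces $r_i^{-1}|\gamma^{n_i}|\to L$ and in particular $|\gamma^{n_i}|\to\infty$. For fixed $t\in[0,1]$, set $q_i=\sigma_{n_i}(t|\gamma^{n_i}|)$ and pick $m_i\in\mathbb{Z}$ with $d(q_i,\gamma^{m_i}\hat{q})\le E_0|\gamma^{n_i}|$. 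Triangle-inequality bookkeeping yields $d(\hat{q},\gamma^{m_i}\hat{q})\le(t+E_0)|\gamma^{n_i}|$ and, symmetrically, $d(\hat{q},\gamma^{n_i-m_i}\hat{q})\le(1-t+E_0)|\gamma^{n_i}|$. By precompactness of equivariant convergence for bounded-displacement sequences (the method of \cite[Proposition 3.6]{FY92}), I pass to a subsequence so that $\gamma^{m_i}\to h$ and $\gamma^{n_i-m_i}\to h'$ in $G$, with $hh'=(v,e)$ and $d(y,hy)\le(t+E_0)L\le(1+E_0)L$.

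The principal obstacle is identifying $hy$ with $tz=(tv,e)y$. Since $\Gamma$ is abelian, its limit group $G=\mathbb{R}\times K$ is abelian; writing $h=(s_h,\kappa_h)$ gives $hy=(s_h,e)y$, so one must show $s_h=tv$. I plan to argue this via Fekete's lemma applied to the subadditive function $k\mapsto|\gamma^k|$: the nontriviality of the limit orbit point, combined with the $\mathbb{R}$-homeomorphism orbit structure guaranteed by Proposition \ref{prop:orbit_R}, forces $\alpha\coloneqq\lim_{k\to\infty}|\gamma^k|/k>0$, so that $n_i/r_i\to L/\alpha$ and the scaled integer parameter $m_i/r_i$ pins down the orbit-point parameter through the bijective correspondence $\mathbb{R}\to Gy$, $s\mapsto(s,e)y$. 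An equivalent route refines $m_i$ to the nearest integer to $tn_i$ and invokes continuity of powers in the Lie group $G$, producing $\gamma^{m_i}\to(v,e)^t=(tv,e)$. Either approach yields $d(tz,y)\le(1+E_0)L$, completing the proof. The hypothesis $E\ne 1/2$ enters essentially twice: it furnishes the interpolating orbit points at every proportional height along the minimal geodesic, and, via Proposition \ref{prop:orbit_R}, it makes the limit orbit genuinely one-dimensional, so that matching integer and Lie-group parametrizations becomes meaningful.
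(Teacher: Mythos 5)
Your argument has a genuine gap exactly at the point you flag as ``the principal obstacle,'' and neither of the two routes you propose closes it. The escape-rate interpolation produces, for each $t$, \emph{some} power $m_i$ with $\gamma^{m_i}\hat q$ close to $\sigma_{n_i}(t|\gamma^{n_i}|)$, and hence a limit $h\in G$ with $d(hy,y)\le (t+E_0)L$; but nothing identifies $hy$ with the specific point $tz=(tv,e)\cdot y$. Route 1 rests on the claim $\alpha:=\lim_k|\gamma^k|/k>0$. Fekete gives existence of this limit, not positivity, and positivity is false in general for deck transformations on manifolds with $\Ric\ge0$: displacement can grow sublinearly in $k$ (this is why Lemma \ref{lem:almost_transl} and Proposition \ref{prop:Z_cover_vol} only establish $|\gamma^m|\ge C_1 m^{1-\epsilon}$ and hence volume growth $o(r^{2+\epsilon})$ rather than exactly quadratic). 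With $\alpha=0$ the correspondence $n_i/r_i\to L/\alpha$ is meaningless. Route 2 redefines $m_i=\lfloor tn_i\rfloor$; for rational $t$ the power argument of Lemma \ref{lem:egh_Q}(1) does then give $\gamma^{\lfloor tn_i\rfloor}\to(tv,h)$, but this sequence of orbit points has no reason to lie near the minimal geodesic $\sigma_{n_i}$, so the escape-rate bound no longer applies to it. Controlling the position of precisely these ``parametrically intermediate'' orbit points --- ruling out a large excursion of the orbit path between $y$ and $z$ --- is the entire content of Lemma \ref{lem:boomer_control}; the escape rate controls the geodesic $\sigma_{n_i}$, not the chain $\hat q,\gamma\hat q,\dots,\gamma^{n_i}\hat q$. (A minor additional slip: one can only arrange $\gamma^{n_i}\to g$ with $gy=z$, i.e.\ $g=(v,\kappa)$ for some $\kappa\in K$, not $g=(v,e)$.)

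For comparison, the paper does not reprove this lemma; it cites \cite[Lemma 5.13]{Pan23} and sketches the actual mechanism, which is a contradiction-and-rescaling argument of a different flavor: if no uniform $C$ existed, one rescales by the (assumed much larger) intermediate displacement $d_i=\max_{0\le k\le n_i}|\gamma^k|$ and obtains an equivariant asymptotic cone in which the limit of $\{\gamma^k: 0\le k\le n_i\}\cdot\hat q$ is a nontrivial bounded connected set closed under multiplication, contradicting Corollary \ref{cor:bdd_sym_fix} and Proposition \ref{prop:orbit_R}. This is the same engine driving Case 2 of Lemma \ref{lem:isotropy_elements} and Lemma \ref{lem:larger_power_dist}. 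If you want to complete a proof along your lines, you would need to graft in such a rescaling step to bound $\max_{0\le k\le n_i}|\gamma^k|/r_i$; the escape-rate interpolation alone cannot do it.
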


One may view Lemma \ref{lem:boomer_control} as a quantitative version of Proposition \ref{prop:orbit_R}. The proof is by contradiction. Roughly speaking, if there were no such uniform distance control for $d(tz,y)$, where $t\in[0,1]$, then one can apply a contradicting argument and find a suitable equivariant asymptotic cone $(Y',y',G')$ of $(\widehat{N},\hat{p},\Gamma)$ such that the orbit $G'y'$ is not homeomorphic to $\mathbb{R}$; this is a contradiction to Proposition \ref{prop:orbit_R}. See \cite[Lemma 5.13]{Pan23} for the detailed proof.

\begin{cor}\label{cor:boomer_control}
   Under the assumptions of Proposition \ref{prop:orbit_R}, for any equivariant asymptotic cone $(Y,y,G)$ of $(\widehat{N},\hat{q},\Gamma)$ and any orbit point $z\in Gy-\{y\}$, we have
   $$d(tz,y)\ge C^{-1} d(z,y)$$
   for all $t\ge 1$, where $C$ is the constant in Lemma \ref{lem:boomer_control}.
\end{cor}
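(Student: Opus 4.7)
The plan is to derive this as an immediate reciprocal of Lemma~\ref{lem:boomer_control} by applying that lemma to a rescaled orbit point. Given $z\in Gy-\{y\}$, write $z=(v,e)\cdot y$ with $v\in\mathbb{R}-\{0\}$ as in Definition~\ref{def:one_para_orbit}. For a fixed $t\ge 1$, set $w\coloneqq tz=(tv,e)\cdot y$; since $tv\ne 0$, the point $w$ also lies in $Gy-\{y\}$ and so Lemma~\ref{lem:boomer_control} applies to $w$.

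Next I would observe that the scalar action of Definition~\ref{def:one_para_orbit} is multiplicative in $t$: for any $s\in\mathbb{R}$,
\[
s\cdot w = s\cdot\bigl((tv,e)\cdot y\bigr) = (stv,e)\cdot y,
\]
so in particular, taking $s=1/t\in(0,1]$, we obtain $(1/t)\cdot w = (v,e)\cdot y = z$. Applying Lemma~\ref{lem:boomer_control} to the orbit point $w$ with parameter $s=1/t\in[0,1]$ then yields
\[
d(z,y) \;=\; d\bigl((1/t)\cdot w,\,y\bigr) \;\le\; C\cdot d(w,y) \;=\; C\cdot d(tz,y),
\]
where $C=C(\widehat{N},\gamma)$ is the constant of Lemma~\ref{lem:boomer_control}. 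Rearranging gives $d(tz,y)\ge C^{-1}d(z,y)$, as desired.

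There is no real obstacle here; the only thing to verify carefully is the multiplicativity $s\cdot(tz)=(st)z$, which is immediate from the fact that the chosen splitting $G=\mathbb{R}\times K$ makes the scalar action in Definition~\ref{def:one_para_orbit} coincide with the $\mathbb{R}$-factor action, and in particular is independent of the ambiguity in choosing an $\mathbb{R}$-subgroup of $G$ (as noted in the paragraph preceding Definition~\ref{def:one_para_orbit}). Thus the corollary follows from Lemma~\ref{lem:boomer_control} by a change of variable $z\mapsto tz$, without invoking any new geometric input.
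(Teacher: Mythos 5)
Your proof is correct and is essentially the paper's argument: the paper phrases it as a contradiction (assuming $d(t_0z,y)<C^{-1}d(z,y)$ and applying Lemma \ref{lem:boomer_control} with parameter $t_0^{-1}\in(0,1]$ to the point $t_0z$), whereas you run the same substitution $z\mapsto tz$, $s=1/t$ directly. The multiplicativity $s\cdot(tz)=(st)z$ that you verify is exactly the point the paper's proof also relies on, so there is no substantive difference.
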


\begin{proof}
   Suppose that there exist an equivariant asymptotic cone $(Y,y,G)$, an orbit point $z\in Gy-\{y\}$, and $t_0\ge 1$ such that
   $$ d(t_0z,y)<C^{-1} d(z,y). $$
   Then $t_0^{-1} \in (0,1]$ satisfies
   $$d(t_0^{-1}(t_0z),y) = d(z,y) > C\cdot d(t_0z,y);$$
   a contradiction to Lemma \ref{lem:boomer_control}.
\end{proof}

The corollary below also follows immediately.

\begin{cor}\label{cor:bdd_sym_fix}
    Let $S$ be a closed symmetric subset of $G$. Suppose that the set $Sy=\{sy|s\in S\}$ satisfies the following:\\
	(1) $Sy$ is closed under multiplication, that is, if $s_1,s_2\in S$, then $s_1s_2y\in Sy$;\\
	(2) $Sy$ is bounded.\\
	Then $Sy=\{y\}$.
\end{cor}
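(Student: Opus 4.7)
The plan is to exploit the product structure $G = \mathbb{R}\times K$ established earlier in the subsection, together with the fact (noted just before Definition \ref{def:one_para_orbit}) that $d((t,e)\cdot y, y) \to \infty$ as $|t|\to\infty$. If $Sy$ contains any point other than $y$, I will show that $Sy$ must contain an entire orbit $\{s^n y : n\in \mathbb{Z}\}$ of a single element $s\in S$, and that this set is already unbounded.

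I would argue by contradiction. Suppose there exists $z\in Sy$ with $z\neq y$, and pick $s\in S$ with $sy = z$. Writing $s = (v,h)$ under the splitting $G=\mathbb{R}\times K$, the fact that $K$ fixes $y$ gives $sy = (v,e)\cdot y$, and necessarily $v\neq 0$. By the symmetric property, $s^{-1}\in S$. I would then run a direct induction on $n\ge 1$: assuming $s^n y = ty$ for some $t\in S$, applying hypothesis (1) to the pair $s,t\in S$ yields $s^{n+1} y = (st)y \in Sy$. The same induction applied to $s^{-1}$ gives $s^{-n}y \in Sy$ for every $n\ge 1$, so $\{s^n y : n\in\mathbb{Z}\}\subseteq Sy$.

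A short computation in $G = \mathbb{R}\times K$ gives $s^n = (nv, h^n)$, hence $s^n y = (nv, e)\cdot y$, which is exactly $nz$ in the notation of Definition \ref{def:one_para_orbit}. Since $v\neq 0$, the observation that $d((t,e)\cdot y,y)\to\infty$ as $|t|\to\infty$ yields $d(s^n y, y)\to\infty$ as $n\to\infty$. This contradicts the boundedness of $Sy$, forcing $Sy = \{y\}$.

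There is no real obstacle here — the argument is essentially a one-line use of hypothesis (1) plus symmetry to iterate a single element, combined with the already-established properness of the parameterization $t\mapsto (t,e)\cdot y$ of the orbit. Neither Proposition \ref{prop:orbit_R} nor Corollary \ref{cor:boomer_control} is needed in the body of the argument; they enter only implicitly through the choice of splitting $G=\mathbb{R}\times K$ and the closedness of $Gy$.
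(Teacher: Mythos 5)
Your proof is correct. It reaches the conclusion by a slightly different mechanism than the paper's, though both ultimately rest on the splitting $G=\mathbb{R}\times K$ and on the properness of the parametrization $t\mapsto (t,e)\cdot y$ of the orbit. The paper passes to $H=\overline{\langle S\rangle}$, observes that $Hy=Sy$ is bounded (hypothesis (1) iterated, together with symmetry of $S$, gives $\langle S\rangle y\subseteq Sy$), concludes that $H$ is a compact subgroup of $G$, and then uses that every compact subgroup of $\mathbb{R}\times K$ lies in $\{0\}\times K$. You instead iterate a single element $s=(v,h)$ with $v\neq 0$: hypothesis (1) gives $s^{n}y\in Sy$ for all $n\ge 1$, and $s^{n}y=(nv,e)\cdot y$ diverges, contradicting boundedness. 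Your route is marginally more elementary, since it avoids the general fact that a closed subgroup with bounded orbit is compact (a properness statement for $\mathrm{Isom}(Y)$ acting on the proper space $Y$) and uses only the divergence $d((t,e)\cdot y,y)\to\infty$, which the paper records explicitly just before Definition \ref{def:one_para_orbit}. The paper's version, in exchange, yields the slightly stronger conclusion that the whole closed subgroup generated by $S$ is contained in the isotropy group $K$. One small remark: symmetry of $S$ is not actually needed in your argument, as the positive powers $s^{n}y$ alone already produce the contradiction.
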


\begin{proof}
   Let $H=\overline{\langle S \rangle}$, the closure of the subgroup generated by $S$. By conditions (1,2), we have $Hy=Sy$ is bounded. Hence, $H$ is a compact subgroup of $G$. Given that $G=\mathbb{R}\times K$, where $K$ is the isotropy subgroup at $y$, the compact subgroup $H$ must be contained in $K$. The result follows.
\end{proof}

\begin{rem}
  We remark that Corollary \ref{cor:bdd_sym_fix} was derived without using Proposition \ref{prop:orbit_R} in \cite[Corollary 5.12]{Pan23}.
\end{rem}

\subsection{Convergence of appropriate symmetric subsets to cylinders}\label{subsec:cylinder}

We continue to assume that $(\widehat{N},\hat{q})\to (N,q)$ is a covering map with covering group $\Gamma\simeq \mathbb{Z}$ and escape rate $E(\widehat{N},\hat{q},\Gamma)\not= 1/2$, where $(N,q)$ is an open manifold with $\Ric\ge 0$. Let $(Y,y,G)$ be an equivariant asymptotic cone of $(\widehat{N},\hat{q},\Gamma)$. We have seen that $G=\mathbb{R}\times K$, where $K$ fixes $y$.

The main results of this section are Propositions \ref{prop:sym_gh} and \ref{prop:sym_saturated}. They concern appropriate symmetric subsets of $\Gamma$ and show that they approximate a \emph{cylinder}-like subset $[-v,v]\times K$ in $G$, where $v>0$. The results in this subsection will help us construct appropriate domains in $\widehat{N}$ relating to $\Omega_r(v)$ in the asymptotic cones in a later section (see Definition \ref{def:omega_set} and Section \ref{subsec:cube}).

As a convention, for an equivariant asymptotic cone $(Y,y,G)$ of $(\widehat{N},\hat{q},\langle \gamma \rangle)$ coming from
\begin{equation}\label{eq:orbit_R}
(r_i^{-1}\widehat{N},\hat{q},\langle \gamma \rangle)\overset{GH}\longrightarrow (Y,y,G),
\end{equation}
where $\gamma$ is a fixed generator of $\Gamma\simeq \mathbb{Z}$, we will always choose an $\mathbb{R}$-subgroup of $G$ (thus a splitting $G=\mathbb{R}\times K$) and label a group element $(1,e)\in \mathbb{R}\times K$ as follows. 

\begin{conv}\label{conv:R_subgroup}
Associated to the convergent sequence (\ref{eq:orbit_R}), for each $i$, we choose $m_i$ as the smallest positive integer $m$ such that
$$d(\gamma^m \hat{q},\hat{q})\ge r_i.$$
Then $\gamma^{m_i}\overset{GH}\to g\in G$ associated to (\ref{eq:orbit_R}) with $d(gy,y)=1$. Now, we choose a one-parameter subgroup through this $g$. (There are multiple ones if the isotropy subgroup $K$ has positive dimension; we can choose any of them here.) We obtain a splitting $G=\mathbb{R}\times K$, where $\mathbb{R}\times \{e\}$ is the one-parameter subgroup we have chosen. Relabeling the elements in this $\mathbb{R}$-subgroup, we write $g=(1,e)\in \mathbb{R}\times K$.

We remark that the choices of $m_i$ and $g=(1,e)$ are indeed no more than conventions. One may label any $g\in G$ with $d(gy,y)=1$ as $(1,e)$ and may choose any sequence $m_i$ with $\gamma^{m_i}\overset{GH}\to g$, then the results below in this subsection still hold, but the proofs should be modified correspondingly.
\end{conv}

From now on in this subsection, we always consider a convergent sequence (\ref{eq:orbit_R}) and let ${m_i}\to\infty$ be a sequence as chosen in Convention \ref{conv:R_subgroup}. For each $l>0$, we denote 
\begin{equation}\label{eq:sym_gamma}
S(l)=\{ e,\gamma^{\pm 1},...,\gamma^{\pm \lfloor l \rfloor} \},
\end{equation}
where $\lfloor \cdot \rfloor$ is the floor function. Regarding the sequence $S(m_i)$, we pass to a subsequence and obtain convergence
\begin{equation}\label{eq:orbit_R_sym}
(r_i^{-1}\widehat{N},\hat{q},\langle \gamma \rangle,S(m_i),\gamma^{m_i})\overset{GH}\longrightarrow (Y,y,G=\mathbb{R}\times K,S_\infty,(1,e)),
\end{equation}
where $S_\infty$ is closed symmetric subset of $G$. By the choice of $m_i$, it is clear that 
\begin{equation}\label{eq:limit_sym_1_ball}
S_\infty \cdot y \subseteq \overline{B_1}(y).
\end{equation}
We also observe that $S_\infty\cdot y$ is connected by construction. 

Lemma \ref{lem:isotropy_elements} below addresses the isotropy elements in $G$.

\begin{lem}\label{lem:isotropy_elements}
   Let $l_i$ be a sequence of integers such that $\gamma^{l_i}\overset{GH}\to h \in G$ associated to (\ref{eq:orbit_R}). Then $h\in K$ if and only if $|l_i|\ll m_i$. In particular, $S_\infty$ in (\ref{eq:orbit_R_sym}) contains the isotropy subgroup $K$.
\end{lem}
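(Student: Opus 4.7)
The plan is to prove the biconditional in both directions and then the ``in particular'' statement. Two structural ingredients are central. First, $G=\mathbb{R}\times K$ is abelian, as the equivariant Gromov--Hausdorff limit of the abelian group $\langle\gamma\rangle\cong\mathbb{Z}$. Second, the function $\rho\colon\mathbb{R}\to[0,\infty)$ defined by $\rho(t):=d((t,e)y,y)$ is continuous and proper: Proposition~\ref{prop:orbit_R} gives $Gy\cong\mathbb{R}$, and since $Gy$ is closed in $Y$ the continuous bijection $t\mapsto(t,e)y$ is a homeomorphism, forcing $\rho(t)\to\infty$ as $|t|\to\infty$.

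For the ``if'' direction ($|l_i|\ll m_i\Rightarrow h\in K$), the key observation is that for every positive integer $N$, $|Nl_i|<m_i$ eventually, so $\gamma^{Nl_i}\in S(m_i)$. Passing to a subsequential limit in~\eqref{eq:orbit_R_sym} yields $h^N=\lim_i\gamma^{Nl_i}\in S_\infty$, and~\eqref{eq:limit_sym_1_ball} gives $d(h^Ny,y)\le 1$. Writing $h=(t_h,k_h)\in\mathbb{R}\times K$, abelianness of $G$ together with the fact that $K$ fixes $y$ imply $d(h^Ny,y)=\rho(Nt_h)\le 1$ for every $N$, so properness of $\rho$ forces $t_h=0$, i.e.\ $h\in K$. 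The ``in particular'' claim ($K\subseteq S_\infty$) follows immediately: for any $k\in K$, equivariant GH convergence supplies a sequence $\gamma^{l_i}\to k$, and $d(ky,y)=0$ gives $r_i^{-1}f(l_i)\to 0$, so by minimality of $m_i$ one has $|l_i|<m_i$ for large $i$, placing $\gamma^{l_i}\in S(m_i)$ and $k\in S_\infty$.

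For the ``only if'' direction, I argue by contradiction. Assume $h\in K$ but, after passing to a subsequence, $l_i/m_i\to\alpha\in(0,\infty]$ with $l_i>0$. A preliminary step rules out $\alpha=\infty$: the boundedness of $r_i^{-1}f(l_i)$ required by $\gamma^{l_i}\to h\in G$, together with the subadditivity $f(m_i)\le m_if(1)$ (which gives $r_i\le f(1)m_i+O(1)$) and the quantitative boomerang controls of Lemma~\ref{lem:boomer_control} and Corollary~\ref{cor:boomer_control}---valid in every equivariant asymptotic cone of $(\widehat{N},\hat{q},\langle\gamma\rangle)$---forces $|l_i|=O(m_i)$, hence $\alpha\in(0,\infty)$. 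Now, using properness of $\rho$, choose an integer $N$ with $N\alpha\notin\mathbb{Z}$ large enough that $q_0:=\lfloor N\alpha\rfloor$ satisfies $\rho(q_0)>1$. For large $i$, write $Nl_i=q_0m_i+s_i$ with $s_i\in[0,m_i)$, so $\gamma^{Nl_i}=\gamma^{q_0m_i}\gamma^{s_i}$. Along a subsequence, $\gamma^{s_i}\to(t_s,k_s)\in G$, and combined with $\gamma^{q_0m_i}\to g^{q_0}=(q_0,e)$ and $\gamma^{Nl_i}\to h^N=(0,k_h^N)\in K$, equating $\mathbb{R}$-components forces $t_s=-q_0$. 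Hence $\rho(q_0)=d((-q_0,e)y,y)=\lim_ir_i^{-1}f(s_i)$; but $s_i<m_i$ and the minimality of $m_i$ give $f(s_i)<r_i$, so $\rho(q_0)\le 1$, contradicting the choice of $q_0$.

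The most delicate step I anticipate is the preliminary exclusion of $\alpha=\infty$: it encodes that no ``very long'' orbit word $\gamma^{l_i}$ with $l_i\gg m_i$ can return close to $\hat{q}$ at scale $r_i$. This is precisely where the non-maximal escape rate hypothesis, feeding Proposition~\ref{prop:orbit_R} and the boomerang controls of Section~\ref{subsec:orbit_R}, plays an essential role by constraining the asymptotic speed of the $\langle\gamma\rangle$-orbit.
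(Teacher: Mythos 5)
Your ``if'' direction is correct (it is the paper's argument, repackaged via properness of $t\mapsto d((t,e)y,y)$, which the paper does establish in Section \ref{subsec:orbit_R}), and your treatment of the finite-ratio case of the ``only if'' direction is correct and genuinely different from the paper's: instead of showing that a limit set $T_\infty\cdot y$ is closed under multiplication and invoking Corollary \ref{cor:bdd_sym_fix}, you equate $\mathbb{R}$-components in $h^N=g^{q_0}\cdot\lim_i\gamma^{s_i}$ with $0\le s_i<m_i$ and contradict the minimality of $m_i$; this is cleaner.

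The genuine gap is your exclusion of $\alpha=\infty$, i.e.\ the case $l_i/m_i\to\infty$. Boundedness of $r_i^{-1}d(\gamma^{l_i}\hat{q},\hat{q})$ is automatic from the hypothesis that $\gamma^{l_i}$ converges in $(Y,y,G)$, and the subadditivity $d(\gamma^{m_i}\hat{q},\hat{q})\le m_i\, d(\gamma\hat{q},\hat{q})$ only relates $r_i$ to $m_i$; neither constrains $l_i$ in terms of $m_i$, precisely because $m\mapsto d(\gamma^{m}\hat{q},\hat{q})$ need not be monotone. Lemma \ref{lem:boomer_control} and Corollary \ref{cor:boomer_control} are statements about real-parameter orbit points $tz$ in a fixed asymptotic cone; they control $\gamma^{km_i}$ for fixed $k$, but give no handle on $\gamma^{l_i}$ when $l_i/m_i\to\infty$, since at scale $r_i$ such a sequence converges (by your own boundedness observation) to an isotropy element and all information about the ratio $l_i/m_i$ is lost in the limit. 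This case is the real content of the lemma: the paper handles it by rescaling at the larger scale $d_i=\max\{d(\gamma^m\hat{q},\hat{q})\,:\,0\le m\le l_i\}\gg r_i$, showing that in the new cone the limit of $S(l_i)$ is a bounded, nontrivial symmetric set closed under multiplication, and contradicting Corollary \ref{cor:bdd_sym_fix}. Without some such argument your proof is incomplete. A smaller issue: in the ``in particular'' step, $d(\gamma^{l_i}\hat{q},\hat{q})<r_i$ does not imply $l_i<m_i$ ``by minimality of $m_i$'' (again, non-monotonicity of the displacement); the inclusion $K\subseteq S_\infty$ should instead be deduced from the full ``only if'' direction once that is established.
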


\begin{proof}
   After passing to a subsequence, we assume that $l_i$ has a fixed sign. Below, we consider the case where $l_i$ is a sequence of positive integers. The negative case can be handled similarly.

   Let $l_i \ll m_i$; we shall show that $h$, the limit of $\gamma^{l_i}$, belongs to $K$. Suppose the contrary $d(hy,y)>0$, then we can write
   $$h=(\delta,\beta)\in \mathbb{R}\times K =G,$$
   where $\delta\not =0$ and $\beta\in K$. Let $k$ be a large integer such that 
   $$10<d((k\delta,\beta^k)\cdot y,y)=d(h^k \cdot y,y).$$
   Then $\gamma^{kl_i}\overset{GH}\to (k\delta,\beta^k)$ with $\gamma^{kl_i}\in S(m_i)$, which contradicts (\ref{eq:limit_sym_1_ball}). Hence $h\in K$.

   Next, we prove the other direction. Let $l_i$ be a sequence of positive integers such that $h$, the limit of $\gamma^{l_i}$ associated to (\ref{eq:orbit_R}), belongs to $K$.

   \textit{Case 1.} $l_i/m_i\to c>0$ for a subsequence. We can pass to a subsequence and choose an integer $k$ such that $kl_i>m_i$ for all $i$. Then associated to (\ref{eq:orbit_R}), $\gamma^{kl_i}$ converges to $h^k\in K$. We consider the convergence
   $$(r_i^{-1}\widehat{N},\hat{q},S(m_i),S(kl_i),\gamma^{m_i},\gamma^{kl_i})\overset{GH}\longrightarrow (Y,y,S_\infty, T_\infty,(1,e),h^k).$$
   Since $0<m_i<kl_i$, it is clear that $T_\infty \supseteq S_\infty$. In particular, then the set $T_\infty \cdot y$ contains the point $(1,e)\cdot y\not= y$. Also, $T_\infty \cdot y$ is bounded; this is because for $i$ large,
   $$kl_i\le k\cdot 2c m_i \le C m_i$$
   for some large integer $C$, and thus $T_\infty y \subseteq \overline{B_C}(y).$
   
   We claim that $T_\infty \cdot y$ is closed under multiplication (see Corollary \ref{cor:bdd_sym_fix}(1)); if this holds, then Corollary \ref{cor:bdd_sym_fix} implies $T_\infty \cdot y =\{y\}$, a contradiction. Now we verify the claim. Let $\alpha,\beta\in T_\infty$; they are the limit of $\gamma^{a_i}$ and $\gamma^{b_i}$, respectively, where $a_i,b_i$ are integers within $[-kl_i,kl_i]$. When $a_i+b_i\in [-kl_i,kl_i]$, it is clear that $\alpha\beta = \lim \gamma^{a_i+b_i} \in T_\infty$. When $a_i+b_i\in [kl_i,2kl_i]$, we write $a_i+b_i = kl_i+o_i$, where $o_i\in \mathbb{Z}\cap [0,kl_i]$. Then
   $$\alpha \beta y = \lim \gamma^{o_i}\gamma^{kl_i}\hat{q}= g_o \cdot h^k \cdot y = g_o y \in T_\infty y,$$
   where $g_o=\lim \gamma^{o_i}\in T_\infty.$ A similar argument covers the remaining case $a_i+b_i\in [-2kl_i,-kl_i]$. This verifies the claim and thus rules out Case 1.

   \textit{Case 2.} $l_i/m_i\to \infty$ for a subsequence. Let 
   $$d_i =\max \{ d(\gamma^m\hat{q},\hat{q}) | m=0,1,...,l_i \}.$$
   We observe that $d_i\gg r_i$. In fact, associated to (\ref{eq:orbit_R_sym}), $\gamma^{km_i}$ has limit $(k,e)\in G$ for each positive integer $k$. Since $d_Y((k,e)\cdot y,y)\to\infty$ as $k\to\infty$ and $km_i\ll l_i$ for each $k$, we see that $d_i\gg r_i$.

   We blow-down $\widehat{N}$ by $d_i\to\infty$:
   $$(d_i^{-1}\widehat{N},\hat{q},\langle \gamma \rangle,S(l_i),\gamma^{l_i})\overset{GH}\longrightarrow (Y',y',G',S'_\infty,h').$$
   It follows from the construction that 
   $$\{y'\} \subsetneqq S'_\infty \cdot y' \subseteq \overline{B_1}(y');$$
   moreover, $h'y'=y'$ since $d(\gamma^{l_i}\hat{q},\hat{q})\ll r_i \ll d_i$. Then, one can follow a similar argument in Case 1 to show that $S'_\infty\cdot y'$ is closed under multiplication. Then Corollary \ref{cor:bdd_sym_fix} implies $S'_\infty \cdot y'=\{y'\}$, a contradiction. Hence, Case 2 cannot happen.

   We are left with $l_i\ll m_i$ as the only possibility; this completes the proof.
\end{proof}

\begin{lem}\label{lem:egh_Q}
   (1) Let $t$ be a rational number, then associated to (\ref{eq:orbit_R_sym}) and after passing to a convergent subsequence if necessary, we have $$\gamma^{\lfloor m_i t \rfloor} \overset{GH} \to (t,h)\in \mathbb{R}\times K=G$$
   for some $h\in K$.\\
   (2) Let $t_1<t_2$ be rational numbers of the same sign and let $w\in(t_1,t_2)$, then we can find a sequence of integers $k_i\in [\lfloor m_i t_1 \rfloor,\lfloor m_i t_2 \rfloor]$ such that $\gamma^{k_i}$ converges to $(w,h)\in \mathbb{R}\times K$ for some $h\in K$.
\end{lem}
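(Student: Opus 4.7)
The plan is to establish (1) first and then bootstrap to (2) via a connectedness argument in the limit. Throughout I will use the splitting $G=\mathbb{R}\times K$ and the labeling convention from Convention~\ref{conv:R_subgroup}, together with the fact from Proposition~\ref{prop:orbit_R} that $Gy$ is homeomorphic to $\mathbb{R}$.

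For part (1), I would first settle the basic case $t=1/q$ with $q$ a positive integer, and then extend to arbitrary $p/q$. Set $n_i=\lfloor m_i/q\rfloor$; since $n_i<m_i$, the minimality in Convention~\ref{conv:R_subgroup} gives $d(\gamma^{n_i}\hat{q},\hat{q})<r_i$, so $\gamma^{n_i}$ has uniformly bounded displacement at scale $r_i$ and, up to a subsequence, converges to some $g_{1/q}\in G$. To identify $g_{1/q}$, I would raise it to the $q$-th power: $\gamma^{qn_i}=\gamma^{m_i}\cdot\gamma^{-(m_i-qn_i)}$, where the exponent $m_i-qn_i$ lies in $[0,q)$ and is therefore bounded independently of $i$. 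Consequently $\gamma^{-(m_i-qn_i)}$ converges, along a subsequence, to some $h'\in K$ (its displacement at scale $r_i$ tends to zero, so the limit fixes $y$). Combined with $\gamma^{m_i}\to(1,e)$ from Convention~\ref{conv:R_subgroup}, this yields $g_{1/q}^{q}=(1,(h')^{-1})$. Writing $g_{1/q}=(a,\kappa)$ and using that $G=\mathbb{R}\times K$ is abelian forces $qa=1$, so $g_{1/q}=(1/q,\kappa)$ has the desired form. For general $t=p/q$, the identity $\lfloor m_i p/q\rfloor=p\lfloor m_i/q\rfloor+\delta_i$ with $|\delta_i|$ bounded (a direct floor computation) lets me factor $\gamma^{\lfloor m_i p/q\rfloor}=(\gamma^{n_i})^p\cdot\gamma^{\delta_i}$; the first factor converges to $g_{1/q}^{p}=(p/q,\kappa^{p})$ and the second to an element of $K$, so the product has the required form $(t,h)$.

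For part (2), the plan is to exhibit a single connected chain whose Hausdorff limit contains the target orbit point $(w,e)y$. Consider $F_i=\{\gamma^{k}\hat{q}/r_i : \lfloor m_i t_1\rfloor\le k\le\lfloor m_i t_2\rfloor\}\subset r_i^{-1}\widehat{N}$. The first step is a uniform displacement bound $F_i\subset\overline{B_R}(\hat{q})$ for a fixed $R$: for any such $k$ (assuming $t_1,t_2>0$ without loss of generality), write $k=jm_i+k'$ with $j\le\lfloor t_2\rfloor$ and $0\le k'<m_i$; the triangle inequality together with $d(\gamma^{m_i}\hat{q},\hat{q})\le r_i+C_0$ (a consequence of the minimality of $m_i$) and $d(\gamma^{k'}\hat{q},\hat{q})<r_i$ yields the bound, where $C_0=d(\gamma\hat{q},\hat{q})$. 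Since consecutive points in $F_i$ differ by $C_0/r_i\to 0$, each $F_i$ is an $\epsilon_i$-connected chain with $\epsilon_i\to 0$. Passing to a subsequence, $F_i$ Hausdorff converges inside $\overline{B_R}(y)$ to a closed connected subset $F$; by the equivariant convergence and $F_i\subset\Gamma\hat{q}/r_i$, one has $F\subset Gy\cap\overline{B_R}(y)$. Part (1) ensures $F$ contains $(t_1,e)y$ and $(t_2,e)y$. Since Proposition~\ref{prop:orbit_R} says $Gy\cong\mathbb{R}$, any closed connected subset containing these two points must contain the full arc $\{(s,e)y : s\in[t_1,t_2]\}$; in particular $(w,e)y\in F$. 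Therefore there exist $k_i\in[\lfloor m_i t_1\rfloor,\lfloor m_i t_2\rfloor]$ with $\gamma^{k_i}\hat{q}/r_i\to(w,e)y$, and after a further subsequence $\gamma^{k_i}\to g^{*}\in G$ with $g^{*}y=(w,e)y$, forcing $g^{*}=(w,h)$ for some $h\in K$.

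The main obstacle I expect is identifying the $\mathbb{R}$-component of the limits. In part (1) this reduces to controlling the bounded-exponent factor $\gamma^{m_i-qn_i}$ so that the algebraic identity $g_{1/q}^{q}\equiv(1,\cdot)\bmod K$ forces $a=1/q$. In part (2) the analogous difficulty is ensuring that the Hausdorff limit $F$ does not merely sit inside $Gy$ but fills the entire orbit arc; this uses both Proposition~\ref{prop:orbit_R}, which supplies the one-dimensional topology of $Gy$, and the $\epsilon_i$-chain structure of $F_i$, which in turn relies on taking all consecutive integer powers in the range rather than some arbitrary subfamily.
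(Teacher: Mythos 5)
Your proposal is correct and follows essentially the same route as the paper: part (1) is the same power-and-floor comparison with $\gamma^{m_i}$ (the paper compares $\gamma^{b\lfloor m_i a/b\rfloor}$ with $\gamma^{m_i a}$ directly, while you build $g_{1/q}$ first and take $p$-th powers, which is the same computation reorganized), and part (2) is the same connectedness argument for the limit of the chain $\{\gamma^k\hat q\}$ combined with $Gy\cong\mathbb{R}$, phrased via Hausdorff convergence of orbit points rather than the paper's convergence of symmetric subsets. The only blemish is the immaterial sign slip $g_{1/q}^q=(1,(h')^{-1})$ versus $(1,h')$, which does not affect the identification of the $\mathbb{R}$-component.
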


\begin{proof}
   (1) Below we assume that $t>0$; the case $t<0$ can be handled similarly. Let $t=a/b\in\mathbb{Q}_+$, where $a,b$ are positive and co-prime integers. It is clear that
  $$m_i a -b \le b\cdot \lfloor \frac{m_ia}{b} \rfloor \le m_i a,$$
  where $\lfloor \cdot \rfloor$ is the floor function. For each $j\in \{-b,...,-1,0\}$, associated to (\ref{eq:orbit_R_sym}), we have convergence
  $$\gamma^{m_i a - j} \overset{GH}\to (a,\beta_j) \in \mathbb{R}\times K,$$
  where $\beta_j\in K$ is the limit of $\gamma^{-j}$. Hence, we can pass to a subsequence and obtain
  $$\gamma^{b\cdot \lfloor \frac{m_ia}{b} \rfloor} \overset{GH}\to (a,\beta),$$
  where $\beta \in \{\beta_{j} | j=0,1,...,b\}$. Then for
  any subsequential limit $\lambda\in G$ of $\gamma^{\lfloor \frac{m_ia}{b}\rfloor}$, $\lambda$ satisfies $\lambda^b = (a,\beta)$. Therefore, $\lambda=(a/b,h)=(t,h)$ for some $h\in K$ with $h^b=\beta$. 

  (2) We assume that $t_1$ and $t_2$ are both positive. The other case can be handled similarly. 

  We consider a sequence of subsets
  $$T_i=\{ \gamma^m \ |\ m=\lfloor m_i t_1\rfloor,\lfloor m_i t_1\rfloor+1,..., \lfloor m_i t_2\rfloor \}$$
  and its convergence
  $$(r_i^{-1}\widehat{N},\hat{q},\langle \gamma \rangle,T_i, \gamma^{\lfloor m_i t_1\rfloor},\gamma^{\lfloor m_i t_2\rfloor})\overset{GH}\longrightarrow (Y,y,G,T_\infty,(t_1,h_1),(t_2,h_2)),$$
  where the limits $(t_1,h_1)$ and $(t_2,h_2)$ are guaranteed by (1). We observe that $T_\infty\cdot y$ is connected because $T_\infty \cdot y$ is bounded and $r_i^{-1}d(\gamma\hat{q},\hat{q})\to 0$. Together with Proposition \ref{prop:orbit_R} that $Gy$ is homeomorphic to $\R$, we conclude that $T_\infty \cdot y$ contains the orbit point $(w,e)\cdot y$. In particular, there is some sequence $\gamma^{k_i}\in T_i$ whose limit is $(w,h)$ for some $h\in K$.
\end{proof}

Now we show that for each $v>0$, the symmetric subsets $S(m_i v)$ converge to the cylinder-like subset $[-v,v]\times K$ in $G$.

\begin{prop}\label{prop:sym_gh}
  For each $v>0$, associated to (\ref{eq:orbit_R_sym}) and after passing to a convergent subsequence if necessary, we consider the convergence
  \begin{equation}\label{eq:sym_subsets}
  (r_i^{-1}\widehat{N},\hat{q},\langle \gamma \rangle,S(m_i v))\overset{GH}\longrightarrow (Y,y,G=\mathbb{R}\times K,S^v_\infty),
  \end{equation}
  where $S^v_\infty$ is a closed symmetric subset of $G$. Then $S^v_\infty= [-v,v]\times K \subseteq G$.
\end{prop}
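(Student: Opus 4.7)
The plan is to show the two inclusions $[-v,v]\times K\subseteq S^v_\infty$ and $S^v_\infty\subseteq[-v,v]\times K$ separately, in each case building on Lemmas \ref{lem:isotropy_elements} and \ref{lem:egh_Q} together with the closedness of $S^v_\infty$ and the structure $G=\R\times K$.

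For the inclusion $[-v,v]\times K\subseteq S^v_\infty$, I would first use Lemma \ref{lem:egh_Q}(1) to produce, for each rational $t\in[-v,v]$, some element $(t,h_t)\in S^v_\infty$ as the limit of $\gamma^{\lfloor m_i t\rfloor}$, noting that this element eventually lies in $S(m_iv)$ whenever $|t|<v$. To fill out the $K$-direction, Lemma \ref{lem:isotropy_elements} writes every $\beta\in K$ as a limit of $\gamma^{l_i}$ with $|l_i|\ll m_i$; for rational $|t|<v$ and $i$ large, $|\lfloor m_i t\rfloor+l_i|\le\lfloor m_i v\rfloor$, so $\gamma^{\lfloor m_i t\rfloor+l_i}\in S(m_iv)$ converges to $(t,h_t)\cdot\beta=(t,h_t\beta)\in S^v_\infty$. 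As $\beta$ ranges over $K$, so does $h_t\beta$, giving $\{t\}\times K\subseteq S^v_\infty$ for every rational $t\in(-v,v)$. Since $S^v_\infty$ is closed and the union of these fibres is dense in $[-v,v]\times K$, this inclusion follows.

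For the reverse inclusion, the key intermediate claim is: if $\gamma^{k_i}\to g\in G$ and $k_i/m_i\to s\in\R$ along a common subsequence, then the $\R$-component of $g$ in the splitting $G=\R\times K$ equals $s$. Granted this, any $g\in S^v_\infty$ arises as $g=\lim\gamma^{k_i}$ with $|k_i|\le\lfloor m_i v\rfloor$, so along a subsequence $k_i/m_i\to s\in[-v,v]$, forcing $g\in\{s\}\times K\subseteq[-v,v]\times K$.

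To prove the intermediate claim, I would first extend Lemma \ref{lem:egh_Q}(1) to arbitrary real exponents: for $s\in\R$, any subsequential limit of $\gamma^{\lfloor m_i s\rfloor}$ in $G$ has $\R$-component $s$. This can be done by approximating $s$ by rationals $s_n\to s$, applying Lemma \ref{lem:egh_Q}(1) to each $s_n$, and using Lemma \ref{lem:isotropy_elements} with a diagonal selection to absorb the difference $\gamma^{\lfloor m_i s\rfloor-\lfloor m_i s_n\rfloor}$ into $K$ (its exponent is $\ll m_i$ once $n$ is large); compactness of $K$ then transfers $\R$-component $s$ to the limit. With this extension in hand, set $l_i:=k_i-\lfloor m_i s\rfloor$, so that $l_i/m_i\to 0$ and hence $|l_i|\ll m_i$; Lemma \ref{lem:isotropy_elements} places every subsequential limit of $\gamma^{l_i}$ in $K$, and factoring $\gamma^{k_i}=\gamma^{\lfloor m_i s\rfloor}\cdot\gamma^{l_i}$ yields $g=(s,h)\cdot k_0\in\{s\}\times K$, as required. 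The main obstacle will be the real-exponent extension of Lemma \ref{lem:egh_Q}(1), which requires careful coordination of the rational approximation $s_n$, the blow-down index $i$, and subsequential selections in the compact group $K$; once that is in place, everything else reduces to the group law in $G=\R\times K$ and the closedness of $S^v_\infty$.
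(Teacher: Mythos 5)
Your proof of the inclusion $[-v,v]\times K\subseteq S^v_\infty$ coincides with Steps 1--2 of the paper's proof (rational fibres from Lemma \ref{lem:egh_Q}(1), the $K$-direction from Lemma \ref{lem:isotropy_elements}, then closedness). For the reverse inclusion you take a genuinely different route. The paper argues by contradiction: if $\gamma^{l_i}\in S(m_iv)$ converged to $(w,\beta)$ with, say, $w>v$, it invokes Lemma \ref{lem:egh_Q}(2) to produce a second sequence $\gamma^{k_i}$ with $k_i\in[\lfloor m_it_1\rfloor,\lfloor m_it_2\rfloor]$, $v<t_1<w<t_2$, converging to the same $\R$-component $w$; then $\gamma^{k_i-l_i}$ converges into $K$ while $k_i-l_i\gtrsim(t_1-v)m_i$ is not $o(m_i)$, contradicting Lemma \ref{lem:isotropy_elements}. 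You instead prove the positive correspondence ``$k_i/m_i\to s$ implies the $\R$-component of the limit is $s$,'' which is stronger (it yields Proposition \ref{prop:sym_saturated} for free) but buys you the delicate real-exponent extension. Two cautions there. First, your parenthetical ``its exponent is $\ll m_i$ once $n$ is large'' is wrong as stated: for \emph{fixed} $n$ the exponent $\lfloor m_is\rfloor-\lfloor m_is_n\rfloor$ is of order $|s-s_n|\,m_i$, a fixed positive fraction of $m_i$, so Lemma \ref{lem:isotropy_elements} does not apply; it applies only along the diagonal $n=n(i)\to\infty$, where the ratio tends to $0$. Second, along that diagonal the rational exponents vary with $i$, so you must take nested subsequences over the countably many $s_n$, quantify the approximation to control how fast $n(i)$ may grow, and use compactness of $K$ to pass the isotropy parts $h_{n(i)}$ to a limit. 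All of this can be carried out, but note that Lemma \ref{lem:egh_Q}(2) already hands you, for any real $w$, a comparison sequence with prescribed limit $\R$-component $w$ and exponents in a prescribed range; quotienting your $\gamma^{k_i}$ against it and applying Lemma \ref{lem:isotropy_elements} proves your intermediate claim directly and lets you skip the real-exponent extension entirely.
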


\begin{proof}
  \textbf{Step 1.} We first prove that 
  $$S^v_\infty \cdot y \supseteq \{ tz | t\in [-v,v] \},$$
  where $z=(1,e)\cdot y$ and the notation $tz$ is introduced in Definition \ref{def:one_para_orbit}. From Lemma \ref{lem:egh_Q}. we see that for any rational number $t\in(-v,v)$, there is an element $\lambda=(t,h)\in S_\infty^v$. In particular, 
  $$ S^v_\infty \cdot y \ni \lambda\cdot y = (t,h)\cdot y = tz. $$
  Given that $S^v_\infty\cdot y$ is a closed subset of $Y$ and $t$ can be any arbitrary rational number within $(-v,v)$, we have completed Step 1.

  \textbf{Step 2.} $S_\infty^v \supseteq [-v,v] \times K$. In Step 1, we have shown that for any rational number $t\in (0,v)$, there is a sequence of elements $\gamma^{\lfloor m_it \rfloor}\overset{GH}\to (t,\beta_t)\in \mathbb{R}\times K$ associated to (\ref{eq:orbit_R_sym}). For any $\beta \in K$, by Lemma \ref{lem:isotropy_elements}, there is a sequence $l_i$ such that $\gamma^{l_i}\overset{GH}\to \beta$ and $|l_i|\ll m_i$. Since $\lfloor m_it \rfloor + l_i < m_i v$ for all $i$ large and $S^v_\infty$ is closed in $G$, we see that $S^v_\infty$ includes all the elements in $[0,v]\times K$. The other half $[-v,0]\times K$ can be proved similarly.

  \textbf{Step 3.} $S_\infty^v \subseteq [-v,v] \times K$. Suppose otherwise, then we would have a sequence $\gamma^{l_i}\in S(m_iv)$, where $l_i>0$, such that 
  $$\gamma^{l_i}\overset{GH}\to (w,\beta) \in \mathbb{R}\times K $$
  associated to (\ref{eq:orbit_R_sym}), where $w>v$ or $w<-v$.
  
  When $w>v$, let $k$ be an integer such that $kv>w$. We choose rational numbers $t_1,t_2$ such that $v<t_1<w<t_2<kv$. By Lemma \ref{lem:egh_Q}, there is a sequence of integers $k_i \in [\lfloor m_i t_1 \rfloor,\lfloor m_i t_2 \rfloor]$ such that
  $$\gamma^{k_i}\overset{GH}\longrightarrow (w,h) \in \mathbb{R}\times K$$
  for some $h\in K$. Consequently,
  $$\gamma^{k_i-l_i}\overset{GH}\longrightarrow (0, h \beta^{-1})\in K.$$
  However, it is clear that the sequence $k_i-l_i$ does not satisfy $k_i-l_i\ll m_i$  by our construction, thus we end in a contradiction to Lemma \ref{lem:isotropy_elements}. 

  When $w<-v$, we will derive a contradiction similarly. In fact, we shall show that $w<0$ cannot happen by using $l_i>0$. We suppose that $w<0$ and look for a contradiction. Let $t_1<t_2<0$ be two negative rational numbers such that $w\in (t_1,t_2)$. Then similarly by Lemma \ref{lem:egh_Q} a sequence of negative integers $k_i\in [\lfloor m_i t_1 \rfloor,\lfloor m_i t_2 \rfloor]$ such that 
  $$\gamma^{k_i}\overset{GH}\longrightarrow (w,h) \in \mathbb{R}\times K.$$
  Then, $\gamma^{l_i-k_i}$ converges to an isotropy element, but $l_i-k_i \ll m_i$ does not hold; a contradiction to Lemma \ref{lem:isotropy_elements}. That completes Step 3 and the proof of the proposition.
\end{proof}

\begin{rem}\label{rem:limit_sym_sign}
   We remark that in Step 3 of the proof above, we showed that any limit of a sequence $\gamma^{l_i}\in S(m_iv)$ associated to (\ref{eq:sym_subsets}) belongs to $[0,v]\times K$, if $l_i$ is a sequence of positive numbers.
\end{rem}

Next, we show that the convergence $S(m_i v)\overset{GH}\to [-v,v]\times K$ in Proposition \ref{prop:sym_gh} is \emph{saturated}, in the sense that we cannot have a sequence of elements outside $S(m_i v)$ converging to the interior $(-v,v)\times K$. 

\begin{prop}\label{prop:sym_saturated}
   Let $v>0$ and $w\in (-v,v)$. Suppose that $\gamma^{l_i}$ is a sequence converging to $(w,h)\in \mathbb{R}\times K=G$ associated to (\ref{eq:sym_subsets}). Then $\gamma^{l_i} \in S(m_i v)$ for all $i$ large. 
\end{prop}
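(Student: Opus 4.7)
The plan is to argue by contradiction and convert the failure of the conclusion into a violation of the isotropy criterion, Lemma~\ref{lem:isotropy_elements}. Suppose, for infinitely many $i$, that $\gamma^{l_i}\notin S(m_iv)$; passing to such a subsequence gives $|l_i|>\lfloor m_iv\rfloor$ for every $i$. Since $(w,h)^{-1}=(-w,h^{-1})$ still has first coordinate in $(-v,v)$, I may replace $l_i$ by $-l_i$ if needed and assume $l_i>\lfloor m_iv\rfloor$ for every $i$. I then split into cases according to the sign of $w$.

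If $w=0$ then $(w,h)\in K$, so Lemma~\ref{lem:isotropy_elements} forces $|l_i|\ll m_i$; but $l_i\ge\lfloor m_iv\rfloor+1\ge m_iv$ gives $l_i/m_i\ge v>0$, a contradiction. If $w>0$, choose rationals $0<t_1<w<t_2<v$. Lemma~\ref{lem:egh_Q}(2) then supplies a sequence of integers $k_i\in[\lfloor m_it_1\rfloor,\lfloor m_it_2\rfloor]$ such that $\gamma^{k_i}$ converges to some $(w,\tilde h)\in\R\times K$ associated to~\ref{eq:sym_subsets}. Consequently
$$\gamma^{l_i-k_i}\to(w,h)(w,\tilde h)^{-1}=(0,h\tilde h^{-1})\in K,$$
so Lemma~\ref{lem:isotropy_elements} forces $|l_i-k_i|\ll m_i$. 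However, the bounds $l_i\ge m_iv$ and $k_i\le\lfloor m_it_2\rfloor\le m_it_2$ yield $l_i-k_i\ge m_i(v-t_2)$, which grows linearly in $m_i$ since $v-t_2>0$; this is the desired contradiction. The case $w<0$ is symmetric: choose rationals $-v<t_1<w<t_2<0$, so the $k_i$ produced by Lemma~\ref{lem:egh_Q}(2) is negative while $l_i\ge m_iv>0$, whence $l_i-k_i\ge m_iv$ is again of linear order in $m_i$, contradicting $|l_i-k_i|\ll m_i$ obtained from $(0,h\tilde h^{-1})\in K$.

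The subtle point, in my view, is that one cannot directly apply Lemma~\ref{lem:egh_Q}(1) with $t=w$, since $w$ need not be rational. The role of Lemma~\ref{lem:egh_Q}(2) is to furnish a companion integer $k_i$ that is \emph{both} controlled in size by $\lfloor m_it_2\rfloor<m_iv$ \emph{and} whose associated group element has the same first coordinate $w$ as the limit of $\gamma^{l_i}$. Once such $k_i$ is available, the quotient $\gamma^{l_i-k_i}$ lies in $K$ in the limit, and the linear-in-$m_i$ gap between $l_i$ and $k_i$ directly contradicts the isotropy criterion.
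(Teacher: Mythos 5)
Your proof is correct and follows essentially the same route as the paper's: both arguments use Lemma~\ref{lem:egh_Q}(2) to produce a companion sequence $k_i\in[\lfloor m_it_1\rfloor,\lfloor m_it_2\rfloor]$ whose limit has the same $\mathbb{R}$-coordinate $w$, and then apply Lemma~\ref{lem:isotropy_elements} to $\gamma^{l_i-k_i}$ to force $|l_i-k_i|\ll m_i$. The only difference is presentational — you run the argument by contradiction from $|l_i|>\lfloor m_iv\rfloor$, whereas the paper concludes $\gamma^{l_i}\in S(m_iv)$ directly from $|l_i-k_i|\ll m_i$ and $k_i\le m_it_2<m_iv$ — so the two are logically the same proof.
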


\begin{proof}
   For $w=0$, we have already proved in Lemma \ref{lem:isotropy_elements} that $\gamma^{l_i}\overset{GH}\to (e,h)$ implies $|l_i|\ll m_i$. Below, we assume that $w>0$. The case $w<0$ can be proved similarly. The proof is similar to Step 3 in the proof of Proposition \ref{prop:sym_gh}.

   Suppose that $\gamma^{l_i}\overset{GH}\to (w,h)$ associated to (\ref{eq:orbit_R_sym}) with $w\in (0,v)$. We choose rational numbers $t_1,t_2$ such that $0<t_1<w<t_2<v$.
   Then by Lemma \ref{lem:egh_Q}, we have a sequence of integers $k_i\in [\lfloor m_i t_1 \rfloor,\lfloor m_i t_2 \rfloor]$ such that 
  $$\gamma^{k_i}\overset{GH}\longrightarrow (w,\beta) \in \mathbb{R}\times K.$$
  Then $\gamma^{l_i-k_i}$ converges to an isotropy element. According to Lemma \ref{lem:isotropy_elements}, we obtain $|l_i-k_i|\ll m_i$. Hence $\gamma^{l_i}\in S(m_i v)$ for all $i$ large.
\end{proof}

We close this subsection with a distance estimate on $\widehat{N}$, which is related to Corollary \ref{cor:boomer_control}.

\begin{lem}\label{lem:larger_power_dist}
   Let $l_i$ be a sequence of integers with $l_i\ge m_i$. Then
   $$d(\gamma^{l_i} \hat{q},\hat{q})\ge (2C)^{-1} r_i$$
   for all $i$ large, where $C$ is the constant in Lemma \ref{lem:boomer_control}. 
\end{lem}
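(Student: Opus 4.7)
The plan is to argue by contradiction. Suppose there is an infinite subsequence (which I relabel) along which $d(\gamma^{l_i}\hat q,\hat q)<(2C)^{-1}r_i$. Since the displacements $d(\gamma^{l_i}\hat q,\hat q)/r_i$ are uniformly bounded, the equivariant Gromov--Hausdorff precompactness of elements with bounded displacement (the extension of \cite[Proposition 3.6]{FY92} recalled after Definition~\ref{def_conv_symsubset}) lets me pass to a further subsequence so that, associated to the convergence (\ref{eq:orbit_R_sym}), $\gamma^{l_i}\overset{GH}\longrightarrow h\in G$. Write $h=(w,k)\in\mathbb{R}\times K=G$.

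The heart of the argument is to upgrade this to the bound $|w|\ge 1$, via the saturation statement in Proposition~\ref{prop:sym_saturated}. Fix any $v\in(0,1)$. Because $l_i\ge m_i>\lfloor m_iv\rfloor$ for all $i$ sufficiently large, the element $\gamma^{l_i}$ does \emph{not} lie in $S(m_iv)$. Applying the contrapositive of Proposition~\ref{prop:sym_saturated} to the convergence (\ref{eq:sym_subsets}) with parameter $v$ (whose limit set is $[-v,v]\times K$ by Proposition~\ref{prop:sym_gh}), I conclude $w\notin(-v,v)$, i.e., $|w|\ge v$. Letting $v\nearrow 1$ gives $|w|\ge 1$.

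To close, I invoke Corollary~\ref{cor:boomer_control}. Take $z=(\mathrm{sgn}(w),e)\cdot y\in Gy-\{y\}$; under Convention~\ref{conv:R_subgroup} both $(1,e)\cdot y$ and its inverse image $(-1,e)\cdot y$ are at distance $1$ from $y$, so $d(z,y)=1$. In the notation of Definition~\ref{def:one_para_orbit}, we have $hy=(w,e)\cdot y=|w|\cdot z$ with $|w|\ge 1$, so Corollary~\ref{cor:boomer_control} yields $d(hy,y)=d(|w|\cdot z,y)\ge C^{-1}d(z,y)=C^{-1}$. Passing to the limit in $d(\gamma^{l_i}\hat q,\hat q)/r_i\to d(hy,y)$, we obtain $d(\gamma^{l_i}\hat q,\hat q)\ge(C^{-1}-o(1))r_i>(2C)^{-1}r_i$ for $i$ large, contradicting the standing assumption.

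The only real obstacle is the second step: translating the integer-level inequality $l_i\ge m_i$ into the geometric constraint $|w|\ge 1$ on the $\mathbb{R}$-component of the limit. Without the saturation property of Proposition~\ref{prop:sym_saturated}, one cannot rule out the possibility that $\gamma^{l_i}$ drifts into the interior $(-1,1)\times K$ of the limit cylinder; once saturation is in place the rest is a routine combination of compactness and the orbit lower bound of Corollary~\ref{cor:boomer_control}.
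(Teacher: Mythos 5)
Your proof is correct, and it takes a genuinely different (and cleaner) route than the paper's. The paper also begins by extracting a limit $h$ of $\gamma^{l_i}$ under the contradiction hypothesis, but then splits into cases according to whether $k_i=\lfloor l_i/m_i\rfloor$ stays bounded: in the bounded case it decomposes $l_i=km_i+o_i$ and uses Proposition \ref{prop:sym_gh} (with $v=1$) together with Remark \ref{rem:limit_sym_sign} to place $hy$ on the arc $\{tz\ |\ t\in[k,k+1]\}$ before invoking Corollary \ref{cor:boomer_control}; in the unbounded case it rescales by a larger factor $d_i\gg r_i$ and reruns the argument of Case 2 of Lemma \ref{lem:isotropy_elements} to contradict Corollary \ref{cor:bdd_sym_fix}. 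You avoid this case split entirely by reading off $|w|\ge 1$ from the contrapositive of the saturation statement (Proposition \ref{prop:sym_saturated}) applied for every $v<1$; this works uniformly because the hard case $l_i/m_i\to\infty$ is already absorbed into Lemma \ref{lem:isotropy_elements} (on which Proposition \ref{prop:sym_saturated} relies), and there is no circularity since that proposition precedes the present lemma. Both arguments finish identically with Corollary \ref{cor:boomer_control}. Two small points are worth making explicit: (i) the limit of $\gamma^{l_i}$ is unchanged when passing to the further subsequences needed to make each $S(m_iv)$ converge, so the bounds $|w|\ge v$ all concern the same $w$; and (ii) $C\ge 1$ (take $t=1$ in Lemma \ref{lem:boomer_control}), so $C^{-1}>(2C)^{-1}$ and the final contradiction is genuine. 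Both are immediate.
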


\begin{proof}
     Suppose the contrary, then, after passing to a subsequence if necessary, we would have $\gamma^{l_i}\overset{GH}\to h\in G$ associated to (\ref{eq:orbit_R_sym}) with $d(hy,y)\le (2C)^{-1}$. Since $l_i\ge m_i$, for each $i$, we can choose an integer $k_i\ge 1$ such that
     $$k_i m_i \le  l_i \le (k_i +1) m_i.$$

     If the sequence $k_i$ is uniformly bounded, we pass to a subsequence and assume $k_i=k$ is a constant. We further write $l_i=km_i+o_i$, where $0\le o_i \le m_i$. Then associated to (\ref{eq:orbit_R_sym}), we have
   $$h=\lim \gamma^{l_i}= \lim (\gamma^{m_i})^k \cdot \lim \gamma^{o_i}= (k,e) \cdot g_o,$$
   where $g_o \in S_\infty$ is the limit of $\gamma^{o_i}$.
   By Proposition \ref{prop:sym_gh} with $v=1$ and Remark \ref{rem:limit_sym_sign}, $g_o=(w,\beta)$ for some $w\in [0,1]$ and $\beta \in K$. Let $z=(1,e)\cdot y$, then
   $$hy = (k,e)\cdot g_o y = (k+w)z\in \{ tz | t\in [k,k+1] \}.$$
   Then $d(hy,y)\le (2C)^{-1}$ contradicts Corollary \ref{cor:boomer_control}.

   If $k_i\to \infty$ for some subsequence, we shall follow a similar argument in Lemma \ref{lem:isotropy_elements} Case 2 to derive a contradiction. Let 
   $$d_i =\max \{ d(\gamma^m \hat{q},\hat{q}) | m=0,1,...,l_i \}.$$ Then $d_i\gg r_i$. After blowing down $\widehat{N}$ by $d_i^{-1}$:
   $$(d_i^{-1}\widehat{N},\hat{q},\langle \gamma \rangle,S(l_i),\gamma^{l_i})\overset{GH}\longrightarrow (Y',y',G',S'_\infty,h').$$
   Similar to the proof in Lemma \ref{lem:isotropy_elements} Case 2, one can verify that
   $$h'y'=y',\quad \{y'\} \subsetneqq S'_\infty \cdot y' \subseteq \overline{B_1}(y'),$$
   and $S'_\infty \cdot y'$ is closed under multiplication; a contradiction to Corollary \ref{cor:bdd_sym_fix}.
\end{proof}

\subsection{Statement of the Induction Theorem and an outline}\label{subsec:outline}

In this section, we state a premium version of the Induction Theorem and explain the ideas behind its proof.

We first recall the setup. Let $M$ be an open manifold with $\Ric\ge0$ and linear volume growth. Let $(\widehat{M},\hat{p})$ be a covering space of $(M,p)$ with a finitely generated torsion-free nilpotent covering group $\Lambda$. We consider a series of normal subgroups of $\Lambda$:
\begin{equation}\label{eq:series_subgroups}
\{e\}=\Lambda_0 \triangleleft \Lambda_1 \triangleleft ... \triangleleft \Lambda_{k-1} \triangleleft \Lambda_k =\Lambda,
\end{equation}
such that each $\Lambda_{j+1}/\Lambda_j$ is isomorphic to $\mathbb{Z}$. This corresponds to a tower of successive covering spaces:
\begin{equation}\label{eq:tower_covers}
\widehat{M}=\widehat{M}_0\to \widehat{M}_{1} \to ... \to \widehat{M}_{k-1} \to \widehat{M}_k=M,
\end{equation}
where $\widehat{M}_j = \widehat{M}/\Lambda_j$. It follows from the construction that each covering map $\widehat{M}_j \to \widehat{M}_{j+1}$ has covering group $\Lambda_{j+1}/\Lambda_j \simeq \mathbb{Z}$.

\begin{thm}[Induction Theorem+]\label{thm:induction_plus}
Let $j=0,1,...,k$ and let $r_i\to\infty$ be a sequence. After passing to a convergent subsequence if necessary, we consider
\begin{equation}\label{eq:induction}
(r_i^{-1}\widehat{M}_{k-j},\hat{p}_{k-j}) \overset{GH}\longrightarrow (Y_{k-j},y_{k-j}).
\end{equation}
Then, the following hold.\\
(1) $(Y_{k-j},y_{k-j})$ is isometric to either a Euclidean space $(\mathbb{R}^{j+1},0)$ or a Euclidean halfspace $(\mathbb{R}^j \times [0,\infty),0)$.\\
(2) $Y_{k-j}$ has a limit renormalized measure as (a multiple of) the Lebesgue measure.\\
(3) When $Y_{k-j}=\mathbb{R}^j\times [0,\infty)$, for every $s>0$, there is a sequence of domains $D_{i}(s)\subseteq \widehat{M}_{k-j}$ such that\\
(3A) $D_{i}(s)\overset{GH}\to [-s,s]^j\times [0,s] \subset Y_{k-j}$ associated to (\ref{eq:induction});\\
(3B) for every $u\in (-s,s)^j \times [0,s)$ and every sequence $u_i\overset{GH}\to u$ associated to (\ref{eq:induction}), it holds that $u_i\in D_i(s)$ for all $i$ large.
\end{thm}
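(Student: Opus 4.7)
We induct on $j$, the base case $j=0$ being $\widehat{M}_k=M$ itself. Parts (1) and (2) are then exactly Corollary \ref{cor:induction_base} (itself built on Theorems \ref{thm:sormani} and \ref{thm:volume_ratio_limit}), and for (3) in the ray subcase one takes $D_i(s)=\overline{B_{sr_i}}(p)$, whose Gromov--Hausdorff convergence to $[0,s]\subset[0,\infty)$ is immediate and whose saturation property (3B) follows from the uniform continuity of Gromov--Hausdorff approximations on balls. In the line subcase (3) is vacuous.

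For the inductive step, assume (1)--(3) for $\widehat{M}_{k-j}$ and consider the $\mathbb{Z}$-cover $\pi\colon\widehat{M}_{k-(j+1)}\to\widehat{M}_{k-j}$ with generator $\gamma$. By the inductive hypothesis, every asymptotic cone of $\widehat{M}_{k-j}$ is a Euclidean space or halfspace, hence polar at its basepoint, so Proposition \ref{prop:non_max_escape} forces $E(\widehat{M}_{k-(j+1)},\hat{p}_{k-(j+1)},\mathbb{Z})<1/2$. Passing to any subsequential equivariant blow-down
\begin{equation*}
(r_i^{-1}\widehat{M}_{k-(j+1)},\hat{p}_{k-(j+1)},\langle\gamma\rangle)\overset{GH}\longrightarrow(Y',y',G),
\end{equation*}
with quotient $(Y_{k-j},y_{k-j})$, Proposition \ref{prop:orbit_R} yields that the orbit $Gy'$ is homeomorphic to $\mathbb{R}$, so $G=\mathbb{R}\times K$ with $K$ the compact stabilizer of $y'$, and we fix the splitting according to Convention \ref{conv:R_subgroup}.

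The $j$ independent lines forming the $\mathbb{R}^j$-factor of $Y_{k-j}$ through $y_{k-j}$ lift under $\pi$ to $j$ independent lines in $Y'$ through $y'$, so iterating Theorem \ref{thm:meas_split} presents $Y'=\mathbb{R}^j\times Z$ with $Z$ an $\mathrm{RCD}(0,n-j)$ space of essential dimension two. An equivariance argument (choosing the $\mathbb{R}$-subgroup so that its translation component along $\mathbb{R}^j$ is trivial) lets us assume $G$ acts only on the $Z$-factor, and the induced quotient $Z/G$ is a ray $[0,\infty)$ in the halfspace case or a line $\mathbb{R}$ in the Euclidean case. In the latter, one more line of $Y_{k-j}$ transverse to the $\mathbb{R}^j$-factor lifts to a line inside $Z$, and a final application of Theorem \ref{thm:meas_split}, together with the constraint $Z/G=\mathbb{R}$, forces $Z=\mathbb{R}^2$.

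The remaining halfspace case is the heart of the argument: we apply the plane/halfplane rigidity (Theorem \ref{thm:plane_halfplane_rigid}) to $Z$, whose only nontrivial hypothesis is the linear measure identity $\mathfrak{m}_Z(\Omega_r(v))=crv$. This, together with (3), we establish simultaneously by defining
\begin{equation*}
D_i(s)\;\coloneqq\;\bigcup_{\gamma'\in S(m_is)}\gamma'\cdot\widetilde{D}_i^{(k-j)}(s),
\end{equation*}
where $\widetilde{D}_i^{(k-j)}(s)\subset\widehat{M}_{k-(j+1)}$ is a chosen lift of the inductive domain $D_i^{(k-j)}(s)$ supplied by (3) for $\widehat{M}_{k-j}$, and $S(m_is)$ is the symmetric subset defined in Section \ref{subsec:cylinder} with $m_i$ chosen according to Convention \ref{conv:R_subgroup}. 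Propositions \ref{prop:sym_gh} and \ref{prop:sym_saturated} yield the convergence $D_i(s)\to[-s,s]^j\times\Omega_s(s)\subset\mathbb{R}^j\times Z$ and the saturation property (3B) when combined with the inductive (3B); by equivariance of $\pi$ and the fact that the $2\lfloor m_is\rfloor+1$ elements of $S(m_is)$ contribute with fundamental-domain overlaps controlled by Lemma \ref{lem:larger_power_dist}, the renormalized volume of $D_i(s)$ factorizes, and substituting the inductive Lebesgue conclusion (2) and Theorem \ref{thm:volume_ratio_limit} produces the required linear-in-$v$ identity for $\mathfrak{m}_Z(\Omega_r(v))$. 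Theorem \ref{thm:plane_halfplane_rigid} then closes the induction. The principal obstacle we anticipate is precisely this last volume bookkeeping — converting the covering-group combinatorics of $S(m_is)$ into exact linear dependence on $v$ — and it is this requirement which forces (3) to be part of the inductive hypothesis rather than a consequence of (1) and (2) alone.
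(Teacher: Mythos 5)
Your proposal is correct and follows essentially the same route as the paper: induction with part (3) carried along as a hypothesis, non-maximal escape rate (Proposition \ref{prop:non_max_escape}) plus Proposition \ref{prop:orbit_R} to obtain $\mathbb{R}$-orbits and the splitting $G=\mathbb{R}\times K$, splitting off the $\mathbb{R}^j$-factor, and the plane/halfplane rigidity applied to $Z$ with hypothesis (2) verified through the domains $S(m_iv)\cdot(\text{lift of }D_i(s))$. The only points to make precise are that the ``chosen lift'' should be $\overline{F\cap\pi^{-1}(D_i(s))}$ with $F$ the Dirichlet domain (so that its translates are essentially disjoint and the lift has the same volume as $D_i(s)$ --- this, rather than Lemma \ref{lem:larger_power_dist}, is what controls the overlaps, the latter being needed to compare $S(m_i)\cdot F_i(1)$ with the ball $B_{r_i}(\hat q)$ and fix the normalization), and that the parameters $s$ and $v$ must be kept independent when verifying $\mathfrak{m}_Z(\Omega_s(v))=csv$, with the diagonal choice $v=s$ reserved for part (3).
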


\begin{rems} 
    We give some remarks on Theorem \ref{thm:induction_plus}.\\
   (1) Theorem \ref{thm:induction_plus}(1) implies that the asymptotic cone of $\widehat{M}_{k-j}$ is unique. In fact, if both $(\mathbb{R}^{j+1},0)$ and $(\mathbb{R}^j \times [0,\infty),0)$ appear as asymptotic cones of $\widehat{M}_{k-j}$, then it would contradict the connectedness of the set of asymptotic cones under (pointed) Gromov--Hausdorff topology (see, for example, \cite[Proposition 2.1]{Pan19}).\\
   (2) Theorem \ref{thm:induction_plus}(2) asserts that $Y_{k-j}$ has some limit renormalized measure as (a multiple of) the Lebesgue measure. While it does not show every limit renormalized measure on $Y_{k-j}$ has this property, it is sufficient for our purpose.\\
   (3) Theorem \ref{thm:induction_plus}(1,2) is exactly the Induction Theorem (Theorem \ref{thm:induction}) in the introduction. Here, we added (3) as an auxiliary property, which facilitates the proof of Theorem \ref{thm:induction_plus}(1,2).
\end{rems}

The base step $j=0$ in Theorem \ref{thm:induction_plus}(1,2) follows immediately from Corollary \ref{cor:induction_base}. For Theorem \ref{thm:induction_plus}(3) with $j=0$, we can simply choose $D_i(s):=B_{r_i s}(p)\subseteq M$. 

Sections \ref{subsec:cube} and \ref{subsec:induction} will be devoted to proving the inductive step in Theorem \ref{thm:induction_plus} (modulo Theorem \ref{thm:plane_halfplane_rigid}), that is, proving the statement with $j+1$ by assuming it holds for $j$. In Sections \ref{subsec:cube} and \ref{subsec:induction}, we usually refer to one of the statements in Theorem \ref{thm:induction_plus} with $j$ as \emph{inductive assumption} ($\alpha$), where $\alpha\in \{1,2,3,3A,3B\}$

We give an outline of the inductive step and explain how Section \ref{subsec:cylinder} and the inductive assumptions are involved. For convenience, we write 
\begin{equation}\label{eq:induction_space}
(\widehat{N},\hat{q},\Gamma):=(\widehat{M}_{k-(j+1)},\hat{p}_{k-(j+1)},\mathbb{Z}),\quad (N,q):=(\widehat{M}_{k-j},\hat{p}_{k-j}).
\end{equation}
For any sequence $r_i\to\infty$, after passing to a subsequence, we have convergence
\begin{equation}\label{eq:induction_notation}
\begin{CD}
(r_i^{-1} \widehat{N},\hat{q},\Gamma\simeq \mathbb{Z}) @>GH>> (Y,y,G) \\
	@VV\pi V @VV \pi V\\
	(r_i^{-1} N,q) @>GH>> (X,x)=(Y/G,\bar{y}),
\end{CD}
\end{equation}
and we will continue to use this notation in the next two subsections. We have the inductive assumption that Theorem \ref{thm:induction_plus} holds on the bottom row of (\ref{eq:induction_notation}), and we need to prove the theorem for the top row. Because $X$ is polar at $x$ by inductive assumption (1), Propositions \ref{prop:non_max_escape} and \ref{prop:orbit_R} together imply that the orbit $Gy$ is homeomorphic to $\mathbb{R}$; in particular, we can write $G=\mathbb{R}\times K$, where $K$ is the isotropy subgroup of $G$ at $y$, as we did in Sections \ref{subsec:orbit_R} and \ref{subsec:cylinder}. 

The nontrivial case in the inductive step is when $X=\mathbb{R}^j \times [0,\infty)$. In this case, we have $Y=\mathbb{R}^j \times Z$ with $Z/G=[0,\infty)$. As indicated in the introduction, we shall investigate a limit renormalized measure $\meas_Y =\mathcal{L}^j \otimes \meas_Z$, in particular, the value of $\meas_Z(\Omega_s(v))$ (see Definition \ref{def:omega_set}), so that we can apply the plane/halfplane rigidity (Theorem \ref{thm:plane_halfplane_rigid}) to $Z$. 

To achieve this, for each $s,v>0$ we will construct a sequence of appropriate domains $\mathcal{D}_i(s,v)\subseteq \widehat{N}$ such that associated to the top row of (\ref{eq:induction_notation}), it holds that
\begin{equation}\label{eq:induction_meas_outline}
\mathcal{D}_i(s,v) \overset{GH}\to [-s,s]^j \times \Omega_s(v),\quad \meas_i(\mathcal{D}_i(s,v))\to \meas_Y([-s,s]^j \times \Omega_s(v))=(2s)^j \cdot \meas_Z(\Omega_s(v)),
\end{equation}
where $\meas_i$ is the renormalized measure on $r_i^{-1}\widehat{N}$ (see Definition \ref{defn:renorm_meas}). If we have these domains and can calculate their renormalized measure, then we can obtain $\meas_Z(\Omega_s(v))$. 

The construction of $\mathcal{D}_i(s,v)$ with limit $[-s,s]^j \times \Omega_s(v)$ has two components. The first one is related to the $s$-direction. By the inductive assumption (3A), we have a sequence of domains $D_i(s)\subseteq N$ with limit $[-s,s]^j \times [0,s]$ associated to the bottom row of (\ref{eq:induction_notation}). We lift $D_i(s)$ to the Dirichlet domain $F$ centered at $\hat{q}$ in $\widehat{N}$. Then, roughly speaking, $F_i(s):=\overline{F\cap \pi^{-1}(D_i(s))}$ covers the $s$-direction in $[-s,s]^j \times \Omega_s(v)$. The second component is related to the $v$-direction. Here, we use portions of $\Gamma$-action to translate $F_i(s)$. By Proposition \ref{prop:sym_gh}, the sequence of symmetric subsets $S(m_i v)$ has limit $[-v,v]\times K$, covering the desired $v$-direction. Hence $S(m_i v)\cdot F_i(s)$ should be the right choice for $\mathcal{D}_i(s,v)$. Proposition \ref{prop:sym_saturated} and inductive assumption (3B) will assure the measure convergence in (\ref{eq:induction_meas_outline}).

\subsection{Convergence of appropriate domains to cubes}\label{subsec:cube}

In this subsection, we follow the outline in Section \ref{subsec:outline} to study the convergence of domains $S(m_i v)\cdot F_i(s)$. We continue to use the notations in (\ref{eq:induction_notation}) and assume that the inductive assumptions hold on the bottom row of (\ref{eq:induction_notation}) with $X=\mathbb{R}^j \times [0,\infty)$, which is the nontrivial case in the inductive step. We observe that $Y$ in (\ref{eq:induction_notation}) splits isometrically as $\mathbb{R}^j \times Z$ with $G$ acting trivially on the $\mathbb{R}^j$-factor. Moreover, by Theorem \ref{thm:meas_split} any limit renormalized measure $\meas_Y$ splits as $\mathcal{L}^j \otimes \meas_Z$, where $\mathcal{L}^j$ is the Lebesgue measure on $\mathbb{R}^j$.

\begin{rem}
   As mentioned in Section \ref{subsec:outline}, we expect this sequence of domains $S(m_iv)\cdot F_i(s)$ to converge to the limit $[-s,s]^j \times \Omega_s(v)$. Eventually, the set $\Omega_s(v)$ is either $[-v,v]\times [-s,s]$ in a Euclidean plane or $[-v,v]\times [0,s]$ in a Euclidean halfplane, so we call these subsets $[-s,s]^j \times \Omega_s(v)$ \emph{cubes} in the title of this subsection.
\end{rem}

For readers' convenience, we recall some notations:\\
$\bullet$ Domains $D_i(s)\subseteq N$ from the inductive assumption (3) of Theorem \ref{thm:induction_plus},\\
$\bullet$ $F\subset \widehat{N}$ the Dirichlet domain centered at $\hat{q}$,\\
$\bullet$ $F_i(s)=\overline{F\cap \pi^{-1}(D_i(s))}\subseteq \widehat{N}$,\\
$\bullet$ $m_i\to +\infty$ chosen in Convention \ref{conv:R_subgroup}.\\
$\bullet$ $S(m_i v)=\{ \gamma^m | m=0,\pm 1,... \pm \lfloor m_iv\rfloor \}$ defined in (\ref{eq:sym_gamma}).

We start with a lemma on horizontal rays in $Z$, which follows from the non-branching property of $\RCD$ spaces \cite{Deng20}. Recall that $(Y,y)=(\mathbb{R}^j\times Z,(0,z))$ has quotient
   $$Y/G= \mathbb{R}^j\times (Z/G)=\mathbb{R}^j\times [0,\infty)=X.$$

\begin{lem}\label{lem:horizontal_rays}
   Let $\sigma:[0,\infty)\to Z$ be a unit speed horizontal ray emanating at $z$. Then\\
   (1) $\sigma|_{[0,t]}$ is the unique minimal geodesic from $Gz$ to $\sigma(t)$.\\
   (2) Any unit speed horizontal ray in $Z$ emanating at $z$ has the form $h\cdot \sigma$, where $h\in K$.
\end{lem}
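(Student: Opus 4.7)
The plan is to combine the horizontality of $\sigma$---which says its projection to $Z/G=[0,\infty)$ is the unit speed ray, equivalently $d(\sigma(t),Gz)=t$ for every $t\ge 0$---with the non-branching property of $\RCD(0,N)$ spaces recalled in Section~\ref{sec:pre}.

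\emph{Part (1).} Let $\tau\colon[0,t]\to Z$ be any minimal geodesic from some $w=\tau(0)\in Gz$ to $\sigma(t)$. For any $s>0$, consider the concatenation $\tau\ast\sigma|_{[t,t+s]}$; it has length $t+s$ and joins $w$ to $\sigma(t+s)$. Since $d(w,\sigma(t+s))\ge d(Gz,\sigma(t+s))=t+s$, this concatenation is itself a minimal geodesic. On the other hand, $\sigma|_{[0,t+s]}$ is a minimal geodesic from $z\in Gz$ to $\sigma(t+s)$. These two minimal geodesics to $\sigma(t+s)$ coincide on the non-degenerate subinterval $[t,t+s]$, so the non-branching of $\RCD$ spaces forces them to coincide on $[0,t+s]$; in particular $w=z$ and $\tau=\sigma|_{[0,t]}$.

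\emph{Part (2).} Let $\tilde{\sigma}$ be another unit speed horizontal ray at $z$. Since $\tilde{\sigma}(1)$ lies on the fiber over $1\in Z/G$, which equals the orbit $G\cdot\sigma(1)$, we may pick $g\in G$ with $\tilde{\sigma}(1)=g\,\sigma(1)$. Then $g\cdot\sigma|_{[0,1]}$ is a length-$1$ minimal geodesic from $gz\in Gz$ to $\tilde{\sigma}(1)$, while part (1) applied to $\tilde{\sigma}$ asserts that $\tilde{\sigma}|_{[0,1]}$ is the \emph{unique} minimal geodesic from $Gz$ to $\tilde{\sigma}(1)$. Hence $\tilde{\sigma}|_{[0,1]}=g\cdot\sigma|_{[0,1]}$; evaluating at $0$ gives $gz=z$, so $g\in K$. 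Setting $h:=g$, the two unit speed rays $\tilde{\sigma}$ and $h\sigma$ at $z$ agree on $[0,1]$, and non-branching extends the equality to all of $[0,\infty)$.

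\emph{Main obstacle.} The only delicate point is verifying that the curves entering the non-branching argument are genuine \emph{minimal} geodesics (rather than merely locally minimizing segments); this is precisely where the horizontality identity $d(\sigma(t),Gz)=t$ is used to guarantee that each concatenation saturates the distance to the orbit. Once minimality is established, the non-branching property of $\RCD(0,N)$ spaces delivers both claims with no further input.
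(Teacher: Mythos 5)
Your proof is correct and follows essentially the same route as the paper's: both parts rest on the horizontality identity $d(\sigma(t),Gz)=t$, the extension-past-the-endpoint trick to upgrade a competitor geodesic to a minimal geodesic sharing a nondegenerate segment with $\sigma$, and the non-branching property of $\RCD(0,N)$ spaces. Your treatment of part (1) merely merges the paper's two cases (competitor starting at $z$ versus at $gz\neq z$) into a single uniform argument, and part (2) is identical.
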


\begin{proof}
   (1) Suppose that there is a different minimal geodesic $\alpha$ from $Gz$ to $\sigma(t)$. If $\alpha$ starts at $z$, then we clearly end in a contradiction to the non-branching property \cite{Deng20} because $\sigma$ is a ray. Suppose that $\alpha$ starts at a point in $Gz$ other than $z$, say $gz\not=z$. Since $\sigma$ is a horizontal ray, for any $t'>t$ we have
   $$t' = d_Z(Gz,\sigma(t')) \le d_Z(gz,\sigma(t'))
   \le d_Z(gz,\sigma(t))+d_Z(\sigma(t),\sigma(t'))
   \le t+(t'-t)=t'.$$
   This shows that $\alpha$ joining $\sigma|_{[t,t']}$ is a minimal geodesic from $gz$ to $\sigma(t')$. Again, we end in a contradiction to the non-branching property.

   (2) Let $\beta$ be any unit speed horizontal ray in $Z$ emanating at $z$. Since $\beta(1)$ and $\sigma(1)$ project to the same point $1\in [0,\infty)=Z/G$, there is an element $g\in G$ such that $g\cdot \sigma(1)=\beta(1)$. We observe that $g\cdot \sigma$ is a horizontal ray at $g\cdot z$ and shares a common point $\beta(1)$ with $\beta$. Hence by (1), $g \cdot \sigma$ and $\beta$ coincide on $[0,t]$. In particular, $gz=z$, so we have $g\in K$. Also, $g \cdot \sigma$ and $\beta$ must coincide on $[t,\infty)$ as well; otherwise, we would result in a branching point at $\beta(1)$. This completes the proof.
\end{proof}

Now we show that $S(m_iv)\cdot F_i(s)$ indeed converges to $[-s,s]^j \times \Omega_s(v)$, as desired.

\begin{prop}\label{prop:domains_gh}
   Let $s,v>0$. After passing to a convergent subsequence, if necessary, we consider the convergence of domains
   \begin{equation}\label{eq:domains_gh}
   S(m_i v)\cdot F_i(s)\overset{GH}\to \mathcal{D}_\infty \subset Y
   \end{equation}
   associated to top row of (\ref{eq:induction_notation}) with $(Y,y)=(\mathbb{R}^j\times Z,(0,z))$, where $\mathcal{D}_\infty$ is a closed subset in $Y$. Then 
   $\mathcal{D}_\infty= [-s,s]^j\times \Omega_s(v)$.
\end{prop}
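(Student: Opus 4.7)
The plan is to prove the two inclusions $\mathcal{D}_\infty \supseteq [-s,s]^j \times \Omega_s(v)$ and $\mathcal{D}_\infty \subseteq [-s,s]^j \times \Omega_s(v)$ by combining Proposition~\ref{prop:sym_gh} (which identifies $S(m_iv) \overset{GH}\longrightarrow [-v,v] \times K$) with a structural description of blow-down limits of the Dirichlet fundamental domain $F$. The key preliminary is the following claim: any convergent subsequence $\hat{u}_i \in F_i(s)$ with $\hat{u}_i \overset{GH}\longrightarrow \hat{u}_\infty$ associated to the top row of \eqref{eq:induction_notation} must satisfy $\hat{u}_\infty = (a, k\sigma(t))$ for some $a \in [-s,s]^j$, $k \in K$, and $t \in [0,s]$.

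To prove the claim, note first that $\hat{u}_i \in F$ gives $d(\hat{u}_i, \hat{q}) \le d(\hat{u}_i, \gamma \hat{q})$ for every $\gamma \in \Gamma$. Because every $g \in G$ is the eqGH-limit of some sequence $\gamma_i \in \Gamma$, passing to the limit yields $d_Y(\hat{u}_\infty, y) \le d_Y(\hat{u}_\infty, g y)$ for all $g \in G$. In the splitting $Y = \mathbb{R}^j \times Z$ with $Gy = \{0\} \times Gz$, writing $\hat{u}_\infty = (a, \zeta)$ reduces this to $d_Z(\zeta, z) \le d_Z(\zeta, gz)$ for all $g \in G$, so $z$ is a closest point on $Gz$ to $\zeta$. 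Set $t = d_Z(\zeta, Gz) = d_Z(\zeta, z)$, which lies in $[0,s]$ since $\pi(\hat{u}_\infty) \in [-s,s]^j \times [0,s]$ by the inductive assumption (3A) on $D_i(s)$. Since $G$ acts transitively on the fiber over $t$ in $Z \to Z/G = [0,\infty)$, write $\zeta = (w, k)\sigma(t)$ with $w \in \mathbb{R}$, $k \in K$. A direct isometry computation using $Kz = \{z\}$ gives $d_Z(\zeta, uz) = d_Z(\sigma(t), (u-w)z)$ for every $u \in \mathbb{R}$; taking $u = 0$ and invoking Lemma~\ref{lem:horizontal_rays}(1), which asserts that $z$ is the unique closest point on $Gz$ to $\sigma(t)$, forces $-wz = z$. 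Since the $\mathbb{R}$-subgroup chosen in Convention~\ref{conv:R_subgroup} acts freely at $y$, we conclude $w = 0$, hence $\zeta = k\sigma(t)$.

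With the claim in place, both inclusions follow quickly. For $\supseteq$, take $u = (a, (w,h)\sigma(t)) \in [-s,s]^j \times \Omega_s(v)$. By inductive assumption (3A), choose $u'_i \in D_i(s)$ with $u'_i \to (a,t)$ and lift each to $\hat{u}_i \in F \cap \pi^{-1}(D_i(s)) \subset F_i(s)$; along a subsequence the claim gives $\hat{u}_i \to (a, k\sigma(t))$ for some $k \in K$. By Proposition~\ref{prop:sym_gh}, pick $\gamma^{l_i} \in S(m_iv)$ with $\gamma^{l_i} \to (w, hk^{-1}) \in [-v,v] \times K$. Then $\gamma^{l_i}\hat{u}_i \to (a, (w, h)\sigma(t)) = u$, placing $u$ in $\mathcal{D}_\infty$. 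For $\subseteq$, any $u = \lim \gamma^{l_i}\hat{u}_i \in \mathcal{D}_\infty$ with $\gamma^{l_i} \in S(m_iv)$ and $\hat{u}_i \in F_i(s)$ yields, after extracting subsequences, a limit $(w, h) \in [-v,v] \times K$ of $\gamma^{l_i}$ (Proposition~\ref{prop:sym_gh}) acting on a limit $(a, k\sigma(t))$ of $\hat{u}_i$ (the claim); composing shows $u = (a, (w, hk)\sigma(t)) \in [-s,s]^j \times \Omega_s(v)$.

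The main obstacle is the preliminary claim, specifically the passage from the Dirichlet-type inequality $d_Z(\zeta, z) \le d_Z(\zeta, gz)$ to the horizontal-ray structure $\zeta \in K\sigma(t)$. This requires three ingredients working together: the transitivity of $G$ on level sets of $Z \to Z/G$ (to parametrize $\zeta = (w,k)\sigma(t)$), the uniqueness of minimizing geodesics from $Gz$ to $\sigma(t)$ in Lemma~\ref{lem:horizontal_rays}(1) (to force $-wz = z$), and the free action of the chosen $\mathbb{R}$-subgroup at $y$ (to conclude $w=0$). Once this structural claim is established, the rest is routine bookkeeping via Proposition~\ref{prop:sym_gh} and the inductive assumption.
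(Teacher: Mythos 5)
Your proposal is correct and its backbone is the same as the paper's: both proofs pass the Dirichlet inequality to the limit and invoke Lemma~\ref{lem:horizontal_rays}(1) to show that limits of points of $F_i(s)$ lie in $[-s,s]^j\times (K\cdot\sigma|_{[0,s]})$, and both then combine this with Proposition~\ref{prop:sym_gh} ($S(m_iv)\to[-v,v]\times K$) to get the two inclusions. The one place you diverge is the $\supseteq$ direction: the paper proves two extra claims (its Claims~2.1 and~2.2) exhibiting explicit horizontal rays inside the limit of the Dirichlet domain, obtained by lifting rays of the quotient $X=\mathbb{R}^j\times[0,\infty)$ to horizontal geodesics in $\widehat{N}$; you instead lift individual points of $D_i(s)$ (using inductive assumption (3A)) and let your structural claim identify each limit as $(a,k\sigma(t))$ for \emph{some} $k\in K$, the unknown $k$ being absorbed by the $K$-factor of $S^v_\infty$. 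This is a legitimate and slightly more economical route, since the undetermined isotropy element plays exactly the role of the paper's undetermined $h_u\in K$ in Claim~2.2. The only point worth tightening is the assertion that a lift of $u'_i\in D_i(s)$ can always be taken in $F\cap\pi^{-1}(D_i(s))$ rather than merely in $\overline{F}\cap\pi^{-1}(D_i(s))$; lifts a priori land in $\overline{F}$, so one should either perturb slightly into the open domain or note that boundary lifts still converge to the same limit set — a cosmetic fix that does not affect the argument.
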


\begin{proof}
    Let $\sigma$ be a unit speed horizontal ray emanating at $z$. We first observe every point in $Y=\mathbb{R}^j\times Z$ can be written as $(u,g\cdot \sigma(t))$ for some $u\in \mathbb{R}^j$, $g\in G$, and $t\in [0,\infty)$. 

   Passing to a convergent sequence, we have the convergence of the Dirichlet domain $F$:
   $$(r_i^{-1}\widehat{N},\hat{q},F)\overset{GH}\longrightarrow (Y,y,F_\infty)$$ 
   for some closed subset $F_\infty$ in $Y.$

   \textbf{Claim 1.} $F_\infty \subset \mathbb{R}^j \times (K\cdot \sigma)$, where $K$ is the isotropy subgroup of $G$ at $y$, as usual.
   
   Recall that the Dirichlet domain $F$ centered at $\hat{q}$ is defined by
   $$F=\{ q'\in \widehat{N}\ |\ d(q',\hat{q})<d(q',\gamma \hat{q}) \text{ for all } \gamma\in \Gamma-\{e\} \}.$$
   Passing this property to the limit, we see that the limit $F_\infty$ is contained in the set
   \begin{equation}\label{eq:domains_gh_set}
   \{ (u,g\cdot \sigma(t))\in Y \ |\ d((u,g\cdot \sigma(t)),(0,z))=d((u,g\cdot \sigma(t)),G\cdot (0,z)) \}.
   \end{equation}
   We calculate the distances involved by
   \begin{align*}
   d((u,g\cdot \sigma(t)),(0,z))&=\left( |u|^2 + d_Z(g\cdot \sigma(t),z) \right)^{1/2}\\
   &= \left( |u|^2 + d_Z(\sigma(t),g^{-1}\cdot z) \right)^{1/2},\\
   d((u,g\cdot \sigma(t)),G\cdot (0,z))&=\left( |u|^2 + d_Z(g\cdot \sigma(t),G\cdot z) \right)^{1/2}\\
   &= \left( |u|^2 + d_Z(\sigma(t), G \cdot z) \right)^{1/2}.
   \end{align*}
   Hence a point $(u,g\cdot \sigma(t))$ belongs to the set (\ref{eq:domains_gh_set}) if and only if it satisfies
   $$d_Z(\sigma(t),g^{-1}z)=d_Z(\sigma(t), G \cdot z).$$
    Together with Lemma \ref{lem:horizontal_rays}(1), we conclude that $g^{-1}z=z$, that is, $g\in K$. Claim 1 follows.

   \textbf{Claim 2.1.} For each non-zero vector $u\in\mathbb{R}^j$, the horizontal ray $t\mapsto (tu,z)\in \mathbb{R}^j \times Z$ is contained in the set $F_\infty$.
   
   \textbf{Claim 2.2.} For each $u\in \mathbb{R}^j$, there is $h_u\in K$ such that the horizontal ray
   $$\sigma_u(t)= (tu,h_u\cdot \sigma(t)) \subseteq \mathbb{R}^j \times Z, \quad t\ge 0$$
   is contained in $F_\infty$.
   
   We prove these two claims together. Let $\beta$ be an arbitrary ray in $X=\mathbb{R}^j\times [0,\infty)$ starting at $0$. By a standard diagonal argument, along the sequence $(r_i^{-1}N,q)$, we can choose a sequence of minimal geodesics $\beta_i$ starting at $q$ such that
   $$(r_i^{-1}N,q,\beta_i)\overset{GH}\longrightarrow (X,0,\beta).$$
   Next, we lift each $\beta_i$ to a horizontal geodesic $\hat{\beta}_i$ in $\hat{N}$ starting at $\hat{q}$. By construction, $\hat{\beta}_i$ is contained in the Dirichilet domain $F$ centered at $\hat{q}$. Then associated to (\ref{eq:induction_notation}), we consider
   \begin{equation*}
   \begin{CD}
   (r_i^{-1} \overline{F},\hat{q},\hat{\beta}_i) @>GH>> (F_\infty, (0,z), \hat{\beta} ) \\
	@VV\pi V @VV \pi V\\
	(r_i^{-1} N,q,\beta_i) @>GH>> (\mathbb{R}^j\times [0,\infty),0,\beta),
   \end{CD}
   \end{equation*}
   where $\hat{\beta}$ is a horizontal ray starting at $(0,z)$ such that $\hat{\beta}\subseteq F_\infty$ and $\pi(\hat{\beta})=\beta$. 
   
   Taking $\beta$ in the form of $t\mapsto (tu,0)\in \mathbb{R}^j \times [0,\infty)$, where $u\in \mathbb{R}^j-\{0\}$, we obtain that the horizontal ray $\hat{\beta}(t)=(tu,z)$ is contained in $F_\infty$. This proves Claim 2.1.
   
   Taking $\beta$ in the form of $t\mapsto (tu,t)\in \mathbb{R}^j\times [0,\infty)$, where $u$ is a vector in $\mathbb{R}^j$, we obtain a corresponding ray $\hat{\beta}$ in $F_\infty$ such that 
   $\pi(\hat{\beta})=\beta$. It is clear that the $\mathbb{R}^j$-component of $\hat{\beta}$ is the same as $\beta$. For its $Z$-component, as a horizontal ray starting at $z$, by Lemma \ref{lem:horizontal_rays}(2) it has the form $h_u\cdot \sigma$ for some $h_u\in K$. This proves Claim 2.2.

   For convenience,  we denote below by $\mathcal{R}$ the union of these two families of horizontal rays from Claims 2.1 and 2.2. 

   Recall that by inductive assumption (3A) we have a sequence of domains $D_i(s)$ in $N$ converging to $[-s,s]^j\times [0,s] \subseteq X$
   associated to the bottom row of (\ref{eq:induction_notation}) with $X=\mathbb{R}^j\times [0,\infty)$. Hence 
   $$(r_i^{-1}(\pi^{-1}(D_i(s)),\hat{q}))\overset{GH}\longrightarrow ([-s,s]^j \times (G\cdot \sigma|_{[0,s]}),(0,z))$$
   associated to the top row of (\ref{eq:induction_notation}) with $Y=\mathbb{R}^j\times Z$. Hence for $F_i(s)=\overline{F\cap \pi^{-1}(D_i(s))}$, it has a limit $F_\infty^s \subset Y$ from
   $$(r_i^{-1}\widehat{N},\hat{q},F_i(s))\overset{GH}\longrightarrow (Y,y,F_\infty^s)$$
   such that
   $$F_\infty^s = F_\infty \cap \left([-s,s]^j \times (G\cdot \sigma|_{[0,s]})\right).$$
   Claims 1 and 2, respectively, yield 
   \begin{equation}\label{eq:limit_set_1}
   F_\infty^s\subseteq [-s,s]^j \times (K\cdot \sigma|_{[0,s]}),
   \end{equation}
   \begin{equation}\label{eq:limit_set_2}
   F_\infty^s \supseteq \mathcal{R} \cap \left([-s,s]^j \times (G\cdot \sigma|_{[0,s]})\right).
   \end{equation}

   Let $v>0$. After passing to a subsequence, the sequence of symmetric subsets $S(m_i v)\subseteq \Gamma$ converges to a closed symmetric subset $S^v_\infty$ of $G$, that is,
   $$(r_i^{-1}\widehat{N},\hat{q},\Gamma,S(m_i v))\overset{GH}\longrightarrow (Y,y,G,S^v_\infty).$$
   Proposition \ref{prop:sym_gh} implies $$S^v_\infty = [-v,v]\times K \subseteq \mathbb{R}\times K=G.$$
   Applying (\ref{eq:limit_set_1}), we infer that $\mathcal{D}_\infty$, the limit of $S(m_i v)\cdot F_i(s)$ in (\ref{eq:domains_gh}), satisfies
   $$\mathcal{D}_\infty \subseteq [-s,s]^j \times \left( S^v_\infty \cdot (K\cdot \sigma|_{[0,s]}) \right) \subseteq [-s,s]^j \times \Omega_s(v).$$
   For the other direction, we apply (\ref{eq:limit_set_2}) and claim
   $$ \mathcal{D}_\infty \supseteq S^v_\infty \cdot (\text{RHS of } \ref{eq:limit_set_2}) \supseteq [-s,s]^j \times \Omega_s(v).$$
   We verify the last $\supseteq$ claimed above. We consider an arbitrary point
   $$\hat{u}=(u,(w,h)\cdot \sigma(r))\in [-s,s]^j \times \Omega_s(v),$$
   where $u\in [-s,s]^j$, $h\in K$, $w\in [-v,v]$ and $r\in [0,s]$. If $r=0$, then $\sigma(r)=z$ and
   $$\hat{u}= (w,h) \cdot (u,z),$$
   where $(w,h)\in S^v_\infty$ and $(u,z)\in (\text{RHS of } \ref{eq:limit_set_2})$. Hence $\hat{u} \in S^v_\infty \cdot (\text{RHS of } \ref{eq:limit_set_2})$. If $r\not= 0$, then the vector $u'=u/r\in\mathbb{R}^j$ gives rise to a horizontal ray $\sigma_{u'}(t)=(tu',h_{u'}\cdot \sigma(t))$ in $\mathcal{R}$, where $h_{u'}\in K$. It is also clear that $\sigma_{u'}(r)=(u,h_{u'}\cdot \sigma(r))$ belongs to $[-s,s]^j \times (G\cdot \sigma|_{[0,s]})$. As a result, $\sigma_{u'}(r) \in (\text{RHS of } \ref{eq:limit_set_2})$. Lastly, we observe that
   $$\hat{u}= (w,hh_{u'}^{-1}) \cdot \sigma_{u'}(r) \in S^v_\infty \cdot (\text{RHS of } \ref{eq:limit_set_2}).$$
   This verifies the claimed $\supseteq$ relation and completes the proof of this proposition.
\end{proof}

Let $s,v>0$. We denote 
$$\Omega^\circ _s(v) =\{ (w,h)\cdot \sigma(t)\ |\ w\in(-v,v), h\in K,  t\in [0,s) \}$$
the interior of $\Omega_s(v)$. We also denote $\partial \Omega_s(v)=\Omega_s(v)-\Omega^\circ _s(v)$ the boundary.

Next, we show that the convergence of domains $S(m_i v)\cdot F_i(s) \overset{GH}\to [-s,s]^j\times \Omega_s(v)$ is \emph{saturated}, in the sense that we cannot have a sequence of points outside $S(m_iv)\cdot F_i(s)$ converging to the interior $(-s,s)^j \times \Omega^\circ_s(v)$.

\begin{prop}\label{prop:domains_saturated}
   Let $s,v>0$ and let $\hat{u}\in (-s,s)^j \times \Omega^\circ_s(v)$. Suppose that $\hat{u}_i$ is a sequence in $\widehat{N}$ converging to $\hat{u}$ associated to (\ref{eq:domains_gh}) with $(Y,y)=(\mathbb{R}^j\times Z,(0,z))$. Then $\hat{u}_i\in S(m_iv)\cdot F_i(s)$ for all $i$ large.
\end{prop}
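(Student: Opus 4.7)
The strategy mirrors that of Proposition \ref{prop:sym_saturated}: for each $i$, translate $\hat{u}_i$ into the closure of the Dirichlet domain $F$ centered at $\hat{q}$, then show the translating element lies in $S(m_i v)$. Choose $l_i \in \mathbb{Z}$ with $\gamma^{l_i}\hat{u}_i \in \overline{F}$ (possible since the $\Gamma$-translates of $\overline{F}$ tile $\widehat{N}$). The Dirichlet property yields $d(\gamma^{l_i}\hat{u}_i, \hat{q}) \le d(\hat{u}_i, \hat{q})$; since $\hat{u}_i \to \hat{u}$ we have $d(\hat{u}_i,\hat{q}) \le C r_i$, whence $d(\gamma^{l_i}\hat{q},\hat{q}) \le 2 C r_i$. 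Moreover, $\pi(\hat{u}_i) \to \pi(\hat{u}) = (u,t)$ lies in $(-s,s)^j \times [0,s)$, so inductive assumption (3B) forces $\pi(\gamma^{l_i}\hat{u}_i) = \pi(\hat{u}_i) \in D_i(s)$ for $i$ large; up to a boundary perturbation this places $\gamma^{l_i}\hat{u}_i$ in $F_i(s)$, and the remaining task is to prove $|l_i| \le \lfloor m_i v \rfloor$ eventually.

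Argue by contradiction: along a subsequence (still denoted $i$), $\gamma^{l_i} \notin S(m_i v)$. Since $d(\gamma^{l_i}\hat{q},\hat{q}) = O(r_i)$, pass to a further subsequence so that $\gamma^{l_i} \to g$ in $G$ associated to the top row of (\ref{eq:induction_notation}). Write $g = (w',h') \in \mathbb{R}\times K = G$ and $\hat{u} = (u,(w,h)\cdot \sigma(t))$ with $w \in (-v,v)$. The limit $g\hat{u}$ lies in $F_\infty$, the GH-limit of $\overline{F}$, which by Claim 1 in the proof of Proposition \ref{prop:domains_gh} is contained in $\mathbb{R}^j \times (K\cdot \sigma)$. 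Computing $g\hat{u} = (u,(w'+w,h'h)\cdot \sigma(t))$, we obtain $k \in K$ with $(w'+w,h'h)\cdot \sigma(t) = k\cdot \sigma(t)$. The horizontal rays $(w'+w,h'h)\cdot \sigma$ and $k\cdot \sigma$ meet at this point, so Lemma \ref{lem:horizontal_rays}(1) forces them to share the same starting point: $(w'+w)\cdot z = z$. In the direct product $G = \mathbb{R}\times K$ this forces $w'+w = 0$, so $w' = -w \in (-v,v)$.

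Then Proposition \ref{prop:sym_saturated} applied to $\gamma^{l_i} \to (w',h')$ with $w' \in (-v,v)$ yields $\gamma^{l_i} \in S(m_i v)$ for all sufficiently large $i$ along the subsequence, contradicting the standing assumption. Hence $\gamma^{l_i} \in S(m_i v)$ for all $i$ large, and $\hat{u}_i = \gamma^{-l_i}(\gamma^{l_i}\hat{u}_i) \in S(m_i v)\cdot F_i(s)$, as required.

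The principal conceptual step is the identification of the $\mathbb{R}$-component of $g$ with $-w$, which fuses Claim 1 of Proposition \ref{prop:domains_gh} with the non-branching rigidity of horizontal rays; once this is in hand the result is a straightforward saturation statement dual to Proposition \ref{prop:sym_saturated}. The main technical subtlety I would expect is the verification that $\gamma^{l_i}\hat{u}_i$ actually lies in $F_i(s) = \overline{F\cap \pi^{-1}(D_i(s))}$ rather than merely in $\overline{F}\cap \pi^{-1}(D_i(s))$; this is handled by approximating $\hat{u}_i$ by nearby points inside the open Dirichlet cell whose projections lie in the interior of $D_i(s)$, and then passing to the closure.
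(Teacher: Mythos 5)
Your proof is correct and follows essentially the same route as the paper: decompose $\hat{u}_i$ into a Dirichlet-domain point and a group element, use inductive assumption (3B) to place the former in $F_i(s)$, pin down the $\mathbb{R}$-component of the limit of the group elements via Claim 1 of Proposition \ref{prop:domains_gh} together with the uniqueness of horizontal rays (Lemma \ref{lem:horizontal_rays}), and conclude with Proposition \ref{prop:sym_saturated}. The only cosmetic difference is your opposite sign convention for $l_i$ (you translate $\hat{u}_i$ into $\overline{F}$ rather than writing $\hat{u}_i=\gamma^{l_i}u_i'$ with $u_i'\in\overline{F}$), which yields $w'=-w$ instead of $w'=w$; since $S(m_iv)$ is symmetric this changes nothing.
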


\begin{proof}
   We write $\hat{u}=(u,(w,h)\cdot \sigma(t))$, where $u\in (-s,s)^j$, $w\in (-v,v)$, $h\in K$, and $t\in [0,s)$. Let $u'_i\in \overline{F}$ and $\gamma^{l_i}\in \Gamma$ such that $\hat{u_i}= \gamma^{l_i}\cdot u'_i$. Since $\pi(u'_i)$ converges to $(u,t)\in (-s,s)^j\times [0,s)$ associated to the bottom row of (\ref{eq:induction_notation}), by the inductive assumption (3B), $\pi(u'_i)\in D_i(s)$ for all $i$ large. Consequently, $u'_i\in F_i(s)$.

   It remains to show that $\gamma^{l_i}\in S(m_i v)$ for all $i$ large. After passing to a subsequence, we have a convergence
   $$u'_i\overset{GH}\to u'\in Y,\quad \gamma^{l_i}\overset{GH}\to (w',h')\in \mathbb{R}\times K=G$$
   associated to the top row of (\ref{eq:induction_notation}). By Claim 2 in the proof of Proposition \ref{prop:domains_gh}, $u'\in \mathbb{R}^j \times (K\cdot \sigma)$.
   Together with
   $$(w',h')\cdot u' = \hat{u} = (u,(w,h)\cdot \sigma(t)),$$
   we infer that $w'=w \in (-v,v)$. Lastly, by Proposition \ref{prop:sym_saturated}, $\gamma^{l_i}\in S(m_i v)$ for all $i$ large.
\end{proof}

Then, it follows from Proposition \ref{prop:domains_saturated} and a standard covering argument that we have the desired measure convergence (\ref{eq:induction_meas_outline}) in the outline.

\begin{cor}\label{cor:domain_meas_conv}
   Let $\meas_i$ be the renormalized measure on $(r_i^{-1}\widehat{N},\hat{q})$ (see Definition \ref{defn:renorm_meas}). After passing to a subsequence, we suppose that $\meas_i$ converges to a limit renormalized measure $\meas_Y$ on $Y$ associated to the top row of (\ref{eq:induction_intro}) with $(Y,y)=(\mathbb{R}^j\times Z,(0,z))$. Then
   $$\lim_{i\to\infty}\meas_i(S(m_iv)\cdot F_i(s))= \meas_Y([-s,s]^j \times \Omega_s(v)).$$
\end{cor}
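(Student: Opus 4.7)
Set $A_i = S(m_iv)\cdot F_i(s)$ and $A_\infty = [-s,s]^j\times\Omega_s(v)$. The strategy is to sandwich $\meas_i(A_i)$ between the measures of a small closed neighborhood of $A_\infty$ and of compact subsets exhausting its interior, using the two geometric inputs: Proposition \ref{prop:domains_gh} gives the outer control, Proposition \ref{prop:domains_saturated} gives the inner control, and the convergence of ball volumes under measured Gromov--Hausdorff convergence translates these into measure estimates.

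For the upper bound, fix $\epsilon>0$ and let $(A_\infty)_\epsilon$ denote the closed $\epsilon$-neighborhood of $A_\infty$ in $Y$. By Proposition \ref{prop:domains_gh}, for $i$ large the image of $A_i$ under the GH approximation lies in $(A_\infty)_{2\epsilon}$. Cover this compact neighborhood by finitely many balls $B_{\delta_k}(u_k)\subset Y$ of radii $\delta_k$ chosen so that $\meas_Y(\partial B_{\delta_k}(u_k))=0$ (which holds for a.e.\ radius, $\meas_Y$ being a Radon measure). Since on such balls the measures converge, i.e.\ $\meas_i(B_{\delta_k}(u_k^i))\to\meas_Y(B_{\delta_k}(u_k))$ for any approximating sequence $u_k^i\overset{GH}\to u_k$, we obtain $\limsup_i\meas_i(A_i)\le\sum_k\meas_Y(B_{\delta_k}(u_k))$. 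Refining the cover makes this approximate $\meas_Y((A_\infty)_{2\epsilon})$, and outer regularity then yields $\limsup_i\meas_i(A_i)\le\meas_Y(A_\infty)$ as $\epsilon\to0$.

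For the lower bound, exhaust the interior $(-s,s)^j\times\Omega_s^\circ(v)$ by an increasing sequence of compact subsets $C_n$. First I upgrade Proposition \ref{prop:domains_saturated} to the uniform statement: for each $n$, every point in the GH-preimage of $C_n$ lies in $A_i$ for $i$ large. Indeed, if not, a diagonal extraction would produce $\hat u_i\notin A_i$ with $\hat u_i\overset{GH}\to u\in C_n\subset(-s,s)^j\times\Omega_s^\circ(v)$, contradicting Proposition \ref{prop:domains_saturated}. A covering of $C_n$ by balls with null-boundary as above then gives $\liminf_i\meas_i(A_i)\ge\meas_Y(C_n)$, and passing to $n\to\infty$ produces $\liminf_i\meas_i(A_i)\ge\meas_Y\bigl((-s,s)^j\times\Omega_s^\circ(v)\bigr)$.

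The main obstacle is to close the resulting gap, namely to verify $\meas_Y(\partial A_\infty)=0$. Using the product splitting $\meas_Y=\mathcal L^j\otimes\meas_Z$ from Gigli's theorem, the piece $\partial[-s,s]^j\times\Omega_s(v)$ is automatically null, leaving only $[-s,s]^j\times\partial\Omega_s(v)$, so I must control $\meas_Z(\partial\Omega_s(v))$. Since $\meas_Z$ is Radon, the set of parameters $(s,v)$ for which $\meas_Z(\partial\Omega_s(v))>0$ is at most countable in each variable, so the desired identity $\lim_i\meas_i(A_i)=\meas_Y(A_\infty)$ holds on a dense set of parameters. Because the left-hand side (by the sandwich above) and the right-hand side are both monotone nondecreasing in $s$ and $v$, a one-sided limit argument in $s$ and $v$ extends the equality to all $s,v>0$, completing the proof.
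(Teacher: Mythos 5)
Your sandwich argument (upper bound from Proposition \ref{prop:domains_gh} via a ball cover with outer regularity, lower bound from Proposition \ref{prop:domains_saturated} via compact exhaustion of the interior) matches the paper's covering argument in substance, and you correctly identify that everything reduces to showing $\meas_Y(\partial A_\infty)=0$, i.e.\ $\meas_Z(\partial\Omega_s(v))=0$. The gap is in how you dispose of that reduction. Your density-plus-monotonicity extension does not close it: two nondecreasing functions that agree on a dense set of parameters may still differ at a common jump, and the possible jump of $(s,v)\mapsto\meas_Y(A_\infty(s,v))$ at a given $(s_0,v_0)$ is, by continuity of the measure along the nested families $A_\infty(s',v')\downarrow A_\infty(s_0,v_0)$ and $A_\infty(s',v')\uparrow A_\infty^\circ(s_0,v_0)$, exactly $\meas_Y(\partial A_\infty(s_0,v_0))$ --- the quantity you are trying to show vanishes. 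So the one-sided limit argument only reproduces the sandwich $\meas_Y(A_\infty^\circ)\le\liminf_i\meas_i(A_i)\le\limsup_i\meas_i(A_i)\le\meas_Y(A_\infty)$ at the exceptional parameters; it does not upgrade it to the claimed equality for all $s,v>0$, which is what the corollary asserts and what the application to the plane/halfplane rigidity requires. (A secondary point: monotonicity of $\meas_i(S(m_iv)\cdot F_i(s))$ in $s$ presupposes that the domains $D_i(s)$ from the inductive hypothesis are nested in $s$, which is not part of inductive assumption (3) as stated.)

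The paper instead proves $\meas_Z(\partial\Omega_s(v))=0$ for \emph{every} $(s,v)$ (Lemma \ref{lem:bdry_zero_meas}), by splitting the boundary into the lateral faces $A^{\pm}=\{(\pm v,h)\cdot\sigma(t)\}$ and the top face $I=\{(w,h)\cdot\sigma(s)\}$. The lateral faces are null because the $\mathbb{R}$-action preserves $\meas_Z$ and translates $A^+$ to uncountably many disjoint copies inside the compact set $\Omega_s(v)$, so positive measure would force $\meas_Z(\Omega_s(v))=\infty$. The top face is null by a density-point blow-up: a density point of $I$ would yield a tangent cone $\mathbb{R}^k$ in which the codimension-one orbit $G'\cdot 0$ carries positive Lebesgue measure, a contradiction. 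You should replace your extension step with an argument of this kind; the rest of your proof then goes through.
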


\begin{proof}
  For convenience, below, we write
  $$\mathcal{D}_i=S(m_iv)\cdot F_i(s),\quad \mathcal{D}_\infty = [-s,s]^j \times \Omega_s(v).$$
  $\mathcal{D}_\infty$ has interior $\mathcal{D}_\infty^\circ =(-s,s)^j \times \Omega^\circ_s(v)$.

  Let $\epsilon>0$. By applying a standard Vitali covering argument to $\mathcal{D}_\infty^\circ$ (see \cite{Hein_book}), we can find a finite collection of disjoint closed balls 
  $\mathcal{B}=\{ \overline{B}_{r_\alpha} (x_\alpha)\}_{\alpha=1}^{A}$
  such that every ball of $\mathcal{B}$ is contained in $\mathcal{D}^\circ_\infty$ and
  $$\sum_{\alpha=1}^{A} \meas_Y (B_{r_\alpha}(x_\alpha)) \le \meas_Y(\mathcal{D}^\circ_\infty) \le \sum_{\alpha=1}^{A} \meas_Y (B_{r_\alpha}(x_\alpha))+\epsilon.$$
  For each ball $B_{r_\alpha}(x_\alpha)$ in $\mathcal{B}$, we choose a sequence $x_{i,\alpha}$ in $r_i^{-1}\widehat{N}$ converging to $x_\alpha$ associated to the top row of (\ref{eq:induction_notation}). Then along the sequence $r_i^{-1}\widehat{N}$,
  $$B_{(1-\epsilon)r_\alpha}(x_{i,\alpha})\overset{GH}\to \overline{B}_{(1-\epsilon)r_\alpha}(x_\alpha) \subset B_{r_\alpha}(x_\alpha) \subset \mathcal{D}_\infty^\circ.$$
  By Proposition \ref{prop:domains_saturated}, $B_{(1-\epsilon)r_\alpha}(x_{i,\alpha})$ is contained in $\mathcal{D}_i$ for all $i$ large. Moreover, from the Gromov-Hausdorff convergence, we see that for all $i$ large, the collection $\{{B}_{(1-\epsilon)r_\alpha}(x_{i,\alpha})\}_{\alpha=1}^{A}$ is disjoint.
  Hence, by Bishop--Gromov relative volume comparison,
  $$ (1-\epsilon)^n\sum_{\alpha=1}^A \meas_i({B}_{r_\alpha}(x_{i,\alpha}))\le \sum_{\alpha=1}^A \meas_i({B}_{(1-\epsilon)r_\alpha}(x_{i,\alpha}))\le \meas_i(\mathcal{D}_i).$$
  Let $i\to\infty$, we obtain
  $$\liminf_{i\to\infty} \meas_i(\mathcal{D}_i)\ge (1-\epsilon)^n \left(\sum_{\alpha=1}^{A} \meas_Y (B_{r_\alpha}(x_\alpha)) -\epsilon\right) \ge (1-\epsilon)^n (\meas_Y(\mathcal{D}^\circ_\infty)-\epsilon).$$
  Let $\epsilon\to 0$; this proves $\liminf \meas_i(\mathcal{D}_i)\ge \meas_Y(\mathcal{D}^\circ_\infty)$.

  For the other direction, we can cover $\mathcal{D}_\infty$ by a finite collection of open balls $\mathcal{B}=\{B_{r_\alpha}(x_\alpha)\}_{\alpha=1}^A$, abusing the notation, such that
  $$\sum_{\alpha=1}^A \meas_Y(B_{r_\alpha}(x_\alpha))-\epsilon \le  \meas_Y(\mathcal{D}_\infty) \le \sum_{\alpha=1}^A \meas_Y(B_{r_\alpha}(x_\alpha)).$$
  For each $\alpha$, we choose a sequence of points $x_{i,\alpha}\in r_i^{-1}\widehat{N}$ converging to $x_\alpha$. From the Gromov--Hausdorff convergence and Proposition \ref{prop:domains_gh}, $\{ 
B_{(1+\epsilon)r_\alpha}(x_\alpha) \}$ is a cover of $\mathcal{D}_i$ for all $i$ large. Hence 
$$\meas_i(\mathcal{D}_i) \le \sum_{\alpha=1}^A \meas_i(B_{(1+\epsilon)r_\alpha}(x_{i,\alpha}))\le (1+\epsilon)^n \sum_{\alpha=1}^A \meas_i(B_{r_\alpha}(x_{i,\alpha})).$$
Let $i\to\infty$ and then let $\epsilon\to 0$. We obtain $\limsup \meas_i(\mathcal{D}_i)\le \meas_Y(\mathcal{D}_\infty)$.

It remains to show that $\meas_Y(\partial \mathcal{D}_\infty)=0$. Since
$$\partial \mathcal{D}_\infty = (\partial[-s,s]^j\times \Omega_s(v)) \cup ([-s,s]^j\times \partial\Omega_s(v)),\quad \meas_Y=\mathcal{L}^j\otimes \meas_Z,$$
$\meas_Z(\partial\Omega_s(v))=0$ from Lemma \ref{lem:bdry_zero_meas} below completes the proof.
\end{proof}

\begin{lem}\label{lem:bdry_zero_meas}
    We have $\meas_Z(\partial \Omega_s(v))=0$, where $\mathcal{L}^j\otimes\meas_Z=\meas_Y$ is any renormalized limit measure on $\R^j\times Z=Y$.
\end{lem}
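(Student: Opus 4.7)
The plan is to decompose $\partial \Omega_s(v) = L_+ \cup L_- \cup T$, where $L_\pm = \{(\pm v, h)\sigma(t) : h\in K,\, t\in[0,s]\}$ are the lateral faces and $T=\{(w,h)\sigma(s): w\in[-v,v],\, h\in K\}$ is the top face. By countable subadditivity of $\meas_Z$, it suffices to show that each of these three sets has $\meas_Z$-measure zero.

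For the top face $T$, I would observe that $T\subseteq f^{-1}(\{s\})$ where $f(z)\coloneqq d(z, Gz_0)$ is the $1$-Lipschitz distance to the orbit: a direct computation using that $K$ fixes $z_0$ and that $\sigma$ is horizontal yields $f((w,h)\sigma(s)) = d(\sigma(s), Gz_0) = s$. Lemma \ref{lem:horizontal_rays} provides unit-speed minimizers $\{h\cdot\sigma\}_{h\in K}$ from every $G$-orbit to $Gz_0$, so $|\nabla f|=1$ on a $\meas_Z$-full subset. The coarea formula (via Cavalletti--Mondino disintegration) in the $\RCD(0, N-j)$ setting then gives $f_*\meas_Z \ll \mathcal{L}^1$, so $\meas_Z(f^{-1}(\{s\})) = 0$ for every $s>0$, and hence $\meas_Z(T) = 0$.

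For the lateral faces, I would exploit the measure-preserving $\R$-action on $Z$. Fix $\delta>0$; the translates $(k\delta, e)\cdot L_+$ for $k=0,1,\ldots,N-1$ are pairwise disjoint (each $L_+$ sits at a single $w$-value $v+k\delta$) and are all contained in $\Omega_s(v+(N-1)\delta)$, so by $\R$-invariance $N\meas_Z(L_+)\le \meas_Z(\Omega_s(v+(N-1)\delta))$. A linear growth bound $\meas_Z(\Omega_s(V))\le C(s)V$ for $V\ge 1$ then forces $\meas_Z(L_+)\le C(s)\delta$ as $N\to\infty$, and $\delta\to 0$ gives $\meas_Z(L_+)=0$. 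The linear bound itself is extracted before passing to the limit from the identity $\meas_i(S(m_iV)F_i(s))=(2\lfloor m_iV\rfloor+1)\meas_i(F_i(s))$ (the $\Gamma$-translates of the Dirichlet domain $F\supseteq F_i(s)$ are essentially disjoint), combined with the linear volume growth of $\widehat{N}$ inherited from $M$, which forces $m_i\meas_i(F)$ to stay uniformly bounded. The symmetric argument handles $L_-$.

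The main obstacle is to obtain the two quantitative ingredients without circular dependence on Corollary \ref{cor:domain_meas_conv}: the a.e. unit gradient of $f$ (needed for the coarea step on $T$) and the linear bound $\meas_Z(\Omega_s(V))\le C(s)V$ (needed for the lateral step). Both rely on the pre-limit computation together with standard $\RCD$ regularity. Alternatively, one may first promote the orbit $Gz_0$ to a metric line (using Proposition \ref{prop:orbit_R} together with the quantitative control of Lemma \ref{lem:boomer_control}) and invoke Gigli's splitting to write $Z=\R\times Z'$; this immediately places $L_\pm$ inside $\{\pm v\}\times Z'$ and reduces $T$ to a $K$-orbit in $Z'$ of dimension strictly smaller than that of $Z'$, both of $\meas_Z$-measure zero.
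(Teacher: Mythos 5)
Your decomposition of $\partial\Omega_s(v)$ into two lateral faces and a top face is exactly the paper's. For the top face your route is genuinely different: the paper argues by contradiction via a density point of $I$ and a tangent-cone blow-up (the limit orbit $G'\cdot 0$ is a codimension-one subspace of $\R^k$, hence Lebesgue-null, contradicting density $1$), whereas you push $\meas_Z$ forward under $f=d(\cdot,Gz)$ and use the Cavalletti--Mondino disintegration to get $f_*\meas_Z\ll\mathcal L^1$. Your argument is valid: every point of $Z\setminus Gz$ lies on a horizontal transport ray of $f$ by Lemma \ref{lem:horizontal_rays}, branching points are negligible in essentially non-branching spaces, and the conditional measures $h_\alpha\mathcal H^1$ are non-atomic, so level sets of $f$ are null. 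It front-loads the needle-decomposition machinery of Section \ref{sec:rigid}, but there is no circularity since that machinery is external.

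For the lateral faces, however, your argument as written has a gap. You translate $L_+$ \emph{outward}, so your $N$ disjoint copies sit inside the growing set $\Omega_s(v+(N-1)\delta)$, and you then need the linear bound $\meas_Z(\Omega_s(V))\le C(s)V$ --- but your justification of that bound invokes ``the linear volume growth of $\widehat N$ inherited from $M$,'' which is false: covering spaces do not inherit linear volume growth ($\widehat M_{k-j}$ grows like $r^{j+1}$). The bound can be extracted from the pre-limit identity $\meas_i(S(m_iV)\cdot F_i(s))=(2\lfloor m_iV\rfloor+1)\meas_i(F_i(s))$ together with Lemma \ref{lem:balls_vs_domains} and Bishop--Gromov (not linear growth), plus a lower-semicontinuity step, but all of this is unnecessary: translate \emph{inward} instead. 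The sets $(w,e)\cdot L_+=\{(w+v,h)\sigma(t)\}$ for $w\in[-2v,0]$ are pairwise disjoint (a point $(w',h)\sigma(t)$ with $t$ fixed determines $w'$, by the uniqueness in Lemma \ref{lem:horizontal_rays}(1)) and all lie in the \emph{fixed} compact set $\Omega_s(v)$; infinitely many disjoint translates of equal positive measure inside a compact set contradict $\meas_Z$ being Radon. This is exactly the paper's argument and requires no quantitative input. Finally, your closing ``alternative'' --- promoting $Gz$ to a line and splitting $Z=\R\times Z'$ --- is circular at this stage: that the orbit is a geodesic line is only established (Lemma \ref{lem:halfplane_rigid}) \emph{after} the measure condition $\meas_Z(\Omega_r(v))=crv$ is verified, and the present lemma is an ingredient in that verification.
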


\begin{proof}
 Observe that the boundary $\partial \Omega_s(v)$ satisfies:
  \begin{align*}
    \partial \Omega_s(v)&=A^+\cup A^-\cup I,\text{ where:}\\
    A^{\pm}&\coloneqq\{(\pm v,h)\cdot\sigma(t),h\in K,t\in[0,s]\},\\
    I&\coloneqq\{(w,h)\cdot\sigma(s), w\in[-v,v],h\in K\}.
  \end{align*}
Now, looking for a contradiction, we assume that $\meas_Z(A^{+})>0$. We recall that $\R=\R\times\{e\}\subset G$ acts by measure-preserving isometries on $Z$; consequently, for every $w\in\R$, we have:
\begin{equation*}
    \meas_Z(\{(w,h)\cdot\sigma(t),h\in K,t\in[0,s]\})=\meas_Z((w,e)\cdot A^+)=\meas_Z(A^+).
\end{equation*}
Thus, 
$$\meas_Z(\Omega_s(v))=\meas_Z(\{(w,h)\cdot\sigma(t),w\in[-v,v],h\in K,t\in[0,s]\})=\infty,$$
which is not possible because $\meas_Z$ is Radon and $\Omega_s(v)$ is compact. The same argument also shows that $\meas_Z(A^-)=0$.    

It remains to show that $\meas_Z(I)=0$. Suppose that $\meas_Z(I)>0$. Note that all points in $I$ are in the same $G$-orbit. Together with that, $I$ has positive $\meas_Z$-measure and the regular set of $Z$ has full $\meas_Z$-measure (see \cite{BrueSemola20}), we see that every point in $I$ must be regular. In addition, since $I$ has positive $\meas_Z$-measure, $I$ admits density points w.r.t. $\meas_Z$ (see \cite[Theorem 1.8]{Hein_book}). Let $z'\in I$ be a density point of $I$, then $z'$ satisfies
$$\lim_{t\to 0} \dfrac{\meas_Z(I\cap B_t(z'))}{\meas_Z(B_t(z'))}=1.$$
Let $t_i\to \infty$ be a sequence. We consider a metric measure tangent cone of $Z$ at $z'$ and keep track of the $G$-action:
\begin{equation*}
   \begin{CD}
   (t_iZ,z',\nu_i,G) @>GH>> (\mathbb{R}^k,0,\nu,G') \\
	@VV\pi V @VV \pi V\\
	(t_i[0,\infty),s) @>GH>> (\mathbb{R},0)
   \end{CD},
\end{equation*}
where $\nu_i\coloneqq\frac{\meas_Z}{\meas_Z(B_{t_i}(z'))}$ are the renormalized measures and $\nu$ is (a multiple of) the Lebesgue measure $\mathcal{L}^k$ on $\mathbb{R}^k$ by Theorem \ref{thm:meas_split}. Since $\mathbb{R}^k/G'$ is isometric to a line, the orbit $G'\cdot 0$ must be a linear subspace in $\mathbb{R}^k$ of codimension $1$. In particular, it holds that $\nu(G'\cdot 0)=0.$ On the other hand, observe that, applying a covering argument as in the proof of Corollary \ref{cor:domain_meas_conv}, we have $$\nu((G'\cdot0)\cap B_1(0))\ge\limsup_{i\to\infty}\nu_i(I\cap B_{t_i}(z'))=1,$$ 
which is a contradiction. Therefore, $\meas_Z(I)=0$, which implies that $\meas_Z(\partial\Omega_s(v))=0$.
\end{proof}

\begin{rem}\label{rem:meas_D}
  Recall that the inductive assumption (3) provides a sequence of domains $D_i(s)\overset{GH}\to [-s,s]^j\times [0,s]$ associated to the bottom row of (\ref{eq:induction_notation}) with $X=\mathbb{R}^j\times [0,\infty)$. By the inductive assumption (3B) and a similar covering argument in the proof of Corollary \ref{cor:domain_meas_conv}, if we start with a limit renormalized measure $\meas_X$ on $X$ as (a multiple of) the Lebesgue measure, which is provided by inductive assumption (2), then
  $$\nu_i (D_i(s)) \to \meas_X([-s,s]^j\times [0,s])=c\cdot 2^j s^{j+1},$$
  where $\nu_i$ is the renormalized measure on $(r_i^{-1}N,q)$.
\end{rem}

We close this subsection by a lemma comparing the domains $S(m_i)\cdot F_i(1)$ to balls $B_{r_i}(\hat{q})$. We continue to assume (\ref{eq:induction_notation}) with $X=\mathbb{R}^j \times [0,\infty)$ and the inductive assumptions for $j$.

\begin{lem}\label{lem:balls_vs_domains}
   There are constants $C_1,C_2>1$ independent of $i$ such that 
   $$B_{r_i/C_1}(\hat{q}) \subseteq S(m_i)\cdot F_i(1) \subseteq B_{C_2r_i}(\hat{q})$$
   holds for all $i$ large. 
\end{lem}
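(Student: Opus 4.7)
The plan is to read off both inclusions directly from Propositions~\ref{prop:domains_gh} and~\ref{prop:domains_saturated} applied with $s=v=1$, via a simple Gromov--Hausdorff/compactness argument. The upper bound is essentially a bounded-limit observation, and the lower bound is a contradiction argument at the base point $y$.

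For the upper bound, I would apply Proposition~\ref{prop:domains_gh} with $s=v=1$ to obtain the convergence
$$
(r_i^{-1}\widehat{N},\hat{q},S(m_i)\cdot F_i(1)) \overset{GH}\longrightarrow (Y,y,[-1,1]^j\times\Omega_1(1)).
$$
Since $K$ is compact (as the stabilizer of $y$ in a proper metric space) and $\sigma([0,1])$ is compact, the limit $[-1,1]^j\times\Omega_1(1)$ is a compact, hence bounded, subset of $Y$; pick $R>0$ with $[-1,1]^j\times\Omega_1(1)\subset B_R(y)$. The Gromov--Hausdorff convergence then forces $S(m_i)\cdot F_i(1)\subseteq B_{R+1}(\hat{q})$ in $r_i^{-1}\widehat{N}$ for all $i$ large, which after unscaling reads $S(m_i)\cdot F_i(1)\subseteq B_{(R+1)r_i}(\hat{q})$ in $\widehat{N}$. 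Setting $C_2=R+1$ handles this direction.

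For the lower bound, the key observation is that the basepoint $y=(0,z)\in\mathbb{R}^j\times Z$ lies in the \emph{interior} set $(-1,1)^j\times\Omega^\circ_1(1)$: indeed, with $w=0\in(-1,1)$, $h=e\in K$ and $t=0\in[0,1)$, one has $z=\sigma(0)=(0,e)\cdot\sigma(0)\in\Omega^\circ_1(1)$. I would then argue by contradiction. If no $C_1$ works, then for each $k\in\mathbb{N}$ there exists $i_k\to\infty$ and a point $\hat{u}_k\in B_{r_{i_k}/k}(\hat{q})\setminus S(m_{i_k})\cdot F_{i_k}(1)$ in $\widehat{N}$. Rescaling by $r_{i_k}^{-1}$, the point $\hat{u}_k$ sits at distance at most $1/k$ from $\hat{q}$ in $r_{i_k}^{-1}\widehat{N}$, so (passing to the subsequential GH convergence along $\{i_k\}$, or extending trivially to a full sequence by setting the omitted terms equal to $\hat{q}$) the sequence $\hat{u}_k$ converges to $y$. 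Proposition~\ref{prop:domains_saturated} then forces $\hat{u}_k\in S(m_{i_k})\cdot F_{i_k}(1)$ for $k$ large, contradicting the choice of $\hat{u}_k$.

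There is no serious obstacle: the entire lemma is essentially a repackaging of the results of Sections~\ref{subsec:cube}. The one point requiring care is the verification that $y$ lies in the \emph{open} part of the limit cube, which relies on the definition of $\Omega^\circ_s(v)$ permitting $t=0$; without this, the saturation statement of Proposition~\ref{prop:domains_saturated} could not be invoked at the origin, and the lower bound would require a quantitative argument instead of a clean contradiction.
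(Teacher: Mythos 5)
Your lower bound is correct, and it takes a genuinely softer route than the paper. You observe that $y=(0,z)$ lies in the relative interior $(-1,1)^j\times\Omega_1^\circ(1)$ (which is right: the definition of $\Omega_s^\circ(v)$ allows $t=0$), and then run a contradiction/diagonal argument so that any putative sequence of bad points $\hat u_k\in B_{r_{i_k}/k}(\hat q)\setminus S(m_{i_k})\cdot F_{i_k}(1)$ converges to $y$ and violates Proposition~\ref{prop:domains_saturated}. The paper instead argues directly: it writes $q'=\gamma^{l_i}(\gamma^{-l_i}q')$ with $\gamma^{-l_i}q'$ in the Dirichlet domain, uses the inductive assumption (3B) to place $\gamma^{-l_i}q'$ in $F_i(1)$, and uses Lemma~\ref{lem:larger_power_dist} to force $|l_i|\le m_i$, which yields the explicit constant $C_1=6C$. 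Your version delivers only an unspecified $C_1$, but that is all the lemma asserts, so this is a legitimate shortcut given that Proposition~\ref{prop:domains_saturated} is already available and does not depend on the present lemma.

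The upper bound as written has a gap. The convergence of subsets used in this paper (modeled on Definition~\ref{def_conv_symsubset}) is a \emph{local} notion: condition (2) only constrains subsequences that actually converge, i.e., have bounded displacement from the basepoint, so convergence of $S(m_i)\cdot F_i(1)$ to the compact set $[-1,1]^j\times\Omega_1(1)$ does not by itself prevent points of $S(m_i)\cdot F_i(1)$ from escaping to distance $\gg r_i$ from $\hat q$. (Compare $S_i=\{0\}\cup\{i\}\subset\R$, which converges to $\{0\}$ in this sense but is not eventually contained in any fixed ball.) So the step ``the GH convergence forces $S(m_i)\cdot F_i(1)\subseteq B_{R+1}(\hat q)$'' is exactly the content that must be proved, not a consequence of Proposition~\ref{prop:domains_gh}. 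The paper supplies it by two a priori estimates: every $\gamma^{l}\in S(m_i)$ satisfies $d(\gamma^{l}\hat q,\hat q)\le 2r_i$, which follows from the \emph{minimality} of $m_i$ in Convention~\ref{conv:R_subgroup} (for $0<m<m_i$ one has $d(\gamma^m\hat q,\hat q)<r_i$, and one more step of $\gamma$ costs $o(r_i)$), and $F_i(1)\subseteq B_{nr_i}(\hat q)$ because points of the Dirichlet domain realize their distance to the orbit and project into $D_i(1)$. Adding these two displacement bounds repairs your argument and gives $C_2=n+2$.
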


\begin{proof}
  We first prove $S(m_i)\cdot F_i(1) \subseteq B_{C_2r_i}(\hat{q})$ for some constant $C_2$. Let $\gamma^{l_i} \in S(m_i)$ and $q'_i\in F_i(1)$. By the method we chose $m_i$ in Convention \ref{conv:R_subgroup}, we have
  $$d(\gamma^{l_i} \hat{q},\hat{q}) \le 2r_i.$$
  for all $i$. Also, in $X=\mathbb{R}^j\times [0,\infty)$ with $j+1\le n$, we have relation 
  $$ [-1,1]^j\times [0,1] \subseteq \overline{B_{\sqrt{j+1}}}(0)\subseteq \overline{B_{\sqrt{n}}}(0).$$
  Let $\pi:\widehat{N} \to N$ be the covering map. Recall the inductive assumption (3A)
  $$\pi(q'_i)\in D_i(1)\overset{GH}\to [-1,1]^j \times [0,1],$$
  associated to the bottom row of (\ref{eq:induction_notation}). It follows that for all $i$ large
  $$d(q'_i,\hat{q})=d(q'_i,\Gamma \hat{q})=d_N(\pi(q'_i),q)\le nr_i,$$
  Consequently,
  $$d(\gamma^{l_i} q'_i,\hat{q})\le d(\gamma^{l_i} q'_i,\gamma^{l_i} \hat{q})+d(\gamma^{l_i} \hat{q},\hat{q})\le nr_i+2r_i=(n+2)r_i.$$
  This shows $S(m_i)\cdot F_i(1) \subseteq B_{(n+2)r_i}(\hat{q})$.

  Next, we prove $ B_{r_i/C_1}(\hat{q}) \subseteq S(m_i)\cdot F_i(1)$ for some constant $C_1$. Let $q'_i\in B_{r_i/C_1}(\hat{q})$, where $C_1\ge 2$ will be determined through the proof. We choose $\gamma^{l_i}\in \Gamma$ such that
  $$d(q'_i,\gamma^{l_i} \hat{q}) = d(q'_i,\Gamma \hat{q}).$$
  Then the point $\gamma^{-l_i} q'_i$ satisfies
  $$d(\hat{q},\gamma^{-l_i} q'_i)=d(\gamma^{l_i} \hat{q},q'_i)\le d(\hat{q},q'_i)\le r_i/C_1.$$
  By construction, $\gamma^{-l_i} q'_i$ belongs to the closure of Dirichilet domain $F$ centered at $\hat{q}$ because $\hat{q}\in \Gamma \hat{q}$ realizes the distance between $\gamma^{-l_i} q'_i$ and $\Gamma \hat{q}$. Associated to the bottom row of (\ref{eq:induction_notation}) with $(X,x)=(\mathbb{R}^j \times [0,\infty),0)$, we have
  $$B_{r_i/C_1}(q)\overset{GH}\to B_{1/C_1}(0)\subset (-1,1)^j\times [0,1).$$
  By inductive assumption (3B), $\pi(\gamma^{-l_i}q'_i)\in D_i(1)$ for all $i$ large. Hence $\gamma^{-l_i}q'_i\in F_i(1)$ for all $i$ large.

  Also, the group element $\gamma^{l_i}$ satisfies
  $$d(\gamma^{l_i} \hat{q},\hat{q})\le d(\gamma^{l_i} \hat{q},q'_i)+d(q'_i,\hat{q})\le 2r_i/C_1.$$
  By Lemma \ref{lem:larger_power_dist}, if we require $C_1\ge 6C$, where $C$ is the constant from Lemma \ref{lem:boomer_control}, then the inequality
  $$d(\gamma^{l_i} \hat{q},\hat{q}) \le 2r_i/C_1 \le r_i/(3C)$$
  implies $l_i\le m_i$ for all $i$ large by Lemma \ref{lem:larger_power_dist}. To sum up, we choose $C_1=6C$, then $q'=\gamma^{l_i}(\gamma^{-l_i} q')$ belongs to $S(m_i)\cdot F_i(1)$.
\end{proof}

\subsection{Proof of the Induction Theorem}\label{subsec:induction}

In this subsection, we prove the inductive step in Theorem \ref{thm:induction_plus} by assuming the plane/halfplane rigidity (Theorem \ref{thm:plane_halfplane_rigid}). This completes of proof of Theorem \ref{thm:induction_plus} module Theorem \ref{thm:plane_halfplane_rigid}.

\begin{proof}[Proof of the inductive step in Theorem \ref{thm:induction_plus}]

We assume that Theorem \ref{thm:induction_plus} holds for some $j$, and we prove the statement for $j+1$.

Let $r_i\to\infty$ be a sequence. After passing to a subsequence if necessary, we consider the convergence (\ref{eq:induction_notation}). By the inductive assumption, we know that $X$ is isometric to Euclidean $\mathbb{R}^j \times [0,\infty)$ or Euclidean $\mathbb{R}^{j+1}$. Propositions \ref{prop:non_max_escape} and \ref{prop:orbit_R} imply that the orbit $Gy$ is homeomorphic to $\mathbb{R}$; in particular, we can write $G=\mathbb{R}\times K$, where $K$ fixes $y$.

If $X$ is isometric to Euclidean $\mathbb{R}^{j+1}$, then we can lift $j+1$ many linearly independent lines to $Y$. By splitting theorem, $(Y,y)$ is isometric to $(\mathbb{R}^{j+1} \times Z,(0,z))$, where $Z$ is $\RCD(0,n-j-1)$; moreover, $G$ acts trivially on $\mathbb{R}^{j+1}$-factor. Since the orbit $Gy=\{0\}\times Z$ is homeomorphic to $\mathbb{R}$, we conclude that $Z$ is a line. Hence $Y$ is isometric to $\mathbb{R}^{j+2}$. By Theorem \ref{thm:meas_split}, $Y$ must be equipped with (a multiple of) the Lebesgue measure as any limit renormalized measure. This completes the inductive step when $X$ is isometric to Euclidean space $\mathbb{R}^{j+1}$.

Below, we assume that $X$ is isometric to the Euclidean halfspace $\mathbb{R}^j \times [0,\infty)$. In this case, $Y$ also splits isomorphically as
$$(Y,y,\meas_Y)=(\mathbb{R}^j,0,\mathcal{L}^j)\otimes (Z,z,\meas_Z)$$
for any limit renormalized measure $\meas_Y$ by Theorem \ref{thm:meas_split}. Moreover,
$G$ acts trivially on the $\mathbb{R}^j$-factor.

We check that $(Z,z,d_Z,\meas_Z)$ fulfills the conditions of Theorem \ref{thm:plane_halfplane_rigid} for some limit renormalized measure $\meas_Y=\meas_j \otimes \meas_Z$. Verifying condition (1) of Theorem \ref{thm:plane_halfplane_rigid} is straightforward. In fact, we have 
$$\mathbb{R}^j\times [0,\infty) =X=Y/G = \mathbb{R}^j \times (Z/G).$$
Hence $Z/G$ is isometric to $[0,\infty)$ with $\bar{z}$ projecting to $0$. Moreover, we have $G=\mathbb{R}\times K$, where $K$ fixes $y$, as we discussed before.

Next, we check $(Z,z,d_Z,\mathfrak{m}_Z)$ satisfies condition (2) of Theorem \ref{thm:plane_halfplane_rigid}. 

By the inductive assumption (2), $X=\mathbb{R}^j\times [0,\infty)$ has a limit renormalized measure as (a multiple of) the Lebesgue measure. We pass to this subsequence to obtain $\meas_X$ as that. Let $s>0$. Inductive assumption (3) provides a sequence domains $D_i(s)\subset N$ such that
$$(r_i^{-1}D_i(s),q)\overset{GH}\longrightarrow ([-s,s]^j \times [0,s],0)$$
associated to the bottom row of (\ref{eq:induction_notation}) with $(X,x)=(\mathbb{R}^j\times [0,\infty),0)$. Then we have
\begin{equation}\label{eq:vol_D}
\dfrac{\vol D_i(s)}{\vol D_i(1)}=\dfrac{\vol D_i(s)}{\vol B_{r_i}(q)}\dfrac{\vol B_{r_i}(q)}{\vol D_i(1)} \to \dfrac{\meas_X([-s,s]^j\times [0,s])}{\meas_X([-1,1]^j \times [0,1]) } = s^{j+1}
\end{equation}
as $i\to\infty$ (see Remark \ref{rem:meas_D}).

Let $F\subseteq \widehat{N}$ be the Dirichilet domain of $\Gamma$-action centered at $\hat{q}$. We consider $F_i(s) := \overline{ F \cap \pi^{-1}(D_i(s)) }$ as in Section \ref{subsec:cube}. As a property of the Dirichlet domain, it holds that
\begin{equation}\label{eq:vol_D_F}
\vol F_i(s) = \vol D_i(s).
\end{equation}

Let $\gamma\in \Gamma$ be a generator of $\Gamma \simeq \mathbb{Z}$. With respect to the top row of (\ref{eq:induction_notation}), we choose a sequence $\gamma^{m_i}$, where $m_i\to\infty$ as in Convention \ref{conv:R_subgroup}. By Lemma \ref{lem:balls_vs_domains}, we have inequality
$$ \vol B_{r_i/C_1}(\hat{q}) \le \vol \left(S(m_i)\cdot F_i(1)\right) \le \vol B_{C_2r_i}(\hat{q})$$
for all $i$ large, where $C_1,C_2>1$ are constants independent of $i$. Together with Bishop--Gromov relative volume comparison, we derive
$$ 1/C_1^n \le \dfrac{\vol(S(m_i)\cdot F_i(1)) }{\vol B_{r_i}(\hat{q})} \le C_2^n.$$
Passing to a subsequence, we obtain
\begin{equation}\label{eq:vol_cube_1}
\dfrac{\vol(S(m_i)\cdot F_i(1)) }{\vol B_{r_i}(\hat{q})} \to \theta \in [1/C_1^n,C_2^n].
\end{equation}

Let $v>0$. According to Proposition \ref{prop:domains_gh}, we have convergence of subsets
\begin{equation}\label{eq:induction_domains_gh}
   (r_i^{-1}(S(m_i v)\cdot F_i(s)),\hat{q}) \overset{GH}\to ([-s,s]^j\times \Omega_s(v),y)
\end{equation}
associated to the top row of (\ref{eq:induction_notation}) with $Y=\mathbb{R}^j \times Z$. Combining Corollary \ref{cor:domain_meas_conv} with (\ref{eq:vol_D}), (\ref{eq:vol_D_F}), and (\ref{eq:vol_cube_1}), this allows us to calculate $\meas_Z(\Omega_s(v))$ by
\begin{align*}
&(2s)^j\cdot \meas_Z(\Omega_s(v)) =(\mathcal{L}^j\otimes\meas_Z)([-s,s]^j\times \Omega_s(v))\\
=&\lim_{i\to\infty}\dfrac{\vol (S(m_i v)\cdot F_i(s))}{\vol B_{r_i}(\hat{q})}=\lim_{i\to\infty} \dfrac{\vol (S(m_i v)\cdot F_i(s))}{\theta^{-1} \vol (S(m_i)\cdot F_i(1))}\\
=&\ \theta \cdot \lim_{i\to\infty} \dfrac{\# S(m_i v) \cdot \vol D_i(s)}{\# S(m_i) \cdot \vol D_i(1)}= \theta \cdot vs^{j+1}.
\end{align*}
Hence $\meas_Z(\Omega_s(v))=c\cdot vs$, where $c=\theta/2^j$ is a constant independent of $s$ and $v$. This verifies condition (2) in Theorem \ref{thm:plane_halfplane_rigid} for $(Z,z,d_Z,\meas_Z)$.

Now we can finally apply the plane/halfplane rigidity (Theorem \ref{thm:plane_halfplane_rigid}) to $Z$ and conclude that $Y=\mathbb{R}^j \times Z$ is isometric to either a Euclidean space $\mathbb{R}^{j+2}$ or a Euclidean halfspace $\mathbb{R}^{j+1} \times [0,\infty)$ with (a multiple of) the Lebesgue measure as a limit renormalized measure. This completes the proof of (1) and (2) in the inductive step.

To complete the inductive step, it remains to verify Theorem \ref{thm:induction_plus}(3) when $Y=\mathbb{R}^{j}\times Z = \mathbb{R}^j \times (\mathbb{R} \times  [0,\infty))$. In this case, $G=\mathbb{R}$ acts as pure translations in the $\mathbb{R}$-factor of $Z$. Hence, it is clear that 
$$
\Omega_s(v)=[-v,v]\times [0,s] \subseteq \mathbb{R}\times [0,\infty)=Z.
$$
By Proposition \ref{prop:domains_gh}, we can choose $v=s$ and $\widehat{D}_i(s):=S(m_i s)\cdot F_i(s)$, then 
$$(r_i^{-1}\widehat{D}_i(s),\hat{q})\overset{GH}\longrightarrow ([-s,s]^{j+1}\times [0,s],0).$$
This proves Theorem \ref{thm:induction_plus}(3A) for $j+1$.
Lastly, applying Proposition \ref{prop:domains_saturated} with $v=s$, we conclude Theorem \ref{thm:induction_plus}(3B) for $j+1$.
 
We complete the inductive step for Theorem \ref{thm:induction_plus}.
\end{proof}

\begin{rem}\label{rem:limit_G_action}
   We give a remark derived from the proof of Theorem \ref{thm:induction_plus}. In the inductive step, if $N=\widehat{M}_{k-j}$ has a (unique) asymptotic cone $\mathbb{R}^{j+1}$, then $\widehat{N}=\widehat{M}_{k-(j+1)}$ also has a (unique) asymptotic cone $\mathbb{R}^{j+2}$; in this case, we have $G=\mathbb{R}$ acting as pure translations in (\ref{eq:induction_notation}). If $N$ has a (unique) asymptotic cone $\mathbb{R}^{j}\times [0,\infty)$, then in (\ref{eq:induction_notation}),
   $$ G=
   \begin{cases}
      \mathbb{R} , & \text{ if } (Y,y)=(\mathbb{R}^{j+1}\times [0,\infty),0)\\
      \mathbb{R}\times \mathbb{Z}_2, & \text{ if } (Y,y)=(\mathbb{R}^{j+2},0),
   \end{cases}$$
   where $\mathbb{Z}_2$ acts as a reflection about a line passing $0$. This $\mathbb{Z}_2$ isotropy subgroup appears at most once through all the inductive steps. In fact, once it appears, the covering must have asymptotic cone $\mathbb{R}^{j+2}$. Afterwards, we will always have $G=\mathbb{R}$ in the remaining inductive steps.
\end{rem}

\begin{rem}
  We give an example of the aforementioned $\mathbb{Z}_2$ isotropy subgroup. Let $M^2$ be the flat (non-compact) M\"obius band. It has linear volume growth and $\pi_1(M)=\mathbb{Z}$. Its universal cover has a unique equivariant asymptotic cone
  $$(r_i^{-1} \widetilde{M},\tilde{p},\mathbb{Z})\overset{GH}\longrightarrow (\mathbb{R}^2,0,G=\mathbb{R}\times \mathbb{Z}_2).$$
\end{rem}

\subsection{Structure of fundamental groups}\label{subsec:pi1}

We prove Theorems \ref{thm:vir_abel} and \ref{thm:finite} by applying the Induction Theorem in this subsection. We start with Theorem \ref{thm:vir_abel}.

\begin{thm}\label{thm:zero_escape_rate}
  Let $(M,p)$ be an open $n$-manifold with $\Ric\ge 0$ and linear volume growth. Then $E(M,p)=0$.
\end{thm}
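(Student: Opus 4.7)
The plan is to argue by contradiction using the Induction Theorem to pin down the geometry of every equivariant asymptotic cone of $(\widetilde{M},\tilde p,\pi_1(M))$. First I would reduce to the case where $\Gamma := \pi_1(M)$ is finitely generated and torsion-free nilpotent of rank $k$: such a finite-index subgroup exists by Theorem \ref{thm:finite_gen}, Theorem \ref{thm:vir_nil}, and Remark \ref{rem:vir_nil_torsionfree}, and passing to the corresponding finite Riemannian cover of $M$ preserves $\Ric\ge 0$, linear volume growth, and (by the standard finite-index comparison of the two covering group actions on $\widetilde{M}$) the vanishing of the escape rate.

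Suppose toward contradiction that $E(M,p)=\delta>0$. Since $E(M,p)=E(\widetilde{M},\tilde p,\Gamma)$ (see the first remark after Definition \ref{def:escape_rate}), I can select $\gamma_i\in\Gamma$ with $r_i:=|\gamma_i|\to\infty$ and minimizing geodesics $\sigma_i$ from $\tilde p$ to $\gamma_i\tilde p$ satisfying $\mathrm{size}(\sigma_i)/r_i\to\delta$. Rescaling by $r_i^{-1}$ and passing to a convergent subsequence yields an equivariant limit
\begin{equation*}
(r_i^{-1}\widetilde{M},\tilde p,\Gamma,\sigma_i,\gamma_i)\overset{GH}\longrightarrow(Y,y,G,\sigma,g),
\end{equation*}
where $d_Y(y,gy)=1$, $\sigma$ is a unit minimizing geodesic from $y$ to $gy$, and some point $q\in\sigma$ satisfies $d_Y(q,Gy)\ge\delta$.

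Now I would apply the Induction Theorem (Theorem \ref{thm:induction_plus}) with $j=k$ to the tower (\ref{eq:tower_covers}) built from the central series of $\Gamma$: this forces $Y$ to be isometric to either $\R^{k+1}$ or $\R^k\times[0,\infty)$. Moreover, $(Y/G,\bar y)$ is an asymptotic cone of $M$, so Theorem \ref{thm:sormani} constrains it to be a line or a ray. Iterating the equivariant analysis of Section \ref{subsec:cylinder} (cf.\ Remark \ref{rem:limit_G_action}), each $\mathbb{Z}$-step $\widehat{M}_{j-1}\to\widehat{M}_j$ in the tower contributes an $\R$-translation direction to the limit group, so the full $\Gamma$-limit contains a closed $\R^k$-subgroup of translations whose orbit through $y$ is $k$-dimensional. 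Matching this with the one-dimensional quotient $Y/G$ and the known dimension of $Y$, the orbit $Gy$ must be a $k$-dimensional Euclidean affine subspace---an affine hyperplane in $\R^{k+1}$, or the bounding hyperplane $\R^k\times\{0\}$ in the halfspace.

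Since $Y$ is Euclidean or a Euclidean halfspace, the minimal geodesic $\sigma$ between two points of the affine subspace $Gy$ must lie entirely inside $Gy$. Hence $\mathrm{size}(\sigma)=0$, contradicting $d_Y(q,Gy)\ge\delta>0$, which forces $E(M,p)=0$. The hardest step will be making the identification $Gy\cong\R^k$ rigorous: the Induction Theorem is phrased one cover at a time, so one must carefully accumulate the $\R$-translation factors produced at each $\mathbb{Z}$-step of the tower and then rule out any additional translations by comparing the dimension of $Gy$ against the one-dimensional constraint on $Y/G$ coming from Theorem \ref{thm:sormani}.
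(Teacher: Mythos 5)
Your overall architecture matches the paper's: use the Induction Theorem to force every equivariant asymptotic cone $(Y,y,G)$ of the universal cover to be Euclidean (half)space with Euclidean orbit $Gy$, then kill the escape rate. Your endgame is a genuine (and legitimate) simplification: instead of citing \cite[Theorem 1.3]{Pan21} as the paper does, you run the contradiction directly, using that in $\mathbb{R}^{k+1}$ or $\mathbb{R}^k\times[0,\infty)$ the minimal geodesic between two points of an affine subspace $Gy$ is unique and lies in $Gy$, so the limit of the size-realizing points must have distance $0$ from the orbit. That step is correct as written and is close in spirit to the paper's own remark that the full strength of \cite{Pan21} is not needed once Euclidean orbits are established. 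However, two steps upstream have real gaps.

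First, your reduction to the torsion-free nilpotent case by ``passing to the finite cover preserves the vanishing of the escape rate'' is asserted, not proved, and the needed direction is not standard. If $\Gamma'\le\Gamma$ has finite index and $\gamma=\gamma'\gamma_j\in\Gamma\setminus\Gamma'$, the \emph{minimal} geodesic $\sigma_\gamma$ from $\tilde p$ to $\gamma\tilde p$ is not a bounded perturbation of $\sigma_{\gamma'}$: concatenating $\sigma_{\gamma'}$ with a short arc gives an almost-minimal path of small size, but that does not control the size of the genuinely minimal $\sigma_\gamma$, which may a priori wander far from the orbit. This is exactly why the paper does not pass to the finite cover; it keeps the full group $\Gamma$, shows $Gy=G'y$ in every equivariant asymptotic cone (a short coset argument in the limit), and only then concludes. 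Your contradiction argument can absorb this fix --- run it for the full $\Gamma$ and use $Gy=G'y$ --- but as stated the reduction is a gap.

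Second, and more seriously, your identification $Gy\cong\mathbb{R}^k\times\{0\}$ is incomplete, and the repair you sketch (accumulate $\mathbb{R}$-translation factors, then match dimensions against $\dim Y=k+1$ and $\dim(Y/G)=1$) does not suffice. The $\mathbb{R}$-factors produced by Section \ref{subsec:cylinder} live in the isometry groups of the successive quotients $\widehat M_{k-j}$, not of $\widetilde M$, and assembling them into a closed $\mathbb{R}^k$-subgroup of $G$ is not something the machinery hands you. Moreover, dimension counting alone cannot conclude: $O(k+1)$ acting on $\mathbb{R}^{k+1}$ has one-dimensional quotient $[0,\infty)$, yet the orbit through the basepoint is a single point, so the special orbit over the cone point of $Y/G$ could in principle be degenerate. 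The paper's argument avoids both problems by observing that $Gy=\pi^{-1}(\bar y)$ is the successive preimage of the basepoint through the tower $Y_{k-(j+1)}\to Y_{k-j}$, and each such fiber is the limit orbit of a $\mathbb{Z}$-action, hence homeomorphic to $\mathbb{R}$ by Proposition \ref{prop:orbit_R}; this forces the fiber over $\bar y$ to be exactly $k$-dimensional and therefore the affine subspace $\mathbb{R}^k\times\{0\}$. You should replace your dimension-matching step with this successive-preimage argument.
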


\begin{proof}
   If $M$ has a (unique) asymptotic cone $(\mathbb{R},0)$. Then, by Theorem \ref{thm:sormani}, $M$ splits as a metric product $M=\mathbb{R}\times N$, where $N$ is compact. In this case, $E(M,p)=0$ holds clearly because all minimal geodesic loops representing elements in $\pi_1(M,p)$ are contained in a slice of $N$, which is bounded.

   Below we assume that $M$ has a (unique) asymptotic cone $([0,\infty),0)$. We first consider the case that $\pi_1(M)$ is torsion-free nilpotent. Let $(\widetilde{M},\tilde{p})$ be the universal cover of $(M,p)$ with covering group $\Gamma=\pi_1(M,p)$. By the Induction Theorem, for any $r_i\to\infty$, we have asymptotic cones
   \begin{equation*}
   \begin{CD}
   (r_i^{-1} \widetilde{M},\tilde{p},\Gamma) @>GH>> (Y,y,G) \\
	@VV\pi V @VV \pi V\\
	(r_i^{-1} M,p) @>GH>> ([0,\infty),0)=(Y/G,\bar{y}),
   \end{CD}
   \end{equation*}
   where $(Y,y)$ is either Euclidean space $(\mathbb{R}^{k+1},0)$ or Euclidean halfspace $(\mathbb{R}^k\times [0,\infty))$. In either case, $Gy$ is a Euclidean $\mathbb{R}^k \times \{0\}$ inside $Y$. In fact, one can see this from $Gy=\pi^{-1}(0)$ as the successive pre-image of the quotient map $Y_{k-(j+1)}\to Y_{k-j}$, where $Y_{k-j}$ is the asymptotic cone defined in (\ref{eq:induction}). Thanks to \cite[Theorem 1.3]{Pan21}, we conclude that $E(M,p)=0$. 

   In general, $\Gamma=\pi_1(M,p)$ has a torsion-free nilpotent subgroup $\Gamma'$ of finite index by Theorem \ref{thm:finite_gen} and Remark \ref{rem:vir_nil_torsionfree}. For any equivariant asymptotic cone
   \begin{equation}\label{eq:gh_conv_index}(r_i^{-1}\widetilde{M},\tilde{p},\Gamma,\Gamma')\overset{GH}\longrightarrow (Y,y,G,G'),
   \end{equation}
   we have seen in the last paragraph that $G'y$ is Euclidean in $Y$. We claim $Gy=Gy'$. If this claim holds, then it follows that $Gy$ is Euclidean and $E(M,p)=0$ by \cite[Theorem 1.3]{Pan21}.

   We verify the claim that $Gy=G'y$. Since $\Gamma'$ has finite index in $\Gamma$, there are finitely many elements $\gamma_0=e,\gamma_1,...,\gamma_k$ in $\Gamma$ such that $\Gamma=\cup_{j=0}^k \Gamma' \gamma_j$. Passing to a subsequence, we assume that every $\gamma_j$ converges to some $g_j \in G$ associated to (\ref{eq:gh_conv_index}). We note that $g_j y=y$ by construction. For any $h\in G-G'$, let $\alpha_i \in \Gamma$ be a sequence converging to $g$ associated to (\ref{eq:gh_conv_index}). We write $\alpha_i=\beta_i\cdot \gamma_{j(i)}$, where $\beta_i\in \Gamma'$. We pass to a subsequence and assume $j(i)$ is a constant; below, we denote $\gamma_{j(i)}$ by $\gamma_1$ for convenience. Then $\beta_i=\alpha_i \cdot \gamma_1^{-1}$ has limit $hg_1^{-1}\in G'$. It follows that
   $$ hy= hg_1^{-1}(g_1 y)=hg_1^{-1} y \in G'y.$$
   This verifies the claim and completes the proof.
\end{proof}


\begin{proof}[Proof of Theorem \ref{thm:vir_abel}]
    We have shown that $E(M,p)=0$ in Theorem \ref{thm:zero_escape_rate}. Now we apply \cite[Theorem 1.2]{Pan21} to conclude that $\pi_1(M)$ is virtually abelian.
\end{proof}

\begin{rems}
   (1) Alternatively, one can first assume that $\pi_1(M)$ is finitely generated and torsion-free nilpotent without loss of generality, thanks to Theorem \ref{thm:finite_gen} and Remark \ref{rem:vir_nil_torsionfree}. Then \cite[Theorem 1.2]{Pan21} and the first two paragraphs in the proof of Theorem \ref{thm:zero_escape_rate} are sufficient to conclude Theorem \ref{thm:vir_abel}.\\
   (2) One may also prove Theorem \ref{thm:vir_abel} without directly applying the main result in \cite[Theorem 1.2]{Pan21}. In fact, the difficult part of \cite[Theorem 1.2]{Pan21} is showing that any equivariant asymptotic cone $(Y,y,G)$ of the universal cover has orbit $Gy$ isometric to a Euclidean subspace in $Y$ from $E(M,p)=0$. Deducing the virtual abelianness from this property of Euclidean orbits is much less involved (see the last page of \cite[Section 4]{Pan21}). Here, we have already shown that the orbit $Gy$ is always Euclidean in the proof of Theorem \ref{thm:zero_escape_rate}, so one can establish the virtual abelianness from there. 
\end{rems}

\begin{rem}\label{rem:vir_abel}
   Theorem \ref{thm:vir_abel} also extends to non-compact $\RCD(0,N)$ spaces with linear volume growth. To see this, we make the following observations on where we used the smooth structure of $M$ and their corresponding results in $\RCD(0,N)$ spaces.\\
   (1) First of all, Wang proved that any $\RCD(0,N)$ space is semi-locally simply connected \cite{WangJK24}, so we can think of its fundamental group as the covering group of the universal cover, just like the case of manifolds. Moreover, its universal cover admits a natural $\RCD(0,N)$ structure by the work of Mondino-Wei \cite{MondinoWei19}. \\
   (2) Sormani's results on manifolds with nonnegative Ricci curvature and linear volume growth were extended to $\RCD(0,N)$ spaces with linear volume growth by Huang \cite{Huang18,Huang20}. In particular, Theorem \ref{thm:sormani} extends to the $\RCD(0,N)$ case.\\
   (3) Sormani's work \cite{Sor00b} extends immediately to $\RCD(0,N)$ spaces by using the Abresch-Gromoll excess estimate in $\RCD(0,N)$ spaces \cite{GM14}. Together with the result of sublinear diameter growth in \cite[Theorem 1.2]{Huang20}, we deduce that any $\RCD(0,N)$ space with linear volume growth has a finitely generated fundamental group.\\
   (4) Theorem \ref{thm:volume_ratio_limit} on the limit ratio extends to $\RCD(0,N)$ spaces as well. More precisely, the limit $\lim_{r\to \infty} \meas(B_r(y))/r $ always exists if a non-compact $\RCD(0,N)$ space $(Y,y,\dist,\meas)$ has linear volume growth. See Remark \ref{rem:rcd_linear_ratio} for more details on this. \\
   (5) In the proof of the Induction Theorem, we used the Dirichlet domain and the fact that its boundary has zero volume. The Dirichlet domain in any $\RCD$ space $(Y,\dist,\meas)$ has a zero $\meas$-measure boundary as well; see \cite[Theorem 1.7(1)]{Ye23}.\\
   (6) Any result we used from \cite{Pan21,Pan23} directly extends to $\RCD(0,N)$ spaces by verbatim because smooth structures are not used in \cite{Pan21,Pan23}.
\end{rem}

Next, we move on to the proof of Theorem \ref{thm:finite}. We will consider a $\mathbb{Z}$-folding cover of $M$ in the proof. We start with a result that immediately follows from the Induction Theorem.

\begin{cor}\label{cor:Z_cover}
  Let $(M,p)$ be an open $n$-manifold with $\Ric\ge 0$ and linear volume growth. Let $(\widehat{M},\hat{p})$ be a $\mathbb{Z}$-folding cover of $(M,p)$. Then $(\widehat{M},\hat{p},\mathbb{Z})$ has a unique equivariant asymptotic cone as one of\\
  (1) $(\mathbb{R}^2,0,\mathbb{R}\times \mathbb{Z}_2)$,\\
  (2) $(\mathbb{R}^2,0,\mathbb{R})$,\\
  (3) $(\mathbb{R}\times [0,\infty),0,\mathbb{R} )$. 
\end{cor}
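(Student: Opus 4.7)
The plan is to apply the Induction Theorem (Theorem \ref{thm:induction_plus}) to the one-step tower $\widehat{M}_0 = \widehat{M} \to \widehat{M}_1 = M$ with $\Lambda = \mathbb{Z}$; note that $\mathbb{Z}$ is torsion-free nilpotent of rank $1$, so the hypotheses are satisfied. For any sequence $r_i \to \infty$, after passing to a subsequence I would obtain the commutative equivariant diagram
$$
\begin{CD}
(r_i^{-1}\widehat{M}, \hat{p}, \mathbb{Z}) @>GH>> (Y, y, G)\\
@VV\pi V @VV\pi V\\
(r_i^{-1}M, p) @>GH>> (X, x) = (Y/G, \bar{y}).
\end{CD}
$$
Theorem \ref{thm:induction_plus}(1) applied with $j = 0$ asserts that $(X, x)$ is isometric to either $(\mathbb{R}, 0)$ or $([0, \infty), 0)$ and is unique (independent of $\{r_i\}$); applied with $j = 1$ it asserts that $(Y, y)$ is isometric to either $(\mathbb{R}^2, 0)$ or $(\mathbb{R} \times [0, \infty), 0)$ and is also unique. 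This already pins down the underlying metric spaces in cases (1)--(3).

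Next I would identify $G$ in each of the pairings $(Y, X)$. Because $M$ is polar at infinity (its unique cone is a line or a ray), Proposition \ref{prop:non_max_escape} gives $E(\widehat{M}, \hat{p}, \mathbb{Z}) < 1/2$, and Proposition \ref{prop:orbit_R} then yields that $Gy$ is homeomorphic to $\mathbb{R}$. In particular $G = \mathbb{R} \times K$ with $K$ a compact Lie subgroup fixing $y$. The three admissible pairs $(Y, X)$ correspond exactly to the three cases of the corollary: $X = \mathbb{R}$ forces $Y = \mathbb{R}^2$ with $G = \mathbb{R}$ acting as pure translations (case (2)); $X = [0, \infty)$ with $Y = \mathbb{R} \times [0, \infty)$ forces $G = \mathbb{R}$ translating along the boundary (case (3)); $X = [0, \infty)$ with $Y = \mathbb{R}^2$ forces $G = \mathbb{R} \times \mathbb{Z}_2$, where $\mathbb{Z}_2$ is the reflection across the translation axis needed to fold $\mathbb{R}^2$ to a halfline (case (1)).

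The identifications of $G$ in each scenario are precisely the content of Remark \ref{rem:limit_G_action}, established in the proof of the inductive step of Theorem \ref{thm:induction_plus}, so I would simply invoke that remark rather than reproduce the argument. Uniqueness of the triple $(Y, y, G)$ independent of $\{r_i\}$ then follows from the uniqueness of $(Y, y)$ together with the fact that $G$ is pinned down by the pair $(Y, X)$. I do not foresee any real obstacle, since the corollary is essentially a direct synthesis of the Induction Theorem at $j = 0, 1$ with Remark \ref{rem:limit_G_action}; the mild subtlety of excluding a spurious $\mathbb{Z}_2$ isotropy in cases (2) and (3) is already handled there, via the observation that such a reflection would force a second quotient reducing the cone further than $X$ allows.
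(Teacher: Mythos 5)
Your proposal is correct and follows essentially the same route as the paper, whose proof of this corollary is the one-line observation that it follows from the Induction Theorem with $k=j=1$; the identification of $G$ in each case is exactly the content of Remark \ref{rem:limit_G_action}, as you note.
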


\begin{proof}
   This follows from the Induction Theorem with $k=j=1$.
\end{proof}

\begin{rem}
  If we assume $M$ has positive Ricci curvature in Corollary \ref{cor:Z_cover}, then only (1) and (3) can occur, because $M$ has a (unique) asymptotic cone $([0,\infty),0)$ by Theorem \ref{thm:sormani}.
\end{rem}

For Lemma \ref{lem:almost_transl} and Proposition \ref{prop:Z_cover_vol} below, we always assume that $(M,p)$ is an open $n$-manifold with nonnegative Ricci curvature and linear volume growth. We study $(\widehat{M},\hat{p})$, a $\mathbb{Z}$-folding cover of $(M,p)$, by using Corollary \ref{cor:Z_cover}. Our goal is to show that $\widehat{M}$ has roughly quadratic volume growth (Proposition \ref{prop:Z_cover_vol}) so that we can apply Anderson's results \cite{Anderson90} on positive Ricci curvature and at most cubic volume growth later to prove Theorem \ref{thm:finite}. The method to prove Proposition \ref{prop:Z_cover_vol} is similar to some of the arguments in \cite[Section 6.2]{Pan23}.

Below we denote $\gamma$ a generator of the covering group $\Gamma \simeq \mathbb{Z}$ and $|\gamma^m|:=d(\gamma^m \hat{p},\hat{p})$, where $m\in \mathbb{Z}$.

\begin{lem}\label{lem:almost_transl}
   For any $\epsilon>0$, there is a constant $R_0=R_0(\epsilon,\widehat{M},\hat{p},\gamma)$ such that for every $\gamma^m \in \Gamma$ with $|\gamma^m|\ge R_0$ and $m\in \mathbb{Z}_+$, it holds that
   $$|\gamma^m| \ge 2^{1-\epsilon} |\gamma^{\lceil m/2 \rceil}|,$$
   where $\lceil \cdot \rceil$ is the ceiling function.
\end{lem}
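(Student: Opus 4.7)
I plan to argue by contradiction. If the conclusion fails for some fixed $\epsilon\in(0,1)$, then there exists a sequence of positive integers $m_i$ with $r_i := |\gamma^{m_i}| \to \infty$ satisfying
\[
|\gamma^{m_i}| < 2^{1-\epsilon}\, |\gamma^{\lceil m_i/2\rceil}|. \quad (\ast)
\]
Rescale by $r_i$ and apply Corollary~\ref{cor:Z_cover}: in each of the three listed cases, the \emph{unique} equivariant asymptotic cone
\[
(r_i^{-1}\widehat{M},\hat{p},\Gamma) \overset{GH}\longrightarrow (Y, y, G)
\]
has $G = \mathbb{R} \times K$ with $K \in \{\{e\},\mathbb{Z}_2\}$ finite and fixing $y$. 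After passing to a subsequence, $\gamma^{m_i} \to g \in G$ with $d(gy, y) = 1$; choosing the $\mathbb{R}$-factor to pass through $g$, I label $g = (1, e)$.

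The key step will be to identify the limit of $\alpha_i := \gamma^{\lceil m_i/2\rceil}$ in the same rescaling. From $\gamma^{m_i} = \alpha_i^{\,2}\cdot\gamma^{m_i - 2\lceil m_i/2\rceil}$, with the trailing factor lying in $\{e,\gamma^{-1}\}$, the triangle inequality gives $|\alpha_i| \ge (r_i - |\gamma|)/2$, so $|\alpha_i|/r_i$ is bounded below away from $0$. For the upper bound I argue by a secondary contradiction: if $|\alpha_i|/r_i \to \infty$ along a subsequence, rescaling instead by $|\alpha_i|$ produces the same limit $(Y, y, G)$ by the uniqueness in Corollary~\ref{cor:Z_cover}; in this new rescaling $\gamma^{m_i}$ has rescaled displacement $r_i/|\alpha_i|\to 0$ and hence collapses to the identity, while $\alpha_i$ subconverges to some $h\in G$ with $d(hy,y)=1$. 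The relation $\gamma^{m_i} = \alpha_i^{\,2}\cdot\gamma^{m_i - 2\lceil m_i/2\rceil}$ then forces $h^2 = e$, and writing $h = (t,k)\in \mathbb{R}\times K$ gives $t = 0$, $k^2 = e$, so $h\in K$ fixes $y$ and $d(hy,y) = 0$, contradicting $d(hy,y)=1$. Hence $|\alpha_i|/r_i$ stays bounded; up to subsequence $\alpha_i \to \alpha = (t,k)\in G$ with $\alpha^2 = (1,e)$, forcing $t = 1/2$ and $k^2 = e$. Since $k$ fixes $y$, this yields $|\alpha_i|/r_i \to d(\alpha y, y) = 1/2$.

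The contradiction now follows immediately: $(\ast)$ rearranges to $|\alpha_i|/r_i > 2^{\epsilon - 1}$, and passing to the limit gives $1/2 \ge 2^{\epsilon - 1}$, i.e.\ $\epsilon \le 0$, contradicting $\epsilon > 0$. The main obstacle I foresee is precisely the upper bound $\limsup_i |\alpha_i|/r_i < \infty$, since without it the elements $\alpha_i$ need not accumulate inside $G$; this is where the \emph{uniqueness} (and not merely existence) of the equivariant asymptotic cone supplied by Corollary~\ref{cor:Z_cover} is essential, letting one perform blow-downs at two different scales into the same limit $(Y, y, G)$ and exploit the algebraic rigidity that $G = \mathbb{R}\times K$ with finite $K$ admits no square root of a unit translation lying outside the isotropy subgroup.
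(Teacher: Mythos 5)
Your proposal is correct and follows essentially the same route as the paper: blow down at scale $r_i=|\gamma^{m_i}|$, identify the limit $h$ of $\gamma^{\lceil m_i/2\rceil}$ as a square root of $g=(1,e)$ modulo the finite isotropy subgroup $K$, and use that the orbit $Gy$ is a line (Corollary \ref{cor:Z_cover}) to force $d(hy,y)=1/2$, contradicting $(\ast)$. Your additional verification that $\limsup_i |\gamma^{\lceil m_i/2\rceil}|/r_i<\infty$ via a second blow-down at scale $|\gamma^{\lceil m_i/2\rceil}|$ is a welcome refinement: the paper extracts the limit $h\in G$ without explicitly justifying this bounded displacement.
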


\begin{proof}
   We argue by contradiction. Suppose the contrary; then there is a sequence $m_i\to\infty$ such that
   $$|\gamma^{m_i}|< 2^{1-\epsilon} |\gamma^{\lceil m_i/2 \rceil}|.$$
   The sequence $r_i=|\gamma^{m_i}|\to \infty$ gives rise to
   a convergent sequence:
   \begin{equation}\label{eq:almost_transl}(r_i^{-1}\widehat{M},,\hat{p},\Gamma,\gamma^{m_i},\gamma^{\lceil m_i/2 \rceil})\overset{GH}\longrightarrow (Y,y,G,g,h),
   \end{equation}
   where $(Y,y,G)$ is one of the spaces described in Corollary \ref{cor:Z_cover}. Moreover,
   $$1=d(gy,y)\le 2^{1-\epsilon} d(hy,y) < 2 d(hy,y).$$
   
   We show that $h^2 y=gy$. In fact, we write $G=\mathbb{R}\times K$ as in subsection \ref{subsec:orbit_R}, where $K$ fixes $y$. Then associated to (\ref{eq:almost_transl}), we have 
   $$  \gamma^{j}\overset{GH}\to (e,\beta_j)\in G ,\quad j=1,2.$$ 
   Since
   $$m_i \le 2\cdot \lceil \frac{m_i}{2} \rceil \le m_i+2,\quad \gamma^{2\lceil m_i/2 \rceil}\overset{GH}\to  h^2,\quad \gamma^{m_i}\overset{GH}\to g,$$
   we conclude that $g$ and $h^2$ differs at most an isotropy element $\beta_j$ for some $j=1,2$. Hence $h^2 y = gy$.

   This allows us to further write $g=(1,\alpha)$ and $h=(1/2, \beta)$, where $\alpha,\beta \in K$. According to Corollary \ref{cor:Z_cover}, the orbit $Gy$ is a line in $Y$. Thus, we have $1=d(gy,y)=2 d(hy,y);$ a contradiction to $1<2 d(hy,y)$.
\end{proof}

\begin{prop}\label{prop:Z_cover_vol}
   $(\widehat{M},\hat{p})$ has volume growth
   $$\lim\limits_{r\to\infty} \dfrac{\vol B_r(\hat{p})}{r^{2+\epsilon}}=0$$
   for all $\epsilon>0$.
\end{prop}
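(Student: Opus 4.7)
The plan is to combine Lemma \ref{lem:almost_transl} with the linear volume growth of $M$ via a counting argument on the $\Z$-orbit of $\hat{p}$. The key intermediate quantity is the orbit counting function $N(r) := \#\{m \in \Z : |\gamma^m| \leq r\}$: I will first show $N(r) \leq C_\epsilon r^{1/(1-\epsilon)}$ and then bound $\vol B_r(\hat{p})$ by $\vol B_r(p) \cdot N(2r)$.

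First, I would iterate Lemma \ref{lem:almost_transl} to obtain a polynomial lower bound on $|\gamma^m|$. Fix a small $\epsilon>0$ and let $R_0$ be the corresponding threshold from the lemma. Starting from $m\geq 1$ with $|\gamma^m|$ large, define the halving sequence $m_0=m$, $m_{j+1}=\lceil m_j/2\rceil$, and iterate the inequality $|\gamma^{m_j}|\geq 2^{1-\epsilon}|\gamma^{m_{j+1}}|$ as long as the current term exceeds $R_0$. Because $\Gamma$ acts properly discontinuously on $\widehat{M}$, the set $\{m'\geq 1 : |\gamma^{m'}|<R_0\}$ is finite, so the iteration terminates at some index $k^*$ at which both $m_{k^*}$ and $|\gamma^{m_{k^*}}|$ are controlled by constants depending only on $R_0$ and $\gamma$. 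Unwinding the iteration yields
\begin{equation*}
   |\gamma^m|\geq c_\epsilon\, m^{1-\epsilon}\qquad\text{for all sufficiently large } m,
\end{equation*}
equivalently $N(r)\leq C_\epsilon\, r^{1/(1-\epsilon)}$ for $r$ large.

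Next, I would relate $\vol B_r(\hat{p})$ to $\vol B_r(p)$ via orbit counting. Since $\pi\colon\widehat{M}\to M$ is a Riemannian covering, it is a local isometry, so the area formula gives
\begin{equation*}
   \vol B_r(\hat{p})=\int_{M}\#\bigl(B_r(\hat{p})\cap\pi^{-1}(x)\bigr)\,d\vol(x).
\end{equation*}
The integrand vanishes off $B_r(p)$, and for $x\in B_r(p)$ one fixes a lift $\hat{x}_0\in B_r(\hat{p})$ and applies the triangle inequality
\begin{equation*}
   |\gamma^m|\leq d(\gamma^m\hat{x}_0,\hat{p})+d(\hat{x}_0,\hat{p})\leq 2r
\end{equation*}
to see that at most $N(2r)$ elements of $\pi^{-1}(x)=\{\gamma^m\hat{x}_0\}_{m\in\Z}$ lie in $B_r(\hat{p})$. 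Combined with the linear volume growth bound $\vol B_r(p)\leq Cr$ and Step 1, this gives
\begin{equation*}
   \vol B_r(\hat{p})\leq \vol B_r(p)\cdot N(2r)\leq C'\, r^{\,2+\epsilon/(1-\epsilon)}.
\end{equation*}
Given any target $\delta>0$, selecting $\epsilon$ in Step 1 small enough that $\epsilon/(1-\epsilon)<\delta$ yields $\vol B_r(\hat{p})/r^{2+\delta}\to 0$ as $r\to\infty$, which is the desired conclusion.

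The main technical obstacle is the bookkeeping in the iteration of Step 1: the ceiling function, the threshold $R_0$ below which Lemma \ref{lem:almost_transl} is unavailable, and the residual term at which the halving halts must be handled so that the final exponent is genuinely $1/(1-\epsilon)$ rather than something strictly worse. Once the orbit count $N(r)\leq C_\epsilon r^{1/(1-\epsilon)}$ is established, the remaining steps are routine consequences of the Riemannian covering structure and the linear volume growth of the base $M$.
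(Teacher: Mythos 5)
Your proposal is correct and follows essentially the same route as the paper: iterate Lemma \ref{lem:almost_transl} along the halving sequence to get $|\gamma^m|\ge C_1 m^{1-\epsilon}$, hence an orbit count of order $r^{1/(1-\epsilon)}$, and then multiply by the linear volume growth of $M$. The only cosmetic difference is that the paper packages the preimage-counting step via the Dirichlet domain ($B_r(\hat p)\subseteq\Gamma(2r)\cdot(F\cap\pi^{-1}(B_r(p)))$) rather than your fiberwise area-formula count, which is an equivalent argument.
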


\begin{proof}
   Let $\epsilon>0$. We choose a large integer $P_0$ such that $|\gamma^m|\ge R_0$ for all $m\ge P_0$, where $R_0=R_0(\epsilon,\widehat{M},\hat{p},\gamma)$ is the constant in Lemma \ref{lem:almost_transl}.

   We claim that there is a constant $C_1>0$ such that $|\gamma^m| \ge C_1 m^{1-\epsilon}$ for all $m\ge P_0$. Let $m\ge P_0$. Lemma \ref{lem:almost_transl} implies
   $$|\gamma^m| \ge 2^{1-\epsilon} |\gamma^{\lceil m/2 \rceil}|.$$
   If $\lceil m/2 \rceil < P_0$, we stop here. Otherwise, we apply Lemma \ref{lem:almost_transl} again to derive
   $$|\gamma^m| \ge 4^{1-\epsilon} |\gamma^{\lceil\lceil m/2 \rceil/2\rceil}|.$$
   Repeating this process,  we obtain
   $$|\gamma^m| \ge (2^k)^{1-\epsilon} |\gamma^{\lceil...\lceil m/2\rceil/2.../2\rceil}|,$$
   where $\lceil\cdot /2 \rceil$ is composited $k$-times on the right hand side so that $\lceil...\lceil m/2\rceil/2.../2\rceil<P_0$ for the first time. Let $r_0=\min_{m\in \mathbb{Z}_+}|\gamma^m|>0$ and note that
   $$\dfrac{m}{2^k} \le \lceil...\lceil m/2\rceil/2.../2\rceil \le P_0,$$
   we end in 
   $$|\gamma^m| \ge (2^k)^{1-\epsilon} r_0 = \dfrac{r_0}{P_0^{1-\epsilon}} \cdot m^{1-\epsilon} = C_1 m^{1-\epsilon}.$$
   This verifies the claim with $C_1=r_0/(P_0^{1-\epsilon})$.

   Applying the claim, we see that for $r$ in an interval $[C_1m^{1-\epsilon}, C_1 (m+1)^{1-\epsilon})$, where $m\ge P_0$, the ball $B_r(p)$ contains at most $2m+1$ many orbit points in $\Gamma\hat{p}$. We denote $$\Gamma(r)=\{ \gamma^m\in \Gamma ||\gamma^m|\le r \},\quad \frac{1}{1-\epsilon}=1+\delta(\epsilon).$$ 
   Then
   $$\# \Gamma(r) \le 2m+1 \le \dfrac{2}{C_1^{1+\delta(\epsilon)}} r^{1+\delta(\epsilon)} +1 \le C_2 r^{1+\delta(\epsilon)}$$
   for some constant $C_2$ (which depends on $\epsilon$).

   To complete the desired volume growth estimate, we note that
   $$B_r(\hat{p}) \subseteq \Gamma(2r)\cdot (F\cap \pi^{-1}(B_r(p))),$$
   where $F$ is the Dirichlet domain centered at $\hat{p}$. Together with the linear volume growth of $M$, we have 
   $$\vol B_r(\hat{p})\le \#\Gamma(2r)\cdot \vol B_r(p) \le C_3 r^{2+\delta(\epsilon)},$$
   for some constant $C_3$ (which depends on $\epsilon$). Therefore, for any $\eta>0$, we choose $\epsilon>0$ small such that $\delta(\epsilon)<\eta$, then
   $$\lim_{r\to\infty} \dfrac{\vol B_r(\hat{p})}{r^{2+\eta}}\le \lim_{r\to\infty} \dfrac{C_3 r^{2+\delta(\epsilon)}}{r^{2+\eta}}=0.$$
\end{proof}

\begin{rem}
   Inductively, one can similarly show that $\widehat{M}_{k-j}$ in (\ref{eq:tower_covers}) has volume growth
   $$\lim\limits_{r\to\infty} \dfrac{\vol B_r(\hat{p}_{k-j})}{r^{j+1+\epsilon}}=0$$
   for all $\epsilon>0$, where $j=1,...,k$.
\end{rem}

To prove Theorem \ref{thm:finite}, we make use of the positive Ricci curvature condition and the volume growth of $\mathbb{Z}$-folding cover derived in Proposition \ref{prop:Z_cover_vol}. This requires two results by Anderson \cite{Anderson90}, where he proved $b_1(M^n)\le n-3$ for open manifolds with positive Ricci curvature.

\begin{thm}\cite[Theorem 2.1]{Anderson90}\label{thm:anderson_no}
   Let $N$ be a complete oriented Riemannian manifold of at most cubic volume growth, that is,
   there is a constant $C$ such that $\vol B_r(\hat{p})\le Cr^3$ for all $r$ large. If $N$ has positive Ricci curvature, then there is no complete area-minimizing hypersurface $S$ which is a boundary of least area in $N$.
\end{thm}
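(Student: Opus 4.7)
The plan is to argue by contradiction, combining two-sided stability of a least-area hypersurface with the ambient cubic volume bound to force $\Ric(\nu,\nu)$ to vanish on $S$. Suppose a complete area-minimizing hypersurface $S = \partial\Omega$ exists, with unit normal $\nu$. Since $S$ minimizes area among boundaries it is two-sided stable, and satisfies the Jacobi inequality
\[
\int_S \left(|A|^2 + \Ric(\nu,\nu)\right)\varphi^2 \, d\sigma \;\le\; \int_S |\nabla^S \varphi|^2 \, d\sigma
\]
for every Lipschitz compactly supported $\varphi$ on $S$. My first step would be to control the area of $S$ in balls: comparing $S$ against the competitor $(S \setminus B_r(\hat{p})) \cup (\partial B_r(\hat{p}) \cap \overline{\Omega})$ gives $\mathrm{area}(S \cap B_t(\hat{p})) \le \mathrm{area}(\partial B_t(\hat{p}))$ for a.e.\ $t$, and the coarea identity $\vol(B_r(\hat{p})) = \int_0^r \mathrm{area}(\partial B_t(\hat{p}))\, dt \le Cr^3$ extracts a sequence $r_i \to \infty$ with $\mathrm{area}(S \cap B_{r_i}(\hat{p})) \le 4 C r_i^2$.

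The core of the argument is a logarithmic cutoff. Fix $r_0 > 0$ and set
\[
\varphi_i(x) \;=\; \begin{cases} 1, & r(x) \le r_0, \\ \log(r_i/r(x))/\log(r_i/r_0), & r_0 \le r(x) \le r_i, \\ 0, & r(x) \ge r_i, \end{cases}
\]
where $r(x) = d_N(\hat{p},x)$. Then $|\nabla^S \varphi_i| \le 1/(r(x)\log(r_i/r_0))$ on the annulus. A Stieltjes integration by parts, combined with the pointwise bound $\mathrm{area}(S \cap B_t) \le \mathrm{area}(\partial B_t)$ at every intermediate scale and the cubic volume bound $\vol(B_t) \le C t^3$, yields
\[
\int_S |\nabla^S \varphi_i|^2 \, d\sigma \;\le\; \frac{1}{\log^2(r_i/r_0)} \left( \frac{\mathrm{area}(S \cap B_{r_i})}{r_i^2} + C'' \int_{r_0}^{r_i} \frac{\vol(B_t)}{t^4}\, dt \right) \;\le\; \frac{C'}{\log(r_i/r_0)} \;\xrightarrow[i \to \infty]{}\; 0.
\]

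Inserting $\varphi_i$ into the stability inequality and letting $i \to \infty$ forces
\[
\int_{S \cap B_{r_0}(\hat{p})} \left(|A|^2 + \Ric(\nu,\nu)\right) d\sigma \;=\; 0.
\]
Since $\Ric > 0$ pointwise on $N$, we have $\Ric(\nu,\nu)(x) > 0$ at every $x \in S$, and the vanishing integral forces $S \cap B_{r_0}(\hat{p}) = \emptyset$. As $r_0$ is arbitrary, $S = \emptyset$, a contradiction. The main technical subtlety is justifying two-sided stability and the integration-by-parts computation in the possible presence of singularities of the minimizing current $S$; this is standard for hypersurface currents in low codimensions (the regime relevant to the applications later in the paper) via test functions vanishing near the singular set, whose contribution to $\int_S |\nabla\varphi|^2$ is controlled by the small Hausdorff dimension of the singular set.
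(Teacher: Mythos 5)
This theorem is quoted from Anderson \cite{Anderson90} and the paper gives no proof of it, so there is nothing internal to compare against; your argument is, in substance, Anderson's original one (stability of the least-area boundary, the comparison $\mathrm{area}(S\cap B_t)\le\mathrm{area}(\partial B_t\cap\overline{\Omega})$, and the logarithmic cutoff whose Dirichlet energy is killed by the cubic volume bound) and it is correct. The only points requiring care --- extracting the subsequence with $\mathrm{area}(S\cap B_{r_i})\lesssim r_i^2$ for the boundary term in the Stieltjes integration by parts, and cutting off near the codimension-$\ge 7$ singular set of the minimizer --- are exactly the ones you flag, and both are handled by standard arguments.
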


\begin{thm}\cite[Lemma 2.2]{Anderson90}\label{thm:anderson_yes}
   Let $N$ be a complete Riemannian manifold with finitely generated homology $H_1(N,\mathbb{Z})$. Then any non-zero line $\mathbb{R}\cdot \alpha$, where $\alpha \in H_1(N,\mathbb{R})$, gives rise to a complete homologically
area-minimizing hypersurface $S_\alpha$, which is a boundary of least area in a $\mathbb{Z}$-folding cover $\widehat{N}\to N$.
\end{thm}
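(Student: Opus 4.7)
The plan is to construct $\widehat{N}$ and $S_\alpha$ in three steps: build the $\mathbb{Z}$-cover from a cohomology class dual to $\alpha$, produce a smooth reference hypersurface as a level set of the resulting $\mathbb{Z}$-equivariant function on $\widehat{N}$, and then run a standard Federer--Fleming minimization.

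First, since $H_1(N,\mathbb{Z})$ is finitely generated and $\alpha\in H_1(N,\mathbb{R})$ is non-zero, I would select a primitive class $\rho\in H^1(N,\mathbb{Z})$ with $\langle\rho,\alpha\rangle\ne 0$. Such a $\rho$ exists because $\mathrm{Hom}(H_1(N,\mathbb{Z})/\mathrm{tors},\mathbb{Z})$ has full rank inside $H^1(N,\mathbb{R})$, so one can pick a rational linear functional not vanishing on $\alpha$, clear denominators, and then divide by the g.c.d.\ of its image to make it primitive. Viewing $\rho$ as a surjective homomorphism $\pi_1(N)\to\mathbb{Z}$ via Hurewicz, its kernel defines the $\mathbb{Z}$-folding cover $\widehat{N}\to N$ with deck-transformation generator $T$. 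I would then realize $\rho$ by a closed integral $1$-form $\omega$ on $N$; its pullback to $\widehat{N}$ is exact, so $\omega=df$ for some smooth $f:\widehat{N}\to\mathbb{R}$ with $f\circ T=f+1$. For a regular value $c$, the smooth embedded hypersurface $\Sigma_0:=f^{-1}(c)$ serves as a reference ``wall'', and by $T$-equivariance $\Sigma_0$ has finite area inside any fundamental domain for $T$.

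Second, set $E_0:=\{f<c\}$, a Caccioppoli set with reduced boundary $\partial^* E_0=\Sigma_0$, and exhaust $\widehat{N}$ by compacts $K_j$. Inside each $K_j$ solve the obstacle-style Plateau problem for Caccioppoli sets $E_j$ that coincide with $E_0$ outside $K_j$; the a priori bound $\mathrm{Per}(E_j;K_j)\le \mathrm{Area}(\Sigma_0\cap K_j)<\infty$ combined with $BV$-compactness yields an $L^1_{\mathrm{loc}}$ limit $E_\infty$ via a diagonal argument, and $S_\alpha:=\partial^* E_\infty$ is the candidate hypersurface. Standard interior regularity for perimeter-minimizing Caccioppoli sets then makes $S_\alpha$ smooth away from a closed singular set of Hausdorff codimension at least $7$, and gives global minimality of $S_\alpha$ under compactly supported perturbations.

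The main obstacle will be proving that $S_\alpha$ does not degenerate in the limit, i.e.\ that it is non-empty, complete, and truly separating the two ends of $\widehat{N}$. Non-emptiness and the separation property come from the constraint $E_j=E_0$ outside $K_j$: the limit $E_\infty$ still contains $\{f<-M\}$ and avoids $\{f>M\}$ for all large $M$, so $\partial^* E_\infty$ must meet every region where $f$ crosses $0$. Completeness is automatic since $S_\alpha$ is closed in the complete manifold $\widehat{N}$, and ``boundary of least area'' minimality follows because any compactly supported competitor to $S_\alpha$ is confined to some $K_j$ and was already tested in the minimization procedure there.
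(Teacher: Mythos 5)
This statement is quoted from Anderson's Lemma~2.2; the paper supplies no proof of it, so I am measuring your proposal against the standard geometric-measure-theoretic construction that Anderson carries out. Your setup is the right one and matches that construction: choosing a primitive $\rho\in H^1(N,\mathbb{Z})$ with $\langle\rho,\alpha\rangle\neq 0$, taking the cover $\widehat{N}$ associated to $\ker(\rho\colon\pi_1(N)\to\mathbb{Z})$, lifting a closed $1$-form to $df$ with $f\circ T=f+1$, and running an exhaustion-constrained perimeter minimization starting from $E_0=\{f<c\}$, followed by $BV$-compactness, lower semicontinuity, and interior regularity. All of that is sound.

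The genuine gap is the non-degeneracy step, and the justification you offer for it is incorrect. The constraint on $E_j$ is only that $E_j=E_0$ \emph{outside} $K_j$; since the $K_j$ exhaust $\widehat{N}$, for any fixed point $x$ the constraint at $x$ is void for all large $j$, so the claim that the limit $E_\infty$ still contains $\{f<c-M\}$ and avoids $\{f>c+M\}$ does not follow. Nothing in the argument as written prevents the free boundaries $\partial^*E_j$ from drifting out of every fixed compact set, so that $E_j\to\emptyset$ or $E_j\to\widehat{N}$ in $L^1_{\mathrm{loc}}$ and $S_\alpha=\emptyset$; a local perimeter minimizer obtained as such a limit may perfectly well be trivial. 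Ruling this out is precisely the nontrivial content of the lemma: one needs the density estimates for constrained minimizers \emph{together with} a device that pins a point of $\partial E_j$ inside a fixed compact set uniformly in $j$ --- for instance by renormalizing with powers of the deck transformation $T$ (which preserves the metric but not the constraint set, so this needs care) or by a separation/barrier argument that survives the limit. Two smaller inaccuracies: the claim that $\Sigma_0$ has finite area in a fundamental domain is false when $N$ is open (and is not needed, since only $\mathrm{Area}(\Sigma_0\cap K_j)<\infty$ enters), and the final assertion that a compact competitor ``was already tested'' is not literally the argument, because the competitor competes against $E_\infty$ rather than against some $E_j$; the correct statement is the standard one that $L^1_{\mathrm{loc}}$ limits of local perimeter minimizers are local perimeter minimizers.
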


\begin{proof}[Proof of Theorem \ref{thm:finite}]
   Suppose that there is a complete manifold $M$ with positive Ricci curvature, linear volume growth, and an infinite fundamental group. Passing to a finite cover if necessary, we can assume that $M$ is oriented and $\Gamma=\pi_1(M)$ is finitely generated and torsion-free nilpotent. Then $H_1(M)=\Gamma/[\Gamma,\Gamma]$ is torsion-free abelian of rank at least $1$. Theorem \ref{thm:anderson_yes} implies that there is a complete area-minimizing hypersurface $S$, which is a boundary of least area in the $\widehat{M}$, a $\mathbb{Z}$-folding cover of $M$. On the other hand, Proposition \ref{prop:Z_cover_vol} implies that $\widehat{M}$ has at most cubic volume growth. Hence, such an area-minimizing hypersurface $S$ shouldn't exist on $\widehat{M}$ according to Theorem \ref{thm:anderson_no}. This contradiction shows that the fundamental group cannot be infinite.
\end{proof}

\section{Plane/halfplane rigidity for RCD$(0,N)$ spaces}\label{sec:rigid}
We will derive the plane/halfplane rigidity (Theorem \ref{thm:plane_halfplane_rigid}) from the two propositions below.

\begin{prop}\label{prop:rigid_iso}
Let $(Y,y,d,\mathfrak{m})$ be an $\RCD(0,N)$ space. Suppose that:\\
(1) $Y$ is polar at $y$, that is, for every point $z\in Y-\{y\}$, there is a ray emanating from $y$ through $z$.\\
(2) there is $c>0$ such that $\mathfrak{m}(B_r(y))=cr$ for all $r>0$.\\
Then $(Y,y,d,\mathfrak{m})$ is isomorphic (up to a constant) to a line $(\mathbb{R},0)$ or a ray $([0,\infty),0)$ with Lebesgue measure.
\end{prop}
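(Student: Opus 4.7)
The plan is to apply the local functional splitting theorem (Theorem \ref{thm:ketterer}) to the distance function $u(x)\coloneqq \dist(x,y)$, and then analyze the resulting cross-section via a tangent-cone argument at $y$.

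First I would verify the hypotheses of Theorem \ref{thm:ketterer} on the open set $\Omega_D\coloneqq u^{-1}((0,D))$ for each $D>0$. The polar condition at $y$ ensures that every $x\in Y\setminus\{y\}$ lies on a unit-speed ray from $y$, so the transport set of $u$ is $Y\setminus\{y\}$ and $|\nabla u|=1$ $\meas$-a.e.\ on $\Omega_D$. The vanishing of the Laplacian is where the exact linear volume growth enters. Applying the Cavalletti--Mondino disintegration along the transport rays of $u$, the measure decomposes as $\meas|_{Y\setminus\{y\}}=\int_Q h_\alpha(t)\,dt\,d\mathfrak q(\alpha)$, with each transport ray $R_\alpha$ parametrized by $[0,\infty)$ via a unit-speed ray from $y$ and $h_\alpha^{1/(N-1)}$ concave on $[0,\infty)$. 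A nonnegative concave function on $[0,\infty)$ is non-decreasing, so $h_\alpha$ is non-decreasing; combined with the identity $\int_Q h_\alpha(t)\,d\mathfrak q(\alpha)=c$ coming from $u_*\meas=c\,\mathcal L^1|_{[0,\infty)}$, this forces $h_\alpha$ to be constant $\mathfrak q$-a.e. The Cavalletti--Mondino formula for the distributional Laplacian then yields $\mathbf{\Delta}_{\Omega_D}u=0$.

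Theorem \ref{thm:ketterer} therefore produces a metric measure space $(Y',\dist_{Y'},\meas_{Y'})$---whose connected components are points (if $N<2$) or $\RCD(0,N-1)$ spaces (if $N\geq 2$)---together with an isomorphism $(\tilde\Omega_D,\tilde\dist_{\Omega_D},\meas_{\Omega_D})\simeq (Y',\dist_{Y'},\meas_{Y'})\otimes[0,D]$. By compatibility as $D$ increases, $Y'$ is independent of $D$, and the identity $\meas(\Omega_D)=cD$ forces $\meas_{Y'}(Y')=c$. Passing to $D\to\infty$, $Y\setminus\{y\}$ with its intrinsic metric is isomorphic to $Y'\otimes[0,\infty)$, and the full space $Y$ is recovered by identifying the slice $Y'\times\{0\}$ to the point $y$ (consistent with $\dist(y,(y',t))=t$).

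The main obstacle and the heart of the argument will be to show that $Y'$ reduces to a discrete space with at most two points. I plan to handle this via a tangent-cone analysis at $y$. In the rescaled space $\lambda Y$ as $\lambda\to\infty$, two points $(y_1',t_1/\lambda)$ and $(y_2',t_2/\lambda)$ at finite rescaled distance from $y$ satisfy $\lambda\dist_Y((y_1',t_1/\lambda),(y_2',t_2/\lambda))=\min\bigl(\sqrt{\lambda^2\dist_{Y'}(y_1',y_2')^2+(t_1-t_2)^2},\,t_1+t_2\bigr)$, which tends to $t_1+t_2$ whenever $y_1'\neq y_2'$. Consequently the tangent cone $T_yY$ is a ``star'' with one ray for each $y'\in Y'$, distinct rays meeting at $y$ at pairwise angle $\pi$. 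Among such stars, only the cases $|Y'|=1$ (tangent cone is a ray $[0,\infty)$) and $|Y'|=2$ (tangent cone is a line $\R$) yield an $\RCD(0,N)$ Euclidean cone; three or more point-components would produce a branching tripod and a positive-dimensional component would produce an uncountably-branching object, neither of which is $\RCD$. Once $Y'$ consists of at most two points, the identification at $t=0$ recovers $Y$ as a ray or a line, and in the line case the one-dimensional $\mathrm{CD}$ condition on $\R$ (log-concavity of the density) forces the two point masses to coincide. This identifies $(Y,y,\dist,\meas)$ up to a constant with $([0,\infty),0,\mathcal L^1)$ or $(\R,0,\mathcal L^1)$.
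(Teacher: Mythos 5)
Your proposal is correct, and its first half coincides with the paper's argument: disintegrate $\meas$ along the transport rays of $\dist_y$ (which, by polarity and non-branching, foliate $Y\setminus\{y\}$ by rays from $y$), observe that the monotonicity of each density $h_\alpha$ along an infinite ray together with the identity $\int_Q h_\alpha(t)\,d\mathfrak q(\alpha)=c$ forces every $h_\alpha$ to be constant, deduce $\mathbf{\Delta}\dist_y=0$ away from $y$ via the Cavalletti--Mondino representation formula, and invoke the Ketterer/KKL functional splitting to write $Y\setminus\{y\}$ (in its extended intrinsic metric) as $Y'\otimes[0,\infty)$.

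Where you genuinely diverge is the endgame. The paper shows the cross-section is trivial by proving the level sets $\dist_y^{-1}(r)$ are intrinsically isometric to $(Q,\tilde\dist_Q)$ for every $r$, rescaling them by $r_i^{-1}$ as $r_i\to 0$ so that any component of positive intrinsic diameter blows up, and contradicting a Bishop--Gromov packing bound; it then concludes via the Kitabeppu--Lakzian classification of $\RCD$ spaces with nonempty $1$-regular set. You instead blow up at $y$ and use the formula $\dist_Y=\min\bigl(\tilde\dist_{Y\setminus\{y\}},\,t_1+t_2\bigr)$ (which is correct: any path either avoids $y$ or passes through it) to see that distinct rays become antipodal through $y$ at small scales. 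Three distinct points of $Y'$ then produce a tripod --- already in $Y$ itself at scale $\epsilon<\tfrac12\min_{i\neq j}\dist_{Y'}(y_i',y_j')$, so you do not even need to pass to the tangent cone --- contradicting non-branching; this also disposes of infinite or positive-dimensional $Y'$, so your separate appeal to an ``uncountably-branching object'' is redundant (and as stated is the one imprecise spot: the honest contradiction there is with properness/doubling of a pGH limit, or simply the same packing bound applied to finitely many pairwise $r$-separated points in $B_r(y)$, not with the $\RCD$ condition of a limit that fails to exist). Your route buys a shorter conclusion --- non-branching plus one-dimensional $\mathrm{CD}$ continuity of the density replace the convexity lemmas, the diameter-rescaling lemma, and the citation of the low-dimensional classification --- at the cost of having to justify the ambient-distance formula, which the paper's level-set argument does not need in this explicit form.
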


\begin{prop}\label{prop:rigid_free}
Let $(Y,y,d,\mathfrak{m})$ be an $\RCD(0,N)$ space. Suppose that:\\
  (1) $Y$ has a measure-preserving isometric $G$-action, where $G=\mathbb{R}$ is closed in $\mathrm{Isom}(Y)$, such that the quotient $(Y/G,\bar{y})$ is isometric to a ray $([0,\infty),0)$.\\
  (2) there is $c>0$ such that $\mathfrak{m}(\Omega_r(v))=crv$ for all $r,v\ge 0$.\\
Then $(Y,y,d,\mathfrak{m})$ is isomorphic (up to a constant) to a Euclidean halfplane $(\mathbb{R}\times [0,\infty),0)$ with Lebesgue measure.
\end{prop}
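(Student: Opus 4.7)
Following the outline in the Introduction, the plan is to apply Ketterer's local functional splitting theorem (Theorem \ref{thm:ketterer}) to the distance function
\[
u(z) \coloneqq \dist(z, G \cdot y),
\]
the distance from $z$ to the $G$-orbit of $y$. Since $Y/G$ is isometric to $[0,\infty)$ with $\bar{y}\mapsto 0$, the function $u$ is precisely the pullback under the quotient map of the coordinate on the ray; in particular, $u$ is $1$-Lipschitz, $G$-invariant, $u^{-1}(0)=G\cdot y$, and $u(\sigma(t))=t$ for the horizontal ray $\sigma$ from $y$ appearing in Definition \ref{def:omega_set}.

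First, on any open slab $\Omega\coloneqq u^{-1}(0,D)$ with $D>0$, I would verify the two hypotheses of Theorem \ref{thm:ketterer}. The bound $|\nabla u|\le 1$ is automatic from the $1$-Lipschitz property, while the reverse inequality $|\nabla u|=1$ $\meas$-a.e.\ is obtained by observing that the family of horizontal rays $\{t\mapsto w\cdot \sigma(t)\}_{w\in\R}$, together with its $G$-translates, covers all of $u^{-1}(0,\infty)$ and parametrizes $u$ at unit speed; in the non-branching $\RCD(0,N)$ setting this forces $|\nabla u|=1$ almost everywhere on $\Omega$. For the harmonicity $\mathbf{\Delta}_\Omega u=0$, I would exploit the measure hypothesis: parametrizing points of $\Omega_r(v)$ by $(w,t)\in[-v,v]\times[0,r]\mapsto w\cdot\sigma(t)$, one sees that $\Omega_r(v)$ is precisely the portion of $u^{-1}[0,r]$ lying in the $G$-tube of width $2v$ about $\sigma$, so the assumption $\meas(\Omega_r(v))=crv$ says that the $u$-pushforward of $\meas$ restricted to such a tube is a constant multiple of $\mathcal{L}^1|_{[0,r]}$. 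Applying the disintegration formula of Cavalletti--Mondino \cite{CM_newformula} for the Laplacian of a $1$-Lipschitz function with unit gradient, constant density of the conditional measures along transport rays is equivalent to $\mathbf{\Delta}_\Omega u=0$.

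With these two facts in hand, Theorem \ref{thm:ketterer} yields, for every $D>0$, an isomorphism
\[
(\tilde{\Omega},\tilde{\dist}_\Omega,\meas_\Omega) \cong (Y_D,\dist_{Y_D},\meas_{Y_D})\otimes \bigl([0,D],d_E,\mathcal{L}^1\bigr),
\]
with each component of $Y_D$ being $\RCD(0,N-1)$. I would then argue that $Y_D$ consists of a single component isomorphic, up to a constant, to $(\R,d_E,\mathcal{L}^1)$. Here the $G$-symmetry is used twice: since $G\cong\R$ is closed in $\mathrm{Isom}(Y)$ and the quotient is $[0,\infty)$ with trivial isotropy on $(0,\infty)$, each level set $u^{-1}(t)$ with $t\in(0,D)$ is a single free $G$-orbit, hence homeomorphic to $\R$; moreover, the ambient $G$-action identifies it isometrically with the real line. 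The measure equality $\meas(\Omega_r(v))=crv$ then forces $\meas_{Y_D}$ to be a constant multiple of $\mathcal{L}^1$. Letting $D\to\infty$ (the splittings for different $D$ being compatible by uniqueness) and attaching the boundary $u^{-1}(0)=G\cdot y\cong\R$ isometrically, one obtains the desired isomorphism $(Y,y,\dist,\meas)\cong(\R\times[0,\infty),0,d_E,\lambda\mathcal{L}^2)$ for some $\lambda>0$.

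The main obstacle I anticipate is the intrinsic-versus-ambient-metric subtlety flagged in the Introduction: Theorem \ref{thm:ketterer} only provides a splitting of $\tilde{\Omega}$ equipped with its \emph{intrinsic} distance $\tilde{\dist}_\Omega$, and one must promote this to an isometric splitting of $\Omega$ inside $Y$ to conclude. The strategy is to use the transitive $G$-action on level sets: any two points on $u^{-1}(t)$ are joined by a segment of a $G$-orbit whose ambient length equals its intrinsic length within $u^{-1}(t)$ (the orbit itself is a geodesic line in $Y$), which combined with the horizontal-ray structure in the $u$-direction forces $\tilde{\dist}_\Omega$ and $\dist$ to agree on $\Omega$.
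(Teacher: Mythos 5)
Your overall skeleton matches the paper's: you disintegrate along the horizontal rays of $u=\dist(\cdot,G\cdot y)$, use the hypothesis $\meas(\Omega_r(v))=crv$ to show the conditional densities are constant (hence $\mathbf{\Delta}u=0$ away from the orbit), and then invoke a functional splitting. Two points, one minor and one serious.

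The minor one: deducing ``constant densities'' from the fact that the pushforward of $\meas$ on each tube is a multiple of $\mathcal{L}^1$ is not purely formal. The identity $\int_{-v}^{v}h_\alpha(\gamma_\alpha(r))\,d\mk q(\alpha)=v$ for all $r$ only controls the \emph{integral} of the densities; to conclude that each individual $h_\alpha$ is constant you must combine it with the monotonicity of $h_\alpha$ along half-infinite needles (Lemma \ref{lem:density}, a consequence of the $\mathrm{CD}(0,N)$ inequality for the one-dimensional densities when the transport ray is a ray). Without that input the conclusion fails.

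The serious one is in your endgame. Your resolution of the intrinsic-versus-ambient issue rests entirely on the parenthetical assertion that ``the orbit itself is a geodesic line in $Y$.'' That is not automatic: an $\mathbb{R}$-orbit in a metric space can be homeomorphic to $\mathbb{R}$ without the orbit parametrization being an isometric embedding (think of a helix), and nothing in hypotheses (1)--(2) directly rules this out. This assertion is precisely where the paper does the real work (Lemma \ref{lem:halfplane_rigid}): it first uses Theorem \ref{thm:ketterer} applied to the superlevel set $u^{-1}([r/2,\infty))$ to prove that the sublevel sets $u^{-1}([0,r])$ are geodesically convex, then traps ambient geodesics joining $\sigma(1/j)$ to $j\cdot\sigma(1/j)$ inside the thin tube $u^{-1}([0,1/j])$ and passes to the limit to produce a line that coincides with $G\cdot y$. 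Once that line exists, the paper applies the \emph{global} splitting theorem to split $Y=\mathbb{R}\times Z$ in the ambient metric, observes that every $G$-orbit is a horizontal slice $\mathbb{R}\times\{z'\}$ (so $G$ acts trivially on $Z$ and $Z=Y/G=[0,\infty)$), and reads off the measure. This route avoids gluing local splittings over slabs and never needs to reconcile $\tilde{\dist}_\Omega$ with $\dist$ on all of $\Omega$; your $D\to\infty$ gluing could in principle be made to work, but only after you have independently established the geodesic-line property of the orbits, which your proposal currently assumes.
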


\begin{proof}[Proof of Theorem \ref{thm:plane_halfplane_rigid} by assuming Propositions \ref{prop:rigid_iso} and \ref{prop:rigid_free}]

Notice that, by \cite{Galaz-Garcia_18}, the quotient space $Y/K$ (equipped with its quotient distance and measure) is an $\RCD(0,N)$ admitting a free $\R$-action that satisfies the assumptions of Proposition \ref{prop:rigid_free}. We deduce that $Y/K$ is isomorphic (up to a constant) to the Euclidean halfplane $\R\times [0,\infty)$ equipped with the Lebesgue measure. In particular, $Y/K$ contains a line. Since that line can be lifted to $Y$, the later splits off an $\R$ factor, and, by Theorem \ref{thm:splitting}, the measure also splits:
\begin{equation*}
    (Y,y,d,\meas)=(\R,0,d_E,\mc{L}^1)\otimes (Z,z,d_Z,\mu),
\end{equation*}
for some pointed $\RCD(0,N-1)$ space $(Z,z,\dist_Z,\mu)$, where $\mu$ is a Radon measure on $Z$. Observe that $K$ only acts on $Z$ and satisfies $Kz=z$. It follows that $Y/K=\R\times (Z/K)$, and, in particular, $Z/K$ is isomorphic to $([0,\infty),\dist_{\mathrm{E}},\lambda\mathcal{L}^1)$, for some constant $\lambda>0$. Let $\pi\colon Z\to Z/K$ be the quotient map, and observe that, by definition of the quotient measure, we have $\pi_*\mu=\lambda\mc{L}^1$. This last assertion, combined with $Kz=z$, implies that $\pi^{-1}([0,r))=B_r(z)$ and $\mu(B_r(z))=\lambda\mc{L}^1([0,r))=\lambda r$. In addition, since $Kz=z$, then, given any $z'\in Z$, there exists a ray from $z$ trough $z'$ in $Z$ (just lift $Z/K=[0,\infty)$ into a ray trough $z'$). Consequently, $Z$ is polar; hence, $(Z,z,\dist_Z,\mu)$ satisfies the assumptions of Proposition \ref{prop:rigid_iso}. Therefore, $Z$ is isomorphic (up to a constant) to a ray or a line equipped with Lebesgue measure, and, a fortiori, $Y$ is a isomorphic (up to a constant) to Euclidean plane or halfplane equipped with the Lebesgue measure.
\end{proof}

The proof of Propositions \ref{prop:rigid_iso} and \ref{prop:rigid_free} uses extensively the \emph{disintegration} with respect to some $1$-Lipschitz function. Payne and Weinberger were the first one to introduce such a disintegration, also called \emph{needle decomposition}, in their study of the optimal Poincar\'{e} inequality on convex domains of $\R^n$ (see \cite{Bebendorf03} for an exposition of their result). Later on, Klartag \cite{Klartag17} generalized Payne and Weinberger's idea to a Riemannian geometry framework and, in particular, extended Paul--Levy's isoperimetric inequality to weighted Riemannian with Ricci bounded below. Inspired by Klartag, Cavalletti--Mondino \cite{CM17} refined the needle decomposition to the setting of $\RCD$ spaces and extended Paul--Levy's isoperimetric to this more general class. While the disintegration proposed by Cavalletti--Mondino held on probability spaces, they also generalized it to $\sigma$-finite measures in \cite{CM_newformula}.

\subsection{Needle decomposition in RCD spaces} Following \cite{CM_newformula}, we set up the disintegration of a $1$-Lipschitz on an $\RCD(0,N)$ space $(X,\dist,\meas)$. (In our case, the setup will be largely simplified due to symmetry considerations and choice of Lipschitz function.) Given a $1$-Lipschitz function $u$, we define the ordered transport set associated with $u$ as follows:
\[\Gamma_{u}\defeq \{(x,y)\in X\times X: u(x)-u(y)=\dist(x,y)\},\]
and its transpose as $\Gamma_{u}^{-1}\defeq \{(x,y)\in X\times X:(y,x)\in\Gamma_{u}\}$.

\begin{rem}
The definition of the ordered transport set is sometimes reversed in the literature (e.g. in 
\cite{Ketterer_23}, which we will use later on).
\end{rem}

Then, we define the transport relation $R_u\defeq \Gamma_u\cup \Gamma_u^{-1}$, and the transport set with endpoints and branching points $\tran_{u,e}\defeq P_1(R_u\setminus \{(x,y):x=y\in X\})$, where $P_1:X\times X\to X$ is the projection onto the first factor. Their sections through $x\in X$ are denoted by $\Gamma_u(x)$, $\Gamma_u^{-1}(x)$, and $R_u(x)$ (e.g. $R_u(x)\defeq \{y\in X:(x,y)\in R_u\}$). Since $\Gamma_{u}$ carries an orientation, we introduce the following sets of distinguished points (which should be thought of as initial and final points):
\begin{equation}\label{eq:IniAndFin}
    a_u\defeq\{x\in \tran_u:\Gamma_u^{-1}(x)=\{x\}\}\quad\&\quad\  b_u\defeq\{x\in \tran_u:\Gamma_u(x)=\{x\}\}.
\end{equation}

Similarly, the set of forward and backward branching points, $A_+$ and $A_-$, are respectively defined by: 
\begin{align*}
    A_{\pm} &\defeq\{x\in \tran_{u,e}:\exists z,w\in \Gamma^{\pm1}_u(x), (z,w)\notin R_u\}.
\end{align*}
We will consider the non-branching transport set $\tran_u\defeq \tran_{u,e}\setminus(A_+\cup A_-)$. The set $R_u$ induces an equivalence relation when restricted to $\tran_u\times \tran_u$ (see \cite{Cavalletti14}). Thus, $\tran_u$ can be decomposed into equivalence classes $\{X_\alpha\}_{\alpha\in Q}$, for some index set $Q$. In addition, there exists a measurable selection map $s:\tran_u\to\tran_u$ so that $(x,y)\in R_u$ if and only if $s(x)=s(y)$ (see \cite{Cavalletti14}). As a result, one can identify the index set $Q$ with the image $\mathrm{Im}(s)\subset X$ of $s$, equipped with the measurable structure induced as a subset of $X$. We can equip $Q$ with a probability measure as described in \cite[lemma 3.3]{CM_newformula}. First, there exists a Borel measurable function $f:X\to (0,\infty)$ so that $f\meas|_{\tran_u}$ is a probability measure. Since the quotient map $\mathfrak{Q}: \tran_u\to Q$ induced by $s$ is measurable, we can push forward the normalized measure and obtain a probability measure $\mathfrak{q}\defeq \mathfrak{Q}_*(f\meas|_{\tran_u})$ on $Q$. The following disintegration theorem holds thanks to Cavalletti--Mondino\cite{CM_newformula}.

\begin{thm}\label{thm:needle}\cite[Theorem 4.6]{CM_newformula}
If $(X,\dist,\meas)$ is an $\RCD(0,N)$ space for some $N\in(1,\infty)$, then there exists a disintegration of $\meas$ such that:
\[\meas|_{\tran_u}=\int_Q\meas_\alpha\di \mk{q}(\alpha)\quad \&\quad \mk q(Q)=1.\]
Here, for $\mk q$-a.e. index $\alpha\in Q$, $\meas_\alpha$ is supported on $\bar X_\alpha$ and $\meas_\alpha=h_\alpha\haus^1|_{X_\alpha}$. Moreover, $(\bar X_\alpha,\dist, \meas_\alpha)$ satisfies $\RCD(0,N)$ condition. Finally, for every compact subset $K\subset X$, there exists $C_K>0$ such that, for $\mk q$-a.e. index $\alpha\in Q$, $\meas_\alpha(K)\leq C_K$.
\end{thm}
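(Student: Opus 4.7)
The plan is to follow the $L^1$-optimal transport approach of Cavalletti--Mondino \cite{CM17,CM_newformula}, itself the metric-measure version of Klartag's Riemannian needle decomposition \cite{Klartag17}. The key idea is that the $1$-Lipschitz function $u$ serves as a Kantorovich potential for the Monge cost $c(x,y)=\dist(x,y)$: its $c$-cyclically monotone subdifferential is precisely $\Gamma_u$, and the equivalence classes of $R_u$ restricted to $\tran_u$ are maximal transport rays (geodesic segments on which $u$ is affine with slope one) along which $\meas$ will be disintegrated.

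The argument proceeds in three steps. \emph{Step one (measure-theoretic setup):} verify that $\Gamma_u$ and $R_u$ are closed in $X\times X$, while $a_u$, $b_u$, $A_\pm$ are analytic sets arising as projections of closed sets, so $\tran_u$ is Borel; then build the measurable selection $s$ of \cite{Cavalletti14} and apply the classical Rokhlin-type Disintegration Theorem to the probability measure $f\meas|_{\tran_u}$ to obtain $\meas|_{\tran_u}=\int_Q \meas_\alpha\di\mathfrak{q}(\alpha)$ with $\meas_\alpha$ concentrated on $\bar X_\alpha$. \emph{Step two (absolute continuity):} show $\meas_\alpha=h_\alpha\haus^1|_{X_\alpha}$ by identifying $h_\alpha$ through a Radon--Nikodym procedure applied ray-by-ray; this requires the branching and endpoint sets $A_+\cup A_-\cup a_u\cup b_u$ to carry no $\meas$-mass, which is extracted from the essential non-branching property of $\RCD(0,N)$ \cite{Deng20}. \emph{Step three (ray-wise $\RCD(0,N)$):} transfer the ambient curvature-dimension condition to each needle by localization. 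Given endpoints on a common ray, the $W_2$-geodesic between two absolutely continuous measures supported there remains concentrated on that ray (again by non-branching), and the $\mathrm{CD}(0,N)$ entropy inequality on $X$ disintegrates, forcing each density $h_\alpha$ to satisfy the one-dimensional $\mathrm{CD}(0,N)$ condition, i.e.\ concavity of $h_\alpha^{1/(N-1)}$ on $X_\alpha$; infinitesimal Hilbertianity is automatic in dimension one, so $(\bar X_\alpha,\dist,\meas_\alpha)$ is $\RCD(0,N)$.

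I expect the main obstacle to be the negligibility of the branching set $A_+\cup A_-$ together with the measurability and compatibility of the selection map $s$ with the disintegration; these constitute the technical core of \cite{CM17,CM_newformula} and depend crucially on the non-branching of $\RCD(0,N)$ \cite{Deng20,CoNa12} and on Cavalletti's \cite{Cavalletti14} analysis of transport relations on general metric spaces. Once these are in hand, the uniform bound $\meas_\alpha(K)\le C_K$ follows from applying Bishop--Gromov comparison to the log-concave-type densities $h_\alpha$ and exploiting continuity of the disintegration data in a tubular neighborhood of $K$.
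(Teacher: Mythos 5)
This statement is not proved in the paper at all: it is quoted verbatim as \cite[Theorem 4.6]{CM_newformula}, so the only ``proof'' the paper offers is the citation. Your three-step outline is a faithful summary of the actual Cavalletti--Mondino argument (measurable selection and Rokhlin disintegration after Cavalletti's analysis of the transport relation, negligibility of the branching set via essential non-branching, and localization of the $\mathrm{CD}(0,N)$ entropy inequality to one-dimensional needles), so it takes essentially the same route as the source. One minor correction: the uniform bound $\meas_\alpha(K)\le C_K$ does not come from a Bishop--Gromov argument on the densities $h_\alpha$; it is an immediate consequence of the normalization, since each conditional measure satisfies $\int f\,\di\meas_\alpha=1$ for the strictly positive normalizing function $f$, whence $\meas_\alpha(K)\le (\inf_K f)^{-1}$.
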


\begin{rem}\label{rmk:para}
In fact for $\mk q$-a.e. $\alpha\in Q$, $(\bar X_\alpha, \dist,\meas_\alpha)$ is isomorphic to $(I_\alpha, |\cdot|, h_\alpha\haus^1)$, where $I_\alpha\subset \R$ is a closed and possibly infinite interval equipped with a $\mathrm{CD}(0,N)$ density $h_{\alpha}$. We parametrize as a unit speed geodesic $I_\alpha=[a(X_\alpha), b(X_\alpha)]$, if $a(X_\alpha)=-\infty$ or $b(X_\alpha)=\infty$, then $I_\alpha=(-\infty,b(X_\alpha)]$ or $[a(X_\alpha),\infty)$, and $I_\alpha=\R$ if both ends are infinity. This notation indicates that we would like to parametrize $I_\alpha$ so that the initial points are at the left end and the final points are at the right end. We will also say that $I_\alpha$ is a parametrization of $\bar X_\alpha$. The claim that $(I_\alpha, |\cdot|, h_\alpha\haus^1)$ is an $\RCD(0,N)$ space means that $h_\alpha$ satisfies a concavity inequality on $I_\alpha$, implying that $h_\alpha$ is continuous on $I_\alpha$, and is positive, log-concave (hence, locally Lipschitz) on the interior of $I_\alpha$.
\end{rem}

In our cases below, we will explicitly choose the selection map $s$, the index set $Q$, the normalization function $f$, and the parametrization of each $\bar X_\alpha$. Each $X_\alpha$ will be a ray, and we will be able to avoid some measure-theoretical consideration. The following monotonicity formula holds for the density $h_\alpha$ when $I_\alpha$ contains a ray. 

\begin{lem}\label{lem:density}
For $\mk q$-a.e $\alpha\in Q$, if $I_\alpha$ contains a ray $(-\infty,\sigma)$ for some $\sigma\in \R$, then for all $s\le t<\sigma$, $h_\alpha(t)\le h_\alpha (s)$.
\end{lem}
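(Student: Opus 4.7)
The plan is to use the one-dimensional $\mathrm{CD}(0,N)$ structure on each needle $(\bar X_\alpha,\dist,\meas_\alpha)$ together with the fact that a non-negative concave function on a half-line must be monotone non-decreasing to the left.

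First, I would recall that by Theorem \ref{thm:needle} and Remark \ref{rmk:para}, for $\mk q$-a.e.\ $\alpha\in Q$ the needle $(\bar X_\alpha,\dist,\meas_\alpha)$ is isomorphic to $(I_\alpha,|\cdot|,h_\alpha\haus^1)$, and the $\RCD(0,N)$ (equivalently $\mathrm{CD}(0,N)$) condition on this one-dimensional model is known to be equivalent to the concavity of $h_\alpha^{1/(N-1)}$ on the interior of $I_\alpha$ (with the convention $h_\alpha^{1/(N-1)}\equiv 1$ when $N=1$, in which case $h_\alpha$ is constant and the claim is trivial). I would discard the $\mk q$-null set on which the needle fails to be $\RCD(0,N)$ and restrict attention to $\alpha$ with $I_\alpha\supset(-\infty,\sigma)$.

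Fix such an $\alpha$ and set $g\defeq h_\alpha^{1/(N-1)}$. By the above, $g$ is non-negative and concave on $(-\infty,\sigma)$. Suppose for contradiction that there exist $s<t<\sigma$ with $g(s)<g(t)$, so that the slope $c\defeq (g(t)-g(s))/(t-s)>0$. Concavity gives, for every $u<s$,
\[
\frac{g(s)-g(u)}{s-u}\ \ge\ \frac{g(t)-g(s)}{t-s}\ =\ c,
\]
which rearranges to $g(u)\le g(s)-c(s-u)$. Letting $u\to-\infty$ forces $g(u)\to-\infty$, contradicting $g\ge 0$. Hence $g(s)\ge g(t)$ for all $s\le t<\sigma$, and raising to the power $N-1\ge 0$ preserves the inequality, giving $h_\alpha(s)\ge h_\alpha(t)$.

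The only subtle point is the possibility that $\sigma$ is an interior cluster point where $g$ might fail to be defined or continuous; however, by Remark \ref{rmk:para}, $h_\alpha$ (and thus $g$) is continuous on $I_\alpha$ and locally Lipschitz on its interior, so the argument above applies verbatim on $(-\infty,\sigma)$. No substantial obstacle is anticipated beyond correctly invoking the one-dimensional characterization of $\mathrm{CD}(0,N)$ densities; the statement then follows from the elementary concavity-on-a-half-line observation.
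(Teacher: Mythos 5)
Your proof is correct and follows essentially the same route as the paper: the paper quotes the two-sided $\mathrm{CD}(0,N)$ density estimate $h_\alpha(t)/h_\alpha(s)\le\left((t-\sigma_-)/(s-\sigma_-)\right)^{N-1}$ from Cavalletti--Mondino and lets $\sigma_-\to-\infty$, which is exactly the integrated form of the concavity of $h_\alpha^{1/(N-1)}$ that you invoke directly. Your elementary derivation from concavity on a half-line is a valid substitute for citing that displayed inequality.
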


\begin{proof}
Let $(\sigma_-,\sigma_+)\subset I_\alpha$, then for $\mk q$-a.e $\alpha\in Q$ any $\sigma_-<s\le t<\sigma_+$ we have 
\[
\left(\frac{\sigma_+-t}{\sigma_+-s}\right)^{N-1}\le \frac{h_\alpha(t)}{h_\alpha(s)}\le \left(\frac{t-\sigma_-}{s-\sigma_-}\right)^{N-1}.
\]
See for example \cite[(2-11),(2-20)]{CM_newformula}. By our assumption we can take $\sigma_+=\sigma$ and $\sigma_-\to -\infty$, then from the right inequality we have $h_\alpha(t)\le h_\alpha (s)$ for $s\le t<\sigma_+=\sigma$, as desired.
\end{proof}

\subsection{Rigidity with a pole} We prove Proposition \ref{prop:rigid_iso} in this subsection. We take $u=\dist_y\defeq \dist(\cdot,y)$ and consider the disintegration induced by it. Note that $Y$ is polar with the pole $y$, i.e., for any $x\neq y$, there exists a unique ray $\gamma_x$ passing through $x$ emanating from $y$.

We start with the needle decomposition with the presence of a pole. The following observation follows from the non-branching property of $\RCD$ spaces.

\begin{lem}\label{lem:ray}
We have $A_+\cup A_-\subset \{y\}$ and $R_{\dist_y}$ is an equivalence relation on $Y\setminus\{y\}$. In particular, $Y\setminus\{y\}\subset\tran_{\dist_y}$ and for each $x\neq y$, $R_{\dist_y}(x)$ is the unique ray $\gamma_x$ passing through $x$.
\end{lem}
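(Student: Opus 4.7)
The plan is to exploit the polarity at $y$ together with the non-branching property of $\RCD(0,N)$ spaces to show that, for each $x\neq y$, the transport class $R_{\dist_y}(x)$ coincides with the image of the unique ray from $y$ through $x$. Once this is established, every assertion of the lemma follows by bookkeeping. First, unpack the definition: for $u=\dist_y$, the relation $(a,b)\in\Gamma_u$ is equivalent to $\dist(a,y)=\dist(a,b)+\dist(b,y)$, i.e.\ to $b$ lying on some minimal geodesic from $a$ to $y$. Consequently, $\Gamma_u(x)$ consists of the points that lie on some minimal geodesic from $x$ to $y$, while $\Gamma_u^{-1}(x)$ consists of the points $z$ such that $x$ lies on some minimal geodesic from $z$ to $y$.

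Two uniqueness statements will be crucial for $x\neq y$. \emph{Uniqueness of the ray:} any two rays emanating from $y$ and passing through $x$ must agree on the minimizing segment from $y$ to $x$, hence coincide everywhere by non-branching of $\RCD(0,N)$ spaces; denote this unique ray $\gamma_x$, parametrized so that $\gamma_x(0)=y$ and $\gamma_x(\dist(y,x))=x$. \emph{Uniqueness of the minimal geodesic from $y$ to $x$:} given any minimal geodesic $\sigma\colon[0,\dist(y,x)]\to Y$ from $y$ to $x$, the concatenation of $\sigma$ with $\gamma_x|_{[\dist(y,x),\infty)}$ is a curve whose distance from $y$ equals parameter length everywhere, hence is itself a ray from $y$ through $x$; by the previous uniqueness it must equal $\gamma_x$, forcing $\sigma=\gamma_x|_{[0,\dist(y,x)]}$.

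With these in hand I identify $R_{\dist_y}(x)=\gamma_x([0,\infty))$ for every $x\neq y$. If $z\in\Gamma_u(x)\setminus\{y\}$, then $z$ lies on the unique minimal geodesic from $y$ to $x$, so $z\in\gamma_x([0,\dist(y,x)])$; the case $z=y$ is trivial. If $z\in\Gamma_u^{-1}(x)\setminus\{y\}$, then $x$ lies on the unique minimal geodesic from $y$ to $z$, namely $\gamma_z|_{[0,\dist(y,z)]}$; since $\gamma_z$ passes through $x$, non-branching identifies $\gamma_z$ with $\gamma_x$, so $z=\gamma_z(\dist(y,z))\in\gamma_x([0,\infty))$. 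The reverse inclusion is immediate. In particular $R_u(x)$ is totally ordered by $\dist(\cdot,y)$, which forces any two points of $\Gamma_u(x)$ (resp.\ $\Gamma_u^{-1}(x)$) to be $R_u$-related; hence $x\notin A_+\cup A_-$, giving $A_+\cup A_-\subset\{y\}$. The equivalence-relation property of $R_u$ on $Y\setminus\{y\}$ follows from the identification $R_u(x)=\gamma_x([0,\infty))$, since the unique ray through any point of $\gamma_x$ is $\gamma_x$ itself. Finally, $(x,y)\in\Gamma_u$ for every $x\neq y$, so $Y\setminus\{y\}\subset\tran_{u,e}$, which combined with $A_+\cup A_-\subset\{y\}$ yields $Y\setminus\{y\}\subset\tran_u$.

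The main obstacle is the uniqueness of the minimal geodesic from $y$ to $x$: polarity yields existence of rays and, via non-branching, their uniqueness, but a priori does not rule out an alternative minimal geodesic from $y$ to $x$ that deviates from $\gamma_x$ before reaching $x$. The extension-to-a-ray argument above resolves this precisely because non-branching prevents the extended curve from splitting off $\gamma_x$ at any intermediate point; without this promotion from rays to arbitrary minimal geodesics, $\Gamma_u(x)$ could contain two distinct geodesic segments meeting only at $y$ and $x$, producing backward branching at $x$.
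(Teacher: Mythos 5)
Your proof is correct and follows essentially the same route as the paper's: polarity plus the non-branching property identify $R_{\dist_y}(x)$ with the unique ray $\gamma_x$, from which $x\notin A_+\cup A_-$ for $x\neq y$ and the transitivity of $R_{\dist_y}$ on $Y\setminus\{y\}$ both follow. You are in fact slightly more explicit than the paper in promoting uniqueness of the ray to uniqueness of the minimal geodesic from $y$ to $x$ via the extension-to-a-ray argument; note only that the same concatenation trick is also what justifies your unproved assertion that two rays through $x$ share their initial minimizing segment.
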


\begin{proof}
We show that any $x\in Y$ other than $y$ is not a branching point. Indeed, for $x\neq y$ there is a unique ray $\gamma_x$ passing through it, so if any $z,w\in \Gamma_{\dist_y}(x) $ or $\Gamma^{-1}_{\dist_y}(x)$, then we have $|\dist(z,y)-\dist(x,y)|=\dist(x,z)$. This implies that $z,x,y$ are on a common geodesic, since $y,x$ are already on the ray $\gamma_x$ and $Y$ is non-branching, $z$ must be on $\gamma_x$, and so must $w$ by the same argument. Then $|\dist(z,y)-\dist(w,y)|=\dist(z,w)$ holds, so $(z,w)\in R_u$. It follows that $x\notin A_+\cup A_-$. To show $R_{\dist_y}$ is an equivalence relation on $Y\setminus\{y\}$, it suffices to show that if $(x,z), (z,w)\in R_{\dist_y}$ then $(x,w)\in R_{\dist_y}$. This follows from the exact same argument. First $(x,z)\in R_{\dist_y}$ implies $z\in \gamma_x$ and $\gamma_z=\gamma_x$ by non-branching property. Then $(z,w)\in R_{\dist_y}$ implies $w\in \gamma_z=\gamma_x$ so $(x,w)\in R_{\dist_y}$.
 \end{proof}

\begin{rem} As a result of the proof above, $Y$ coincides with the transport set with endpoints and branching points $\mathcal{T}_{\dist_y,e}$. However, note that the inclusion $\mathcal{T}_{\dist_y}\subset Y$ may either be an equality or a strict inclusion. For example, if $Y$ is equal to a ray starting from $y$, then $A_+=A_-=\varnothing$, and $Y=\mathcal{T}_{\dist_y}$. Conversely, if $Y$ is equal to a line through $y$, then $y\in A_-$, and $\mathcal{T}=Y\backslash\{y\}$.
\end{rem}

It follows from Lemma \ref{lem:ray} that the map: 
\begin{equation*}
s\colon x\in Y\setminus\{y\}\to s(x)=\gamma_x\cap\{\dist_y=1\}\in Y\backslash\{y\},
\end{equation*}
is a selection map. Indeed, the map $s$ is well-defined since a ray emanating from $y$ intersects every geodesic sphere centered at $y$ at a unique point. In addition, the map $s$ is continuous (the proof relies on the same argument as the proof of Lemma \ref{lem:s_is_continuous}); hence, it is measurable. Thus, we will identify the quotient set  $Q$ with $\{\dist_y=1\}$, equipped with the induced measurable structure coming from $X$. Finally, we observe that the function $f=e^{-\dist_y}$ is a normalizing function since, according to Cavalieri's formula, the following equation holds:
\[
\int_Y f(x)\di\meas(x)=-\int_0^\infty f'(r)\meas (B_r(y))\di r=-\int_0^\infty re^{-r}\di r=1.
\]
Lemma \ref{lem:ray} also implies that $Y\setminus\{y\}$ is decomposed into equivalence classes $\gamma_\alpha \setminus \{y\}$, for $\alpha\in Q=\{\dist_y=1\}$. Thanks to Theorem \ref{thm:needle}, there exists a disintegration associated with $\dist_y$ such that :
\[
\meas|_{Y\setminus\{y\}}=\int_Q h_\alpha  \haus^1\di \mk q (\alpha),
\]
and $(\gamma_\alpha, \dist, h_\alpha\haus^1|_{\gamma_\alpha})$ is an $\RCD(0,N)$ space. It is straightforward to verify that $a_{\dist_y}=\emptyset$ and $b_{\dist_y}=\{y\}$ by following their definition (see \eqref{eq:IniAndFin}). We parametrize $\gamma_\alpha:(-\infty,0]\to Y$ with $\gamma_\alpha(0)=y$ and unit speed, which is consistent with the parametrization as indicated in Remark \ref{rmk:para}. 

Under this parametrization, we have 
\begin{equation}\label{eq:meas}
r=\meas(B_r(y))=\int_Q\int_{-r}^0 h_\alpha(\gamma_\alpha(t)) \di t\di \mk q(\alpha).
\end{equation}

A crucial consequence is that $\dist_y$ is harmonic in $Y\setminus\{y\}$.

\begin{lem}\label{lem:d_y is harmonic}
We have $\dist_y\in D(\mf{\Delta}, {Y\setminus\{y\}})$ and $\mf{\Delta} \dist_y=0$ in ${Y\setminus\{y\}}$.
\end{lem}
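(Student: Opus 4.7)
The plan is to exploit the needle decomposition of $\meas$ associated with $\dist_y$ to reduce the PDE $\mathbf{\Delta} \dist_y = 0$ to a one-dimensional calculation on each needle, after first using the linear volume growth $\meas(B_r(y)) = cr$ to force each density $h_\alpha$ to be constant along its needle.

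For the first step, I would differentiate the identity \eqref{eq:meas} in $r$ to obtain
\[
c = \int_Q h_\alpha(-r)\, d\mathfrak{q}(\alpha) \quad \text{for a.e.\ } r > 0.
\]
By Lemma \ref{lem:density}, applied with $I_\alpha = (-\infty, 0]$ and $\sigma < 0$ arbitrary, $h_\alpha$ is non-increasing on $(-\infty, 0]$ for $\mathfrak{q}$-a.e.\ $\alpha$; equivalently, $r \mapsto h_\alpha(-r)$ is non-decreasing. Since the integral of a family of non-decreasing functions is constant in $r$, each of them must itself be constant in $r$ for $\mathfrak{q}$-a.e.\ $\alpha$ (otherwise the integral would strictly increase). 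Hence there exists a positive constant $h_\alpha^*$ with $h_\alpha \equiv h_\alpha^*$ along its needle.

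For the second step, since $\dist_y$ is $1$-Lipschitz it lies in $H^{1,2}_{\mathrm{loc}}(Y \setminus\{y\})$, so it remains to check that $\int_Y \langle \nabla \phi, \nabla \dist_y\rangle\, d\meas = 0$ for every test function $\phi$ with $\spt \phi \Subset Y\setminus\{y\}$. Using the needle decomposition and the fact that each $\bar X_\alpha$ is an $\RCD(0,N)$ space isomorphic (up to the constant $h_\alpha^*$) to the interval $(-\infty, 0]$ with the Lebesgue measure, I would invoke the needle-wise chain rule of Cavalletti--Mondino \cite{CM_newformula} to write
\[
\langle \nabla \phi, \nabla \dist_y\rangle(\gamma_\alpha(t)) = -\tfrac{d}{dt}(\phi \circ \gamma_\alpha)(t)
\]
for $\meas$-a.e.\ point on each needle (the minus sign reflecting that $\dist_y\circ \gamma_\alpha(t) = -t$). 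Substituting into the disintegration and using the constancy of $h_\alpha$ together with integration by parts in $t$ (boundary terms vanish because $\phi \circ \gamma_\alpha$ has compact support in $(-\infty, 0)$) yields
\[
\int_Y \langle \nabla \phi, \nabla \dist_y\rangle\, d\meas = -\int_Q h_\alpha^* \int_{-\infty}^0 \tfrac{d}{dt}(\phi \circ \gamma_\alpha)(t)\, dt\, d\mathfrak{q}(\alpha) = 0,
\]
which is the desired weak formulation of $\mathbf{\Delta} \dist_y = 0$ on $Y \setminus \{y\}$.

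The main technical point is the rigorous justification of the needle-wise chain rule in the $\RCD$ framework. This relies on the fact that, by polarity of $Y$ and Lemma \ref{lem:ray}, each needle is a \emph{global} unit-speed geodesic ray (so the intrinsic and ambient metrics agree along it), together with the needle-wise $\RCD(0,N)$ structure supplied by Theorem \ref{thm:needle}. Once this identification between the ambient inner product $\langle \nabla \phi, \nabla \dist_y\rangle$ and the one-dimensional derivative along the needle is in place, the remainder of the argument is a routine 1D integration by parts.
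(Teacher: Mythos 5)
Your proposal is correct, and its first half (constancy of the densities $h_\alpha$ along the needles) is exactly the paper's argument: differentiate \eqref{eq:meas}, combine with the monotonicity from Lemma \ref{lem:density}, and conclude that a family of monotone functions with constant integral must each be constant. (The paper spends Remark \ref{rem:continuity_of_integral_of_h_alpha} justifying that $s\mapsto\int_Q h_\alpha(\gamma_\alpha(s))\,d\mathfrak{q}(\alpha)$ is locally Lipschitz so that \eqref{eq:meas} can be differentiated everywhere; your a.e.\ version, obtainable from Fubini and Lebesgue differentiation, suffices for the monotonicity argument, so this is not a gap.) Where you diverge is the second half: the paper simply cites the representation formula for the distributional Laplacian of a distance function, $\mathbf{\Delta}\dist_y|_{Y\setminus\{y\}}=-(\log h_\alpha)'\,\meas$ plus singular parts supported on endpoints and the cut locus (\cite[Corollary 4.19 and Remark 4.9]{CM_newformula}), observes that the singular part is absent because $y$ is a pole, and concludes from $h_\alpha\equiv\mathrm{const}$. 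You instead verify the weak formulation $\int\langle\nabla\phi,\nabla\dist_y\rangle\,d\meas=0$ directly by disintegrating and integrating by parts along each needle. This is essentially a re-derivation of the cited representation formula in the special case at hand; the nontrivial input in both routes is the same, namely the Cavalletti--Mondino identification of $\langle\nabla\phi,\nabla\dist_y\rangle$ with the derivative of $\phi$ along the needles (which rests on $\lvert\nabla\dist_y\rvert=1$ $\meas$-a.e.\ on the transport set, true here since $\tran_{\dist_y}\supseteq Y\setminus\{y\}$). Your version is more self-contained in spirit but not shorter; the paper's version makes the dependence on the black box explicit and avoids having to restate the chain rule. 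You correctly flag the chain rule as the technical crux, and your sign conventions and the vanishing of boundary terms (since $\spt\phi\Subset Y\setminus\{y\}$ forces $\phi\circ\gamma_\alpha$ to vanish near $t=0$) check out.
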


\begin{proof}
The fact that $\dist_y\in D(\mf{\Delta}, {Y\setminus\{y\}})$ follows from the definition. Focusing on the next claim, we first observe that, since $h_{\alpha}$ are uniformly locally Lipschitz on $(-\infty,0)$, the map:
\begin{equation*}
    s\in(-\infty,0)\mapsto \int_Q h_\alpha(\gamma(s))\di \mk q(\alpha)\in\R
\end{equation*}
is locally Lipschitz (see Remark \ref{rem:continuity_of_integral_of_h_alpha} below for more details). Thus, we can differentiate \eqref{eq:meas} and obtain that, for every $r< 0$, the following holds:
\begin{equation}\label{eq:const}
    1=\int_Q h_\alpha(\gamma_\alpha(r)) \di \mk q(\alpha).
\end{equation}
Since each $\gamma_\alpha$ is a ray, then Lemma \ref{lem:density} implies that, for $\mk q$-a.e. $\alpha\in Q$, we have $h_\alpha(\gamma_\alpha(t))\le h_\alpha(\gamma_\alpha(s))$, for every $s\le t<0$. Along with \eqref{eq:const}, we infer that, for $\mk q$-a.e. $\alpha\in Q$, the function $r\mapsto h_\alpha(\gamma_\alpha(r))$ is constant on $(-\infty,0]$ (the constant may depend on $\alpha$).

By the representation formula of the distributional Laplacian \cite[Corollary 4.19]{CM_newformula} (and in particular \cite[Remark 4.9]{CM_newformula} deals with the case when the transport rays are infinite) we have that:
\[
\mathbf{\Delta} \dist_y|_{Y\setminus\{y\}}= -(\log h_\alpha)'\meas=0.
\]
Note that we do not have any singular part. Morally, the singular part is supported on the point $y$, which is excluded, and on its cut locus, which is empty as $y$ is a pole.
\end{proof}

\begin{rem}\label{rem:continuity_of_integral_of_h_alpha}[Continuity of $s\mapsto \int_Q h_\alpha(\gamma(s))\di \mk q(\alpha)$]
We denote $\tilde{h}_{\alpha}(t)\coloneqq h_\alpha(\gamma(t))$, for $-\infty<t\leq 0$. Let us recall that, given $\alpha\in Q$, $r>0$, and $-r\leq t\leq -r/2$ such that $\tilde{h}_{\alpha}$ is differentiable at $t$, we have the following estimate (see \cite[Lemma A.9]{Cavalletti-Milman_21} using $a=-2r$ and $b=0$):
\begin{equation*}
    \lvert \tilde{h}_{\alpha}'(t)\rvert \leq \frac{2(N-1)}{r}\lVert \tilde{h}_{\alpha}\rVert_{L^{\infty}(-r,-r/2)}.
\end{equation*}
However, according to \cite[Lemma A.8]{Cavalletti-Milman_21}, the following estimate holds:
\begin{equation*}
    \lVert\tilde{h}_{\alpha}\rVert_{L^{\infty}(-r,-r/2)}\leq \frac{2N}{r}\int_{-r}^{-r/2}\tilde{h}_{\alpha}(s)\ ds\leq\frac{2N}{r}\meas_{\alpha}(B_r(y)).
\end{equation*}
In particular, if we denote $\Psi(s)\coloneqq \int_Q h_\alpha(\gamma(s))\di \mk q(\alpha)$ ($s\in(-\infty,0]$), then for every $-r\leq t_1,t_2\leq -r/2$, we have the following:
\begin{equation*}
    \lvert\Psi(t_1)-\Psi(t_2)\rvert \leq \frac{4N(N-1)}{r^2}\int_{\alpha\in Q}\meas_{\alpha}(B_r(y))\ d\mk q(\alpha).
\end{equation*}
In particular, since there exists $C(r)>0$ such that, for $\mk q$-a.e. index $\alpha\in Q$, $\meas_\alpha(B_r(y))\leq C(r)$, then we have:
\begin{equation*}
    \lvert\Psi(t_1)-\Psi(t_2)\rvert\leq C(r)\frac{4N(N-1)}{r^2},
\end{equation*}
i.e. $\Psi$ is locally Lipschitz on $(-\infty,0)$.
\end{rem}

We then aim to show the splitting of $Y\backslash\{y\}$. Here we follow the proof of Theorem \ref{thm:ketterer} which is contained in \cite{KKL23}, where the authors study spectrally-extremal $\RCD(0,N)$ spaces, i.e., compact $\RCD(0,N)$ spaces such that the first non-zero eigenvalue $\lambda_1$ of $-\Delta$ is minimal. Given such an $\RCD(0,N)$ space $(X,\dist,\meas)$ with diameter equal to $\pi$, they first fix a an eigenvector $u$ such that $\Delta u=-u$ and $\lvert u\rvert_{\infty} =1$. One of their main observation is that $f\coloneqq \arcsin(u)$ is harmonic on $X\backslash S$, where $S\coloneqq \{u=1\}\cup \{u=-1\}$ plays the role of a singular set. In addition, they also show that $\lvert \nabla f\rvert =1$ $\meas$-almost everywhere. In particular, since $(X,\dist,\meas)$ is an $\RCD(0,N)$ space, $f$ admits a $1$-Lipschitz representative as a result of the Sobolev-to-Lipschitz property. Therefore, $f$ induces a disintegration of the measure $\meas$. This disintegration allows the author to define the flow of $f$ as a function defined almost everywhere. After some work, they extend the flow map to the entire space minus the singularities $X\backslash S$. Using the method developed in \cite{Gigli13}, they prove that the flow map splits of an interval in (the completion of the extended intrinsic metric of) $X\backslash S$.

While our setup is different, their arguments are local and carry over verbatim, replacing their function $f$ by our function $\dist_y$, and the singular set $S$ with $\{y\}$ (see the proof of \cite[Theorem 4.11]{Ketterer_23} and the comment following \cite[Theorem 4.14]{Ketterer_23})). Our goal here is to introduce the flow map associated with $\dist_y$ and explain (referring to \cite{KKL23}) how it induces a splitting of $Y\backslash\{y\}$ equipped with an extended metric.

\begin{defn}[Ray and flow maps]\label{def:flow_map}We define the \emph{ray map} $g$ and \emph{flow map $F_t$ at time $t\ge0$} as follows:
\begin{align*}
g\colon (\alpha,t)\in Q\times(0,\infty)&\to \gamma_{\alpha}(-t)\in Y\backslash\{y\},\\
F_t\colon \gamma_{\alpha}(-s)\in \dist_y^{-1}((t,\infty))&\to \gamma_{\alpha}(t-s)\in Y\backslash\{y\}.
\end{align*}
\end{defn}

\begin{rem}
    Recall that we have identified the index set $Q$ with the image of the selection map $s:\dist_y^{-1}((0,\infty))\to \{\dist_y=1\}$. Morally, $F_t$ should be viewed as the gradient follow of $-\dist_y$.
\end{rem}

\begin{rem}
In \cite[Section 4.2]{KKL23}, the flow maps $F_t$ are only well defined up to a negligible set. Some work is needed to extend these maps everywhere but a neighborhood of the singular set (see \cite[Proposition 4.16]{KKL23}). Finally, they extend the maps to the entire space but the singular set (see \cite[Remark 4.17]{KKL23}). These extensions are local isometries; therefore, they are uniquely determined by their values outside a negligible set.

In our case, the flow maps are already well-defined on the entire space minus the singular point, namely $Y\backslash\{y\}$. The reason is that we have a good choice of selection map that does not require abstract measure theoretical arguments. In addition, our selection map is continuous, which implies that our flow maps are also continuous. As a result, the flow maps introduced in Definition \ref{def:flow_map} coincide with the extensions constructed in \cite{KKL23}.
\end{rem}

\begin{defn}[Completion of the extended intrinsic distance on $Y\backslash\{y\}$]\label{def:extended_intrinsic_Y}
The intrinsic extended distance $\tilde{\dist}_{Y\backslash\{y\}}$ on $Y\backslash\{y\}$ is defined in the following way:
\begin{itemize}
\item  if $x$ and $x'$ are in different connected components of $Y\backslash\{y\}$, then $\tilde{\dist}_{Y\backslash\{y\}}(x,x')\coloneqq\infty$;
\item otherwise,  $\tilde{\dist}_{Y\backslash\{y\}}(x,x')$ is the infimum of the set of $\dist$-lengths of curves from $x$ to $x'$ in $Y\backslash\{y\}$.
\end{itemize}
We will denote by $\tilde{Y}\coloneqq(\tilde{Y},\dist_{\tilde{Y}},\meas_{\tilde{Y}})$ the completion of $(Y\backslash\{y\},\tilde{\dist}_{Y\backslash\{y\}})$ equipped with the measure $\meas_{\tilde{Y}}\coloneqq \meas_{Y\backslash{y}}$.
\end{defn}

\begin{rem}Notice that, when restricted to a connected component of $Y\backslash\{y\}$, $\tilde{\dist}_{Y\backslash\{y\}}$ is intrinsic. Indeed, for every $x\in Y\backslash\{y\}$, there exists $\epsilon>0$ such that $B_{\epsilon}(x)\subset Y\backslash\{y\}$. In particular, every point in $B_{\epsilon}(x)$ can be reached by a geodesic starting from $x$ in $Y\backslash\{y\}$.
\end{rem}

\begin{rem}
Note that $\tilde{Y}$ may have a different topology than $Y$, for example, when $Y$ is a line.
\end{rem}

\begin{defn}[Extended intrinsic distance on $Q$]\label{def:extended_intrinsic_Q} We denote by $\tilde{\dist}_{Q}$ the extended intrinsic distance on $Q$, and recall that we equipped $Q$ with the measure ${\mk q} = s_*(\exp({-\dist_y})\meas)$ , where $s$ is our selection map and $\exp({-\dist_y})$ our normalizing function.    
\end{defn}

\begin{rem}\label{rem:Q_locally_geodesically_convex}
As a consequence of \cite[Corollary 4.7]{KKL23}, $Q=\{\dist_y=1\}$ is locally geodesically convex (note that our space $Q$ plays the role of the level set $u^{-1}(0)$ in \cite{KKL23}). Hence, the extended intrinsic distance $\tilde{\dist}_{Q}$ is well defined, and its restriction to any connected component of $Q$ is intrinsic.
\end{rem}

The strategy described in \cite[Corollary 5.56 \& Theorem 5.10]{KKL23} (which retraces the arguments of \cite{Gigli13}) carries over verbatim to our situation. As a result, the following theorem holds.

\begin{thm}\label{thm:splitting}
If $(Y,y,\dist,\meas)$ satisfies the hypothesis of Proposition \ref{prop:rigid_iso}, then the ray map $g\colon Q\times\R^+\to Y\backslash\{y\}$ (see Definition \ref{def:flow_map}) can be extended uniquely into an isomorphism:
\begin{equation*}
    F\colon ({Q},\tilde{\dist}_{Q},\mk q)\otimes([0,\infty),\dist_{\mathrm{E}},c\mathcal{L}^1)\to (\tilde{Y},\dist_{\tilde{Y}},\meas_{\tilde{Y}}),
\end{equation*}
where $\tilde{Y}$ and $\tilde{\dist}_{Q}$ are introduced in Definition \ref{def:extended_intrinsic_Y} and \ref{def:extended_intrinsic_Q}, respectively, and $c>0$ is the positive constant which satisfies $\meas(B_r(y))=cr$, for $r>0$. Moreover, each connected component of $({Q},\tilde{\dist}_{Q},\mk q)$ is an $\RCD(0,N-1)$ space.
\end{thm}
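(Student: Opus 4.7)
The plan is to reduce the theorem to the local functional splitting of Ketterer (Theorem \ref{thm:ketterer}) applied to the distance function $\dist_y$ on annular regions, and then glue the resulting local splittings into a single global ray map, passing finally to the limit.

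The first step is to verify the hypotheses of Theorem \ref{thm:ketterer} for $u = \dist_y - \epsilon$ on the open set $\Omega_{\epsilon,D} \defeq \dist_y^{-1}((\epsilon,D))$, for every $0 < \epsilon < D$. Since $\dist_y$ is $1$-Lipschitz, the Sobolev-to-Lipschitz property on $\RCD(0,N)$ spaces combined with the fact that $\dist_y$ is attained along the rays $\gamma_\alpha$ (Lemma \ref{lem:ray}) yields $\lvert \nabla \dist_y\rvert = 1$ $\meas$-a.e. on $Y \setminus \{y\}$, and Lemma \ref{lem:d_y is harmonic} provides $\mathbf{\Delta}\dist_y = 0$ on $\Omega_{\epsilon,D}$. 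Theorem \ref{thm:ketterer} then gives an isomorphism
\begin{equation*}
(\widetilde{\Omega_{\epsilon,D}},\tilde{\dist}_{\Omega_{\epsilon,D}},\meas_{\Omega_{\epsilon,D}}) \simeq (Y_{\epsilon,D},\dist_{Y_{\epsilon,D}},\meas_{Y_{\epsilon,D}}) \otimes [0,D-\epsilon],
\end{equation*}
where every connected component of $Y_{\epsilon,D}$ is $\RCD(0,N-1)$.

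The second step is to identify the transverse factor $Y_{\epsilon,D}$ with (the image of) $Q = \{\dist_y = 1\}$ equipped with its extended intrinsic distance $\tilde{\dist}_Q$ and measure $\mk{q}$. By Lemma \ref{lem:ray} the fibers of the local splitting are precisely the restrictions of the rays $\gamma_\alpha$ to $\Omega_{\epsilon,D}$, so the ray map $g$ of Definition \ref{def:flow_map} provides a canonical parametrization. The flow maps $F_t$ introduced there are local isometries between level sets (this is where local convexity of $Q$ from Remark \ref{rem:Q_locally_geodesically_convex} enters) and they conjugate the local splittings produced for different values of $\epsilon$ and $D$. Gluing along these flows, exactly as in \cite[Sections 4.2 and 5]{KKL23}, produces a well-defined extension of $g$ to $Q \times \R^+ \to Y \setminus \{y\}$ that is compatible with every finite-annulus splitting.

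Finally, I would pass to the limit $\epsilon \to 0^+$ and $D \to \infty$. In the radial direction the intervals $[0,D-\epsilon]$ exhaust $[0,\infty)$, and the radial factor is a multiple $c\,\mc{L}^1$ of the Lebesgue measure; the constant $c$ is fixed using the assumption $\meas(B_r(y)) = cr$ together with the normalization $\mk{q}(Q) = 1$ and Cavalieri's formula (compare with equation \eqref{eq:meas}). In the transversal direction, the flow map identifies $\{\dist_y = \epsilon\}$ with $Q$ for each $\epsilon > 0$, so the inverse limit is precisely $(Q,\tilde{\dist}_Q,\mk{q})$, yielding the desired isomorphism $F$. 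The $\RCD(0,N-1)$ property of each connected component of $Q$ is inherited from the corresponding property of $Y_{\epsilon,D}$, which is stable under the local isometries realized by the flow. The main obstacle, as in \cite{KKL23}, lies precisely in this limiting procedure: one has to check that the extended intrinsic distance on $Q = \{\dist_y=1\}$ truly agrees with the transverse distance read off from the local splittings (so that the gluing is a global isomorphism, not merely a pointwise bijection) and that the singular point $y$ does not generate any defect in the measure. Both issues are handled verbatim by the arguments of \cite[Corollary 5.56, Theorem 5.10]{KKL23} once the local splitting of Theorem \ref{thm:ketterer} is available.
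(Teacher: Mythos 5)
Your proposal is correct and follows essentially the same route as the paper: both rest on the harmonicity of $\dist_y$ on $Y\setminus\{y\}$ together with $\lvert\nabla\dist_y\rvert=1$, the local functional splitting of Theorem \ref{thm:ketterer}, and the globalization via the ray/flow maps and the extended intrinsic metrics, deferring the technical gluing to \cite[Corollary 5.56, Theorem 5.10]{KKL23} exactly as the paper does. Your explicit reduction to annuli $\dist_y^{-1}((\epsilon,D))$ and the subsequent limit $\epsilon\to0$, $D\to\infty$ is just a slightly more concrete rendering of the same argument.
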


We will then control the diameter of any connected component of $({Q},\tilde{\dist}_{Q})$, aiming to show the diameter is $0$.

\begin{lem}\label{lem:level_sets_are_convex}
The section $({Q},\tilde{\dist}_{Q})$ is compact, and, for every $r>0$, $\dist_y^{-1}(r)$ is convex in $(\tilde{Y},\dist_{\tilde{Y}})$ (in the sense that its extended intrinsic distance coincides with $\dist_{\tilde{Y}}$) and isometric to $({Q},\tilde{\dist}_{Q})$.
\end{lem}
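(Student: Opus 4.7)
My plan is to split the proof into two largely independent parts: the identification of each level set $\dist_y^{-1}(r)$ with $Q$ via the splitting of Theorem \ref{thm:splitting}, and the compactness of $(Q, \tilde{\dist}_Q)$.

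For the first part, I apply Theorem \ref{thm:splitting} to obtain the isomorphism $F\colon (Q, \tilde{\dist}_Q, \mk q) \otimes ([0,\infty), \dist_E, c\mathcal{L}^1) \to (\tilde{Y}, \dist_{\tilde{Y}}, \meas_{\tilde{Y}})$ extending the ray map. Fix $r > 0$. By construction $F(\alpha, r) = \gamma_\alpha(-r) \in \dist_y^{-1}(r)$. Using that $Y$ is polar at $y$ together with the uniqueness of the ray through each point of $Y \setminus \{y\}$ (Lemma \ref{lem:ray}), the assignment $\alpha \mapsto F(\alpha, r)$ is a bijection $Q \to \dist_y^{-1}(r)$. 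The product-distance formula yields
\[
\dist_{\tilde{Y}}(F(\alpha, r), F(\beta, r)) = \tilde{\dist}_Q(\alpha, \beta),
\]
with both sides equal to $\infty$ when $\alpha, \beta$ lie in distinct components, so $F(\cdot, r)$ is an isometry onto its image in $(\tilde{Y}, \dist_{\tilde{Y}})$. For the convexity claim, approximate geodesics in $(Q, \tilde{\dist}_Q)$ between points of the same component lift to curves in the fiber $Q \times \{r\}$ of the same length and push forward under $F$ to curves in $\dist_y^{-1}(r)$; this forces the extended intrinsic distance of $\dist_y^{-1}(r)$ to coincide with $\dist_{\tilde{Y}}|_{\dist_y^{-1}(r)}$, giving both convexity and the asserted isometry with $(Q, \tilde{\dist}_Q)$.

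For the compactness of $(Q, \tilde{\dist}_Q)$, note that $Q = \dist_y^{-1}(1)$ is closed in $Y$ (continuous preimage of a point) and has ambient diameter at most $2$ by the triangle inequality through $y$. As $(Y, \dist, \meas)$ is $\RCD(0,N)$ with $N < \infty$, it is locally doubling and complete, hence proper, so $Q$ is compact in the ambient metric $\dist$. By Remark \ref{rem:Q_locally_geodesically_convex}, $Q$ is locally geodesically convex: every $\alpha \in Q$ admits a $\dist$-neighborhood $U \subset Q$ on which $\tilde{\dist}_Q$ and $\dist$ coincide. Extracting an ambient-convergent subsequence from any given sequence in $Q$ and then applying this local identification on a small neighborhood of the ambient limit yields convergence in $\tilde{\dist}_Q$; thus $(Q, \tilde{\dist}_Q)$ is sequentially compact.

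The principal obstacle I anticipate is ensuring that the two metrics under consideration, the ambient $\dist$ on $Y$ and the extended intrinsic $\tilde{\dist}_Q$ on $Q$, are genuinely comparable, since a priori $\tilde{\dist}_Q$ could be much larger than $\dist|_Q$ or assign infinite values between points that are close in $\dist$. The local geodesic convexity supplied by \cite[Corollary 4.7]{KKL23} (recorded in Remark \ref{rem:Q_locally_geodesically_convex}) is the crucial input that rules out such pathologies on small scales and enables the transfer from ambient to intrinsic compactness.
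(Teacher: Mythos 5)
Your proposal is correct and follows essentially the same route as the paper: the isometry $F$ from Theorem \ref{thm:splitting} identifies $\dist_y^{-1}(r)$ with the convex fiber $Q\times\{r\}$ of the product, and compactness of $(Q,\tilde{\dist}_Q)$ is obtained from properness of $(Y,\dist)$ together with the local coincidence of $\dist$ and $\tilde{\dist}_Q$ supplied by Remark \ref{rem:Q_locally_geodesically_convex}. No gaps.
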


\begin{proof}
    We first show that $({Q},\tilde{\dist}_{Q})$ is compact. Let $\{x_i\}$ be a sequence in $Q=\{\dist_y=1\}$. Since $(Y,\dist)$ is proper, $(Q,\dist)$ is compact, and there exists a subsequence $\{x_{k_i}\}$ converging to $x\in Q$ w.r.t. the extrinsic distance $\dist$. However,  $Q$ is locally geodesically convex in $(Y,\dist)$ (see Remark \ref{rem:Q_locally_geodesically_convex}). Consequently, the extrinsic distance $\dist$ and intrinsic distance $\tilde{\dist}_Q$ coincide on $B_{\delta}(x)$, for any $\delta>0$ small enough. In particular, $\{x_{k_i}\}_i$ converging to $x\in Q$ w.r.t. the extrinsic distance $\tilde{\dist}_Q$. Hence, $({Q},\tilde{\dist}_{Q})$ is compact.

    Since it extends the ray map (see Definition \ref{def:flow_map}), then, for every $r>0$, the isometry $F$ provided by Theorem \ref{thm:splitting} satisfies $F({Q}\times\{r\})=\dist_{y}^{-1}(r)$. Since $F$ is an isometry, and, for every $r>0$, ${Q}\times\{r\}$ is convex in $(Q,\tilde{\dist_Q})\times[0,\infty)$ and isometric to $(Q,\tilde{\dist_Q})$, it follows that $\dist_{y}^{-1}(r)$ is convex in $(\tilde{Y},\dist_{\tilde{Y}})$ and isometric to $({Q},\tilde{\dist}_{Q})$.
\end{proof}

\begin{rem}\label{rem:Q_has_finitely_many_connected_components}
    Thanks to Lemma \ref{lem:level_sets_are_convex}, ${Q}$ has finitely many connected components (otherwise, we could construct a sequence in the compact space $Q$ with no converging subsequence).
\end{rem}

Let us introduce the following quantity:
\begin{equation}\label{eq:max_diam}
    D\coloneqq\inf_{\mathcal{C}}\{\mathrm{Diam}(\mathcal{C},\tilde{\dist}_Q)\},
\end{equation}
where the sum ranges over connected components of $Q$ (note that Remark \ref{rem:Q_has_finitely_many_connected_components} implies that $D$ is finite).

\begin{lem}\label{lem:level_sets_are_convex_for_the_extrinsic_distance}
    For every $R>D/2$, any connected component of $\dist_y^{-1}(R)$ is convex in $(Y,\dist)$.
\end{lem}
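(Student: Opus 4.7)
The plan is to combine two ingredients. The first, coming from Lemma \ref{lem:level_sets_are_convex}, is that any connected component $\mathcal{C}_R$ of $\dist_y^{-1}(R)$ is convex in $(\tilde{Y},\dist_{\tilde{Y}})$ and isometric to a connected component $\mathcal{C}$ of $Q$, and hence has intrinsic diameter bounded by $D$. The second is the elementary observation that any continuous curve in $(Y,\dist)$ joining two points $x,x'\in\mathcal{C}_R$ which passes through the pole $y$ has length at least $\dist(x,y)+\dist(y,x')=2R$ by the triangle inequality.

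Fix $R>D/2$, a connected component $\mathcal{C}_R$ of $\dist_y^{-1}(R)$, and points $x,x'\in\mathcal{C}_R$. The inequality $\dist(x,x')\le\tilde{\dist}_{\mathcal{C}_R}(x,x')$ is automatic. For the reverse, let $\sigma$ be any continuous curve in $Y$ from $x$ to $x'$. If $y$ lies in the image of $\sigma$, then $\mathrm{length}(\sigma)\ge 2R$. Otherwise $\sigma$ is contained in $Y\setminus\{y\}$; by continuity of $\sigma$ and the correspondence (via the flow maps of Definition \ref{def:flow_map}) between connected components of $Q$ and of $Y\setminus\{y\}$, the image of $\sigma$ lies entirely in the unique connected component of $Y\setminus\{y\}$ containing $\mathcal{C}_R$. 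Hence $\mathrm{length}(\sigma)\ge\tilde{\dist}_{Y\setminus\{y\}}(x,x')=\dist_{\tilde{Y}}(x,x')=\tilde{\dist}_{\mathcal{C}_R}(x,x')$, where the last equality uses Lemma \ref{lem:level_sets_are_convex}.

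Taking the infimum over all such $\sigma$ yields $\dist(x,x')\ge\min\{2R,\tilde{\dist}_{\mathcal{C}_R}(x,x')\}$. Since $\tilde{\dist}_{\mathcal{C}_R}(x,x')\le D<2R$, this reduces to $\dist(x,x')\ge\tilde{\dist}_{\mathcal{C}_R}(x,x')$. Combined with the opposite inequality this gives $\dist(x,x')=\tilde{\dist}_{\mathcal{C}_R}(x,x')$, i.e., $\mathcal{C}_R$ is convex in $(Y,\dist)$.

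The only real subtlety is the handling of the dichotomy for $\sigma$: I need to be certain that on the branch where $\sigma$ avoids $y$, the curve is trapped in the correct connected component of $Y\setminus\{y\}$, so that the diameter bound from Lemma \ref{lem:level_sets_are_convex} transfers to it. This is where the correspondence between components of $Q$ and components of $Y\setminus\{y\}$ (implicit in Theorem \ref{thm:splitting}) is essential. Everything else is the triangle inequality together with the diameter estimate already established.
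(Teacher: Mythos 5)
Your argument is correct and is essentially the paper's: the paper takes a single minimizing geodesic $\gamma$ from $x_1$ to $x_2$, notes its length $\dist(x_1,x_2)\le D<2R$, and uses the same triangle inequality $2R\le 2\dist(y,\gamma_t)+\dist(x_1,x_2)$ to conclude that $\gamma$ avoids $y$, hence lies in $(\tilde Y,\dist_{\tilde Y})$ and is trapped in $\mathcal{C}_R$ by Lemma \ref{lem:level_sets_are_convex}. Your infimum-over-all-curves dichotomy is a minor repackaging of the same two ingredients (and your appeal to the flow maps in the second case is superfluous, since a continuous curve avoiding $y$ stays in one component of $Y\setminus\{y\}$ by connectedness alone); note also that, exactly like the paper's proof, you read $D$ from \eqref{eq:max_diam} as an upper bound for the intrinsic diameter of every component of $Q$, which presupposes that the $\inf$ there is really a $\sup$.
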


\begin{proof}Let $\mathcal{C}$ be a connected component of $\dist_y^{-1}(R)$ and let $x_1,x_2\in\mathcal{C}$. Due to Lemma \ref{lem:level_sets_are_convex}, the diameter of $\mathcal{C}$ w.r.t. its intrinsic metric is at most $D$. In particular, there exists a curve on $\mathcal{C}$ whose $\dist$-length is at most $D$. Now, let $\gamma\colon[0,1]\to Y$ be a constant speed geodesic from $x_1$ to $x_2$ in $(Y,\dist)$ and note that its $\dist$-length is at most $D$. However, note that the following inequality holds:
\begin{equation*}
    2R=\dist(y,x_1)+\dist(y,x_2)\leq 2\dist(y,\gamma_t)+\dist(x_1,\gamma_t)+\dist(x_2,\gamma_t)=2\dist(y,\gamma_t)+\dist(x_1,x_2)\le2\dist(y,\gamma_t)+D,
\end{equation*}
which implies $\dist(y,\gamma_t)\geq R-D/2>0$, i.e. $\gamma$ takes values in $Y\backslash\{y\}$. As a result, $\gamma$ is a geodesic in $(\tilde{Y},\dist_{\tilde{Y}})$, and therefore is included in $\mathcal{C}$ by Lemma \ref{lem:level_sets_are_convex}, which concludes the proof.
\end{proof}

\begin{rem}\label{rem:super_level_sets_are_convex}
    The arguments in the proof of Lemma \ref{lem:level_sets_are_convex_for_the_extrinsic_distance} also show that, if $=R>D/2$, then any connected component of the super super level set $\{\dist_y\geq R\}$ is also convex in $(Y,\dist)$.
\end{rem}

\begin{rem}
    Connected components of $\dist_y^{-1}(R)$ are exactly the images by the isometry $F$ (see Theorem \ref{thm:splitting}) of the sets $\mathcal{C}\times\{R\}$, where $\mathcal{C}$ is a connected component of $Q$.
\end{rem}

\begin{lem}\label{lem:D=0}
  $D=0$ (see \eqref{eq:max_diam}).
\end{lem}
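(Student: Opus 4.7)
The plan is to argue by contradiction: assume $D>0$. Then some connected component $\mathcal{C}$ of $Q$ has $\tilde{\dist}_Q$-diameter strictly positive. By Theorem~\ref{thm:splitting}, every connected component of $Q$ is a (compact) $\RCD(0,N-1)$ space, so in particular geodesic; being a non-degenerate such component, $\mathcal{C}$ is path-connected and contains at least three distinct points, say $\alpha_1,\alpha_2,\alpha_3$. Set $\delta\coloneqq\tfrac{1}{2}\min_{i\neq j}\tilde{\dist}_Q(\alpha_i,\alpha_j)>0$ and fix any $t\in(0,\delta)$.

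The heart of the proof is the claim that the three rays $\gamma_{\alpha_i}$ from $y$ come into $y$ ``at pairwise angle $\pi$'': for every $i\neq j$,
\begin{equation*}
\dist(\gamma_{\alpha_i}(-t),\gamma_{\alpha_j}(-t))=2t.
\end{equation*}
The upper bound follows by concatenating the two rays through $y$. For the matching lower bound, any rectifiable curve in $Y$ joining these two endpoints either passes through $y$, in which case its length is at least $\dist(\gamma_{\alpha_i}(-t),y)+\dist(y,\gamma_{\alpha_j}(-t))=2t$, or it stays entirely in $Y\setminus\{y\}$, in which case its length is at least the intrinsic distance between the two endpoints in $\tilde{Y}$. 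By Theorem~\ref{thm:splitting} the latter equals $\sqrt{\tilde{\dist}_Q(\alpha_i,\alpha_j)^2+0^2}=\tilde{\dist}_Q(\alpha_i,\alpha_j)>2t$, by our choice of $t$.

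Granted this, for each pair $i\neq j$ the ``bent'' unit-speed curve $\beta_{ij}\colon[0,2t]\to Y$ that traverses $\gamma_{\alpha_i}$ backwards from $\gamma_{\alpha_i}(-t)$ to $y$ on $[0,t]$ and then $\gamma_{\alpha_j}$ forwards from $y$ to $\gamma_{\alpha_j}(-t)$ on $[t,2t]$ has length $2t$, matching the distance between its endpoints; hence $\beta_{ij}$ is a minimizing geodesic. But then $\beta_{12}$ and $\beta_{13}$ are two minimizing geodesics on $[0,2t]$ that coincide on the nontrivial initial interval $[0,t]$ (both tracing $\gamma_{\alpha_1}$ back to $y$) while disagreeing at their endpoint $u=2t$. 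This contradicts the non-branching property of $\RCD(0,N)$ spaces (see \cite{CoNa12,Deng20}), forcing $D=0$.

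I expect the main obstacle to be identifying which rigidity to invoke. Volume-growth considerations alone (for instance Bishop--Gromov at $y$) seem too weak, since an isometrically embedded flat half-strip that would arise from a positive-diameter $\mathcal{C}$ is perfectly compatible with the linear volume condition $\meas(B_r(y))=cr$. The right rigidity is a purely local one at the pole: a positive-diameter component of $Q$ forces uncountably many rays from $y$ that are pairwise ``antipodal'' at $y$ in the sense above, and any two of them concatenate at $y$ into a minimizing geodesic. The product splitting of Theorem~\ref{thm:splitting} is indispensable, as it is what certifies the intrinsic-distance lower bound in $Y\setminus\{y\}$ that turns the ``antipodal'' geometric intuition into an actual branching contradiction.
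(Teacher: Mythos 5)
Your proof is correct, and it takes a genuinely different route from the paper's. The paper argues by blow-up at the pole: by Lemma \ref{lem:level_sets_are_convex} the level sets $\dist_y^{-1}(2r_i)$ keep intrinsic diameter $D$ as $r_i\to 0$, so after rescaling by $r_i^{-1}$ their intrinsic diameters diverge, while the local coincidence of intrinsic and extrinsic metrics (from the proof of Lemma \ref{lem:level_sets_are_convex_for_the_extrinsic_distance}) together with Bishop--Gromov bounds the $\epsilon$-packing number of each rescaled level set by $(5/\epsilon)^N$ --- a contradiction. You instead use the exact product structure of Theorem \ref{thm:splitting} to get the lower bound $\tilde{\dist}_Q(\alpha_i,\alpha_j)>2t$ for any path avoiding $y$, so that the concatenation of two rays through $y$ is a minimizing geodesic of length $2t$; three such concatenations sharing the initial segment along $\gamma_{\alpha_1}$ then violate non-branching. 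Both arguments rest on the isometric splitting of $\tilde Y$; yours additionally invokes non-branching (already used in Lemmas \ref{lem:ray} and \ref{lem:horizontal_rays}) but dispenses with the packing/volume estimate, and is shorter and more local. Two details worth making explicit: the existence of three distinct points in a positive-diameter component $\mathcal{C}$ follows from connectedness (a connected metric space containing two points at positive distance is uncountable), and the distinctness $\gamma_{\alpha_2}(-t)\neq\gamma_{\alpha_3}(-t)$ of the two endpoints is immediate from your distance formula applied to the pair $(2,3)$. Note also that your argument actually shows every component of $Q$ is a single point, which is the statement used afterwards in the proof of Proposition \ref{prop:rigid_iso} and is insensitive to whether $D$ in \eqref{eq:max_diam} is read as an infimum or (as the surrounding lemmas indicate is intended) a maximum over components.
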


\begin{proof}
   Looking for a contradiction, we assume that $D>0$, and we fix a connected component $\mathcal{C}$ of $Q$ with diameter $D$ w.r.t. $\tilde{\dist}_Q$. Let us also fix a sequence $r_i\to 0$, and, for every $i\in\N$, denote by $\mathcal{C}_i\coloneqq F(\mathcal{C}\times\{2r_i\})$ the associated connected component of $\dist_y^{-1}(2r_i)$ (see Theorem \ref{thm:splitting} for the definition of $F$). In addition, let us recall that thanks to Lemma \ref{lem:level_sets_are_convex}, we have $\mathrm{Diam}(\mathcal{C}_i,\dist_i)=\mathrm{Diam}(\mathcal{C},\tilde{\dist}_Q)=D$, where $\dist_i$ is the intrinsic metric on $\mathcal{C}_i$ (in particular, $\dist_i$ is simply the restriction of ${\dist_{\tilde{Y}}}$ to $\mathcal{C}_i$). Therefore, the following holds:
   \begin{equation*}
       \mathrm{Diam}(\mathcal{C}_i,r_i^{-1}\dist_i)=r_i^{-1}D\to \infty.
   \end{equation*}
   
   Let us now fix $\epsilon>0$, and, for each $i\in\N$, denote:
   \begin{align*}
   \mathcal{N}_i(\epsilon)=\max \{ k\in \mathbb{N}\ | &\text{ there are $k$ many points $x_1,...,x_k$ in $(\mathcal{C}_i,r_i^{-1}\dist_i)$ such that } \\ 
   & \text{ they are pairwise $\epsilon$-disjoint and } \cup_{j=1}^k B_{2\epsilon}(x_j) \supseteq \mathcal{C}_i\},
   \end{align*}
   where the metric balls $B_{2\epsilon}$ above are under the rescaled metric $r_i^{-1}\dist_i$. Given that $\{(\mathcal{C}_i,\dist_i)\}$ is a sequence of length metric spaces with diameter to $\infty$, we must have $\mathcal{N}_i(\epsilon)\to \infty$ as $i\to\infty$.
   
   However, for any $\epsilon\leq2$, from the proof of Lemma \ref{lem:level_sets_are_convex_for_the_extrinsic_distance}, we see that, for all $x,x'\in \dist_y^{-1}(2r_i)$ with $\dist_i(x,x')< 2\epsilon r_i$ or $\dist_Y(x,x')< 2\epsilon r_i$, the following equality is satisfied:
   \begin{equation*}
       \dist_Y(x,x')=\dist_i(x,x').
   \end{equation*}
   As a result, a metric ball $B_{2\epsilon r_i}(x)$ under the metric $\dist_Y$ is identical to the one under $\dist_i$, where $x\in \dist_y^{-1}(2r_i)$. Together with a standard packing argument and relative volume comparison, we have
   $$\mathcal{N}_i(\epsilon) \le (5/\epsilon)^N,$$
   a contradiction to $\mathcal{N}_i(\epsilon)\to \infty$.
\end{proof}

Finally, we can finish the proof of Proposition \ref{prop:rigid_iso}.

\begin{proof}[Proof of Proposition \ref{prop:rigid_iso}]
Lemma \ref{lem:D=0} implies that $Q=\{\dist_y=1\}$ only has finitely many points. Moreover, given any fixed $R>0$, any connected components of $\{\dist_{y}\ge R\}$ is a convex subset of $(Y,\dist)$ (this is due to Remark \ref{rem:super_level_sets_are_convex} and Lemma \ref{lem:D=0}). However, the map $F$ from Theorem \ref{thm:splitting} induces an isomorphism:
\begin{equation*}
    (\{\dist_{y}\ge R\},\dist,\meas)\simeq(Q,\tilde{\dist}_Q,\mk q)\times([R,\infty),\dist_{\mathrm{E}},c\mathcal{L}^1).
\end{equation*}
Hence, any connected components of $\{\dist_{y}\ge R\}$ is a ray equipped with a multiple of the Lebesgue measure. In particular, the $1$-regular set $\mathcal{R}_1$ of $(Y,\dist,\meas)$ is non empty. Thus, as a result of \cite[Theorem 1.1]{Kitabeppu-Lakzian_16}, and since $Y$ is non-compact, $(Y,\dist,\meas)$ is necessarily isomorphic to a Euclidean line or ray equipped with a multiple of the Lebesgue measure.    
\end{proof}

\subsection{Rigidity with a free $\mathbb{R}$-action}
 Suppose that the hypotheses in Proposition \ref{prop:rigid_free}. hold. We take the distance function to the orbit $Gy$, $\dist_\partial\defeq \dist (\cdot, Gy)$ and consider the corresponding disintegration. For each $z\in Y$ there is a unique horizontal ray $\gamma_z$ passing through $z$, which is a lift of $Y/G=[0,\infty)$. 
 \begin{lem}
     We have $A_+\cup A_-=\emptyset$, $\tran_{\dist_\partial}=Y$, $a_{\dist_\partial}=\emptyset$ and $b_{\dist_\partial}=Gy$. In particular, for any $x\in Y$, $R_{\dist_\partial}(x)$ is the horizontal ray through $x$.
 \end{lem}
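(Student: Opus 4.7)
The plan is to read off all of the claims from the existence and uniqueness of horizontal rays. First, I would show that every $z\in Y$ lies on a unique horizontal ray $\gamma_z\colon[0,\infty)\to Y$ with $\gamma_z(\dist_\partial(z))=z$ and $\gamma_z(0)\in Gy$. Existence is immediate: since $Y/G$ is isometric to $[0,\infty)$, we lift the ray from $\bar z$ to infinity and prepend the (minimal) geodesic from $Gy$ to $z$, whose length is $\dist_\partial(z)=d_{Y/G}(\bar z,0)$. Uniqueness of the minimizer from $Gy$ to $z$ is an instance of non-branching in $\RCD$ spaces \cite{Deng20}: the argument of Lemma \ref{lem:horizontal_rays}(1) applies verbatim, because any two such minimizers could be extended past $z$ by a common forward segment of $\gamma_z$, forcing a branching point.

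Next, I would identify $R_{\dist_\partial}(x)$ with the image of $\gamma_x$ for every $x\in Y$. The inclusion $\gamma_x\subset R_{\dist_\partial}(x)$ is immediate because $\dist_\partial$ coincides with the arc-length parameter along $\gamma_x$. For the reverse inclusion, suppose $(x,y)\in\Gamma_{\dist_\partial}$, i.e.\ $\dist_\partial(x)-\dist_\partial(y)=d(x,y)$. Pick a minimal geodesic $\sigma$ from $x$ to $y$ and concatenate it with $\gamma_y|_{[0,\dist_\partial(y)]}$. The result is a curve from $x$ to $Gy$ of length $d(x,y)+\dist_\partial(y)=\dist_\partial(x)$, hence a horizontal minimizer from $x$ to $Gy$. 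By uniqueness, this concatenation equals $\gamma_x|_{[0,\dist_\partial(x)]}$, so $y$ lies on $\gamma_x$. The case $(y,x)\in\Gamma_{\dist_\partial}$ is symmetric, placing $y$ on the portion of $\gamma_x$ with parameter larger than $\dist_\partial(x)$.

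From this characterization, every claim follows by inspection. Since $\Gamma_{\dist_\partial}(x)$ and $\Gamma_{\dist_\partial}^{-1}(x)$ both consist of points on the single ray $\gamma_x$, any two such points are themselves related by $R_{\dist_\partial}$, hence $A_+=A_-=\emptyset$. Because $\gamma_x$ is a nontrivial ray through $x$, we have $R_{\dist_\partial}(x)\supsetneq\{x\}$, so $\tran_{\dist_\partial,e}=Y$, and therefore $\tran_{\dist_\partial}=Y\setminus(A_+\cup A_-)=Y$. Since each $\gamma_x$ extends to parameter $+\infty$, the set $\Gamma_{\dist_\partial}^{-1}(x)$ is never just $\{x\}$, hence $a_{\dist_\partial}=\emptyset$. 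Finally, $\Gamma_{\dist_\partial}(x)=\{x\}$ if and only if no point of $\gamma_x$ has strictly smaller $\dist_\partial$-value than $x$, which is equivalent to $\dist_\partial(x)=0$, i.e.\ $x\in Gy$; this gives $b_{\dist_\partial}=Gy$.

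The only real content is the uniqueness statement in the first step, which is a direct adaptation of Lemma \ref{lem:horizontal_rays}(1) via non-branching; everything else is an unwinding of the definitions in Section~\ref{sec:rigid}.
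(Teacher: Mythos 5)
Your proof is correct and follows essentially the same route as the paper's: both arguments reduce the identification $R_{\dist_\partial}(x)=\gamma_x$ to the uniqueness of the minimizer from $Gy$ to $x$ (Lemma \ref{lem:horizontal_rays}(1) plus non-branching) by concatenating a transport-relation geodesic with the segment to $Gy$, and then read off $A_\pm=\emptyset$, $\tran_{\dist_\partial}=Y$, $a_{\dist_\partial}=\emptyset$, $b_{\dist_\partial}=Gy$ from the definitions. The only organizational difference is that you prove the existence and uniqueness of the horizontal ray through each point, which the paper simply asserts before the lemma.
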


 \begin{proof}
     Let $(x,z)\in \Gamma_{\dist_\partial}$, then we have $\dist_\partial (x)-\dist_\partial(z)=\dist(x,z)$. We claim that $x,z$ are on the same horizontal ray. Let $x',z'\in Gy$ such that $\dist_\partial(x)=\dist(x,x')$ and $\dist_\partial(z)=\dist(z,z')$, it follows 
    \begin{equation*}
         \dist(x,z')\le \dist(x,z)+\dist(z,z')=\dist(x,x')-\dist(z,z')+\dist(z,z')=\dist(x,x')=\dist(x,Gy).
    \end{equation*}
    Meanwhile, it holds $\dist(x,Gy)\le\dist(x,z')$. This forces $ \dist(x,z')= \dist(x,z)+\dist(z,z')$. If $z=z'$, then $\dist(x,z)=\dist(x,Gy)$, by Lemma \ref{lem:horizontal_rays}(1), $x$ is on the horizontal ray emanating from $z$. If $z\neq z'$,  then $x,z,z'$ are on a geodesic. By the non-branching property $x$ is on the horizontal ray through $z, z'$. Clearly the same argument applies to show if $(x,z)\in \Gamma^{-1}_{\dist_\partial}$ then $x,z$ are on the same horizontal ray as well. It follows that for any $x\in Y$, $R_{\dist_\partial}(x)$ is the unique horizontal ray through $x$. As a consequence $A_+\cup A_-=\emptyset$ and $\tran_{\dist_\partial}=Y$. Finally, we see that $a_{\dist_\partial}=\emptyset$ since, for every $x\in Y$, $\Gamma^{-1}_{\dist_\partial}(x)$ is a ray. We also see that $Gy\subset b_{\dist_\partial}$ by definition, and that if $x\notin Gy$, then $\Gamma_{\dist_\partial}(x)$ contains the geodesic segment from $x$ to $Gy$, so $x\notin b_{\dist_\partial}$. We get $Gy= b_{\dist_\partial}$.
 \end{proof}

 Given $x\in Y$, we denote by $s(x)$ the unique intersection point between the horizontal ray through $x$ and the set $\{\dist_{\partial}=1\}$. As a result, we obtain a surjective map:
 \begin{equation}\label{eq:selection_free_case}
     s:Y\to \{\dist_\partial=1\}
 \end{equation}
 such that $(x,y)\in R_{\dist_{\partial}}$ if and only if $s(x)=s(y)$.
 
 \begin{lem}\label{lem:s_is_continuous}
     The map $s$ in \eqref{eq:selection_free_case} is continuous. In particular, it is a selection map for the equivalence relation induced by $R_{\dist_{\partial}}$.
 \end{lem}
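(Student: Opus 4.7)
The plan is to combine the properness of $(Y,\dist)$ (which holds since $Y$ is $\RCD(0,N)$) with the closedness of the transport relation $R_{\dist_\partial}$, to show that any subsequential limit of $s(x_n)$ for $x_n\to x$ is forced to coincide with $s(x)$.

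First, I would verify that if $x_n\to x$ in $Y$, then $\{s(x_n)\}_n$ lies in a bounded set. Since $x_n$ and $s(x_n)$ lie on a common horizontal ray and $\dist_\partial(s(x_n))=1$, we have
\[
\dist(x_n,s(x_n)) \;=\; |\dist_\partial(x_n)-1|,
\]
and by the $1$-Lipschitz continuity of $\dist_\partial$ the right-hand side converges to $|\dist_\partial(x)-1|$. Together with $x_n\to x$, this bounds $\dist(s(x_n),x)$, so properness of $Y$ yields a subsequence $s(x_{n_k})\to z$ for some $z\in Y$.

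Next, I would identify the limit $z$. Continuity of $\dist_\partial$ gives $\dist_\partial(z)=1$. On the other hand, $(x_{n_k},s(x_{n_k}))\in R_{\dist_\partial}$ for every $k$, and the relation
\[
R_{\dist_\partial}=\{(a,b)\in Y\times Y : |\dist_\partial(a)-\dist_\partial(b)|=\dist(a,b)\}
\]
is closed by continuity of $\dist$ and $\dist_\partial$. Passing to the limit therefore gives $(x,z)\in R_{\dist_\partial}$. By the preceding lemma, $R_{\dist_\partial}(x)$ coincides with the unique horizontal ray through $x$, so $z$ belongs to that ray. Since horizontal rays intersect the level set $\{\dist_\partial=1\}$ at exactly one point, we conclude $z=s(x)$.

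Finally, since every convergent subsequence of $\{s(x_n)\}$ has the same limit $s(x)$, the full sequence converges to $s(x)$, proving continuity of $s$. For the \emph{in particular} statement: continuity makes $s$ Borel-measurable, and by construction two points $x,y\in Y$ satisfy $s(x)=s(y)$ precisely when they lie on a common horizontal ray, i.e., when $(x,y)\in R_{\dist_\partial}$; this is exactly what it means to be a selection map for the equivalence relation. There is no essential obstacle: the argument only uses properness of $Y$, the closedness of $R_{\dist_\partial}$, and the uniqueness of horizontal rays already established.
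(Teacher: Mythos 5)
Your proof is correct. Every step checks out: the boundedness of $\{s(x_n)\}$ follows from $\dist(x_n,s(x_n))=|\dist_\partial(x_n)-1|$ (valid because $x_n$ and $s(x_n)$ lie on a common unit-speed horizontal ray along which $\dist_\partial$ equals the arclength parameter), properness of the $\RCD(0,N)$ space gives subsequential limits, the relation $R_{\dist_\partial}=\{(a,b):|\dist_\partial(a)-\dist_\partial(b)|=\dist(a,b)\}$ is closed, and the preceding lemma identifies $R_{\dist_\partial}(x)$ with the unique horizontal ray through $x$, which meets $\{\dist_\partial=1\}$ in exactly one point. The route, however, differs from the paper's. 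The paper takes limits of the \emph{rays} themselves: it applies Arzel\`a--Ascoli to the sequence of horizontal rays $\gamma_n$ through $x_n$, extracts a sublimit ray, identifies it with the unique horizontal ray through $x_\infty$, and then evaluates at parameter $1$ to get $s(x_n)=\gamma_n(1)\to\gamma_\infty(1)=s(x_\infty)$. You instead take limits of the \emph{points} $s(x_n)$ directly and let the closedness of the transport relation, together with the already-proved structure of $R_{\dist_\partial}(x)$, do the geometric work. Your version is somewhat more economical, since it reuses the preceding lemma rather than re-running a geometric-limit argument on curves; the paper's version has the mild advantage of not needing to invoke closedness of $R_{\dist_\partial}$ explicitly and of producing the limiting ray as a byproduct. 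Both arguments rest on the same two pillars (a compactness statement plus uniqueness of the horizontal ray through a given point), so either is acceptable here. Your treatment of the ``in particular'' clause is also fine: continuity gives measurability, and $s(x)=s(y)$ iff $x,y$ lie on a common horizontal ray iff $(x,y)\in R_{\dist_\partial}$.
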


 \begin{proof}
     Let us assume that $x_n\to x_{\infty}$ in $Y$ and let us show that $s(x_n)\to s(x_{\infty})$. Observe that, for every $n\in\N\cup\{\infty\}$, there exists a unique horizontal ray $\gamma_n\colon[0,\infty)\to Y$ through $x_n$. In particular, denoting $t_n\coloneqq \dist_{\partial}(x_n)$, we have $x_n=\gamma_n(t_n)$. Since $\dist_{\partial}$ is continuous, we have $t_n\to t_{\infty}$. In addition, $\{\gamma_n(0)\}$ is a bounded sequence; hence, thanks to Arzel\`{a}--Ascoli's theorem, $\{\gamma_n\}$ sub-sequentially converges to a horizontal ray $\gamma$ w.r.t. the topology of uniform convergence on compact subsets of $Y$. In addition, observe that $\gamma(t_\infty)=\lim\gamma_n(t_n)=\lim x_n=x_{\infty}$, i.e. $\gamma$ is the horizontal ray through $x_{\infty}$; in other words, $\gamma$ is equal to $\gamma_{\infty}$. As a result, $s(x_n)=\gamma_n(1)\to\gamma_{\infty}(1)=s(x_\infty)$, which concludes the proof.
 \end{proof}
 
 Since $s$ is a continuous selection (in particular, measurable), we can identify the index set $Q$ with $\{\dist_\partial=1\}$. With this identification, for each $\alpha\in Q$, there is a unique horizontal ray passing through $\alpha$ which we denote by $\gamma_\alpha$. Again, we use the convention of parametrization  from Remark \ref{rmk:para}, i.e. $\gamma_\alpha\colon(-\infty,0]\to Y$, so that $\gamma_\alpha(0)\in Gy$. By Theorem \ref{thm:needle}, there exists a disintegration associated with $\dist_\partial$ such that:
 \[
 \meas=\int_Q h_\alpha\haus^1\di \mk q(\alpha),
 \]
 and $(\gamma_\alpha, \dist, h_\alpha\haus^1|_{\gamma_\alpha})$ is an $\RCD(0,N)$ space. Together with \cite[Corollary 4.16]{CM_newformula}, the arguments used in the proof of Lemma \ref{lem:d_y is harmonic} carry on verbatim; hence, the following lemma holds.
 
 \begin{lem}
    We have $\mf\Delta \dist_\partial\in D(\mf \Delta, Y\setminus Gy)$ and $\mf \Delta \dist_\partial=0$ in $Y\setminus Gy$.
 \end{lem}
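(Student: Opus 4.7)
My plan is to adapt the argument of Lemma \ref{lem:d_y is harmonic} almost verbatim, replacing $\dist_y$ with $\dist_\partial$, the ball $B_r(y)$ with the tube $\Omega_r(v)$, and the pole $y$ with the orbit $Gy$. First, I will use the free $\R$-action to give a concrete parametrization of $Q=\{\dist_\partial=1\}$. Fixing a horizontal ray $\sigma$ emanating from $y$ and setting $\alpha_g\defeq g\cdot\sigma(1)$ for each $g\in\R$, Lemma \ref{lem:horizontal_rays}(2) shows that every horizontal ray in $Y$ has the form $g\cdot\sigma$, so $g\mapsto\alpha_g$ is a bijection $\R\to Q$ with $\gamma_{\alpha_g}(-t)=g\cdot\sigma(t)$ for all $t\ge0$. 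Writing $Q_v\defeq\{\alpha_g:g\in[-v,v]\}$, this yields the explicit description
\[
\Omega_r(v)=\{\gamma_{\alpha_g}(-t):g\in[-v,v],\ t\in[0,r]\}.
\]

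Plugging this into the disintegration furnished by Theorem \ref{thm:needle}, the hypothesis $\meas(\Omega_r(v))=crv$ translates into
\[
crv=\int_{Q_v}\int_{-r}^{0}h_\alpha(\gamma_\alpha(t))\,dt\,d\mk q(\alpha).
\]
The local boundedness estimate of Remark \ref{rem:continuity_of_integral_of_h_alpha}, applied to the subfamily indexed by $Q_v$, makes the map $r\mapsto\int_{Q_v}h_\alpha(\gamma_\alpha(-r))\,d\mk q(\alpha)$ locally Lipschitz on $(0,\infty)$, so differentiating in $r$ yields
\[
cv=\int_{Q_v}h_\alpha(\gamma_\alpha(-r))\,d\mk q(\alpha)\qquad\text{for every } r>0.
\]

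The crux is to couple this with Lemma \ref{lem:density}: since each $\gamma_\alpha$ is parametrized on the full half-line $(-\infty,0]$, the map $r\mapsto h_\alpha(\gamma_\alpha(-r))$ is non-decreasing for $\mk q$-a.e.\ $\alpha$. Its $\mk q$-integral over $Q_v$ being independent of $r$, a direct comparison at $r_1<r_2$ forces the integrand to be $\mk q$-a.e.\ constant on $Q_v$; exhausting $Q$ by a countable chain $Q_{v_n}$ with $v_n\uparrow\infty$ then shows $h_\alpha$ is constant along $\gamma_\alpha$ for $\mk q$-a.e.\ $\alpha\in Q$. The membership $\dist_\partial\in D(\mf\Delta,Y\setminus Gy)$ is immediate from the definition, and invoking the representation formula for the distributional Laplacian \cite[Corollary 4.16]{CM_newformula} then gives $\mf\Delta\dist_\partial=-(\log h_\alpha)'\meas=0$ on $Y\setminus Gy$, with no singular contribution because the set of initial/final points lies in $Gy$.

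The only mild obstacle I anticipate is bookkeeping: I must verify that the parametrization $g\mapsto\alpha_g$ is compatible with the measurable structure on $Q$ and the normalizing function implicit in Theorem \ref{thm:needle}, so that the restriction of $\mk q$ to $Q_v$ is genuinely a finite measure and the integrals above are well-defined. Because the argument relies only on the displayed integral identity and the one-sided monotonicity of $h_\alpha$, and not on any explicit formula for the pushforward of $\mk q$ under $g\mapsto\alpha_g$, this verification should be a routine parallel of the pole case.
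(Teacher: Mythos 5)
Your proposal is correct and follows essentially the same route as the paper: identify $Q=\{\dist_\partial=1\}$ with the $\R$-orbit, disintegrate $\meas(\Omega_r(v))=crv$ along the horizontal rays, differentiate in $r$, and combine the resulting constant-in-$r$ integral identity with the monotonicity of $h_\alpha$ from Lemma \ref{lem:density} to force $h_\alpha$ to be constant along $\mk q$-a.e.\ needle, whence $\mathbf{\Delta}\dist_\partial=0$ by the representation formula of \cite{CM_newformula}. Your extra care with the local Lipschitz continuity needed to justify the differentiation and with exhausting $Q$ by the sets $Q_{v_n}$ only makes explicit what the paper leaves implicit.
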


 \begin{proof}
      First notice that $Q$ is identified with $\{\dist_\partial=1\}$, which is an orbit of $G$, so we can further identify $(Q,\mk q)$ as $\R$ equipped with a probability measure. With such an identification, for any $v\in G=\R$, $v\ge 0$, we have
     \[
    rv= \meas(\Omega_r(v))=\int_{-r}^0\int_{-v}^v h_\alpha(\gamma_\alpha(t))\di\mk q(\alpha)\di t.
     \]
      By differentiating the above equation with respect to $r$ we have 
      \[
    v= \meas(\Omega_r(v))=\int_{-v}^v h_\alpha(\gamma_\alpha(r))\di\mk q(\alpha).
     \]
     Since each $\gamma_\alpha$ is a ray, by Lemma \ref{lem:density}, we have that $h_\alpha(\gamma_\alpha(t))\le h_\alpha(\gamma_\alpha(s))$ for $s\le t<0$ $\mk q$-a.e. $\alpha\in [0,v]\subset Q(=\R)$. If follows that $h_\alpha(\gamma_\alpha(r))$ is a constant possibly depending on $\alpha$ for any $r\ge 0$ and $\mk q$-a.e. $\alpha\in [0,v]$. Since $v\ge 0$ is arbitrary, we infer that $h_\alpha(\gamma_\alpha (r))$ is a constant for any $r\ge 0$ and $\mk q$-a.e. $\alpha\in Q$.

 By the representation formula of the distributional Laplacian from \cite[Corollary 4.16, Remark 4.9]{CM_newformula}, we have that 
 \[
 \mathbf{\Delta} \dist_y|_{Y\setminus\{y\}}= -(\log h_\alpha)'\meas=0,
 \] 
 as expected.
 \end{proof}

The next lemma is a direct consequence of the functional splitting theorem \ref{thm:ketterer}.

\begin{lem}\label{lem:halfplane_rigid}The following holds.
  \begin{enumerate}
      \item\label{lem:halfplane_rigid:item:1} $\dist_\partial^{-1}([0,r])$ is geodesically convex for every $r>0$.

      \item\label{lem:halfplane_rigid:item:2} The orbit $Gy$ is a line.
  \end{enumerate} 
\end{lem}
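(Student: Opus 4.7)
The plan is to deduce both statements from the functional splitting theorem (Theorem \ref{thm:ketterer}) applied to $u=\dist_\partial$ on $\Omega_D\defeq\dist_\partial^{-1}((0,D))$ for each $D>0$. I have already shown $\dist_\partial$ is harmonic on $Y\setminus Gy$, and the disintegration along horizontal rays gives $|\nabla\dist_\partial|=1$ $\meas$-almost everywhere on $\Omega_D$ (each horizontal ray realizes the unit directional derivative while $\dist_\partial$ is $1$-Lipschitz). Theorem \ref{thm:ketterer} thus produces a metric-measure isomorphism
\[
(\tilde{\Omega}_D,\tilde{\dist}_{\Omega_D},\meas_{\Omega_D})\simeq (Z_D,\dist_{Z_D},\meas_{Z_D})\otimes[0,D],
\]
under which $\dist_\partial$ corresponds to the coordinate on $[0,D]$, and where each connected component of $Z_D$ is $\RCD(0,N-1)$. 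Because $K=\{e\}$ here, Lemma \ref{lem:horizontal_rays}(2) guarantees that each point of $Gy$ is the starting point of a unique horizontal ray, so the boundary layer $Z_D\otimes\{0\}$ can be naturally identified with $Gy\subset Y$ via the completion map.

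For (1), I would fix $x_1,x_2\in\dist_\partial^{-1}([0,r])$ and a minimizing ambient geodesic $\alpha\colon[0,T]\to Y$ between them, then choose $D>r+T$ so that $\alpha\subset\dist_\partial^{-1}([0,D))$. The key observation is that any maximal open sub-arc of $\alpha$ lying in $\Omega_D$ has the same length in $\dist$ as in $\tilde{\dist}_{\Omega_D}$; combined with $\tilde{\dist}_{\Omega_D}\ge\dist$, such a sub-arc must be a minimizing intrinsic geodesic in $\tilde{\Omega}_D$, extending continuously to any endpoints on $Gy$ via the identification above. The product splitting then forces its $\dist_\partial$-coordinate to be affine in arc length. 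Consequently, $\{t\in[0,T]:\alpha(t)\in Gy\}$ is either empty or a single closed interval: any sub-arc of $\alpha$ strictly between two distinct $Gy$-touchings would have affine $\dist_\partial$-coordinate going from $0$ to $0$, hence identically $0$, which would force that sub-arc into $Gy$ and contradict the assumption. In every case $\dist_\partial\circ\alpha$ is piecewise affine and bounded by $\max(\dist_\partial(x_1),\dist_\partial(x_2))\le r$, proving (1).

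For (2), item (1) gives $Gy=\bigcap_{r>0}\dist_\partial^{-1}([0,r])$ as an intersection of convex sets, hence convex in $Y$. Since the $\R$-action is free ($K=\{e\}$) and $Gy$ is a closed orbit, the map $t\mapsto t\cdot y$ is a homeomorphism $\R\to Gy$, and $Gy$ must be unbounded (a bounded free $\R$-orbit would be compact, hence a circle or point, both incompatible with a free $\R$-action). Convexity in $Y$ makes $(Gy,\dist)$ a geodesic length space homeomorphic to $\R$, which is necessarily isometric to $(\R,c|\cdot|)$ for some $c>0$; thus $Gy$ is a line.

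The main technical subtlety will be the careful identification of ambient geodesic segments in $\Omega_D$ with intrinsic ones in $\tilde{\Omega}_D$ and the continuous extension of the product structure across the boundary layer $Gy$; the assumption $K=\{e\}$, yielding a unique horizontal ray at each point of $Gy$, is what makes this identification clean. Once this is settled, the affine behavior of $\dist_\partial$ along product geodesics, together with the non-branching argument that rules out disconnected touchings of $Gy$, delivers (1), from which (2) follows readily.
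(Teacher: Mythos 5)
Your proposal is correct; part (1) follows the paper's mechanism with a different choice of domain, and part (2) takes a genuinely different route. For (1), both arguments use Theorem \ref{thm:ketterer} to force the $\dist_\partial$-coordinate of an intrinsic minimizing geodesic to be affine, but the paper applies the splitting to the super-level region $\dist_\partial^{-1}((r/2,\infty))$: the maximal excursion arc of a geodesic above level $r$ lies entirely in that region, is an intrinsic geodesic there joining two points at the same height $r$, and hence must stay in the level set --- no boundary layer ever enters the picture. Your choice $\Omega_D=\dist_\partial^{-1}((0,D))$ also works, but it obliges you to extend sub-arcs to the completion $\tilde{\Omega}_D$ across $Gy$ and to identify the extended $[0,D]$-coordinate with $\dist_\partial$ at those limit points; this is precisely the subtlety you flag, and the paper's choice of domain makes it disappear. (Note also that you only need the affine coordinate of each maximal sub-arc to have endpoint values $\le r$, which is immediate since the endpoints are either $x_1,x_2$ or lie on $Gy$; the classification of the touching set as a single interval is not needed.) For (2), the paper takes minimizing geodesics $c_j$ from $\sigma(1/j)$ to $j\cdot\sigma(1/j)$, confines them to $\dist_\partial^{-1}([0,1/j])$ by (1), recenters by group elements, and extracts a limit line contained in $Gy$. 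Your argument --- $Gy=\bigcap_{r>0}\dist_\partial^{-1}([0,r])$ is convex by (1), hence a geodesic space that is a closed, unbounded, homeomorphic copy of $\mathbb{R}$, hence isometric to a line --- is shorter and avoids the compactness-and-limit step; the one point worth writing out is that in a geodesic space homeomorphic to $\mathbb{R}$ every minimizing geodesic between $\phi(s)$ and $\phi(u)$ has image exactly the arc $\phi([s,u])$, so distances are additive along the order, which is what produces the isometry with $\mathbb{R}$ (unboundedness in both directions coming from closedness of the orbit of the proper $\mathbb{R}$-action).
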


\begin{proof}
   \eqref{lem:halfplane_rigid:item:1} Let $r>0$. Suppose that there are two points in $\dist_\partial^{-1}[0,r]$ such that there is a geodesic $c$ between them not contained in $\dist_\partial^{-1}[0,r]$. Then there is an open interval $I=(a,b)$ such that $c(I)$ is contained in $\dist_\partial^{-1}(r,\infty)$ with $c(a),c(b)\in \dist_\partial^{-1}(r)$. By Theorem \ref{thm:ketterer}, $\dist_\partial^{-1}([r/2,\infty))$ is a metric product $[r/2,\infty)\times Z$ for some $\RCD(0,N-1)$ space $Z$. Observe that $c(\overline{I})$ is an ambient geodesic hence also intrinsic geodesic, i.e.\ , it is a geodesic in the product space $[r/2,\infty)\times Z$ that connecting two points in the cross section $Z$, so it should stay in $\dist_\partial^{-1}(r)$. This is a contradiction.

   \eqref{lem:halfplane_rigid:item:2} Let $\sigma$ be the unit speed horizontal ray emanating at $y$. Observe that each $j\in \mathbb{N}$ gives rises to a geodesic $c_j$ from $\sigma(1/j)$ to $j\cdot \sigma(1/j)\in G\cdot \sigma(1/j)$. By \eqref{lem:halfplane_rigid:item:1}, $c_j$ is contained in $\dist_\partial^{-1}([0,1/j])$. After composing $c_j$ with a suitable group element $g_j\in G$, the midpoint of $g_j\circ c_j$ belongs to $\sigma([0,1/j])$. Let $j\to\infty$, $g_j\circ c_j$ converges to a line in $Y$, which by construction coincides with $Gy=\dist_\partial^{-1}(0)$.  
\end{proof}

We are in position to complete the last step of Proposition \ref{prop:rigid_free}.

\begin{proof}[Proof of Proposition \ref{prop:rigid_free}]
  By Lemma \ref{lem:halfplane_rigid} and the splitting theorem \ref{thm:splitting}, $(Y,y,d_Y,\meas)$ is isomorphic to $(\mathbb{R},0,d_E,\mc{L}^1)\otimes (Z,z,d_Z,\mu)$ with the line $Gy$ corresponding to $\mathbb{R}\times \{z\}$. For any $z'\in Z$, the line $L(t)=(t,z')\in \mathbb{R}\times Z$ is parallel to the line $Gy=\mathbb{R}\times \{z\}$ in the sense that $d(L(t),Gy)$ is constant in $t$. Hence for $r=\dist_Z(z,z')$, we have
  $$\mathbb{R}\times \{z'\}= \mathrm{im} L = \dist_\partial^{-1}(r) = G\cdot \sigma(r).$$
  Consequently, every $G$-orbit in $Y=\mathbb{R}\times Z$ is of the form $\mathbb{R}\times \{z'\}$. In particular, $G$ acts trivially on the $Z$-factor. Therefore,
  $$[0,\infty)=Y/G = (\R \times Z)/G=Z.$$
  We infer that $\Omega_r(v)=[-v,v] \times [0,r] \subset\R\times Z$, so the measure $\mu$ on $Z=[0,\infty)$ satisfies $\mu([0,r])=cr$ for any $r\ge 0$. Hence $\mu$ is a multiple of the Lebesgue measure. The result follows.
\end{proof}

\appendix

\section{The limit ratio of linear volume growth}\label{sec:app}
In this appendix, we prove Propositions \ref{prop:linear1} and \ref{prop:linear2} below on linear volume growth. The methods are due to a recent work of Zhou-Zhu \cite[Remark 2.1]{ZZ}. For readers' convenience, here we follow their method and provide the detailed proofs.

  \begin{prop}\label{prop:linear1}
    If $M$ is an open manifold with $\mathrm{Ric}\ge 0$ and 
  	$$\liminf_{r\to\infty} \dfrac{\vol B_r(p)}{r}=\theta \in (0,\infty),$$
  	then $M$ has linear volume growth.
  \end{prop}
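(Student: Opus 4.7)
The plan is to combine the Bishop--Gromov volume comparison with the finite $\liminf$ hypothesis, in essentially two moves: first pin down the large-scale behavior of $V(r)/r^n$, then use it to control $V(r)/r$ pointwise.

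Write $V(r) \coloneqq \vol B_r(p)$. By Bishop--Gromov, the function $r \mapsto V(r)/r^n$ is non-increasing, hence admits a limit $L \in [0,\infty)$ as $r \to \infty$. The first step is to rule out $L > 0$: if $L>0$ and $n \ge 2$, then $V(r) \ge L r^n$ for all $r$, which would force $V(r)/r \ge L r^{n-1} \to \infty$, contradicting $\liminf V(r)/r = \theta < \infty$. (The case $n=1$ is direct.) Thus $L=0$, i.e.\ $V(r) = o(r^n)$.

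The second step is to upgrade this to $\limsup V(r)/r < \infty$. Choose a sequence $r_i \nearrow \infty$ realizing the $\liminf$, so that $V(r_i)/r_i \to \theta$. Bishop--Gromov then yields, for every $r \ge r_i$,
$$\frac{V(r)}{r} \;\le\; \frac{V(r_i)}{r_i}\left(\frac{r}{r_i}\right)^{n-1}.$$
In particular, on each interval $[r_i, 2r_i]$ the quantity $V(r)/r$ is bounded by roughly $2^{n-1}\theta$. If the intervals $\{[r_i, 2r_i]\}$ cover $[R_0,\infty)$ for some $R_0$, then $\limsup V(r)/r \le 2^{n-1}(\theta+\eps)$ for every $\eps > 0$, and we are done.

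The crux is therefore to show that one can extract a subsequence of the $r_i$'s with $r_{i+1} \le 2 r_i$. I would argue by contradiction: if no such subsequence exists, then for some $\eps > 0$ there are arbitrarily large intervals $(s,t)$, with $t/s$ unbounded, on which $V(r)/r > \theta + \eps$ while $V(s)/s = V(t)/t = \theta + \eps$ (using continuity of $V(r)/r$). On such an interval, the pointwise bound $V(r)/r \le (\theta+\eps)(r/s)^{n-1}$ (Bishop--Gromov forward from $s$) together with the monotonicity estimate $V(r) \le V(t) = (\theta+\eps)t$, i.e.\ $V(r)/r \le (\theta+\eps)t/r$, yield via an optimization in $r$ that
$$\sup_{r \in (s,t)} \frac{V(r)}{r} \;\le\; (\theta+\eps)\left(\frac{t}{s}\right)^{(n-1)/n}.$$
Combined with the coarea identity $V(t)-V(s) = \int_s^t S(\rho)\,d\rho = (\theta+\eps)(t-s)$ and the Laplacian-comparison bound $S(\rho) \le n V(\rho)/\rho$, one forces the measure-theoretic share of such bad intervals in $[0,\infty)$ to be too large relative to the linearly growing $V$, contradicting $V(r)/r \to \theta$ along $r_i$. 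I expect the main technical obstacle to be making this last balancing argument clean; the Laplacian comparison applied on the annuli $[s,t]$, which underlies the integrated Bishop--Gromov estimate, is the natural tool to close the loop, mirroring the method advertised in \cite{ZZ}.
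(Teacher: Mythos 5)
Your first step ($V(r)=o(r^n)$) is harmless but carries no weight, and the proof breaks down exactly at the ``crux'' you identify yourself. Nothing in your toolkit lets you make the good radii multiplicatively $2$-dense, and in fact no argument can close the gap using only the radial information you invoke (monotonicity of $V(r)/r^n$, the derivative bound $V'(\rho)\le nV(\rho)/\rho$, the coarea identity, and the lower bound $V(r)/r\ge\theta-o(1)$ coming from the $\liminf$). Here is an abstract obstruction: with $n=2$, set $r_1=1$, $c_i=\theta/r_i$, $R_i=ir_i$, $r_{i+1}=i^2r_i$, and define $V(r)=c_ir^2$ on $[r_i,R_i]$ and $V(r)=c_iR_i^2$ on $[R_i,r_{i+1}]$. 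This $V$ is continuous, increasing, has $V(r)/r^2$ non-increasing and $V'\le 2V/r$ everywhere, satisfies $V(r)/r\ge\theta$ with equality at each $r_i$ (so $\liminf V(r)/r=\theta$), and yet $V(R_i)/R_i=i\theta\to\infty$. So the long ``bad intervals'' $(s,t)$ you try to rule out are perfectly consistent with every inequality you use, and the final ``balancing argument'' you hope for cannot exist.

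The missing input is genuinely geometric, and it is the heart of the paper's proof: the comparison must be run from a base point far away along a ray, not from $p$. Given $R$, pick a good radius $r>\epsilon^{-1}R$ with $\vol B_r(p)\le(\theta+\epsilon)r$, and take $q=\gamma(t)$ with $t\gg r$ on a ray from $p$. Then $B_R(p)$ and $B_r(p)$ lie in annuli centered at $q$ whose radii are all comparable to $t$; the coarea formula produces a slice $\Sigma=\partial B_\rho(q)\cap\overline{B_r}(p)$ with $\mathcal{H}^{n-1}(\Sigma)\le\frac{\theta+\epsilon}{1-5\epsilon}$ which every minimal geodesic from $q$ into $B_R(p)$ must cross, and the sphere-area comparison centered at $q$ then costs only a factor $(s/\rho)^{n-1}\le\bigl(\tfrac{t+R}{t-r}\bigr)^{n-1}\to1$ as $t\to\infty$ --- rather than the factor $(r/R)^{n-1}$ you pay when comparing from $p$. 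Integrating over $s\in[t-R,t+R]$ gives $\vol B_R(p)\le 2R\cdot\frac{\theta+\epsilon}{1-5\epsilon}$ in the limit, uniformly in $R$. If you want to salvage your outline, this change of base point is the step you must add; the purely radial Bishop--Gromov bookkeeping cannot substitute for it.
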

  
  \begin{prop}\label{prop:linear2}
  	If $M$ is an open manifold with $\mathrm{Ric}\ge 0$ and linear volume growth, then the limit 
  	$\lim_{r\to\infty}\vol B_r(p)/r$ exists.
  \end{prop}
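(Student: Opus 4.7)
My approach would begin with Sormani's dichotomy (Theorem~\ref{thm:sormani}): either $M$ splits isometrically as $\mathbb{R}\times N$ with $N$ compact, or every asymptotic cone of $M$ is isometric to the ray $[0,\infty)$. In the splitting case, a direct sandwich of $B_r(p)$ between the cylinders $[-\sqrt{r^2-D^2},\sqrt{r^2-D^2}]\times N$ and $[-r,r]\times N$ (with $D=\diam N$) yields $V(r)/r\to 2\vol(N)$, so I focus on the ray case.

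In the ray case, fix an arbitrary $r_i\to\infty$ and pass to a subsequence so that $(r_i^{-1}M,p,\mathfrak{m}_i)\overset{GH}\longrightarrow ([0,\infty),0,\mathfrak{m})$ in pointed mGH, where $\mathfrak{m}_i=\vol/V(r_i)$ and $\mathfrak{m}$ is a normalized $\RCD(0,n)$ Radon measure on the ray. Writing $\mathfrak{m}=h\,ds$ with $h^{1/(n-1)}$ concave (the $1$-dimensional $\RCD(0,n)$ condition), linear growth gives $\mathfrak{m}([0,s])\le Cs$; the requirement that the subsequential limit $\mathfrak{m}$ remain a Radon measure along \emph{every} subsequence forces $\liminf_{r\to\infty}V(r)/r>0$, whence also $\mathfrak{m}([0,s])\ge cs$ for large $s$. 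Concavity together with these two-sided linear bounds forces $h$ to admit a finite positive limit $h_\infty$ at infinity. I would then show $\mathfrak{m}=\mathcal{L}^1$ uniformly across subsequences via a nested blow-down argument: given any $s_i\to\infty$, choose the auxiliary subsequence $r_i=\sqrt{s_i}$ so that $r_i\to\infty$ and $\lambda_i\coloneqq s_i/r_i\to\infty$; then
\[
V(us_i)/V(s_i) = \big[V(u\lambda_i r_i)/V(r_i)\big]\big/\big[V(\lambda_i r_i)/V(r_i)\big] = \mathfrak{m}_{r_i}([0,u\lambda_i])/\mathfrak{m}_{r_i}([0,\lambda_i]),
\]
and invoking $\mathfrak{m}_{r_i}([0,\lambda])/\lambda\to h_\infty^{(r)}>0$ on both factors one concludes $V(us_i)/V(s_i)\to u$, hence the limit along $s_i$ is $\mathcal{L}^1$.

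From the uniqueness $\mathfrak{m}=\mathcal{L}^1$ one obtains $V(sr)/V(r)\to s$ as $r\to\infty$ for each $s>0$, uniformly on compact subintervals of $(0,\infty)$. To upgrade this ratio-convergence to convergence of $V(r)/r$ itself, I would argue as follows: given any two subsequences $s_i,t_i\to\infty$ with $V(s_i)/s_i\to L_1$ and $V(t_i)/t_i\to L_2$, introduce the intermediate scale $v_i=\sqrt{s_it_i}$; applying the density-at-infinity limit on the $s_i$ side (where $v_i/s_i\to\infty$) and its analogue on the $t_i$ side (where $v_i/t_i\to 0$) forces $V(v_i)/v_i$ to converge simultaneously to $L_1$ and $L_2$, so $L_1=L_2$. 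The main obstacle is making rigorous that the mGH convergence $\mathfrak{m}_{r_i}([0,\lambda])/\lambda\to h_\infty$ holds uniformly as $\lambda\to\infty$ (and analogously $\to 0$), rather than only pointwise on compact scales of $\lambda$; this uniformity ultimately rests on combining the mGH convergence with Bishop--Gromov's monotonicity of $V(r)/r^n$ and the two-sided bound $cr\le V(r)\le Cr$, which together control the deviation of the density from $h_\infty$ over unbounded ranges of scales.
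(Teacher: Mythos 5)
Your reduction to the ray case via Theorem \ref{thm:sormani} is fine, as is the observation that any subsequential limit renormalized measure on the ray has a density $h$ with $h^{1/(n-1)}$ concave, hence $h(s)\to h_\infty\in(0,\infty)$ as $s\to\infty$ once one has the two-sided linear bounds $cr\le \vol B_r(p)\le Cr$ (the lower bound being Calabi--Yau, not a consequence of Radon-ness of the limit). The problem is the step you yourself flag as "the main obstacle": it is not an obstacle that can be finessed, it is the entire content of the proposition. Pointed measured Gromov--Hausdorff convergence $\mathfrak{m}_{r_i}\to\mathfrak{m}$ controls only bounded regions of the rescaled spaces, so $\mathfrak{m}_{r_i}([0,\lambda])\to\mathfrak{m}([0,\lambda])$ holds locally uniformly in $\lambda$ but says nothing about $\mathfrak{m}_{r_i}([0,\lambda_i])$ when $\lambda_i\to\infty$ with $i$. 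The ingredients you propose to close this — Bishop--Gromov monotonicity of $\vol B_r(p)/r^n$ together with $cr\le \vol B_r(p)\le Cr$ — only yield $\mathfrak{m}_{r_i}([0,\lambda])/\lambda=\vol B_{\lambda r_i}(p)/(\lambda\,\vol B_{r_i}(p))\in[c/C,\,C/c]$ for all $i$ and $\lambda$, a bound with a fixed multiplicative gap that does not shrink and cannot identify a single limit $h_\infty$. Worse, the diagonal statement you need, $\vol B_{\lambda_i r_i}(p)/\vol B_{r_i}(p)\sim h_\infty\lambda_i$ for arbitrary $\lambda_i\to\infty$, is (after writing $\rho_i=\lambda_i r_i$) precisely the assertion that $\vol B_\rho(p)/\rho$ converges — the statement to be proven. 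The same circularity infects the final $v_i=\sqrt{s_it_i}$ interpolation, which in addition conflates the density limit at infinity $h_\infty$ with the density value near $0$ (these need not agree for a concave-power density, and the scale $v_i/t_i\to 0$ sees the latter).

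For comparison, the paper's proof (Proposition \ref{prop:linear2} in the appendix, following Zhou--Zhu) does not pass through limit measures at all: it fixes $p$ at the minimum of the (proper, by \cite{Sor98}) Busemann function of a ray, traps $B_R(p)$ and $B_r(p)$ in annuli centered at a far point $q=\gamma(t)$ of widths roughly $R$ and $r$, uses the coarea formula to select a slice $\Sigma=\partial B_\rho(q)\cap\overline{B_r}(p)$ with $\mathcal{H}^{n-1}(\Sigma)\lesssim\liminf_r \vol B_r(p)/r$, and then sweeps $B_R(p)$ by geodesics from $q$ through $\Sigma$ using relative volume comparison to conclude $\limsup_R \vol B_R(p)/R\le\liminf_r \vol B_r(p)/r$. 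If you want to pursue a measure-theoretic route, you would need to supply a monotone (or almost-monotone) quantity playing the role that the coarea slice plays there; as written, your argument assumes the uniformity it is supposed to establish.
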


  We remark that Proposition \ref{prop:linear1} is not used in this paper. We include it here because the proofs of the above two propositions are similar. The proof of Proposition \ref{prop:linear2} requires some additional help from the properness of Busemann functions \cite[Corollary 23]{Sor98}.
  
  We prove Proposition \ref{prop:linear1} first.
  
  Let $p\in M$ and let $\gamma:[0,\infty)\to M$ be a ray in $M$ starting at $p$. We estimate the volume of $B_R(p)$. Let $\epsilon\in (0,1/10)$. We choose $r$ large such that
  $$r> \epsilon^{-1} R,\quad \mathrm{vol}B_r(p)\le (\theta + \epsilon) r.$$
  Let $q=\gamma(t)$, where $t\gg r$ is large. We denote $\overline{A}_{r_1}^{r_2}(q)$ the closed annulus centered at $q$, that is,
  $$\overline{A}_{r_1}^{r_2}(q)=\{ x\in M | r_1 \le d(x,q)\le r_2 \}.$$
  By triangle inequality,
  $$B_R(p)\subseteq \overline{A}_{t-R}^{t+R} (q),\quad B_{r}(p) \subseteq \overline{A}_{t-r}^{t+r} (q).$$
  
  \begin{lem}\label{lem:linear1_level_set}
  	There is a subset $\mathcal{A}\subseteq[t-r,t-R]$ of measure at least $4\epsilon r$ such that
  	$$\mathcal{H}^{n-1}(\partial B_\rho (q)\cap {B}_{r}(p))\le \dfrac{\theta +\epsilon}{1-5\epsilon} \quad \text{for all } \rho\in \mathcal{A}.$$
  \end{lem}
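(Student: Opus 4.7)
The plan is to combine the coarea formula applied to the distance function $d(\cdot,q)$ on $B_r(p)$ with a Markov-type argument to extract a large set of radii where the spherical slice is small.

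Abbreviate $S(\rho) \defeq \mathcal{H}^{n-1}(\partial B_\rho(q)\cap B_r(p))$. Since $|\nabla d(\cdot,q)|=1$ almost everywhere on $M$ (away from the cut locus of $q$, which is negligible), the coarea formula gives
\begin{equation*}
\int_{t-r}^{t+r} S(\rho)\, d\rho \;=\; \vol B_r(p) \;\le\; (\theta+\epsilon)\, r,
\end{equation*}
where the integration range is reduced to $[t-r,t+r]$ thanks to the inclusion $B_r(p)\subseteq \overline{A}_{t-r}^{t+r}(q)$ noted just before the lemma.

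Next, I would introduce the ``bad'' set
\begin{equation*}
\mathcal{B} \;\defeq\; \Bigl\{\rho\in[t-r,t+r]\;:\;S(\rho) > \tfrac{\theta+\epsilon}{1-5\epsilon}\Bigr\},
\end{equation*}
and apply Markov's inequality to get $|\mathcal{B}| \le (1-5\epsilon)\,r$. Hence the complementary ``good'' set $\mathcal{G} \defeq [t-r,t+r]\setminus \mathcal{B}$ satisfies $|\mathcal{G}| \ge (1+5\epsilon)\,r$.

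Finally, I would set $\mathcal{A} \defeq \mathcal{G}\cap[t-r,t-R]$. Since the complement of $[t-r,t-R]$ inside $[t-r,t+r]$ is $[t-R,t+r]$, of length $r+R$, and since our choice $r>\epsilon^{-1}R$ gives $R<\epsilon r$, we obtain
\begin{equation*}
|\mathcal{A}| \;\ge\; |\mathcal{G}| - |[t-R,t+r]| \;\ge\; (1+5\epsilon)\,r - (1+\epsilon)\,r \;=\; 4\epsilon\, r,
\end{equation*}
while by construction every $\rho\in\mathcal{A}$ satisfies the required bound on $S(\rho)$. There is no genuine obstacle in this argument; it is a direct bookkeeping computation. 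The only point worth double-checking is that one uses the a.e.\ identity $|\nabla d(\cdot,q)|=1$ on a smooth Riemannian manifold to get equality (not merely inequality) in the coarea formula, which is what makes the numerical constants line up cleanly with the desired $4\epsilon r$.
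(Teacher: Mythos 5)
Your proposal is correct and follows essentially the same route as the paper: apply the coarea formula for $d(\cdot,q)$ to bound $\int S(\rho)\,d\rho$ by $\vol B_r(p)\le(\theta+\epsilon)r$, then use a Markov/Chebyshev argument to bound the measure of the bad set, with the hypothesis $r>\epsilon^{-1}R$ supplying the final $4\epsilon r$. The only cosmetic difference is that the paper integrates directly over $[t-r,t-R]$ (so only the inequality $\le$ in the coarea step is needed) rather than over all of $[t-r,t+r]$ and then intersecting with $[t-r,t-R]$; the bookkeeping is equivalent.
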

  
  \begin{proof}
  	$\partial B_\rho (q)$ is $\mathcal{H}^{n-1}$-rectifiable for a.e. $\rho$. By co-area formula, 
  	$$\int_{t-r}^{t-R} \mathcal{H}^{n-1}(\partial B_\rho (q)\cap B_{r}(p)) d\rho \le \mathrm{vol}B_{r}(p)\le (\theta+\epsilon)r.$$
  	Let 
  	$$I=\left\{ \rho \in[t-r,t-R]\ |\  \mathcal{H}^{n-1}(\partial B_\rho (p)\cap B_{r}(p))\ge \dfrac{\theta+\epsilon}{1-5\epsilon} \right\}.$$ Then
  	$$\int_{t-r}^{t-R} \mathcal{H}^{n-1}(\partial B_\rho (p)\cap B_{r}(p)) d\rho \ge |I|\dfrac{\theta+\epsilon}{1-5\epsilon}. $$
  	Hence 
  	$$|I|\le (1-5\epsilon)r < (1-\epsilon)r \le r-R =|[t-r,t-R]|.$$
  	In particular, $\mathcal{A}:=[t-r,t-R]-I$ has measure at least $4\epsilon r$.
  \end{proof}
  
  \begin{proof}[Proof of Proposition \ref{prop:linear1}]
  	Let $T_s=\partial B_s(q) \cap (B_R(p)-\mathcal{C}_q)$, where $s\in [t-R,t+R]$ and $\mathcal{C}_q$ is the cut locus of $q$. By Lemma \ref{lem:linear1_level_set}, we can choose $\rho\in \mathcal{A} \subseteq [t-r,t-R]$ such that $\Sigma= \partial B_\rho(q)\cap \overline{B}_{r}(p)$ satisfies 
  	$$\rho\ge t-r+ 3\epsilon r ,\quad\mathcal{H}^{n-1}(\Sigma)\le \dfrac{\theta +\epsilon}{1-5\epsilon},\quad  \mathcal{H}^{n-1}(\Sigma \cap \mathcal{C}_q)=0.$$ 
  	We note that every minimal geodesic $\sigma$ from $q$ to $z\in T_s$ must intersect $\Sigma$. In fact, let $z^*$ be the intersection of $\sigma$ and $\partial B_\rho (q)$; then by triangle inequality
  	$$d(p,z^*)\le d(p,z)+d(z,z^*)\le R+(s-\rho)
  		\le R + (t+R) - (t-r+3\epsilon r)<r.$$
  	By relative volume comparison, we have 
   \begin{equation}\label{eq:volume_sphere}
    \mathcal{H}^{n-1}(T_s)\le \dfrac{s^{n-1}}{\rho^{n-1}}\mathcal{H}^{n-1}(\Sigma).
   \end{equation}
  	Next, we estimate the volume of $B_R(p)$ by integrating $\mathcal{H}^{n-1}(T_s)$.
  	$$\dfrac{\mathrm{vol}(B_R(p))}{R} =\frac{1}{R} \int_{t-R}^{t+R} \mathcal{H}^{n-1}(T_s) ds
  	\le  \frac{2R}{R} \left( \dfrac{t+R}{t-r} \right)^{n-1} \dfrac{\theta +\epsilon}{1-5\epsilon} 
  	\to  2 \dfrac{\theta+\epsilon}{1-5\epsilon}$$
  	as $t\to\infty$. The result follows.
  \end{proof}

  As indicated, the proof of Proposition \ref{prop:linear2} is similar to that of Proposition \ref{prop:linear1}. A key improvement is that we can pick $p$ attaining the minimum of a Busemann function so that the annulus $\overline{A}_{t-R}^{t+R}(q)$ of width $2R$ in the above proof will be improved to an annulus of width $\sim R$.

  \begin{proof}[Proof of Proposition \ref{prop:linear2}]
     We assume that $M$ is not a metric product $\mathbb{R}\times N$ (see Theorem \ref{thm:sormani}); otherwise, the result holds trivially. Let $\gamma:[0,\infty)\to M$ be a ray and let $$b(x)=\lim_{t\to\infty} t-d(x,\gamma(t))$$ be the Busemann function associated to $\gamma$. Because $M$ has linear volume growth and only one end, by \cite[Corollary 23]{Sor98}, $b$ is a proper function with a minimum $\min b>-\infty$. Let $p\in M$ be a point attaining the minimum of $b$. 
  	
    We estimate the volume of $B_R(p)$. Let $\epsilon\in(0,1/10)$ and let $r$ large such that
  	$$r> \epsilon^{-1} R,\quad \mathrm{vol}B_r(p)\le (\theta + \epsilon) r.$$ 
  	As $t\to\infty$, the partial Busemann function
  	$b_t(x)=t-d(x,\gamma(t))$
  	converges uniformly to $b$ on every compact set. In particular, on $\overline{B_{r}}(p)$ we have $|b_t-b|\le 1$ for all $t$ sufficiently large. Then
  	$$\overline{B_r}(p)\subseteq b^{-1}[\min b,\min b+r]\subseteq b_t^{-1}[\min b-1, \min b +r +1].$$
  	Setting $q=\gamma(t)$, where $t\gg r$ is large, then
  	$$\overline{B_r}(p) \subseteq \overline{A}_{t-\min b- r -1}^{t-\min b +1} (q),\quad 
  	\overline{B_R}(p) \subseteq \overline{A}_{t-\min b- R -1}^{t-\min b +1} (q).$$
  	For convenience, we denote
  	$$\eta_r= t-\min b -r-1,\quad \eta_R = t-\min b -R-1.$$
  	
  	Let $T_s=\partial B_s(q) \cap (B_R(p)-\mathcal{C}_q)$, where $s\in [\eta_R,t-\min b +1]$. By the proof of Lemma \ref{lem:linear1_level_set}, we choose $\rho\in \mathcal{A} \subseteq [\eta_r,\eta_R]$ such that $\Sigma= \partial B_\rho(q)\cap \overline{B}_{r}(p)$ satisfies 
  	$$\rho\ge \eta_r + 3\epsilon r,\quad \mathcal{H}^{n-1}(\Sigma)\le \dfrac{\theta_* +\epsilon}{1-5\epsilon},\quad  \mathcal{H}^{n-1}(\Sigma \cap \mathcal{C}_q)=0.$$ 
  	Then every minimal geodesic $\sigma$ from $q$ to $z\in T_s$ must intersect $\Sigma$. Following a similar volume estimate as in the proof of Proposition \ref{prop:linear2}, we have
  	\begin{align*}
  		\dfrac{\mathrm{vol}(B_R(p))}{R} =& \frac{1}{R}\int_{\eta_R}^{t - \min b +1} \mathcal{H}^{n-1}(T_s) ds \\
  		\le & \frac{R+2}{R} \left( \dfrac{t - \min b+1}{ t-\min b-r-1} \right)^{n-1} \dfrac{\theta_* +\epsilon}{1-5\epsilon} \\
  		\to & \frac{R+2}{R} \cdot \dfrac{\theta_* +\epsilon}{1-5\epsilon}.
  	\end{align*}
  	Taking $\epsilon\to 0$ and $R\to\infty$, we complete the proof.
  \end{proof}

  \begin{rem}\label{rem:rcd_linear_ratio}
     Zhu Ye pointed out to us that Propositions \ref{prop:linear1} and \ref{prop:linear2} also extend to non-compact RCD$(0,N)$ spaces by slightly modifying the proofs. In the proof of Lemma \ref{lem:linear1_level_set}, instead of foliating $B_r(p)$ by $\partial B_\rho(q)$, we can intersect $B_r(p)$ with disjoint and thin annulus regions centered at $q$. Then in the proof of Propositions \ref{prop:linear1} and \ref{prop:linear2}, we use Brunn-Minkowski inequality to control the $\meas$-measure of annulus regions with center $q$ at different scales; this replaces the relative volume comparison in (\ref{eq:volume_sphere}). We also need two additional results for $\RCD$ spaces. The first one is the cut locus $\mathcal{C}_q$ is $\meas$-negligible (see \cite[Remark 3.3]{Ye23}). The second one is the properness of Busemann function for $\RCD(0,N)$ spaces with linear volume growth \cite[Theorem 1.2]{Huang18} (also see \cite[proof of Corollary 23]{Sor00b}). The rest of the proof goes through with some clear modifications. 
  \end{rem}

\bibliographystyle{plain}

\end{document}